\newtheorem{definition}{Definition}[section]
\newtheorem{theorem}[definition]{Theorem}
\newtheorem{lemma}[definition]{Lemma}
\newtheorem{corollary}[definition]{Corollary}
\newtheorem{remark}[definition]{Remark}
\newtheorem{example}[definition]{Example}
\newtheorem{proposition}[definition]{Proposition}
\begin{document}
\title{\bf 
The alternating central extension \\
 of the $q$-Onsager algebra}
\author{
Paul Terwilliger 
}
\date{}

\maketitle
\begin{abstract}
The $q$-Onsager algebra $O_q$ is presented by two generators $W_0$, $W_1$ and two relations, called the $q$-Dolan/Grady relations.
Recently Baseilhac and Koizumi introduced a current algebra $\mathcal A_q$ for $O_q$. Soon afterwards, Baseilhac and Shigechi gave a presentation of
$\mathcal A_q$ by generators and relations. We show that these generators give a PBW basis for $\mathcal A_q$. Using this PBW basis, we show that the algebra $\mathcal A_q$ is isomorphic to 
$O_q \otimes \mathbb F \lbrack z_1, z_2, \ldots \rbrack$, where $\mathbb F$ is the ground field and $\lbrace z_n \rbrace_{n=1}^\infty $ are mutually
commuting indeterminates. Recall the positive part $U^+_q$ of the quantized enveloping algebra $U_q(\widehat{\mathfrak{sl}}_2)$.
Our results show that $O_q$ is related to $\mathcal A_q$  in the same way that $U^+_q$ is related to the alternating central extension of $U^+_q$.
For this reason, we propose to call $\mathcal A_q$ the alternating central extension of $O_q$.

\bigskip

\noindent
{\bf Keywords}. $q$-Onsager algebra; $q$-Dolan/Grady relations; PBW basis; tridiagonal pair.
\hfil\break
\noindent {\bf 2020 Mathematics Subject Classification}. 
Primary: 17B37. Secondary: 05E14, 81R50.

 \end{abstract}
 
 \section{Introduction}
 Our topic involves  the $q$-Onsager algebra $O_q$ \cite{bas1,qSerre}. This infinite-dimensional associative algebra is presented by 
  two generators $W_0$, $W_1$ and two relations called the
$q$-Dolan/Grady relations; see Definition \ref{def:U}
below. 
\medskip

\noindent
The algebra $O_q$ first appeared in algebraic combinatorics, in the study of 
$Q$-polynomial distance-regular graphs;
to our knowledge the $q$-Dolan/Grady relations first appeared in \cite[Lemma~5.4]{tersub3}.
The algebra $O_q$ belongs to a family of algebras called  tridiagonal algebras 
\cite[Definition~3.9]{qSerre}. One can think of $O_q$ as the ``most general''  tridiagonal algebra \cite[Section~1.2]{ItoTerAug}. 
There is an object in linear algebra called a tridiagonal pair
\cite[Definition~1.1]{TD00}.
A finite-dimensional irreducible $O_q$-module is essentially the same thing as a tridiagonal pair of $q$-Racah type \cite[Theorem~3.10]{qSerre}. 
These tridiagonal pairs are classified up to isomorphism in \cite[Theorem~3.3]{ItoTer}. See \cite{ito} for a survey about $O_q$ and tridiagonal pairs.
\medskip

\noindent  Over the past two decades the algebra $O_q$ has appeared in a wide variety of contexts.
 For instance, $O_q$ is used 
 to study boundary 
integrable systems 
\cite{
bas2,
bas1,
bas8,
basXXZ,
basBel,
BK05,
bas4,
basKoi,
basnc}.
The algebra $O_q$  can be realized as a left or right coideal subalgebra of the quantized enveloping algebra
$U_q(\widehat{\mathfrak{sl}}_2)$; see \cite{bas8, basXXZ, kolb}. The algebra $O_q$ is the simplest example of a quantum symmetric pair coideal subalgebra 
of affine type \cite[Example~7.6]{kolb}. 
A Drinfeld type presentation of $O_q$ is obtained in \cite{LuWang}, and this  is used in \cite{LRW} to realize $O_q$ as an $\iota$Hall algebra of the projective line.
There is an algebra  homomorphism from  $O_q$ into the universal Askey-Wilson algebra 
\cite[Sections~9,10]{uaw}. See  \cite{BK, lusztigaut,  pbw,  z2z2z2, conj} for more information about $O_q$.

\medskip

 \noindent
In \cite{BK05} Baseilhac and Koizumi introduce a current algebra $\mathcal A_q$ for $O_q$, in order to solve boundary integrable systems with hidden symmetries.
In \cite[Definition~3.1]{basnc} Baseilhac and Shigechi give a presentation of $\mathcal A_q$ by generators and relations. 
The generators are denoted
\begin{align}
\label{eq:AltGen}
 \lbrace \mathcal W_{-k}\rbrace_{k\in \mathbb N}, \qquad \lbrace \mathcal W_{k+1}\rbrace_{k\in \mathbb N}, \qquad  
 \lbrace \mathcal G_{k+1} \rbrace_{k\in \mathbb N},
\qquad
\lbrace  \mathcal {\tilde G}_{k+1} \rbrace_{k \in \mathbb N}
\end{align}
and the relations are given in 
\eqref{eq:3p1}--\eqref{eq:3p11} below.
The relationship between $\mathcal A_q$ and $O_q$ has been the subject of intense study.  There are some partial results and conjectures in \cite{basBel}, and some conjectures in \cite{z2z2z2}.
The results of \cite{basBel} are summarized as follows.
By \cite[line (3.7)]{basBel} the elements $\mathcal W_0$, $\mathcal W_1$  satisfy the $q$-Dolan/Grady relations.
It is shown in \cite[Section~3]{basBel} that $\mathcal A_q$ is generated by $\mathcal W_0$, $\mathcal W_1$ together with the
central elements $\lbrace \Delta_n \rbrace_{n=1}^\infty$ defined in
 \cite[Lemma~2.1]{basBel}.
In \cite[Section~3]{basBel} a quotient algebra of $\mathcal A_q$ is obtained
by sending $\Delta_n$ to a scalar for all $n\geq 1$. The construction yields an algebra homomorphism
 $\Psi$ from $O_q$ onto this quotient.
According to \cite[Conjecture~2]{basBel} the map $\Psi$ is an isomorphism. In \cite[Conjecture~4.6]{z2z2z2} the above material is considered from a different point of view.
We will state the conjecture after defining a few terms. Let $\langle \mathcal W_0, \mathcal W_1 \rangle$ denote
the subalgebra of $\mathcal A_q$ generated by $\mathcal W_0, \mathcal W_1$.
Let $\mathcal Z$ denote the center of $\mathcal A_q$. Let $\lbrace z_n \rbrace_{n=1}^\infty$ denote mutually commuting indeterminates.
Consider the polynomial algebra $\mathbb F \lbrack z_1, z_2,\ldots \rbrack$ where $\mathbb F$ is the ground field. 
According to \cite[Conjecture~4.6]{z2z2z2}, (i)
there exists an algebra isomorphism
 $\mathbb F\lbrack z_1, z_2,\ldots \rbrack \to \mathcal Z$
 that sends $z_n \mapsto \Delta_n$ for $n \geq 1$;
(ii) there exists an algebra isomorphism
$O_q \to \langle \mathcal W_0, \mathcal W_1\rangle$ 
that sends $W_0\mapsto \mathcal W_0$ and
$W_1\mapsto \mathcal W_1$;
(iii) the multiplication map 
\begin{align*}
\langle \mathcal W_0, \mathcal W_1\rangle 
\otimes
\mathcal Z
 & \to   \mathcal A_q
\\
 w \otimes z  &\mapsto  wz
 \end{align*}
 is an isomorphism of algebras.
As noted below \cite[Conjecture~4.6]{z2z2z2}, a proof of (i)--(iii) yields
an algebra
isomorphism
$ O_q \otimes \mathbb F \lbrack z_1,
z_2,\ldots \rbrack
\to
\mathcal A_q 
$.
\medskip

 \noindent The results of the present paper imply that \cite[Conjecture~2]{basBel} and \cite[Conjecture~4.6]{z2z2z2} are correct, provided 
  that ${\rm Char}(\mathbb F) \not=2$. The difficulty with ${\rm Char}(\mathbb F) =2$ is not serious; we overcome it  by replacing the central elements $\lbrace \Delta_n\rbrace_{n=1}^\infty$
  with slightly different central  elements $\lbrace \mathcal Z_n\rbrace_{n=1}^\infty$. Below we list our main results, which hold without restriction on $\mathbb F$.
  \begin{enumerate}
  \item[$\bullet$] a PBW basis for $\mathcal A_q$ is obtained by the generators \eqref{eq:AltGen} in any linear order $<$ such that
\begin{align*}
\mathcal G_{i+1} < \mathcal W_{-j} < \mathcal W_{k+1} < \mathcal {\tilde G}_{\ell+1}\qquad \qquad (i,j,k, \ell \in \mathbb N).
\end{align*}
 \item[$\bullet$] 
There exists an algebra isomorphism
 $\mathbb F\lbrack z_1, z_2,\ldots \rbrack \to \mathcal Z$
 that sends $z_n \mapsto \mathcal Z_n$ for $n \geq 1$.
 \item[$\bullet$] 
There exists an algebra isomorphism
$O_q \to \langle \mathcal W_0, \mathcal W_1\rangle$ 
that sends $W_0\mapsto \mathcal W_0$ and
$W_1\mapsto \mathcal W_1$.
\item[$\bullet$] 
The multiplication map 
\begin{align*}
\langle \mathcal W_0, \mathcal W_1\rangle 
\otimes
\mathcal Z
 & \to   \mathcal A_q
\\
 w \otimes z  &\mapsto  wz
 \end{align*}
 is an isomorphism of algebras.
 \item[$\bullet$] 
There exists an algebra isomorphism $ O_q \otimes \mathbb F \lbrack z_1, z_2,\ldots \rbrack \to \mathcal A_q$ that sends
\begin{align*}
W_0 \otimes 1 \mapsto \mathcal W_0, \qquad \quad
W_1 \otimes 1 \mapsto \mathcal W_1, \qquad \quad 
1 \otimes z_n \mapsto \mathcal Z_n, \qquad n\geq 1.
\end{align*}
  \end{enumerate}
  \noindent We will prove the above results in the order of bullets 1,5,2,3,4; see Theorems \ref{thm:rr}, \ref{thm:2}, \ref{lem:c}, \ref{lem:ww},     \ref{lem:UZ}
  in the main body of the paper.
  \medskip

\noindent Near the beginning of the paper, we mentioned   that $O_q$ is a tridiagonal algebra. There is another tridiagonal algebra of interest, denoted $U^+_q$ and called the positive part of $U_q(\widehat{\mathfrak{sl}}_2)$ \cite{pospart}.
The  results listed above show that $O_q$ is related to $\mathcal A_q$ in the same way that $U^+_q$ is related to the alternating central extension of $U^+_q$ 
\cite{alternating},
\cite{altCE}.
For this reason, we propose to call $\mathcal A_q$ the alternating central extension of $O_q$. 
\medskip

\noindent We believe that $\mathcal A_q$ is a profound discovery and worthy of much further study. In the future we hope to investigate whether the generators \eqref{eq:AltGen} have a combinatorial interpretation for
$Q$-polynomial distance-regular graphs.
\medskip

\noindent The paper is organized as follows. Section 2 contains some preliminaries and the definition of $\mathcal A_q$. In Section 3 the algebra $\mathcal A_q$ is described using generating functions. 
The results of Section 3 are used in Sections 4, 5  to obtain some equations called reduction rules.
In Section 6 we use the reduction rules and the Bergman diamond lemma to show that the generators \eqref{eq:AltGen} give a PBW basis for $\mathcal A_q$.
In Section 7 we obtain a filtration of $\mathcal A_q$ that will help us understand how $\mathcal A_q$ is related to $O_q$.
In Section 8 we obtain some central elements $\lbrace \mathcal Z_n \rbrace_{n=1}^\infty$ in $\mathcal A_q$. The results of Sections 6, 7, 8 are used in Sections 9, 10
to describe how $\mathcal A_q$ is related to $O_q$.
In Appendix A we list many identities that hold in $\mathcal A_q$. 
Appendix B contains some data and examples that are meant to clarify Section 8.
Appendix C contains some details from our application of the Bergman diamond lemma.

\section{The algebra $\mathcal A_q$}

\noindent We now begin our formal argument. Throughout the paper, the following notational conventions are in effect.
Recall the natural numbers $\mathbb N= \lbrace 0,1,2,\ldots \rbrace$ and integers $\mathbb Z=\lbrace 0,\pm 1, \pm 2,\ldots \rbrace$. Let $\mathbb F$ denote a field.
Every vector space and tensor product discussed in this paper is over $\mathbb F$.
Every algebra discussed in this paper is associative, over $\mathbb F$, and has a multiplicative identity.  Let $\mathcal A$ denote an algebra. By an {\it automorphism} of $\mathcal A$
we mean an algebra isomorphism $\mathcal A\rightarrow \mathcal A$. The algebra $\mathcal A^{\rm opp}$ consists of the vector space $\mathcal A$ and the multiplication map $\mathcal A \times \mathcal A \rightarrow \mathcal A$, $(a,b)\to ba$.
By an {\it antiautomorphism} of $\mathcal A$ we mean an algebra isomorphism $\mathcal A \rightarrow \mathcal A^{\rm opp}$.
\medskip


 \begin{definition}\label{def:pbw}
 \rm 
(See \cite[p.~299]{damiani}.)
Let $ \mathcal A$ denote an algebra. A {\it Poincar\'e-Birkhoff-Witt} (or {\it PBW}) basis for $\mathcal A$
consists of a subset $\Omega \subseteq \mathcal A$ and a linear order $<$ on $\Omega$
such that the following is a basis for the vector space $\mathcal A$:
\begin{align*}
a_1 a_2 \cdots a_n \qquad n \in \mathbb N, \qquad a_1, a_2, \ldots, a_n \in \Omega, \qquad
a_1 \leq a_2 \leq \cdots \leq a_n.
\end{align*}
We interpret the empty product as the multiplicative identity in $\mathcal A$.
\end{definition}
\begin{definition}\label{def:gr}\rm 
 A  {\it grading} of an algebra $\mathcal A$ is a sequence  $\lbrace \mathcal A_n \rbrace_{n \in \mathbb N}$ of subspaces of $\mathcal A$ such that
(i) $1 \in \mathcal A_0$; (ii) the sum $\mathcal A = \sum_{n \in \mathbb N} \mathcal A_n$ is direct; (iii) $\mathcal A_r \mathcal A_s \subseteq \mathcal A_{r+s} $ for $r,s\in \mathbb N$.
\end{definition}
\begin{definition} \label{def:filtration} \rm (See \cite[p.~202]{carter}.)
A {\it filtration} of an algebra $\mathcal A$ is a sequence  $\lbrace \mathcal A_n \rbrace_{n \in \mathbb N}$ of subspaces of $\mathcal A$ such that
(i) $1 \in \mathcal A_0$; (ii) $\mathcal A_{n-1} \subseteq \mathcal A_n$ for $n\geq 1$; (iii) $\mathcal A = \cup_{n \in \mathbb N} \mathcal A_n$;
(iv) $\mathcal A_r \mathcal A_s \subseteq \mathcal A_{r+s} $ for $r,s\in \mathbb N$.
\end{definition}

\noindent
Fix a nonzero $q \in \mathbb F$ that is not a root of unity. Recall the notation
\begin{align*}
\lbrack n \rbrack_q = \frac{q^n-q^{-n}}{q-q^{-1}} \qquad \qquad n \in \mathbb Z.
\end{align*}
\noindent
For elements $X$ and $Y$ in any algebra, their commutator and $q$-commutator are given by
\begin{align*}
 \lbrack X,Y\rbrack=XY-YX, \qquad \qquad \lbrack X,Y\rbrack_q=
qXY-q^{-1}YX.
\end{align*}

\begin{definition}\rm
\label{def:Aq}
(See 
\cite{BK05}, \cite[Definition~3.1]{basnc}.)
Define the algebra $\mathcal A_q$
by generators
\begin{align}
\label{eq:4gens}
\lbrace \mathcal W_{-k}\rbrace_{n\in \mathbb N}, \qquad  \lbrace \mathcal  W_{k+1}\rbrace_{n\in \mathbb N},\qquad  
 \lbrace \mathcal G_{k+1}\rbrace_{n\in \mathbb N},
\qquad
\lbrace \mathcal {\tilde G}_{k+1}\rbrace_{n\in \mathbb N}
\end{align}
 and the following relations. For $k, \ell \in \mathbb N$,
\begin{align}
&
 \lbrack \mathcal W_0, \mathcal W_{k+1}\rbrack= 
\lbrack \mathcal W_{-k}, \mathcal W_{1}\rbrack=
({\mathcal{\tilde G}}_{k+1} - \mathcal G_{k+1})/(q+q^{-1}),
\label{eq:3p1}
\\
&
\lbrack \mathcal W_0, \mathcal G_{k+1}\rbrack_q= 
\lbrack {\mathcal{\tilde G}}_{k+1}, \mathcal W_{0}\rbrack_q= 
\rho  \mathcal W_{-k-1}-\rho 
 \mathcal W_{k+1},
\label{eq:3p2}
\\
&
\lbrack \mathcal G_{k+1}, \mathcal W_{1}\rbrack_q= 
\lbrack \mathcal W_{1}, {\mathcal {\tilde G}}_{k+1}\rbrack_q= 
\rho  \mathcal W_{k+2}-\rho 
 \mathcal W_{-k},
\label{eq:3p3}
\\
&
\lbrack \mathcal W_{-k}, \mathcal W_{-\ell}\rbrack=0,  \qquad 
\lbrack \mathcal W_{k+1}, \mathcal W_{\ell+1}\rbrack= 0,
\label{eq:3p4}
\\
&
\lbrack \mathcal W_{-k}, \mathcal W_{\ell+1}\rbrack+
\lbrack \mathcal W_{k+1}, \mathcal W_{-\ell}\rbrack= 0,
\label{eq:3p5}
\\
&
\lbrack \mathcal W_{-k}, \mathcal G_{\ell+1}\rbrack+
\lbrack \mathcal G_{k+1}, \mathcal W_{-\ell}\rbrack= 0,
\label{eq:3p6}
\\
&
\lbrack \mathcal W_{-k}, {\mathcal {\tilde G}}_{\ell+1}\rbrack+
\lbrack {\mathcal {\tilde G}}_{k+1}, \mathcal W_{-\ell}\rbrack= 0,
\label{eq:3p7}
\\
&
\lbrack \mathcal W_{k+1}, \mathcal G_{\ell+1}\rbrack+
\lbrack \mathcal  G_{k+1}, \mathcal W_{\ell+1}\rbrack= 0,
\label{eq:3p8}
\\
&
\lbrack \mathcal W_{k+1}, {\mathcal {\tilde G}}_{\ell+1}\rbrack+
\lbrack {\mathcal {\tilde G}}_{k+1}, \mathcal W_{\ell+1}\rbrack= 0,
\label{eq:3p9}
\\
&
\lbrack \mathcal G_{k+1}, \mathcal G_{\ell+1}\rbrack=0,
\qquad 
\lbrack {\mathcal {\tilde G}}_{k+1}, {\mathcal {\tilde G}}_{\ell+1}\rbrack= 0,
\label{eq:3p10}
\\
&
\lbrack {\mathcal {\tilde G}}_{k+1}, \mathcal G_{\ell+1}\rbrack+
\lbrack \mathcal G_{k+1}, {\mathcal {\tilde G}}_{\ell+1}\rbrack= 0.
\label{eq:3p11}
\end{align}
In the above equations $\rho = -(q^2-q^{-2})^2$. The generators 
\eqref{eq:4gens} are called {\it alternating}.
\noindent For notational convenience define
\begin{align}
{\mathcal G}_0 = -(q-q^{-1})\lbrack 2 \rbrack^2_q, \qquad \qquad 
{\mathcal {\tilde G}}_0 = -(q-q^{-1}) \lbrack 2 \rbrack^2_q.
\label{eq:GG0}
\end{align}
\end{definition}

\noindent Our first main goal in this paper is to show that a PBW basis for $\mathcal A_q$ is obtained by its alternating generators in any linear order $<$ such that
\begin{align}
\mathcal G_{i+1} < \mathcal W_{-j} < \mathcal W_{k+1} < \mathcal {\tilde G}_{\ell+1}\qquad \qquad (i,j,k, \ell \in \mathbb N).
\label{eq:order}
\end{align}
\noindent The above result is a variation on  \cite[Conjecture~4.5]{z2z2z2}. Both the above result and  \cite[Conjecture~4.5]{z2z2z2} are inspired by
\cite[Conjecture~1]{basBel}.

\section{Generating functions}

\noindent As we investigate the algebra $\mathcal A_q$, our first task is to express the defining relations \eqref{eq:3p1}--\eqref{eq:3p11} in terms of generating functions. 
Later in the paper, we will put these relations in a more useful form. As we go along, the following symmetries will be used to simplify some of our proofs.


\begin{lemma}
\label{lem:aut} {\rm (See \cite[Remark~1]{basBel}.)} There exists an automorphism $\sigma$ of $\mathcal A_q$ that sends
\begin{align*}
\mathcal W_{-k} \mapsto \mathcal W_{k+1}, \qquad
\mathcal W_{k+1} \mapsto \mathcal W_{-k}, \qquad
\mathcal G_{k+1} \mapsto \mathcal {\tilde G}_{k+1}, \qquad
\mathcal {\tilde G}_{k+1} \mapsto \mathcal G_{k+1}
\end{align*}
 for $k \in \mathbb N$. Moreover $\sigma^2 = 1$.
\end{lemma}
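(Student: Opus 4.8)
The plan is to exploit the fact that $\mathcal A_q$ is presented by generators and relations (Definition \ref{def:Aq}). To build an algebra homomorphism $\sigma\colon \mathcal A_q \to \mathcal A_q$ it suffices to declare the images of the alternating generators as in the statement and then verify that these images satisfy every defining relation \eqref{eq:3p1}--\eqref{eq:3p11}; the universal property of a presentation then supplies a unique algebra endomorphism $\sigma$ realizing the prescribed assignments.

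First I would record how $\sigma$ acts on the indices occurring in the relations. Since $\mathcal W_0 = \mathcal W_{-0}$ and $\mathcal W_1 = \mathcal W_{0+1}$, the map sends $\mathcal W_0 \mapsto \mathcal W_1$ and $\mathcal W_1 \mapsto \mathcal W_0$, and more generally $\mathcal W_{-k-1} \mapsto \mathcal W_{k+2}$. Because $\sigma$ is to be multiplicative, $\sigma(\lbrack X,Y\rbrack)=\lbrack \sigma X,\sigma Y\rbrack$ and $\sigma(\lbrack X,Y\rbrack_q)=\lbrack \sigma X,\sigma Y\rbrack_q$, so checking a relation reduces to bookkeeping with these index shifts together with the sign conventions of the commutator and $q$-commutator.

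The heart of the argument is the observation that $\sigma$ permutes the defining relations as a set. Concretely, applying $\sigma$ carries \eqref{eq:3p2} to \eqref{eq:3p3} and back; it carries \eqref{eq:3p6} to \eqref{eq:3p9} and \eqref{eq:3p7} to \eqref{eq:3p8} (and back); it interchanges the two equations within each of \eqref{eq:3p4} and \eqref{eq:3p10}; and it fixes each of \eqref{eq:3p1}, \eqref{eq:3p5}, \eqref{eq:3p11}, where the two halves are exchanged. I would verify a couple of representative cases in detail -- for example that $\sigma$ sends the first equation $\lbrack \mathcal W_0, \mathcal G_{k+1}\rbrack_q = \rho\mathcal W_{-k-1}-\rho\mathcal W_{k+1}$ of \eqref{eq:3p2} to $\lbrack \mathcal W_1, {\mathcal{\tilde G}}_{k+1}\rbrack_q = \rho\mathcal W_{k+2}-\rho\mathcal W_{-k}$, which is part of \eqref{eq:3p3}, and that it sends the first equation of \eqref{eq:3p1} to the second, the sign from $\lbrack X,Y\rbrack=-\lbrack Y,X\rbrack$ canceling against the sign from ${\mathcal{\tilde G}}_{k+1}-\mathcal G_{k+1}\mapsto \mathcal G_{k+1}-{\mathcal{\tilde G}}_{k+1}$ -- and then note that the remaining relations follow by the identical computation. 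The only points requiring care are these index shifts and the commutator sign flips; no genuine obstacle appears, since the presentation was evidently designed with this symmetry in mind.

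Finally I would check $\sigma^2 = 1$. On each generator the composite is the identity, since $\mathcal W_{-k} \mapsto \mathcal W_{k+1} \mapsto \mathcal W_{-k}$, $\mathcal W_{k+1}\mapsto \mathcal W_{-k}\mapsto \mathcal W_{k+1}$, $\mathcal G_{k+1}\mapsto {\mathcal{\tilde G}}_{k+1}\mapsto \mathcal G_{k+1}$, and ${\mathcal{\tilde G}}_{k+1}\mapsto \mathcal G_{k+1}\mapsto {\mathcal{\tilde G}}_{k+1}$. Thus the algebra endomorphism $\sigma^2$ agrees with $\mathrm{id}_{\mathcal A_q}$ on a generating set, hence equals it. In particular $\sigma$ is invertible with $\sigma^{-1}=\sigma$, so $\sigma$ is an automorphism, completing the proof. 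The main ``obstacle,'' such as it is, is purely organizational: keeping the index arithmetic and the commutator signs straight across all eleven relation-families.
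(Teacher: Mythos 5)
Your proof is correct: the paper itself gives no argument for this lemma (it simply cites Remark 1 of Baseilhac--Belliard), and your verification via the universal property of the presentation --- checking that the prescribed images permute the defining relations \eqref{eq:3p1}--\eqref{eq:3p11}, with the sign bookkeeping you describe, and then checking $\sigma^2=1$ on generators --- is exactly the standard argument the citation stands in for. Your tally of how the relations are permuted ( \eqref{eq:3p2}$\leftrightarrow$\eqref{eq:3p3}, \eqref{eq:3p6}$\leftrightarrow$\eqref{eq:3p9}, \eqref{eq:3p7}$\leftrightarrow$\eqref{eq:3p8}, the pairs within \eqref{eq:3p4}, \eqref{eq:3p10} swapped, and \eqref{eq:3p1}, \eqref{eq:3p5}, \eqref{eq:3p11} preserved up to exchanging summands) checks out.
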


\begin{lemma}\label{lem:antiaut} {\rm (See \cite[Lemma~3.7]{z2z2z2}.)} There exists an antiautomorphism $\dagger$ of $\mathcal A_q$ that sends
\begin{align*}
\mathcal W_{-k} \mapsto \mathcal W_{-k}, \qquad
\mathcal W_{k+1} \mapsto \mathcal W_{k+1}, \qquad
\mathcal G_{k+1} \mapsto \mathcal {\tilde G}_{k+1}, \qquad
\mathcal {\tilde G}_{k+1} \mapsto \mathcal G_{k+1}
\end{align*}
for $k \in \mathbb N$. Moreover $\dagger^2=1$.
\end{lemma}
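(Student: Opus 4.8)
The plan is to build $\dagger$ from its action on the generators, which is the standard recipe for an antiautomorphism of an algebra given by generators and relations. Let $T$ be the free algebra on the alternating generators \eqref{eq:4gens}, and let $J\subseteq T$ be the two-sided ideal generated by the relations \eqref{eq:3p1}--\eqref{eq:3p11}, so that $\mathcal A_q=T/J$. By the universal property of $T$, the prescribed generator assignment extends uniquely to an antihomomorphism $\dagger$ of $T$, that is, an additive map with $(XY)^\dagger=Y^\dagger X^\dagger$ and $1^\dagger=1$ (obtained as the unique algebra homomorphism $T\to T^{\rm opp}$ extending the generator map). To produce the desired map on $\mathcal A_q$ it then suffices to show $J^\dagger\subseteq J$, so that $\dagger$ descends to an antihomomorphism of $\mathcal A_q$ fixing each $\mathcal W_{-k}$, $\mathcal W_{k+1}$ and interchanging $\mathcal G_{k+1}\leftrightarrow\mathcal{\tilde G}_{k+1}$. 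Bijectivity will then come for free: since $\dagger^2$ is an algebra homomorphism of $T$ fixing every generator, it is the identity on $T$ and hence on $\mathcal A_q$, giving $\dagger^2=1$ and in particular that $\dagger$ is an antiautomorphism.

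The remaining and only substantive task is to verify $J^\dagger\subseteq J$, which I would do by showing that $\dagger$ carries each defining relation to a scalar multiple of a defining relation. The computation rests on two elementary facts valid for any antihomomorphism: for all $X,Y$ one has $\lbrack X,Y\rbrack^\dagger=-\lbrack X^\dagger,Y^\dagger\rbrack$ and $\lbrack X,Y\rbrack_q^\dagger=\lbrack Y^\dagger,X^\dagger\rbrack_q$. With these identities the bookkeeping proceeds relation by relation. Applying $\dagger$ to \eqref{eq:3p1}, \eqref{eq:3p4}, \eqref{eq:3p5}, and \eqref{eq:3p11} reproduces the same relations up to an overall sign, using that $\dagger$ negates commutators and fixes the $\mathcal W$'s while $(\mathcal{\tilde G}_{k+1}-\mathcal G_{k+1})^\dagger=-(\mathcal{\tilde G}_{k+1}-\mathcal G_{k+1})$; applying $\dagger$ interchanges the two equalities within \eqref{eq:3p10}. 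The relations \eqref{eq:3p6} and \eqref{eq:3p7} are interchanged by $\dagger$, as are \eqref{eq:3p8} and \eqref{eq:3p9}, since reversing the commutators swaps the roles of $\mathcal G$ and $\mathcal{\tilde G}$ while fixing the $\mathcal W$'s. Finally, in the two-part $q$-commutator relations \eqref{eq:3p2} and \eqref{eq:3p3}, the identity $\lbrack X,Y\rbrack_q^\dagger=\lbrack Y^\dagger,X^\dagger\rbrack_q$ exchanges the two expressions asserted to be equal, while the right-hand sides $\rho\mathcal W_{-k-1}-\rho\mathcal W_{k+1}$ and $\rho\mathcal W_{k+2}-\rho\mathcal W_{-k}$ are fixed; so each of these relations maps to its own partner equality. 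Hence every generator of $J$ maps into $J$, giving $J^\dagger\subseteq J$.

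I do not anticipate a genuine obstacle here; the content is entirely in confirming that the chosen generator assignment---fixing the $\mathcal W$'s and swapping $\mathcal G\leftrightarrow\mathcal{\tilde G}$---is precisely the one under which the sign produced by reversing a commutator, and the transposition produced by reversing a $q$-commutator, are absorbed by the relations. The one point demanding care is the handling of the paired equalities in \eqref{eq:3p1}, \eqref{eq:3p2}, \eqref{eq:3p3}: one must read each such line as two separate relations and check that $\dagger$ maps the pair into the span of the same pair, rather than only one member of it. Once this is checked, the descent of $\dagger$ to $\mathcal A_q$, and with it the existence of the antiautomorphism together with $\dagger^2=1$, follows immediately.
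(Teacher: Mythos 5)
Your proof is correct and complete. Note that the paper itself gives no argument for this lemma: it simply cites \cite[Lemma~3.7]{z2z2z2}, so you are supplying the verification that the paper defers to an earlier reference. Your route is the standard one for such statements (extend the generator assignment to an antihomomorphism of the free algebra, check that the ideal of defining relations is stable, and conclude $\dagger^2=1$ from the fact that $\dagger^2$ fixes the generators), and your relation-by-relation bookkeeping is accurate: \eqref{eq:3p1}, \eqref{eq:3p4}, \eqref{eq:3p5}, \eqref{eq:3p11} go to themselves up to sign, the pairs \eqref{eq:3p6}/\eqref{eq:3p7} and \eqref{eq:3p8}/\eqref{eq:3p9} and the two families in \eqref{eq:3p10} are interchanged, and in \eqref{eq:3p2}, \eqref{eq:3p3} the two $q$-commutator expressions asserted equal to a common right-hand side are swapped; your caution about reading each multi-equality line as several separate ideal generators is exactly the right point to flag.
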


\begin{lemma} \label{lem:sdcom} The maps $\sigma$, $\dagger $ commute.
\end{lemma}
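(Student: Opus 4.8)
The plan is to check that the two composite maps $\sigma\dagger$ and $\dagger\sigma$ agree on the alternating generators \eqref{eq:4gens}, and then to invoke the fact that each composite is an antiautomorphism of $\mathcal A_q$, hence uniquely determined by its action on a generating set. The conceptual point to get right first is the following bookkeeping: since $\sigma$ is an automorphism (Lemma \ref{lem:aut}) and $\dagger$ is an antiautomorphism (Lemma \ref{lem:antiaut}), each of $\sigma\circ\dagger$ and $\dagger\circ\sigma$ reverses the order of multiplication exactly once, so both composites are antiautomorphisms of $\mathcal A_q$. For any antiautomorphism $\phi$ one has $\phi(xy)=\phi(y)\phi(x)$, so $\phi$ is completely determined once its values on the generators \eqref{eq:4gens} are known. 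Thus it suffices to verify $\sigma\dagger=\dagger\sigma$ on each of the four families $\mathcal W_{-k}$, $\mathcal W_{k+1}$, $\mathcal G_{k+1}$, $\mathcal {\tilde G}_{k+1}$ with $k\in\mathbb N$.

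Next I would carry out the four comparisons directly from the formulas in Lemmas \ref{lem:aut} and \ref{lem:antiaut}. For instance $\sigma\dagger$ sends $\mathcal W_{-k}\mapsto\mathcal W_{-k}\mapsto\mathcal W_{k+1}$, while $\dagger\sigma$ sends $\mathcal W_{-k}\mapsto\mathcal W_{k+1}\mapsto\mathcal W_{k+1}$, so the two maps agree on $\mathcal W_{-k}$; the computation on $\mathcal W_{k+1}$ is symmetric and returns $\mathcal W_{-k}$ in both cases. On $\mathcal G_{k+1}$ both composites give $\mathcal G_{k+1}$ (routing through $\mathcal {\tilde G}_{k+1}$), and on $\mathcal {\tilde G}_{k+1}$ both give $\mathcal {\tilde G}_{k+1}$ (routing through $\mathcal G_{k+1}$). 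Since the two antiautomorphisms agree on every generator, they coincide on all of $\mathcal A_q$, which is the assertion.

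There is essentially no obstacle here beyond the conceptual step above: one must not treat $\sigma\dagger$ and $\dagger\sigma$ as automorphisms, since the generator-by-generator verification is valid precisely because both composites are antiautomorphisms, and an antiautomorphism is pinned down by its generator values in the same way an automorphism is. The only practical thing to guard against is an index slip in reading off the images, and the fixed pattern ($\sigma$ swaps the $\mathcal W$'s and swaps $\mathcal G$ with $\mathcal {\tilde G}$, whereas $\dagger$ fixes the $\mathcal W$'s and swaps $\mathcal G$ with $\mathcal {\tilde G}$) makes each comparison transparent.
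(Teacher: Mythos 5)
Your proof is correct, and it is exactly the verification the paper has in mind: the paper's own proof of this lemma is simply "Routine," and your argument — noting both composites are antiautomorphisms, hence determined by their values on the alternating generators, then checking agreement family by family — is the standard way to fill in that routine check.
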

\begin{proof} Routine.
\end{proof}

\begin{definition}
\label{def:gf4}
\rm
We define some generating functions in an indeterminate $t$:
\begin{align}
&\mathcal W^-(t) = \sum_{n \in \mathbb N} \mathcal W_{-n} t^n,
\qquad \qquad  
\mathcal W^+(t) = \sum_{n \in \mathbb N} \mathcal W_{n+1} t^n,
\label{eq:gf12}
\\
\label{eq:gf34}
&\mathcal G(t) = \sum_{n \in \mathbb N} \mathcal G_n t^n,
\qquad \qquad \qquad 
\mathcal {\tilde G}(t) = \sum_{n \in \mathbb N} \mathcal {\tilde G}_n t^n.
\end{align}
\end{definition}
\noindent We emphasize that throughout this paper all generating functions are formal;  issues of convergence are not considered.
\medskip

\noindent By \eqref{eq:3p4} we have
\begin{align*}
\lbrack \mathcal W_0, \mathcal W^-(t)\rbrack = 0, \qquad \qquad 
\lbrack \mathcal W_1, \mathcal W^+(t)\rbrack=0.
\end{align*}

\begin{lemma}\label{lem:aaut}
The automorphism $\sigma$ sends 
\begin{align*}
\mathcal W^-(t) \mapsto \mathcal W^+(t), \qquad
\mathcal W^+(t) \mapsto \mathcal W^-(t), \qquad
\mathcal G(t) \mapsto \mathcal {\tilde G}(t), \qquad
\mathcal {\tilde G}(t)\mapsto \mathcal G(t).
\end{align*}
The antiautomorphism $\dagger$ sends
\begin{align*}
\mathcal W^-(t) \mapsto \mathcal W^-(t), \qquad
\mathcal W^+(t) \mapsto \mathcal W^+(t), \qquad
\mathcal G(t) \mapsto \mathcal {\tilde G}(t), \qquad
\mathcal {\tilde G}(t)\mapsto \mathcal G(t).
\end{align*}
\end{lemma}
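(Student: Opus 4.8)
The plan is to extend both $\sigma$ and $\dagger$ to the formal power series algebra $\mathcal A_q[[t]]$ by declaring $t$ to be a central indeterminate, so that each map acts coefficient-by-coefficient, and then to read off the image of each generating function in \eqref{eq:gf12}, \eqref{eq:gf34} directly from its defining series. Since $\sigma$ is an algebra automorphism and $\dagger$ an antiautomorphism, both are $\mathbb F$-linear, so both commute with the formal (infinite) $\mathbb F$-linear combinations that define $\mathcal W^\pm(t)$, $\mathcal G(t)$, $\mathcal {\tilde G}(t)$. Applying either map to a generating function therefore reduces to applying it termwise to the coefficients, which are the generators whose images are recorded in Lemmas \ref{lem:aut} and \ref{lem:antiaut}.

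For $\sigma$ I would compute $\sigma(\mathcal W^-(t)) = \sum_{n \in \mathbb N} \sigma(\mathcal W_{-n}) t^n = \sum_{n \in \mathbb N} \mathcal W_{n+1} t^n = \mathcal W^+(t)$, and then obtain $\sigma(\mathcal W^+(t)) = \mathcal W^-(t)$ either by the same termwise substitution or from $\sigma^2 = 1$. The identical termwise computation handles $\dagger$ on $\mathcal W^\pm(t)$: here every coefficient is fixed, so $\mathcal W^\pm(t) \mapsto \mathcal W^\pm(t)$. Note that because each summand is a single generator rather than a product, the order-reversing property of the antiautomorphism $\dagger$ never comes into play, and its action on these series is literally $\mathbb F[t]$-linear.

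The one place requiring care is the constant term of $\mathcal G(t)$ and $\mathcal {\tilde G}(t)$, since Lemmas \ref{lem:aut} and \ref{lem:antiaut} only describe the action of $\sigma, \dagger$ on $\mathcal G_{k+1}$ and $\mathcal {\tilde G}_{k+1}$ for $k \in \mathbb N$, that is, on indices $\geq 1$. I would split $\mathcal G(t) = \mathcal G_0 + \sum_{k \in \mathbb N} \mathcal G_{k+1} t^{k+1}$; applying $\sigma$ sends this to $\sigma(\mathcal G_0) + \sum_{k \in \mathbb N} \mathcal {\tilde G}_{k+1} t^{k+1}$, and it then remains only to observe that $\sigma$ fixes the scalar $\mathcal G_0$ while $\mathcal G_0 = \mathcal {\tilde G}_0$ by \eqref{eq:GG0}, so the constant term is exactly $\mathcal {\tilde G}_0$ and the whole series assembles into $\mathcal {\tilde G}(t)$. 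The same bookkeeping yields $\sigma(\mathcal {\tilde G}(t)) = \mathcal G(t)$ and $\dagger(\mathcal G(t)) = \mathcal {\tilde G}(t)$, $\dagger(\mathcal {\tilde G}(t)) = \mathcal G(t)$.

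The main obstacle is genuinely minor: it is precisely this matching of the $n=0$ coefficients, which would silently fail were it not for the fact that $\mathcal G_0$ and $\mathcal {\tilde G}_0$ were defined to be equal in \eqref{eq:GG0}. Everything else is a single termwise substitution, so no reduction rules, defining relations, or structural facts about $\mathcal A_q$ are needed.
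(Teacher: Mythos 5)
Your proof is correct and matches the paper's approach: the paper's own proof is simply ``By Lemmas \ref{lem:aut}, \ref{lem:antiaut},'' i.e.\ termwise application of $\sigma$ and $\dagger$ to the series in Definition \ref{def:gf4}. Your extra observation that the $n=0$ coefficients of $\mathcal G(t)$, $\mathcal {\tilde G}(t)$ are scalars fixed by both maps and equal by \eqref{eq:GG0} is exactly the point the paper's one-line proof leaves implicit, so it is a welcome (but not substantively different) refinement.
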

\begin{proof} By Lemmas \ref{lem:aut}, \ref{lem:antiaut}.
\end{proof}

\noindent  We now give the relations \eqref{eq:3p1}--\eqref{eq:3p11}  in terms of the generating functions \eqref{eq:gf12}, \eqref{eq:gf34}.
Let $s$ denote an indeterminate that commutes with $t$. The following result is a variation on \cite[Definition~2.2]{basnc}.

\begin{lemma} \label{lem:ad} For the algebra $\mathcal A_q$ we have
\begin{align}
& \label{eq:3pp1}
\lbrack \mathcal W_0, \mathcal W^+(t) \rbrack = \lbrack \mathcal W^-(t), \mathcal W_1 \rbrack = t^{-1}(\mathcal {\tilde G}(t)-\mathcal G(t))/(q+q^{-1}),
\\
& \label{eq:3pp2}
\lbrack \mathcal W_0, \mathcal G(t) \rbrack_q = \lbrack \mathcal {\tilde G}(t), \mathcal W_0 \rbrack_q = \rho \mathcal W^-(t)-\rho t \mathcal W^+(t),
\\
&\label{eq:3pp3}
\lbrack \mathcal G(t), \mathcal W_1 \rbrack_q = \lbrack \mathcal W_1, \mathcal {\tilde G}(t) \rbrack_q = \rho \mathcal W^+(t) -\rho t \mathcal W^-(t),
\\
&\label{eq:3pp4}
\lbrack  \mathcal W^-(s), \mathcal W^-(t) \rbrack = 0, 
\qquad 
\lbrack \mathcal W^+(s),  \mathcal W^+(t) \rbrack = 0,
\\ \label{eq:3pp5}
&\lbrack  \mathcal W^-(s), \mathcal W^+(t) \rbrack 
+
\lbrack \mathcal W^+(s), \mathcal W^-(t) \rbrack = 0,
\\ \label{eq:3pp6}
&s \lbrack \mathcal W^-(s), \mathcal G(t) \rbrack 
+
t \lbrack  \mathcal G(s),  \mathcal W^-(t) \rbrack = 0,
\\ \label{eq:3pp7}
&s \lbrack  \mathcal W^-(s), \mathcal {\tilde G}(t) \rbrack 
+
t \lbrack  \mathcal {\tilde G}(s), \mathcal W^-(t) \rbrack = 0,
\\ \label{eq:3pp8}
&s \lbrack   \mathcal W^+(s),  \mathcal G(t) \rbrack
+
t \lbrack   \mathcal G(s), \mathcal W^+(t) \rbrack = 0,
\\ \label{eq:3pp9}
&s \lbrack   \mathcal W^+(s), \mathcal {\tilde G}(t) \rbrack
+
t \lbrack \mathcal {\tilde G}(s), \mathcal W^+(t) \rbrack = 0,
\\ \label{eq:3pp10}
&\lbrack   \mathcal G(s), \mathcal G(t) \rbrack = 0, 
\qquad 
\lbrack  \mathcal {\tilde G}(s),  \mathcal {\tilde G}(t) \rbrack = 0,
\\ \label{eq:3pp11}
&\lbrack  \mathcal {\tilde G}(s), G(t) \rbrack +
\lbrack   \mathcal G(s), \mathcal {\tilde G}(t) \rbrack = 0.
\end{align}
\end{lemma}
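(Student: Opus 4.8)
The plan is to prove each generating-function identity by substituting the definitions \eqref{eq:gf12}, \eqref{eq:gf34}, expanding by bilinearity of the commutator and $q$-commutator, and then equating the coefficient of each monomial $t^n$ (or $s^a t^b$) on the two sides. For the monomials in which every generator index is positive, the resulting scalar identity is precisely one of \eqref{eq:3p1}--\eqref{eq:3p11}, so the substance of the lemma is the organization of this coefficient comparison together with a check of the boundary monomials. Throughout I would exploit the symmetries of Lemma \ref{lem:aaut}: $\sigma$ interchanges $\mathcal W^-(t) \leftrightarrow \mathcal W^+(t)$, $\mathcal G(t) \leftrightarrow \mathcal {\tilde G}(t)$, and $\mathcal W_0 \leftrightarrow \mathcal W_1$, while $\dagger$ fixes $\mathcal W^\pm(t)$, $\mathcal W_0$, $\mathcal W_1$, interchanges $\mathcal G(t) \leftrightarrow \mathcal {\tilde G}(t)$, and reverses products. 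Together these permute the identities \eqref{eq:3pp1}--\eqref{eq:3pp11} among themselves, so that within each family it suffices to verify a single representative.

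First I would dispose of the identities \eqref{eq:3pp4}, \eqref{eq:3pp5}, \eqref{eq:3pp10}, \eqref{eq:3pp11} that involve only like-type generators. Here the coefficient of $s^a t^b$ is simply the relevant commutator of two generators; for $a,b \geq 1$ it is one of \eqref{eq:3p4}, \eqref{eq:3p5}, \eqref{eq:3p10}, \eqref{eq:3p11}. The identities \eqref{eq:3pp4}, \eqref{eq:3pp5} have no boundary subtlety at all, while the only boundary monomials occurring in \eqref{eq:3pp10}, \eqref{eq:3pp11} involve $\mathcal G_0$ or $\mathcal {\tilde G}_0$, and these vanish because $\mathcal G_0$, $\mathcal {\tilde G}_0$ are central scalars by \eqref{eq:GG0}.

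Next I would treat the mixed identities \eqref{eq:3pp6}--\eqref{eq:3pp9}, which form a single $\langle \sigma, \dagger \rangle$-orbit, so only one need be done explicitly. Writing the left-hand side of \eqref{eq:3pp6} as $\sum_{k,\ell} \lbrack \mathcal W_{-k}, \mathcal G_\ell \rbrack\, s^{k+1} t^\ell + \sum_{k,\ell} \lbrack \mathcal G_k, \mathcal W_{-\ell} \rbrack\, s^k t^{\ell+1}$, the coefficient of $s^{k+1} t^{\ell+1}$ is exactly the left-hand side of \eqref{eq:3p6}; the weight factors $s$ and $t$ inserted in \eqref{eq:3pp6} are precisely what align the two double sums. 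The boundary monomials $s^a t^0$ and $s^0 t^b$ involve only $\mathcal G_0$ and so vanish. The remaining members of the family follow by applying $\sigma$ and $\dagger$.

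The step requiring the most care is the family \eqref{eq:3pp1}--\eqref{eq:3pp3}, where the scalars $\mathcal G_0$, $\mathcal {\tilde G}_0$ play an active role rather than merely vanishing. For \eqref{eq:3pp1} the factor $t^{-1}$ is legitimate only because the constant terms of $\mathcal G(t)$ and $\mathcal {\tilde G}(t)$ coincide, i.e. $\mathcal G_0 = \mathcal {\tilde G}_0$, so $\mathcal {\tilde G}(t) - \mathcal G(t)$ is divisible by $t$; after this cancellation the coefficient of $t^n$ is \eqref{eq:3p1}. For \eqref{eq:3pp2}, \eqref{eq:3pp3} the coefficient of $t^n$ with $n \geq 1$ reproduces \eqref{eq:3p2}, \eqref{eq:3p3}, but the constant term must be examined separately: since $\mathcal G_0$ is a scalar, $\lbrack \mathcal W_0, \mathcal G_0 \rbrack_q = (q - q^{-1}) \mathcal G_0 \mathcal W_0$, and this must equal the constant term $\rho \mathcal W_0$ of the right-hand side. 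I expect this numerical coincidence to be the one genuinely non-bookkeeping point, and it is forced by \eqref{eq:GG0}: using $\lbrack 2 \rbrack_q = q + q^{-1}$ one finds $(q - q^{-1}) \mathcal G_0 = -(q - q^{-1})^2 (q + q^{-1})^2 = -(q^2 - q^{-2})^2 = \rho$, so the definition \eqref{eq:GG0} is calibrated exactly so that the constant terms match. Once this is verified, the remaining equalities in \eqref{eq:3pp2}, \eqref{eq:3pp3} follow from the first by $\sigma$ and $\dagger$, and every other contribution is routine coefficient extraction.
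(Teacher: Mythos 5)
Your proposal is correct and is essentially the paper's own proof, which consists of the single instruction ``Use \eqref{eq:3p1}--\eqref{eq:3p11} and Definition \ref{def:gf4}'' — i.e.\ exactly the coefficient-extraction argument you carry out, with the $\sigma$, $\dagger$ symmetries used as the paper intends. Your expanded treatment of the boundary monomials — that $\mathcal G_0 = \mathcal {\tilde G}_0$ makes $\mathcal {\tilde G}(t)-\mathcal G(t)$ divisible by $t$, and that $(q-q^{-1})\mathcal G_0 = -(q^2-q^{-2})^2 = \rho$ calibrates the constant terms of \eqref{eq:3pp2}, \eqref{eq:3pp3} — is exactly the bookkeeping the paper leaves implicit, and all of these checks are accurate.
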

\begin{proof} Use \eqref{eq:3p1}--\eqref{eq:3p11} and Definition
\ref{def:gf4}.
\end{proof}

\noindent We just displayed some relations involving the generating functions \eqref{eq:gf12}, \eqref{eq:gf34}. There are more relations of interest. We will show that for  $\mathcal A_q$,
\begin{align}
&\lbrack \mathcal W^-(s), \mathcal G(t) \rbrack_q -
\lbrack \mathcal W^-(t), \mathcal G(s) \rbrack_q =
s\lbrack \mathcal W^+(s), \mathcal G(t) \rbrack_q -
t\lbrack \mathcal W^+(t), \mathcal G(s) \rbrack_q,
\label{eq:3p12}
\\
&\lbrack \mathcal G(s), \mathcal W^+(t) \rbrack_q -
\lbrack \mathcal G(t), \mathcal W^+(s) \rbrack_q =
t\lbrack \mathcal G(s), \mathcal W^-(t) \rbrack_q -
s\lbrack \mathcal G(t), \mathcal W^-(s) \rbrack_q,
\label{eq:3p13}
\\
&\lbrack \mathcal {\tilde G}(s), \mathcal W^-(t) \rbrack_q -
\lbrack \mathcal {\tilde G}(t), \mathcal W^-(s) \rbrack_q =
t\lbrack \mathcal {\tilde G}(s), \mathcal W^+(t) \rbrack_q -
s\lbrack \mathcal {\tilde G}(t), \mathcal W^+(s) \rbrack_q,
\label{eq:3p14}
\\
&\lbrack \mathcal W^+(s), \mathcal {\tilde G}(t) \rbrack_q -
\lbrack \mathcal W^+(t), \mathcal {\tilde G}(s) \rbrack_q =
s\lbrack \mathcal W^-(s), \mathcal {\tilde G}(t) \rbrack_q -
t\lbrack \mathcal W^-(t), \mathcal {\tilde G}(s) \rbrack_q,
\label{eq:3p15}
\\
&\frac{t^{-1} \lbrack \mathcal G(s), \mathcal {\tilde G}(t) \rbrack - s^{-1} \lbrack \mathcal G(t),\mathcal {\tilde G}(s) \rbrack}{\rho (q+q^{-1})}
= \lbrack \mathcal W^-(t), \mathcal W^+(s)\rbrack_q -
 \lbrack \mathcal W^-(s), \mathcal W^+(t)\rbrack_q \nonumber
 \\
&+st \lbrack \mathcal W^+(t), \mathcal W^-(s)\rbrack_q
-st \lbrack \mathcal W^+(s), \mathcal W^-(t)\rbrack_q + (q-q^{-1})(s-t) \mathcal W^+(s) \mathcal W^+(t) \nonumber
\\ &
-(q-q^{-1})(s-t) \mathcal W^-(s) \mathcal W^-(t),
\label{eq:3p16}
\\
&\frac{t^{-1} \lbrack \mathcal {\tilde G}(s), \mathcal  G(t) \rbrack - s^{-1} \lbrack \mathcal {\tilde G}(t),\mathcal G(s) \rbrack}{\rho (q+q^{-1})}
= \lbrack \mathcal W^+(t), \mathcal W^-(s)\rbrack_q -
 \lbrack \mathcal W^+(s), \mathcal W^-(t)\rbrack_q \nonumber
 \\
&+st \lbrack \mathcal W^-(t), \mathcal W^+(s)\rbrack_q
-st \lbrack \mathcal W^-(s), \mathcal W^+(t)\rbrack_q + (q-q^{-1})(s-t) \mathcal W^-(s) \mathcal W^-(t) \nonumber
\\ &
-(q-q^{-1})(s-t) \mathcal W^+(s) \mathcal W^+(t),
\label{eq:3p17}
\\
&\lbrack \mathcal G(s), \mathcal {\tilde G}(t)\rbrack_q -
\lbrack \mathcal G(t), \mathcal {\tilde G}(s)\rbrack_q = (q+q^{-1})\rho t \lbrack \mathcal W^-(t), \mathcal W^+(s)\rbrack \nonumber
\\
& \qquad \qquad \qquad \qquad \qquad \qquad \qquad \qquad \quad 
-(q+q^{-1}) \rho s \lbrack \mathcal W^-(s), \mathcal W^+(t)\rbrack,
\label{eq:3p18}
\\
&
\lbrack \mathcal {\tilde G}(s), \mathcal  G(t)\rbrack_q -
\lbrack \mathcal {\tilde G}(t), \mathcal G(s)\rbrack_q = (q+q^{-1})\rho t \lbrack \mathcal W^+(t), \mathcal W^-(s)\rbrack \nonumber
\\
& \qquad \qquad \qquad \qquad \qquad \qquad \qquad \qquad \quad 
-(q+q^{-1}) \rho s \lbrack \mathcal W^+(s), \mathcal W^-(t)\rbrack.
\label{eq:3p19}
\end{align}

\noindent In Appendix A we display many equations  involving the above expressions. We will invoke these equations as we go along.
  \medskip
  
  \noindent The following result is a variation on \cite[Proposition~3.1]{basnc}.

  \begin{proposition} \label{eq:p12top19}
In the algebra $\mathcal A_q$ the relations {\rm \eqref{eq:3p12}--\eqref{eq:3p19}} hold.
\end{proposition}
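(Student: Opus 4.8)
The plan is to first exploit the symmetries of Lemma~\ref{lem:aaut} to cut the eight identities down to a few representatives. A direct check shows that the automorphism $\sigma$ interchanges \eqref{eq:3p12} with \eqref{eq:3p15}, \eqref{eq:3p13} with \eqref{eq:3p14}, \eqref{eq:3p16} with \eqref{eq:3p17}, and \eqref{eq:3p18} with \eqref{eq:3p19}, while the antiautomorphism $\dagger$ sends \eqref{eq:3p12} to \eqref{eq:3p14} once the indeterminates are swapped, thereby linking the two $\sigma$-pairs inside \eqref{eq:3p12}--\eqref{eq:3p15}. Thus the relations fall into three orbits, with representatives \eqref{eq:3p12}, \eqref{eq:3p16}, and \eqref{eq:3p18}; the remaining five then follow by applying $\sigma$, $\dagger$, and $\sigma\dagger$, using that these maps are (anti)automorphisms and that a relation holds in $\mathcal A_q$ if and only if its image does.

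For \eqref{eq:3p12} I would first collect the $\mathcal W^+$ terms on each side, so that by bilinearity of the $q$-commutator in its first slot the identity becomes
\[
[\mathcal W^-(s)-s\,\mathcal W^+(s),\,\mathcal G(t)]_q=[\mathcal W^-(t)-t\,\mathcal W^+(t),\,\mathcal G(s)]_q.
\]
By \eqref{eq:3pp2} one has $\mathcal W^-(u)-u\,\mathcal W^+(u)=\rho^{-1}[\mathcal W_0,\mathcal G(u)]_q$, so the claim is equivalent to the symmetry
\[
[[\mathcal W_0,\mathcal G(s)]_q,\mathcal G(t)]_q=[[\mathcal W_0,\mathcal G(t)]_q,\mathcal G(s)]_q.
\]
This last identity is a formal consequence of $[\mathcal G(s),\mathcal G(t)]=0$ from \eqref{eq:3pp10}: writing $W=\mathcal W_0$, $G=\mathcal G(s)$, $H=\mathcal G(t)$ and expanding each iterated $q$-commutator with $GH=HG$, both sides equal $q^2WGH-GWH-HWG+q^{-2}GHW$. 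This settles the entire orbit \eqref{eq:3p12}--\eqref{eq:3p15} at once.

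The identities \eqref{eq:3p16}--\eqref{eq:3p19} are more delicate, since they couple the composite generating functions $\mathcal G,\tilde{\mathcal G}$ to the $\mathcal W^{\pm}$. My plan is to eliminate $\tilde{\mathcal G}$ using \eqref{eq:3pp1} in the form $\tilde{\mathcal G}(t)-\mathcal G(t)=t(q+q^{-1})[\mathcal W_0,\mathcal W^+(t)]$ and to substitute this into the left-hand sides of \eqref{eq:3p18} and \eqref{eq:3p16}. The pure $\mathcal G(s)\mathcal G(t)$ contributions are symmetric in $s,t$ by \eqref{eq:3pp10}, hence cancel in the antisymmetrised left-hand sides, leaving $q$-commutators of $\mathcal G$ against $[\mathcal W_0,\mathcal W^+]$. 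These I would reduce using the Jacobi identity together with \eqref{eq:3pp2}, \eqref{eq:3pp3}, \eqref{eq:3pp6}--\eqref{eq:3pp9}, and the $\mathcal W$-commutation relations \eqref{eq:3pp4}, \eqref{eq:3pp5}; for \eqref{eq:3p18} this reduction should terminate exactly at the commutators $[\mathcal W^-,\mathcal W^+]$ appearing on its right-hand side.

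The main obstacle is \eqref{eq:3p16} (equivalently \eqref{eq:3p17}). Its right-hand side, unlike those of the other relations, carries the quadratic terms $\mathcal W^{\pm}(s)\mathcal W^{\pm}(t)$, which cannot be organised into commutators. They appear because, once \eqref{eq:3pp5} forces $[\mathcal W^-(s),\mathcal W^+(t)]$ to be symmetric in $s,t$, the difference of $q$-commutators $[\mathcal W^-(t),\mathcal W^+(s)]_q-[\mathcal W^-(s),\mathcal W^+(t)]_q$ collapses to a combination of anticommutators, that is, to genuine products. Keeping careful track of these non-commutator remainders and matching them against the coefficients $(q-q^{-1})(s-t)$ in \eqref{eq:3p16} is the technical heart of the argument, which I would carry out through the auxiliary identities assembled in Appendix~A rather than in a single long expansion.
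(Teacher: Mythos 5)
Your symmetry reduction to the three representatives \eqref{eq:3p12}, \eqref{eq:3p16}, \eqref{eq:3p18} is sound, and your treatment of the first orbit is complete and correct: rewriting \eqref{eq:3p12} as $[\mathcal W^-(s)-s\mathcal W^+(s),\mathcal G(t)]_q=[\mathcal W^-(t)-t\mathcal W^+(t),\mathcal G(s)]_q$, substituting \eqref{eq:3pp2}, and invoking the formal identity $[[W,G]_q,H]_q=[[W,H]_q,G]_q$ (valid whenever $GH=HG$) does prove \eqref{eq:3p12}, and then \eqref{eq:3p13}--\eqref{eq:3p15} follow by applying $\sigma$ and $\dagger$. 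In fact your identity is the paper's Appendix A relation $[\mathcal W_0,H(s,t)]_{q^2}=\rho K(s,t)$ in disguise, since the difference of the two double $q$-commutators collapses to $[\mathcal W_0,[\mathcal G(s),\mathcal G(t)]]_{q^2}$; so for this orbit your argument is a clean, self-contained repackaging of the same mechanism the paper uses.

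The genuine gap is in \eqref{eq:3p16}--\eqref{eq:3p19}: you do not prove them, you only announce a plan and explicitly defer its ``technical heart.'' Moreover, the plan as described does not go through without new ideas. After eliminating $\mathcal {\tilde G}$ via \eqref{eq:3pp1}, you must evaluate expressions such as $[\mathcal G(s),[\mathcal W_0,\mathcal W^+(t)]]_q$; the Jacobi identity does not apply to $q$-commutators, the ordinary commutators $[\mathcal G(s),\mathcal W_0]$ it would require are not among the given relations (only $q$-commutators are), and decomposing $[X,Y]_q=(q-q^{-1})XY+q^{-1}[X,Y]$ re-introduces exactly the products $\mathcal G(s)\mathcal {\tilde G}(t)$ you were trying to remove. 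Indeed, modulo \eqref{eq:3pp11}, relation \eqref{eq:3p18} is \emph{equivalent} to the product identity $(q-q^{-1})\bigl(\mathcal G(s)\mathcal {\tilde G}(t)-\mathcal G(t)\mathcal {\tilde G}(s)\bigr)=(q+q^{-1})\rho\bigl( t[\mathcal W^-(t),\mathcal W^+(s)]-s[\mathcal W^-(s),\mathcal W^+(t)]\bigr)$, so your reduction has not decreased the difficulty: what remains \emph{is} the content of the relation, and your assertion that the reduction ``should terminate'' at the right-hand side of \eqref{eq:3p18} is unsubstantiated. What closes these cases in the paper is a specific derivation chain built on identities verified by direct computation in $\mathcal A^\vee_q$: $R(s,t)=0$ (i.e.\ \eqref{eq:3p18}) follows by $q$-commutating the defining relation $D(s,t)=0$ (i.e.\ \eqref{eq:3pp6}) with $\mathcal W_1$, $S(s,t)=0$ by commutating $F(s,t)=0$ with $\mathcal W_0$, and $P(s,t)=0$, $Q(s,t)=0$ (i.e.\ \eqref{eq:3p16}, \eqref{eq:3p17}) by $q$-commutating the already-established $L(s,t)=0$, $N(s,t)=0$ with $\mathcal W_0$. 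Until you exhibit and verify identities of this kind (or an equivalent explicit computation), your proposal proves only half of the proposition.
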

\begin{proof} We refer to the generating functions $A(s,t), B(s,t),\ldots ,S(s,t)$ from Appendix A.
By Lemma \ref{lem:ad} the functions
$A(s,t), B(s,t),\ldots, J(s,t)$ are all zero in $\mathcal A_q$. The present lemma asserts that $K(s,t), L(s,t),\ldots, S(s,t)$ are all zero in $\mathcal A_q$.
 To verify this assertion, we refer to the commutator relations in Appendix A.
 The commutator relation for $\lbrack \mathcal W_0, H(s,t)\rbrack_{q^2}$ implies $K(s,t)=0$ in $\mathcal A_q$.
 Then the commutator relation for $\lbrack H(s,t), \mathcal W_1\rbrack_{q^2}$ implies $L(s,t)=0$ in $\mathcal A_q$.
 Then the commutator relation for $\lbrack I(s,t), \mathcal W_0\rbrack_{q^2}$ implies $M(s,t)=0$ in $\mathcal A_q$.
 Then the commutator relation for $\lbrack \mathcal W_1, I(s,t) \rbrack_{q^2}$ implies $N(s,t)=0$ in $\mathcal A_q$.
 Then the commutator relation for $\lbrack \mathcal W_0, L(s,t)\rbrack_{q}$ implies $P(s,t)=0$ in $\mathcal A_q$.
 Then the commutator relation for $\lbrack N(s,t), \mathcal W_0\rbrack_{q}$ implies $Q(s,t)=0$ in $\mathcal A_q$.
 Then the commutator relation for $\lbrack D(s,t), \mathcal W_1\rbrack_{q}$ implies $R(s,t)=0$ in $\mathcal A_q$.
 Then the commutator relation for $\lbrack \mathcal W_0, F(s,t)\rbrack_{q}$ implies $S(s,t)=0$ in $\mathcal A_q$.
 We have shown that  $K(s,t), L(s,t),\ldots, S(s,t)$ are all zero in $\mathcal A_q$.
\end{proof}
\medskip

\section{Another presentation for $\mathcal A_q$}

\noindent In Definition \ref{def:Aq} we gave a presentation of $\mathcal A_q$  by generators and relations. 
We now give a second presentation of $\mathcal A_q$ by generators and relations. The relations will be expressed using the
generating functions \eqref{eq:gf12}, \eqref{eq:gf34}.
\begin{proposition}\label{lem:rr}
The algebra $\mathcal A_q$ is presented by its alternating generators and the following relations:
\begin{enumerate}
\item[\rm (i)] we have
\begin{align*}
&\lbrack \mathcal G(s), \mathcal G(t)\rbrack=0, \qquad \qquad \lbrack \mathcal W^-(s), \mathcal W^-(t)\rbrack=0,  
\\
&\lbrack \mathcal W^+(s), \mathcal W^+(t)\rbrack=0, \qquad \qquad 
\lbrack \mathcal {\tilde G}(s), \mathcal {\tilde G}(t)\rbrack=0;
\end{align*}
\item[\rm (ii)]  $\mathcal W^+(s) \mathcal W^{-}(t)$ is a weighted sum with the following terms and coefficients:
\bigskip

\centerline{
\begin{tabular}[t]{c|cc}
  {\rm term} & {\rm coefficient} & {\rm coeff. name}
   \\
\hline
$\mathcal W^{-} (t) \mathcal W^{+} (s)$ & $1$ & $1$
\\
$\mathcal G(t) \mathcal {\tilde G}(s)$ & $\frac{1}{(q^2-q^{-2})(q+q^{-1})^2 (s-t)} $ & $e_{s,t}$
\\
$\mathcal G(s) \mathcal {\tilde G}(t)$ & $\frac{-1}{(q^2-q^{-2})(q+q^{-1})^2 (s-t)} $ & $-e_{s,t}$
	       \end{tabular}}
	       
\item[\rm (iii)]  $\mathcal {\tilde G}(s) \mathcal G(t)$ is a weighted sum with the following terms and coefficients:
\bigskip

\centerline{
\begin{tabular}[t]{c|cc}
  {\rm term} & {\rm coefficient} & {\rm coeff. name}
   \\
\hline
$\mathcal G(t) \mathcal {\tilde G} (s)$ & $1$ & $1$
\\
$\mathcal W^{-}(s) \mathcal W^{+}(t)$ & $\frac{(q^2-q^{-2})^3 s t (st -1)}{s-t}$ & $f_{s,t}$
\\
$\mathcal W^{-}(t) \mathcal W^{+}(s)$ & $-\frac{(q^2-q^{-2})^3 s t (st -1)}{s-t}$ & $-f_{s,t}$
\\
$\mathcal W^{+}(s)  \mathcal W^{+}(t)$ & $(q^2-q^{-2})^3 s t $ & $F_{s,t}$
\\
$\mathcal W^{-}(s)  \mathcal W^{-}(t)$ & $-(q^2-q^{-2})^3 s t $ & $-F_{s,t}$
\end{tabular}}

\item[\rm (iv)]  $\mathcal W^+(t) \mathcal G(s)$ is a weighted sum with the following terms and coefficients:
\bigskip

\centerline{
\begin{tabular}[t]{c|cc}
  {\rm term} & {\rm coefficient} & {\rm coeff. name}
   \\
\hline
$\mathcal G(t) \mathcal W^{-}(s)$ & $ \frac{q s^2(q-q^{-1})}{s-t} $ & $a'_{s,t}$
\\
$\mathcal G(s) \mathcal W^{-}(t)$ & $-\frac{q st (q-q^{-1})}{s-t} $ & $A'_{s,t}$
\\
$\mathcal G(t) \mathcal W^{+}(s)$ & $-\frac{q s(q-q^{-1})}{s-t} $ & $A_{s,t}$
\\
$\mathcal G(s) \mathcal W^{+}(t)$ & $\frac{q(qs-q^{-1}t)}{s-t} $ & $a_{s,t}$
	       \end{tabular}}
	       
\item[\rm (v)]  $\mathcal W^-(t) \mathcal G(s)$ is a weighted sum with the following terms and coefficients:
\bigskip

\centerline{
\begin{tabular}[t]{c|cc}
  {\rm term} & {\rm coefficient} & {\rm coeff. name}
   \\
\hline
$\mathcal G(t) \mathcal W^{-}(s)$ & $ \frac{q^{-1}s(q-q^{-1})}{s-t} $ & $B_{s,t}$
\\
$\mathcal G(s) \mathcal W^{-}(t)$ & $\frac{q^{-1}(q^{-1}s-qt)}{s-t} $ & $b_{s,t}$
\\
$\mathcal G(t) \mathcal W^{+}(s)$ & $-\frac{q^{-1} s^2(q-q^{-1})}{s-t} $ & $b'_{s,t}$
\\
$\mathcal G(s) \mathcal W^{+}(t)$ & $\frac{q^{-1}st (q-q^{-1})}{s-t} $ & $B'_{s,t}$
	       \end{tabular}}

\item[\rm (vi)]  $\mathcal {\tilde G}(s)\mathcal W^+(t)$ is a weighted sum with the following terms and coefficients:
\bigskip

\centerline{
\begin{tabular}[t]{c|cc}
  {\rm term} & {\rm coefficient} & {\rm coeff. name}
   \\
\hline
$\mathcal W^-(s) \mathcal {\tilde G}(t) $ & $ \frac{q s^2(q-q^{-1})}{s-t} $ & $a'_{s,t}$
\\
$\mathcal W^-(t) \mathcal {\tilde G}(s) $ & $-\frac{q st (q-q^{-1})}{s-t} $ & $A'_{s,t}$
\\
$\mathcal W^+(s) \mathcal {\tilde G}(t) $ & $-\frac{q s(q-q^{-1})}{s-t} $ & $A_{s,t}$
\\
$\mathcal W^+(t) \mathcal {\tilde G}(s)$ & $\frac{q(qs-q^{-1}t)}{s-t} $ & $a_{s,t}$
	       \end{tabular}}
	       
\item[\rm (vii)]  $ \mathcal {\tilde G}(s)\mathcal W^-(t)$ is a weighted sum with the following terms and coefficients:
\bigskip

\centerline{
\begin{tabular}[t]{c|cc}
  {\rm term} & {\rm coefficient} & {\rm coeff. name}
   \\
\hline
$\mathcal W^-(s)\mathcal {\tilde G}(t) $ & $ \frac{q^{-1}s(q-q^{-1})}{s-t} $ & $B_{s,t}$
\\
$\mathcal W^-(t) \mathcal {\tilde G}(s) $ & $\frac{q^{-1}(q^{-1}s-qt)}{s-t} $ & $b_{s,t}$
\\
$\mathcal W^+(s) \mathcal {\tilde G}(t) $ & $-\frac{q^{-1} s^2(q-q^{-1})}{s-t} $ & $b'_{s,t}$
\\
$\mathcal W^+(t) \mathcal {\tilde G}(s) $ & $\frac{q^{-1}st (q-q^{-1})}{s-t} $ & $B'_{s,t}$
	       \end{tabular}}

	       \end{enumerate}
\end{proposition}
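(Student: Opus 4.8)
The plan is to prove that the alternating generators satisfy the relations (i)--(vii) and that, conversely, the defining relations \eqref{eq:3p1}--\eqref{eq:3p11} follow from (i)--(vii); equivalently, that the two families generate the same two-sided ideal in the free algebra on the generators \eqref{eq:4gens}. Writing $I$ for the ideal generated by \eqref{eq:3p1}--\eqref{eq:3p11} and $J$ for the ideal generated by (i)--(vii), I would establish the two inclusions $J \subseteq I$ and $I \subseteq J$ separately.

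For $J \subseteq I$ I must check that (i)--(vii) hold in $\mathcal A_q$. Part (i) is immediate, since its four identities are literally \eqref{eq:3pp4} and \eqref{eq:3pp10} of Lemma \ref{lem:ad}. For the remaining tables I would start from the complete list of generating-function relations, namely \eqref{eq:3pp1}--\eqref{eq:3pp11} (Lemma \ref{lem:ad}) together with \eqref{eq:3p12}--\eqref{eq:3p19} (Proposition \ref{eq:p12top19}). These split naturally into two blocks matching the types of products: tables (ii), (iii), which interchange $\mathcal W^{\pm}\mathcal W^{\pm}$ products with $\mathcal G\,\mathcal {\tilde G}$ products, are obtained from \eqref{eq:3p16}--\eqref{eq:3p19} together with the symmetric relations \eqref{eq:3pp5} and \eqref{eq:3pp11}; the mixed tables (iv)--(vii) are obtained from the symmetric relations \eqref{eq:3pp6}--\eqref{eq:3pp9} together with the $q$-commutator relations \eqref{eq:3p12}--\eqref{eq:3p15}. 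In each case one isolates the single out-of-order product by solving the resulting linear system over the field $\mathbb F(s,t)$ of rational functions, and then checks that the coefficients collapse to the compact entries $e_{s,t}, f_{s,t}, a_{s,t}, \dots$ recorded in the tables. This coefficient bookkeeping is the bulk of the labor, and I expect the identities of Appendix A to be arranged precisely so as to make these collapses mechanical.

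For $I \subseteq J$ I must recover \eqref{eq:3p1}--\eqref{eq:3p11}, equivalently \eqref{eq:3pp1}--\eqref{eq:3pp11}, from (i)--(vii). Again \eqref{eq:3pp4} and \eqref{eq:3pp10} are part of (i), and the symmetric relations \eqref{eq:3pp5}--\eqref{eq:3pp9}, \eqref{eq:3pp11} are recovered by symmetrizing the tables, i.e. combining a table with its image under $s \leftrightarrow t$ and using that the rational coefficients are odd under this interchange (for instance $e_{t,s}=-e_{s,t}$). The apparent obstacle is the degree-lowering relations \eqref{eq:3pp1}--\eqref{eq:3pp3}: these equate a degree-two product with a degree-one expression, whereas every entry of (ii)--(vii) is degree two in the generators, so at first sight the tables cannot encode them. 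The resolution --- and the key point of the argument --- is that $\mathcal G_0$ and $\mathcal {\tilde G}_0$ are the scalars fixed in \eqref{eq:GG0}. Clearing the denominator $s-t$ in a table and then extracting the boundary coefficient of $t^0$ or $s^0$ forces one $\mathcal G$- or $\mathcal {\tilde G}$-factor down to $\mathcal G_0$ or $\mathcal {\tilde G}_0$, collapsing a degree to a scalar. Carrying this out on (ii) and using $\mathcal G_0=\mathcal {\tilde G}_0=-(q-q^{-1})\lbrack 2\rbrack_q^2$ reproduces exactly \eqref{eq:3pp1}; the same boundary extraction applied to (v) and (vii) (resp. (iv) and (vi)), together with the simplification $(q-q^{-1})\mathcal G_0 = \rho$, reproduces \eqref{eq:3pp2} (resp. \eqref{eq:3pp3}). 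This gives $I \subseteq J$.

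The main obstacle is thus twofold. The laborious part is the forward direction $J\subseteq I$, where the seven tables must be produced with their exact rational-function coefficients; the conceptual part is the backward direction, where one must see that the degree-lowering relations \eqref{eq:3pp1}--\eqref{eq:3pp3} are hidden inside the degree-preserving tables and are liberated only by the scalar values \eqref{eq:GG0}. A final check, easy but necessary, is that the order \eqref{eq:order} leaves exactly the seven out-of-order products that are rewritten by (ii)--(vii), so that no product is omitted.
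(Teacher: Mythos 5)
Your proposal is correct and follows essentially the same route as the paper: the forward direction derives the tables (i)--(vii) from the relations of Lemma \ref{lem:ad} and Proposition \ref{eq:p12top19} (the paper packages this as explicit vanishing combinations of the Appendix A generating functions $A(s,t),\ldots,S(s,t)$, which is exactly your ``linear system'' step), and the backward direction recovers \eqref{eq:3pp1}--\eqref{eq:3pp3} by the specialization $s=0$ or $t=0$, where the scalars \eqref{eq:GG0} do precisely the degree-lowering you identify, with the remaining symmetric relations obtained by rewriting terms into the order \eqref{eq:order} (your $s\leftrightarrow t$ symmetrization is the same omitted calculation). No gap; the differences are purely organizational.
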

\begin{proof} We first show that the above relations (i)--(vii) hold in $\mathcal A_q$. The relations (i)
are from \eqref{eq:3pp4}, \eqref{eq:3pp10}.
 As we discuss the relations (ii)--(vii), we will 
 use the fact that the generating functions $A(s,t), B(s,t), \ldots, S(s,t)$ from Appendix A  are all zero in $\mathcal A_q$.
\\
\noindent (ii) $\mathcal W^+(s) \mathcal W^-(t)$ minus the given weighted sum is equal to
\begin{align*}
\frac{(q+q^{-1}) \rho s C(s,t)+ q^{-1} J(s,t)-R(s,t)}{(q+q^{-1})\rho (s-t)}.
\end{align*}
\noindent 
 (iii) $\mathcal {\tilde G}(s) \mathcal G(t)$ minus the given weighted sum is equal to
\begin{align*}
&\frac{(q+q^{-1})\rho st(qst+q^{-1})C(s,t)+ s J(s,t)-(q+q^{-1})\rho stP(s,t)}{s-t}.
\end{align*}
(iv) $\mathcal W^+(t) \mathcal G(s)$ minus the given weighted sum is equal to
\begin{align*}
\frac{F(s,t)-sD(s,t)-qsL(s,t)}{s-t}.
\end{align*}
(v) $\mathcal W^-(t) \mathcal G(s)$ minus the given weighted sum is equal to
\begin{align*}
\frac{D(s,t)-sF(s,t)-q^{-1}sK(s,t)}{s-t}.
\end{align*}
(vi) $\mathcal {\tilde G}(s)\mathcal W^+(t)$ minus the given weighted sum is equal to
\begin{align*}
\frac{s E(s,t)-G(s,t)+qsN(s,t)}{s-t}.
\end{align*}
(vii) $ \mathcal {\tilde G}(s) \mathcal W^-(t)$ minus the given weighted sum is equal to
\begin{align*}
\frac{sG(s,t)-E(s,t)+q^{-1}sM(s,t)}{s-t}.
\end{align*}
\noindent We have shown that the relations (i)--(vii) hold in $\mathcal A_q$. Next we show that the relations (i)--(vii) imply the defining relations \eqref{eq:3p1}--\eqref{eq:3p11} for $\mathcal A_q$.
It suffices to show that the relations (i)--(vii) imply the relations \eqref{eq:3pp1}--\eqref{eq:3pp11}. The relations  \eqref{eq:3pp1} are obtained from the relation (ii) by setting $s=0$ or $t=0$.
The relations \eqref{eq:3pp2} are obtained from the relations (v), (vii) by setting $t=0$. 
The relations  \eqref{eq:3pp3} are obtained from the relations (iv), (vi) by setting $t=0$. 
The relations  \eqref{eq:3pp4}, \eqref{eq:3pp10} are the relations (i).
Concerning the remaining relations  in Lemma \ref{lem:ad}, pick any one of them and let $\Delta$ denote the left-hand side minus the right-hand side.
To obtain $\Delta=0$ from  the relations (i)--(vii), use these relations to put the terms of $\Delta$ in the order \eqref{eq:order}. This brief calculation is omitted.
\end{proof}

\section{The reduction rules for $\mathcal A_q$}
\noindent In Proposition \ref{lem:rr} we gave a presentation of $\mathcal A_q$ by generators and relations. In this presentation we expressed the relations using generating functions.
Next we show how the presentation looks if generating functions are not used.
\begin{proposition}\label{lem:rra}
The algebra $\mathcal A_q$ is presented by its alternating generators and the following relations:
\begin{enumerate}
\item[\rm (i)] for $i,j\in \mathbb N$ such that $i>j$,
\begin{align*}
&\mathcal G_{i+1} \mathcal G_{j+1} = \mathcal G_{j+1} \mathcal G_{i+1}, \qquad \qquad
\mathcal W_{-i} \mathcal W_{-j} = \mathcal W_{-j} \mathcal W_{-i}, 
\\
& \mathcal W_{i+1} \mathcal W_{j+1} = \mathcal W_{j+1} \mathcal W_{i+1}, 
\qquad \qquad 
\mathcal {\tilde G}_{i+1} \mathcal {\tilde G}_{j+1} = \mathcal {\tilde G}_{j+1} \mathcal {\tilde G}_{i+1};
\end{align*}
\item[\rm (ii)] for $i,j\in \mathbb N$,
\begin{align}
  \mathcal W_{i+1}\mathcal W_{-j} &= \mathcal W_{-j} \mathcal W_{i+1}
  + \sum_{\ell =0}^{{\rm min}(i,j)} \frac{\mathcal G_\ell \mathcal {\tilde G}_{i+j+1-\ell} - \mathcal G_{i+j+1-\ell} \mathcal {\tilde G}_{\ell}}{(q^2-q^{-2})(q+q^{-1})^2};
  \label{eq:rr2}
  \end{align}
\item[\rm (iii)] for $i,j\in \mathbb N$,
\begin{align*}
\mathcal {\tilde G}_{i+1} \mathcal G_{j+1} &= \mathcal G_{j+1} \mathcal {\tilde G}_{i+1} - (q^2-q^{-2})^3 \mathcal W_{-i} \mathcal W_{-j} +(q^2-q^{-2})^3 \mathcal W_{i+1} \mathcal W_{j+1}
\\
&\quad+ (q^2-q^{-2})^3 \sum_{\ell =0}^{{\rm min}(i,j)} \bigl(
\mathcal W_{-\ell} \mathcal W_{i+j+2-\ell} -\mathcal W_{\ell-1-i-j} \mathcal W_{\ell+1} \bigr)
\\
&\quad -(q^2-q^{-2})^3 \sum_{\ell =1}^{{\rm min}(i,j)} \bigl(
\mathcal W_{1-\ell} \mathcal W_{i+j+1-\ell} -\mathcal W_{\ell-i-j} \mathcal W_{\ell} \bigr);
\end{align*}
\item[\rm (iv)] for $i,j\in \mathbb N$,
\begin{align*}
\mathcal W_{i+1} \mathcal G_{j+1} &= \mathcal G_{j+1} \mathcal W_{i+1} +q(q-q^{-1}) \sum_{\ell=0}^{{\rm min}(i,j)}  \mathcal G_{\ell} \mathcal W_{\ell-i-j}
\\
&\quad+ q(q-q^{-1}) \sum_{\ell =0}^{{\rm min}(i,j)} \bigl(
\mathcal G_{i+j+1-\ell} \mathcal W_{\ell+1} -\mathcal G_{\ell} \mathcal W_{i+j+2-\ell} \bigr)
\\
&\quad -q(q-q^{-1}) \sum_{\ell =1}^{{\rm min}(i,j)} 
\mathcal G_{i+j+1-\ell} \mathcal W_{1-\ell} ;
\end{align*}
\item[\rm (v)]  for $i,j\in \mathbb N$,
\begin{align*}
\mathcal W_{-i} \mathcal G_{j+1} &= \mathcal G_{j+1} \mathcal W_{-i} -q^{-1}(q-q^{-1}) \sum_{\ell=0}^{{\rm min}(i,j)}  \mathcal G_{\ell} \mathcal W_{i+j+1-\ell}
\\
&\quad+ q^{-1}(q-q^{-1}) \sum_{\ell =0}^{{\rm min}(i,j)} \bigl(
\mathcal G_{\ell} \mathcal W_{\ell-1-i-j} -\mathcal G_{i+j+1-\ell} \mathcal W_{-\ell} \bigr)
\\
&\quad +q^{-1}(q-q^{-1}) \sum_{\ell =1}^{{\rm min}(i,j)} 
\mathcal G_{i+j+1-\ell} \mathcal W_{\ell} ;
\end{align*}

\item[\rm (vi)] for $i,j\in \mathbb N$,
\begin{align*}
\mathcal {\tilde G}_{i+1} \mathcal W_{j+1} &= \mathcal W_{j+1} \mathcal {\tilde G}_{i+1} +q(q-q^{-1}) \sum_{\ell=0}^{{\rm min}(i,j)}  \mathcal W_{\ell-i-j} \mathcal {\tilde G}_{\ell}
\\
&\quad+ q(q-q^{-1}) \sum_{\ell =0}^{{\rm min}(i,j)} \bigl(
\mathcal W_{\ell+1} \mathcal {\tilde G}_{i+j+1-\ell} -\mathcal W_{i+j+2-\ell} \mathcal {\tilde G}_{\ell} \bigr)
\\
&\quad -q(q-q^{-1}) \sum_{\ell =1}^{{\rm min}(i,j)} 
\mathcal W_{1-\ell} \mathcal {\tilde G}_{i+j+1-\ell} ;
\end{align*}
\item[\rm (vii)]  	  for $i,j\in \mathbb N$,
\begin{align*}
\mathcal {\tilde G}_{i+1} \mathcal W_{-j} &= \mathcal W_{-j} \mathcal {\tilde G}_{i+1} -q^{-1}(q-q^{-1}) \sum_{\ell=0}^{{\rm min}(i,j)}  \mathcal W_{i+j+1-\ell} \mathcal {\tilde G}_{\ell}
\\
&\quad+ q^{-1}(q-q^{-1}) \sum_{\ell =0}^{{\rm min}(i,j)} \bigl(
\mathcal W_{\ell-1-i-j} \mathcal {\tilde G}_{\ell} -\mathcal W_{-\ell} \mathcal {\tilde G}_{i+j+1-\ell} \bigr)
\\
&\quad +q^{-1}(q-q^{-1}) \sum_{\ell =1}^{{\rm min}(i,j)} 
\mathcal W_{\ell} \mathcal {\tilde G}_{i+j+1-\ell}.
\end{align*}
	       \end{enumerate}
\end{proposition}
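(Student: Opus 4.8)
The plan is to derive this presentation from the generating-function presentation already established in Proposition~\ref{lem:rr}. Both propositions present $\mathcal{A}_q$ by the same alternating generators, so, granting Proposition~\ref{lem:rr}, it suffices to prove that in the free algebra on the alternating generators the two-sided ideal generated by the relations (i)--(vii) of Proposition~\ref{lem:rr} equals the two-sided ideal generated by the relations (i)--(vii) of the present statement. I would establish this by showing that each relation set is a consequence of the other.

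For the forward direction, I would first convert each generating-function relation of Proposition~\ref{lem:rr} into a genuine formal power series identity in the commuting indeterminates $s,t$ by multiplying through by $s-t$; this clears the rational coefficients $e_{s,t}, f_{s,t}, F_{s,t}, a_{s,t}, a'_{s,t}, A_{s,t}, A'_{s,t},\ldots$ and leaves an identity all of whose coefficients lie in $\mathcal{A}_q$. Substituting the expansions of Definition~\ref{def:gf4} and using that a formal power series vanishes iff every coefficient vanishes, I extract the coefficient of $s^i t^j$. The factor $s-t$ turns each extracted coefficient into a \emph{first difference}; for family (ii), writing $c=(q^2-q^{-2})(q+q^{-1})^2$, this produces the recursion $[\mathcal{W}_{i+1},\mathcal{W}_{-j}] - [\mathcal{W}_{i+2},\mathcal{W}_{-(j-1)}] = (\mathcal{G}_j\tilde{\mathcal{G}}_{i+1} - \mathcal{G}_{i+1}\tilde{\mathcal{G}}_j)/c$ together with boundary cases in which one index equals $0$. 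Telescoping this recursion along the antidiagonal $i+j=\mathrm{const}$ recovers the closed-form relation (ii); the upper limit $\min(i,j)$ and the appearance of the index-$0$ generators arise exactly at the boundary of the telescoping sum, where the scalars $\mathcal{G}_0,\tilde{\mathcal{G}}_0$ of \eqref{eq:GG0} enter (one checks, for instance, that the $j=0$ instance reproduces \eqref{eq:3p1}). Families (iii)--(vii) are treated identically, the only extra work being to record which generating function contributes each monomial and to keep the signs consistent with the coefficient names. Family (i) is immediate: extracting coefficients from $[\mathcal{G}(s),\mathcal{G}(t)]=0$ and its three companions gives $[\mathcal{G}_i,\mathcal{G}_j]=0$ for all $i,j$, which reduces to the stated form when $i>j$ (the case $i=j$ being trivial, $i<j$ redundant, and any index $0$ automatic since $\mathcal{G}_0,\tilde{\mathcal{G}}_0$ are scalars).

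For the reverse direction I would assemble the generating functions from the closed-form relations: multiplying the relation indexed $(i,j)$ by $s^i t^j$ and summing over $i,j\in\mathbb{N}$ should reproduce the corresponding generating-function relation of Proposition~\ref{lem:rr}. Here one uses that the relevant combinations, such as $\mathcal{G}(t)\tilde{\mathcal{G}}(s)-\mathcal{G}(s)\tilde{\mathcal{G}}(t)$, are antisymmetric in $s,t$ and hence divisible by $s-t$ as formal power series, so that the rational coefficients $e_{s,t},\ldots$ are meaningful; the finite inner sums $\sum_{\ell=0}^{\min(i,j)}$ are precisely the power-series coefficients produced by this division. This yields the opposite inclusion of ideals, and the two presentations therefore coincide.

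I expect the main obstacle to be organizational rather than conceptual: matching the coefficient of each monomial $s^i t^j$ on both sides simultaneously for all seven families, getting the $\mathcal{W}$-subscript shifts right (so that $\mathcal{W}^+(s)=\sum_n \mathcal{W}_{n+1}s^n$ contributes $\mathcal{W}_{i+1}$ to the coefficient of $s^i$, while $\mathcal{W}^-(t)$ contributes $\mathcal{W}_{-j}$ to the coefficient of $t^j$), and verifying that the boundary terms assemble into the displayed summation limits $\sum_{\ell=0}^{\min(i,j)}$ and $\sum_{\ell=1}^{\min(i,j)}$ exactly as written. To keep the bookkeeping manageable I would present family (ii) in full and then invoke the symmetries $\sigma$ and $\dagger$ of Lemma~\ref{lem:aaut} to cut down the remaining casework; in particular, since $\dagger$ is an antiautomorphism with $\mathcal{W}^+(t)\mapsto\mathcal{W}^+(t)$ and $\mathcal{G}(s)\mapsto\tilde{\mathcal{G}}(s)$, it carries the relation for $\mathcal{W}^+(t)\mathcal{G}(s)$ to the relation for $\tilde{\mathcal{G}}(s)\mathcal{W}^+(t)$, relating family (iv) to family (vi) and, likewise, family (v) to family (vii).
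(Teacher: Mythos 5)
Your proposal is correct, and its mathematical core coincides with the paper's proof: Proposition \ref{lem:rra} is deduced from Proposition \ref{lem:rr} by expanding each generating-function relation as a formal power series in $s,t$ and equating coefficients of $s^it^j$, the only substantive computation being that $\bigl(\mathcal G(t)\mathcal {\tilde G}(s)-\mathcal G(s)\mathcal {\tilde G}(t)\bigr)/(s-t)$ and its analogues are genuine power series whose coefficient of $s^it^j$ is the displayed sum $\sum_{\ell=0}^{\min(i,j)}(\cdots)$ --- this is exactly your ``reverse direction.'' The paper does this in one stroke: since a power-series identity is, by definition, the totality of its coefficient identities, the single expansion shows the two relation sets are the same family of elements of the free algebra, so no separate forward/backward ideal argument is needed; your forward direction (multiply by $s-t$, extract first-difference recursions such as $\lbrack \mathcal W_{i+1},\mathcal W_{-j}\rbrack-\lbrack \mathcal W_{i+2},\mathcal W_{-(j-1)}\rbrack=(\mathcal G_j\mathcal {\tilde G}_{i+1}-\mathcal G_{i+1}\mathcal {\tilde G}_j)/c$ with $c=(q^2-q^{-2})(q+q^{-1})^2$, then telescope) is valid but redundant once the reverse computation is in hand. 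Two points to watch if you do carry it out. First, telescoping toward the boundary $j=0$ produces the upper limit $j$, not $\min(i,j)$; to obtain $\min(i,j)$ you must either telescope toward the nearer of the two boundaries or invoke the cancellation coming from the sign change of the summand under $\ell\mapsto i+j+1-\ell$, which is the true source of the truncation --- not the appearance of the scalars $\mathcal G_0,\mathcal {\tilde G}_0$, as you suggest. Second, for families (iv)--(vii) the numerator combinations multiplying $1/(s-t)$ are not antisymmetric in $s,t$; the correct reason they are divisible by $s-t$ is that they vanish at $s=t$, a one-line check in each case (antisymmetry is the right reason only for families (ii) and (iii)). Your use of $\sigma$ and $\dagger$ to carry family (iv) to (vi) and (v) to (vii) is sound and genuinely cuts the casework, though the paper's proof does not need it.
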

\begin{proof} Pick any one of the relations (i)--(vii) in Proposition \ref{lem:rr}. Expand each side as a power series in $s,t$ and then equate coefficients
to obtain the corresponding relations in the present proposition.
We illustrate using relation (ii) in Proposition \ref{lem:rr}. This relation asserts that
\begin{align}
\label{eq:repeat}
\mathcal W^+(s) \mathcal W^-(t) = \mathcal W^-(t) \mathcal W^+(s) + \frac{\mathcal G (t) \mathcal {\tilde G}(s) - \mathcal G(s) \mathcal {\tilde G}(t)}{(q^2-q^{-2})(q+q^{-1})^2(s-t)}.
\end{align}
\noindent By \eqref{eq:gf12},
\begin{align}
&\mathcal W^+(s) \mathcal W^-(t) = \sum_{i,j\in \mathbb N} \mathcal W_{i+1} \mathcal W_{-j} s^i t^j, 
\qquad \quad 
\mathcal W^-(t) \mathcal W^+(s) = \sum_{i,j\in \mathbb N} \mathcal W_{-j} \mathcal W_{i+1} s^i t^j.
\label{eq:look1}
\end{align}
\noindent Using \eqref{eq:gf34},
\begin{align}
 \frac{\mathcal G (t) \mathcal {\tilde G}(s) - \mathcal G(s) \mathcal {\tilde G}(t)}{s-t} &= \sum_{n,m\in \mathbb N} \mathcal G_n \mathcal {\tilde G}_m \frac{s^m t^n-s^n t^m}{s-t} \nonumber
\\
& = \sum_{n<m} \bigl(\mathcal G_n \mathcal {\tilde G}_m -\mathcal G_m \mathcal {\tilde G}_n\bigr) \frac{s^m t^n-s^n t^m}{s-t} \nonumber
\\
& = \sum_{n<m} \bigl(\mathcal G_n \mathcal {\tilde G}_m -\mathcal G_m \mathcal {\tilde G}_n\bigr)\bigl(s^{m-1} t^n + s^{m-2} t^{n+1} + \cdots + s^n t^{m-1}\bigr) \nonumber
\\
&= \sum_{i,j\in \mathbb N} 
 \sum_{\ell=0}^{{\rm min}(i,j)} \bigl(\mathcal G_\ell \mathcal {\tilde G}_{i+j+1-\ell} - \mathcal G_{i+j+1-\ell} \mathcal {\tilde G}_\ell \bigr)s^i t^j.
 \label{eq:look2}
\end{align}
Equation \eqref{eq:rr2} is obtained from \eqref{eq:repeat} by examining the coefficient of $s^it^j$ in \eqref{eq:look1}, \eqref{eq:look2}.
\end{proof}

\begin{definition} \label{def:redrule}
\rm The relations (i)--(vii) in  Proposition \ref{lem:rra} are called the {\it reduction rules} for $\mathcal A_q$.
\end{definition}
\noindent 
The relations (i)--(vii) in Proposition \ref{lem:rr} express the reduction rules for $\mathcal A_q$
in terms of generating functions. Going forward, we will refer to these relations as follows.
\begin{definition} \label{def:GFredrule} \rm
The relations (i)--(vii) in Proposition \ref{lem:rr} are called the {\it GF reduction rules} for $\mathcal A_q$.
\end{definition}

\section{The alternating generators form a PBW basis for $\mathcal A_q$}

\noindent  We now state our first main result, which is a variation on  \cite[Conjecture~4.5]{z2z2z2}. Both the result and  \cite[Conjecture~4.5]{z2z2z2} are inspired by
\cite[Conjecture~1]{basBel}.
\begin{theorem}
\label{thm:rr}
A PBW basis for $\mathcal A_q$ is obtained by its alternating generators in any linear order $<$ such that
\begin{align*}
\mathcal G_{i+1} < \mathcal W_{-j} < \mathcal W_{k+1} < \mathcal {\tilde G}_{\ell+1}\qquad \qquad (i,j,k, \ell \in \mathbb N).
\end{align*}
\end{theorem}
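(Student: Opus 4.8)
My plan is to apply the Bergman diamond lemma to the reduction rules of Proposition~\ref{lem:rra}. By that proposition $\mathcal A_q$ is the quotient of the free algebra $\mathcal F$ on the alternating generators by the ideal generated by those rules, and each rule equates a product of two generators to a linear combination of other products. Reading every rule from left to right yields a rewriting system whose reducible factors are exactly the length-two products that violate the order \eqref{eq:order}: a same-type product with descending index (rule (i)), or one of the six cross-type descents $\mathcal W^+\mathcal W^-$, $\mathcal{\tilde G}\mathcal G$, $\mathcal W^+\mathcal G$, $\mathcal W^-\mathcal G$, $\mathcal{\tilde G}\mathcal W^+$, $\mathcal{\tilde G}\mathcal W^-$ (rules (ii)--(vii)). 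Hence the irreducible words are precisely the PBW monomials of Definition~\ref{def:pbw} for the order \eqref{eq:order} in which, within each type, a smaller index is smaller. By the diamond lemma it remains to supply a reduction-compatible well-order on monomials and to prove that every ambiguity is resolvable; the statement for an arbitrary admissible within-type order then follows because same-type generators mutually commute.

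First I would fix the order. Assign the weights $\deg\mathcal W_{-k}=\deg\mathcal W_{k+1}=2k+1$ and $\deg\mathcal G_{n}=\deg\mathcal{\tilde G}_{n}=2n$, and let $D$ be the induced total degree. A direct inspection shows that each rule is top-homogeneous: its right-hand side splits into a part of degree $D$, consisting of type-sorted leading terms together with the $\mathcal G\mathcal{\tilde G}$ or $\mathcal W\mathcal W$ terms demanded by (ii) and (iii), plus a part of strictly smaller degree. Thus no reduction raises $D$, and each fixed degree contains only finitely many monomials. On that finite set the degree-preserving reductions merely replace a type-descent by products that are type-sorted in their two factors, after which rule (i) sorts indices; I would verify that this sorting process is acyclic, so that the relation generated by all reductions is a multiplication-compatible partial order satisfying the descending chain condition and strictly lowered by every rule.

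Next I would treat the ambiguities. Because every reducible factor has length two there are no inclusion ambiguities, and the overlap ambiguities are exactly the length-three words $XYZ$ for which both $XY$ and $YZ$ are reducible, i.e.\ $\mathrm{type}(X)\ge\mathrm{type}(Y)\ge\mathrm{type}(Z)$ in the order $\mathcal G<\mathcal W^-<\mathcal W^+<\mathcal{\tilde G}$, with descending indices wherever consecutive types agree. For each such triple I would reduce $XYZ$ in its two ways, by first rewriting $XY$ and by first rewriting $YZ$, and check that the outcomes coincide in $\mathcal A_q$. To keep this feasible I would work with generating functions, using the GF reduction rules of Proposition~\ref{lem:rr} and the identities of Appendix~A, so that a whole index-parametrised family of ambiguities collapses to a single functional identity; the symmetries $\sigma$ and $\dagger$ of Lemmas~\ref{lem:aut}, \ref{lem:antiaut}, \ref{lem:aaut} relate distinct ambiguities to one another and cut the roughly twenty type-patterns down to a short list of representatives.

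The main obstacle is this confluence check for the cross-type overlaps, and above all those involving rule (iii), which expands $\mathcal{\tilde G}\mathcal G$ into four families of $\mathcal W\mathcal W$ terms. A triple such as $\mathcal{\tilde G}\,\mathcal{\tilde G}\,\mathcal G$ or $\mathcal{\tilde G}\,\mathcal G\,\mathcal G$ then produces a large collection of intermediate monomials that must be returned to PBW form and matched term by term; the generating-function bookkeeping and the symmetries are exactly what make this finite but lengthy computation tractable, and I would relegate its details to an appendix. Once all ambiguities are resolved, the diamond lemma shows that the PBW monomials form a basis of $\mathcal A_q$, which is the assertion of the theorem.
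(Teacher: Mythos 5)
Your overall strategy coincides with the paper's: apply the Bergman diamond lemma to the reduction rules of Proposition \ref{lem:rra}, note that there are no inclusion ambiguities and that the overlap ambiguities are the length-three words with two consecutive descents, and verify resolvability by generating-function computations organized by the symmetries of Lemmas \ref{lem:aut}, \ref{lem:antiaut} (this is exactly what the paper's Appendix C carries out). You also correctly identify the irreducible words and correctly reduce to the case of an index-increasing order within each type.

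The gap is in the hypothesis of the diamond lemma that you never actually establish: a semigroup partial order on words, satisfying the descending chain condition, under which every term on the right-hand side of each reduction rule is strictly smaller than the word on the left. Your degree $D$ is only non-increasing, not decreasing: the leading swap in each of rules (ii)--(vii) preserves $D$, rule (ii) converts a $\mathcal W^+\mathcal W^-$ pair into $\mathcal G\,\mathcal {\tilde G}$ pairs of the same degree, and rule (iii) converts $\mathcal {\tilde G}\mathcal G$ into $\mathcal W\mathcal W$ pairs, some of the same degree. So everything hinges on your claim that the degree-preserving moves are ``acyclic,'' which you defer (``I would verify''). That claim is the crux of the whole proof, and your local picture of it---replace a descent by type-sorted pairs, then let rule (i) sort indices---understates the problem: in a word of length at least $3$, the letters created by rules (ii)--(vii) form new descents with the neighboring letters (for instance, reducing the pair $\mathcal W_{i+1}\mathcal W_{-j}$ inside $\mathcal {\tilde G}_{a}\mathcal W_{i+1}\mathcal W_{-j}$ creates the new descents $\mathcal {\tilde G}_{a}\mathcal W_{-j}$ and $\mathcal {\tilde G}_{a}\mathcal G_{\ell}$), so the process is not a mere bubble sort and its termination requires a genuine argument. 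The paper supplies precisely this missing ingredient: an explicit weight function valued in the ordered abelian group $\mathbb Z\xi^2+\mathbb Z\xi+\mathbb Z$, with letter weights ${\rm wt}(\mathcal G_{k+1})=k+1$, ${\rm wt}(\mathcal W_{-k})=\xi^2+k$, ${\rm wt}(\mathcal W_{k+1})=\xi^2+\xi+k$, ${\rm wt}(\mathcal {\tilde G}_{k+1})=2\xi^2+k+1$, and with the weight of a word $a_1a_2\cdots a_n$ defined as the position-weighted sum $\sum_{i=1}^n(n-i+1)\,{\rm wt}(a_i)$; one checks that every descendant of a reducible pair is either shorter or of strictly smaller weight, so the domination relation generates a semigroup partial order with the descending chain condition. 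Without such a potential function (or some other proof of acyclicity) your appeal to the diamond lemma is not justified, even if all the confluence computations you plan for an appendix were carried out.
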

\noindent We will prove Theorem \ref{thm:rr} after a brief discussion.
\medskip

\noindent We have some
 comments about the linear order $<$ in the theorem statement.
By Proposition \ref{lem:rra}(i), we may assume without loss that the linear order $<$ satisfies
\begin{align*}
\mathcal G_{i+1} < \mathcal G_{j+1}, \qquad
\mathcal W_{-i} < \mathcal W_{-j}, \qquad
\mathcal W_{i+1} < \mathcal W_{j+1}, \qquad
\mathcal {\tilde G}_{i+1} < \mathcal {\tilde G}_{j+1}
\end{align*}
for all $i,j \in \mathbb N$ such that $i<j$. This assumption is in force for the rest of the paper.
\medskip

\noindent We discuss the notion of an $\mathcal A_q$-word.
By an {\it $\mathcal A_q$-letter} we mean an alternating generator for $\mathcal A_q$.
For $n \in \mathbb N$,  by an {\it $\mathcal A_q$-word of length $n$}
we mean a sequence of $\mathcal A_q$-letters
 $a_1a_2\cdots a_n$.
We interpret the 
$\mathcal A_q$-word
of length zero to be the multiplicative
identity $1 \in \mathcal A_q$. An
 $\mathcal A_q$-word
$a_1a_2\cdots a_n$
 is called {\it reducible} whenever there exists an integer $i$ $(2 \leq i \leq n)$ such that $a_{i-1}>a_i$.
An $\mathcal A_q$-word is called  {\it irreducible} whenever it is not reducible. An $\mathcal A_q$-word
$a_1 a_2 \cdots a_n$ is irreducible if and only if $a_1 \leq a_2 \leq  \cdots \leq  a_n$.
Theorem \ref{thm:rr} asserts  that the irreducible $\mathcal A_q$-words give a basis for the vector space $\mathcal A_q$.
\medskip

\noindent Let $\xi$ denote an indeterminate.
To each $\mathcal A_q$-letter we assign a weight, as follows. 
For $k\in \mathbb N$,
\medskip
 
\centerline{
\begin{tabular}[t]{c|cccc}
  $u$& $\mathcal G_{k+1}$ & $\mathcal W_{-k}$ &$\mathcal W_{k+1}$ & $\mathcal {\tilde G}_{k+1}$ 
   \\
\hline
${\rm wt}(u)$ & $k+1$ &$\xi^2+k$ &$\xi^2+\xi+k$ & $2\xi^2+k+1 $
\end{tabular}}
\bigskip

\noindent To each $\mathcal A_q$-word $w$ we assign a weight, as follows. Writing $w=a_1a_2\cdots a_n$,
\begin{align*}
{\rm wt}(w) = \sum_{i=1}^n (n-i+1) {\rm wt}(a_i).
\end{align*}
For example, $\mathcal W_{2} \mathcal W_{-1}  \mathcal G_3\mathcal W_4$ has weight $8\xi^2+5\xi+16$
and 
 $\mathcal W_{3} \mathcal {\tilde G}_4 \mathcal W_{-1}  \mathcal W_{-2}$ has weight $13\xi^2+4\xi+24$.
The weight of each $\mathcal A_q$-word has the form $a\xi^2+b\xi+c $ with $a,b,c \in \mathbb N$. 
\medskip

\noindent
The abelian group $\mathbb Z \xi^2+ \mathbb Z\xi+\mathbb Z$ consists of the polynomials $a\xi^2  +b\xi+c $ with $a,b,c\in \mathbb Z$.
We endow $\mathbb Z \xi^2+ \mathbb Z\xi+\mathbb Z$ with a linear order $<$ as follows. For $a,b,c\in \mathbb Z$ and $a',b',c'\in \mathbb Z$ we have $a\xi^2 + b\xi+c < a' \xi^2+ b'\xi+c'$ whenever 
 (i) $a<a'$; or (ii) $a=a'$ and $b<b'$; or (iii) $a=a'$ and $b=b'$ and $c<c'$. The linear order $<$ on $\mathbb Z \xi^2+ \mathbb Z\xi + \mathbb Z$
has the following
property: for $u,v,u',v' \in\mathbb Z \xi^2+ \mathbb Z\xi+ \mathbb Z$ such that
$u\leq u' $ and $v \leq v'$, we have $u+v\leq u'+v'$.
\medskip

\noindent
  In Definition \ref{def:redrule} we introduced the reduction rules for $\mathcal A_q$. These reduction rules are in bijection with
  the reducible $\mathcal A_q$-words of length 2.
Let $w$ denote a reducible $\mathcal A_q$-word of length 2, and consider the corresponding reduction rule.
In this reduction rule, the left-hand side is $w$ and the right-hand side is a linear combination of 
$\mathcal A_q$-words that have length 1 or 2. 
An $\mathcal A_q$-word that appears on the right-hand side of this reduction rule is called a {\it descendent} of $w$.
\medskip

\noindent 
Next we define a partial
order $\preceq $ on the set of $\mathcal A_q$-words.
Let $w, w'$ denote $\mathcal A_q$-words, and write $w=a_1 a_2\cdots a_n$. We say that {\it $w$ dominates $w'$} whenever  there exists an integer
$i$ $(2 \leq i \leq n)$ such that $a_{i-1}>a_i$, and $w'$ is obtained from $w$ by replacing $a_{i-1}a_i$ by one of its descendents.
In this case, either (i) $w'$ has length  $n-1$; or (ii) $w'$ has length $n$ and ${\rm wt}(w')< {\rm wt}(w)$. By these comments the transitive
closure of the domination relation is a partial order on the set of $\mathcal A_q$-words; we denote this partial order by  $\preceq$.
\medskip

\noindent {\it Proof of Theorem \ref{thm:rr}}. We will use the Bergman diamond lemma \cite[Theorem~1.2]{berg}. 
We refer to the above partial order $\preceq $ on the set of $\mathcal A_q$-words. By construction $\preceq$ is
a semigroup partial order
\cite[p.~181]{berg} and satisfies the
descending chain condition
\cite[p.~179]{berg}. 
We show that the reduction rules are compatible with $\preceq$ in the sense of
Bergman \cite[p.~181]{berg}.
Let  $w=a_1a_2\cdots a_n$ denote a reducible $\mathcal A_q$-word.
There exists an integer $i$ $(2 \leq i \leq n)$ such that $a_{i-1}>a_i$.
By construction $a_{i-1}a_i$ is a reducible $\mathcal A_q$-word of length 2. Using the corresponding reduction rule
 we eliminate $a_{i-1}a_i$ from $w$,
and thereby express $w$ as a linear combination of
$\mathcal A_q$-words, each  $\prec w$.
Therefore,
the reduction rules are compatible
with $\preceq $ in the sense of Bergman \cite[p.~181]{berg}. 
\medskip

\noindent
In order to employ the diamond lemma,
we must show that 
the ambiguities
are resolvable in the sense of Bergman
\cite[p.~181]{berg}.
There are potentially two kinds of ambiguities; inclusion
ambiguities and overlap ambiguities
\cite[p.~181]{berg}.
For the present example there are no inclusion ambiguities.
There are four types of overlap ambiguities, and these are displayed below:
\begin{enumerate}
\item[\rm (i)] For $i,j,k\in \mathbb N$,
\begin{align*}
\mathcal W_{i+1} \mathcal W_{-j} \mathcal G_{k+1}, \qquad \quad
\mathcal {\tilde G}_{i+1} \mathcal W_{-j} \mathcal G_{k+1}, \qquad \quad
\mathcal {\tilde G}_{i+1} \mathcal W_{j+1} \mathcal G_{k+1}, \qquad \quad
\mathcal {\tilde G}_{i+1} \mathcal W_{j+1} \mathcal W_{-k}.
\end{align*}
\item[\rm (ii)] For $i,j,k \in \mathbb N$ such that $i>j$,
\begin{align*}
&\mathcal W_{-i} \mathcal W_{-j} \mathcal G_{k+1}, \qquad \quad
\mathcal W_{i+1} \mathcal W_{j+1}  \mathcal G_{k+1}, \qquad \quad
\mathcal {\tilde G}_{i+1} \mathcal {\tilde G}_{j+1} \mathcal G_{k+1},
\\
&\mathcal W_{i+1} \mathcal W_{j+1} \mathcal W_{-k}, \qquad \quad
\mathcal {\tilde G}_{i+1} \mathcal {\tilde G}_{j+1} \mathcal W_{-k}, \qquad \quad
\mathcal {\tilde G}_{i+1} \mathcal {\tilde G}_{j+1} \mathcal W_{k+1}.
\end{align*}
\item[\rm (iii)] For $i,j,k \in \mathbb N$ such that $j>k$,
\begin{align*}
&\mathcal W_{-i} \mathcal G_{j+1} \mathcal G_{k+1}, \qquad \quad
\mathcal W_{i+1} \mathcal G_{j+1} \mathcal G_{k+1}, \qquad \quad
\mathcal {\tilde G}_{i+1} \mathcal G_{j+1} \mathcal G_{k+1},
\\
&\mathcal W_{i+1} \mathcal W_{-j} \mathcal W_{-k}, \qquad \quad
\mathcal {\tilde G}_{i+1} \mathcal W_{-j} \mathcal W_{-k}, \qquad \quad
\mathcal {\tilde G}_{i+1} \mathcal W_{j+1} \mathcal W_{k+1}.
\end{align*}
\item[\rm (iv)] For $i,j,k\in \mathbb N$ such that $i>j>k$,
\begin{align*}
\mathcal G_{i+1} \mathcal G_{j+1} \mathcal G_{k+1}, \qquad \quad
\mathcal W_{-i} \mathcal W_{-j} \mathcal W_{-k}, \qquad \quad
\mathcal W_{i+1} \mathcal W_{j+1} \mathcal W_{k+1}, \qquad \quad
\mathcal {\tilde G}_{i+1} \mathcal {\tilde G}_{j+1} \mathcal {\tilde G}_{k+1}.
\end{align*}
\end{enumerate}
Take for instance the overlap ambiguity $\mathcal W_{i+1} \mathcal W_{-j} \mathcal G_{k+1}$. We use the reduction rules to express
$\mathcal W_{i+1} \mathcal W_{-j} \mathcal G_{k+1}$ as a  linear combination of irreducible $\mathcal A_q$-words.
There are two ways to begin; we could eliminate
$\mathcal W_{i+1} \mathcal W_{-j}$ first, or we could eliminate 
$\mathcal W_{-j} \mathcal G_{k+1}$ first. It turns out that the two ways yield the same result, which means that 
the 
overlap ambiguity  $\mathcal W_{i+1} \mathcal W_{-j} \mathcal G_{k+1}$ is resolvable \cite[p.~181]{berg}. In Appendix C we use generating functions to show that every overlap ambiguity is resolvable.
Since every  ambiguity is
resolvable, 
by the diamond lemma 
\cite[Theorem~1.2]{berg}
the irreducible $\mathcal A_q$-words give a basis for
the vector space $\mathcal A_q$.
\hfill $\Box$
\bigskip

\noindent The proof of Theorem \ref{thm:rr} illustrates how to write a reducible $\mathcal A_q$-word
as a linear combination of irreducible $\mathcal A_q$-words.
Let $w=a_1a_2\cdots a_n$ denote a reducible $\mathcal A_q$-word. There exists an integer $i$ $(2 \leq i \leq n)$ such that $a_{i-1}>a_i$. By construction $a_{i-1}a_i$ is a reducible $\mathcal A_q$-word of length 2.
Using the corresponding reduction rule
 we eliminate $a_{i-1}a_i$ from $w$,
and thereby express $w$ as a linear combination of
$\mathcal A_q$-words, each $\prec w$. We iterate this procedure and after a finite number of steps, $w$ is expressed
 as a linear combination of irreducible $\mathcal A_q$-words, each $\prec w$.

\section{A filtration of $\mathcal A_q$}

\noindent Recall the filtration concept from Definition \ref{def:filtration}.
In this section we obtain a filtration of $\mathcal A_q$ that is related to the PBW basis from Theorem \ref{thm:rr}.

\begin{definition}\label{def:degree}
\rm
To each alternating generator for $\mathcal A_q$ we assign a degree, as follows. For $k \in \mathbb N$,
\bigskip
 
\centerline{
\begin{tabular}[t]{c|cccc}
  $u$ & $\mathcal G_{k+1}$ & $\mathcal W_{-k}$ & $\mathcal W_{k+1}$ & $\mathcal {\tilde G}_{k+1}$ 
   \\
\hline
${\rm deg}(u)$ & $2k+2$ & $2k+1$ & $2k+1$ & $2k+2$
\end{tabular}}
\bigskip

\end{definition}

\begin{definition}\label{def:worddeg}
\rm
To each $\mathcal A_q$-word $w$ we assign a degree, as follows. Writing $w=a_1a_2\cdots a_n$,
\begin{align*}
{\rm deg}(w) = \sum_{i=1}^n {\rm deg}(a_i).
\end{align*}
\end{definition}

\begin{definition}\label{def:An} \rm
For $d \in \mathbb N$ let $A_d$ denote the subspace of $\mathcal A_q$ spanned by the irreducible $\mathcal A_q$-words of degree $d$.
\end{definition}

\begin{lemma} \label{lem:ds} We have $A_0=\mathbb F1$. Moreover the sum $\mathcal A_q = \sum_{d \in \mathbb N} A_d$ is direct.
\end{lemma}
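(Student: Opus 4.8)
The plan is to deduce both assertions directly from Theorem \ref{thm:rr}, which states that the irreducible $\mathcal A_q$-words form a basis for the vector space $\mathcal A_q$. Since $A_d$ is defined in Definition \ref{def:An} as the span of the irreducible $\mathcal A_q$-words of degree $d$, the key observation is that every irreducible $\mathcal A_q$-word belongs to exactly one $A_d$, namely the one indexed by its degree. I would first record that the degree of an $\mathcal A_q$-word, as given in Definitions \ref{def:degree}, \ref{def:worddeg}, is a well-defined natural number, so the irreducible words are partitioned into the disjoint classes indexed by $d \in \mathbb N$ according to degree.

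For the claim $A_0 = \mathbb F 1$, I would argue that an irreducible $\mathcal A_q$-word has degree $0$ if and only if it is the empty product. Indeed, every $\mathcal A_q$-letter has degree at least $1$ by Definition \ref{def:degree} (the smallest values being $\mathrm{deg}(\mathcal W_{-0}) = \mathrm{deg}(\mathcal W_1) = 1$ and $\mathrm{deg}(\mathcal G_1) = \mathrm{deg}(\mathcal {\tilde G}_1) = 2$), so any nonempty word has strictly positive degree. Hence the only irreducible word of degree $0$ is the length-zero word, which equals $1 \in \mathcal A_q$. Since by Theorem \ref{thm:rr} this single element $1$ is part of a basis, it is nonzero, and $A_0$ is exactly its span, giving $A_0 = \mathbb F 1$.

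For the directness of the sum $\mathcal A_q = \sum_{d \in \mathbb N} A_d$, I would invoke Theorem \ref{thm:rr} once more: the full collection of irreducible $\mathcal A_q$-words is a basis of $\mathcal A_q$, hence is linearly independent and spans. Because each $A_d$ is spanned by the irreducible words of degree $d$, and distinct $A_d$ are spanned by disjoint subsets of this single basis, the subspaces $A_d$ meet only in $0$ and their sum recovers all of $\mathcal A_q$. Concretely, a basis partitioned into blocks yields an internal direct sum of the blocks' spans; this is the standard fact that $\mathcal A_q = \bigoplus_{d} A_d$ whenever $\{A_d\}$ are spanned by the blocks of a partition of a basis. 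That the sum equals $\mathcal A_q$ (not merely a subspace) follows because every basis element lies in some $A_d$.

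I do not anticipate a genuine obstacle here; the lemma is essentially a bookkeeping consequence of the PBW basis established in Theorem \ref{thm:rr}. The only point requiring minor care is confirming that the length-zero word is the unique irreducible word of degree $0$, which reduces to checking that every single $\mathcal A_q$-letter has positive degree — immediate from Definition \ref{def:degree}. Everything else is the routine translation of ``a basis partitioned by an $\mathbb N$-valued invariant yields a direct sum decomposition indexed by that invariant.''
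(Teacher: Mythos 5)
Your proposal is correct and follows essentially the same route as the paper: the paper's proof likewise observes that $1$ is the unique irreducible $\mathcal A_q$-word of degree $0$, and deduces directness of the sum from Definition \ref{def:An} together with the fact (Theorem \ref{thm:rr}) that the irreducible $\mathcal A_q$-words form a basis of $\mathcal A_q$. Your write-up simply spells out the bookkeeping (every letter has positive degree; a basis partitioned by degree yields an internal direct sum) that the paper leaves implicit.
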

\begin{proof} We have $A_0=\mathbb F1$ since $1$ is the unique irreducible $\mathcal A_q$-word that has degree 0. The sum $\mathcal A_q=\sum_{d\in \mathbb N} A_d$ is direct by
 Definition \ref{def:An} and since the irreducible $\mathcal A_q$-words form a basis for the vector space $\mathcal A_q$.
\end{proof}

\noindent Referring to Definition \ref{def:An}, we next consider the dimension of $A_d$.

\begin{definition}\label{def:genf}
\rm
 We define a generating function in an indeterminate $x$:
\begin{align*}
\mathcal H(x) = \sum_{d \in \mathbb N} {\rm dim}(A_d) x^d.
\end{align*}
\end{definition}

\begin{lemma}\label{lem:genf} We have
\begin{align}
\label{eq:prod2}
\mathcal H(x) = \prod_{n=1}^\infty \frac{1}{(1-x^n)^2}.
\end{align}
\end{lemma}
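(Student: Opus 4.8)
The plan is to read the identity off directly from the PBW basis of Theorem \ref{thm:rr}, reducing the lemma to an elementary count. By Definition \ref{def:An} together with Theorem \ref{thm:rr}, for each $d \in \mathbb N$ the irreducible $\mathcal A_q$-words of degree $d$ are linearly independent and span $A_d$, so they form a basis of $A_d$; hence $\dim(A_d)$ equals the number of irreducible $\mathcal A_q$-words of degree $d$. Recall from Definition \ref{def:pbw} and the surrounding discussion that an irreducible word is a nondecreasing product $a_1 \leq a_2 \leq \cdots \leq a_n$ of alternating generators, which is the same data as a finite multiset of alternating generators; and by Definition \ref{def:worddeg} the degree of such a word is the sum of the degrees (in the sense of Definition \ref{def:degree}) of its letters. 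Thus $\dim(A_d)$ is the number of finite multisets of alternating generators of total degree $d$.

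First I would rewrite $\mathcal H(x)$ as a product indexed by the alternating generators. By the standard multiset generating-function argument, allowing a generator $g$ of degree $e=\deg(g)$ to occur with multiplicity $0,1,2,\ldots$ contributes a factor $1+x^{e}+x^{2e}+\cdots=(1-x^{e})^{-1}$, and since total degree is additive across the letters of a word these factors multiply independently. This yields
\begin{align*}
\mathcal H(x) = \prod_{g} \frac{1}{1-x^{\deg(g)}},
\end{align*}
where the product runs over all alternating generators $g$.

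It then remains only to group the factors by degree, using Definition \ref{def:degree}. The generators $\mathcal W_{-k}$ and $\mathcal W_{k+1}$ with $k\in \mathbb N$ both have degree $2k+1$, so there are exactly two generators of each odd degree; the generators $\mathcal G_{k+1}$ and $\mathcal {\tilde G}_{k+1}$ with $k\in \mathbb N$ both have degree $2k+2$, so there are exactly two generators of each even degree $\geq 2$. Hence for every $n\geq 1$ there are precisely two alternating generators of degree $n$, and the displayed product collapses to $\prod_{n=1}^{\infty}(1-x^{n})^{-2}$, as claimed.

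I do not anticipate any serious obstacle here: the entire content is a bookkeeping argument resting on Theorem \ref{thm:rr}, with the combinatorial heavy lifting already done by the diamond lemma. The only points deserving care are the identification of irreducible words of a fixed degree with finite multisets of generators of that same total degree, so that the classical multiset generating function legitimately factors as above, and the degree tally showing that each positive integer $n$ is realized by exactly two alternating generators.
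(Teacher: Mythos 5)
Your proof is correct and takes essentially the same route as the paper: the paper's proof likewise assigns each alternating generator $g$ a geometric-series factor $(1-x^{\deg(g)})^{-1}$ and then groups the factors by degree, two generators per degree, to obtain $\prod_{n=1}^\infty (1-x^n)^{-2}$. You simply make explicit the steps the paper leaves implicit, namely that Theorem \ref{thm:rr} gives $\dim(A_d)$ as the number of irreducible words of degree $d$ and that such words correspond to finite multisets of generators.
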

\begin{proof} Each alternating generator $g$ contributes a factor $(1-x^d)^{-1} = 1 + x^d + x^{2d}+ \cdots $ to $\mathcal H(x)$, where $d$ is the degree of $g$.
By this and Definition \ref{def:degree},
\begin{align*}
\mathcal H(x) = \prod_{k\in \mathbb N} \frac{1}{(1-x^{2k+1})^2} \frac{1}{(1-x^{2k+2})^{2}} =\prod_{n=1}^\infty \frac{1}{(1-x^n)^2}.
\end{align*}
\end{proof}

\begin{example} \rm In the table below we display the dimension of $A_d$ for $0 \leq d \leq 8$.
\bigskip
 
\centerline{
\begin{tabular}[t]{c|ccccccccc}
  $d$ & $0$& $1$ &$2$& $3$& $4$& $5$& $6$ &$7$&$8$
   \\
\hline
${\rm dim}(A_d)$ & $1$&$ 2$&$ 5$&$ 10$&$ 20$&$ 36$&$ 65 $&$110$&$185$
\end{tabular}}
\bigskip
\end{example}

\begin{definition}\label{def:filt}\rm
For $d \in \mathbb N$ define $B_d = A_0+A_1+\cdots + A_d$. For notational convenience define $B_{-1}=0$.
\end{definition}

\begin{lemma} \label{lem:AB} The following hold for $d\in \mathbb N$:
\begin{enumerate}
\item[\rm (i)] the sum $B_d = B_{d-1}+A_d$ is direct;
\item[\rm (ii)]  the quotient vector space $B_d/B_{d-1}$ is isomorphic to $A_d$.
\end{enumerate}
\end{lemma}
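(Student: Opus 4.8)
The plan is to deduce both parts from the directness of the full sum $\mathcal A_q = \sum_{d \in \mathbb N} A_d$ established in Lemma \ref{lem:ds}, since the entire statement is a formal consequence of that fact together with the definition $B_d = A_0 + A_1 + \cdots + A_d$ from Definition \ref{def:filt}.

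First I would prove (i) by showing that $B_{d-1} \cap A_d = 0$. In the direct sum $\mathcal A_q = \bigoplus_{e \in \mathbb N} A_e$ the summand $A_d$ meets $\sum_{e \neq d} A_e$ trivially; this is precisely the defining property of a direct sum of an indexed family of subspaces. Since $B_{d-1} = A_0 + A_1 + \cdots + A_{d-1}$ is contained in $\sum_{e \neq d} A_e$, it follows that $B_{d-1} \cap A_d = 0$, and hence the sum $B_d = B_{d-1} + A_d$ is direct, as required. For the edge case $d=0$ one uses $B_{-1}=0$, so that $B_0 = A_0$ trivially.

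Next I would prove (ii) by exhibiting the standard isomorphism attached to a direct-sum decomposition. Consider the composite linear map $A_d \hookrightarrow B_d \twoheadrightarrow B_d/B_{d-1}$, where the first arrow is the inclusion and the second is the canonical projection. This map is surjective because $B_d = B_{d-1} + A_d$, so every coset of $B_{d-1}$ in $B_d$ has a representative lying in $A_d$. It is injective because its kernel is $A_d \cap B_{d-1}$, which is zero by part (i). Therefore the composite is an isomorphism of vector spaces $A_d \to B_d/B_{d-1}$.

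The argument is entirely formal, and there is no genuine obstacle once Lemma \ref{lem:ds} is available; the two parts are really just the generic statement that in a decomposition $V = U \oplus W$ one has $V/U \cong W$, applied to $B_d = B_{d-1} \oplus A_d$. The only point meriting a moment's care is the opening assertion that $A_d$ intersects $\sum_{e \neq d} A_e$ trivially, and this is immediate from the meaning of directness in Lemma \ref{lem:ds}.
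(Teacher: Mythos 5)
Your proof is correct and follows the same route as the paper: the paper's proof of (i) simply cites Lemma \ref{lem:ds} and Definition \ref{def:filt}, which is exactly the argument you spell out ($B_{d-1}\subseteq\sum_{e\neq d}A_e$ forces $B_{d-1}\cap A_d=0$), and its proof of (ii) cites (i), which is your standard $V=U\oplus W\Rightarrow V/U\cong W$ step. You have merely filled in the details the paper leaves implicit.
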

\begin{proof} (i) By Lemma \ref{lem:ds} and Definition \ref{def:filt}.
\\
\noindent (ii) By (i) above.
\end{proof}

\begin{lemma} \label{lem:123}
 The following {\rm (i)--(iii)} hold:
\begin{enumerate}
\item[\rm (i)] $B_0 = \mathbb F1$;
\item[\rm (ii)] $B_{d-1} \subseteq B_d$ for $d\geq 1$;
\item[\rm (iii)] $\mathcal A_q = \cup_{d \in \mathbb N} B_d$;
\end{enumerate}
\end{lemma}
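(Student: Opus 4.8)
The plan is to verify the three filtration axioms (i)--(iii) directly from the definitions of $B_d$ and the basic properties of the subspaces $A_d$ already established. This is a bookkeeping proof that unpacks Definitions \ref{def:filt} and \ref{def:An}, together with Lemmas \ref{lem:ds} and \ref{lem:AB}; no deep new idea is needed.

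First, for part (i) I would observe that by Definition \ref{def:filt} we have $B_0 = A_0$, and by Lemma \ref{lem:ds} we have $A_0 = \mathbb F1$, so $B_0 = \mathbb F1$ immediately. For part (ii), I would note that for $d \geq 1$ Definition \ref{def:filt} gives $B_d = A_0 + A_1 + \cdots + A_d = (A_0 + \cdots + A_{d-1}) + A_d = B_{d-1} + A_d$, and since a sum of subspaces contains each summand, $B_{d-1} \subseteq B_d$. (One could also cite Lemma \ref{lem:AB}(i) here, which records $B_d = B_{d-1} + A_d$ as a direct sum.)

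For part (iii), I would argue that $\cup_{d\in\mathbb N} B_d \subseteq \mathcal A_q$ is clear since each $B_d$ is a subspace of $\mathcal A_q$. For the reverse inclusion, by Theorem \ref{thm:rr} the irreducible $\mathcal A_q$-words form a basis for the vector space $\mathcal A_q$, so every element of $\mathcal A_q$ is a finite linear combination of irreducible $\mathcal A_q$-words. Each such word has some finite degree $d$ by Definition \ref{def:worddeg}, hence lies in $A_d \subseteq B_d \subseteq \cup_{e\in\mathbb N} B_e$. A finite linear combination of elements of the union, each living in some $B_{d_i}$, lies in $B_{\max_i d_i}$ by part (ii), hence in the union. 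Therefore $\mathcal A_q \subseteq \cup_{d\in\mathbb N} B_d$, and equality follows.

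There is no real obstacle here; the only point requiring a moment's care is the last inclusion in (iii), where one must use that every element of $\mathcal A_q$ is a \emph{finite} linear combination of basis words, so that finitely many degrees are involved and the nesting $B_{d-1} \subseteq B_d$ from (ii) lets one absorb all terms into a single $B_d$. This is exactly the standard verification that a grading gives rise to the associated filtration, specialized to the grading $\lbrace A_d\rbrace_{d\in\mathbb N}$ coming from the PBW basis.
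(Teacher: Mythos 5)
Your proof is correct and follows essentially the same route as the paper, whose entire proof is the citation "By Lemma \ref{lem:ds} and Definition \ref{def:filt}"; your argument simply unpacks that citation, with part (iii) re-deriving the decomposition $\mathcal A_q = \sum_{d\in\mathbb N} A_d$ from the PBW basis rather than quoting it directly from Lemma \ref{lem:ds}.
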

\begin{proof} By Lemma \ref{lem:ds} and Definition \ref{def:filt}.
\end{proof}
\noindent Shortly we will show that  $B_r B_s \subseteq B_{r+s}$ for $r,s \in \mathbb N$. To prepare for this, we consider how the degree of an $\mathcal A_q$-word
is affected by the reduction rules.

\begin{lemma}\label{lem:desless}
 Let $w$ denote a reducible $\mathcal A_q$-word of length 2, and let $w'$ denote a descendent of $w$. Then ${\rm deg}(w')\leq {\rm deg}(w)$.
\end{lemma}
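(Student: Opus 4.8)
The plan is to verify the inequality $\mathrm{deg}(w') \leq \mathrm{deg}(w)$ by inspecting each of the seven reduction rules (i)--(vii) from Proposition \ref{lem:rra} directly. Since a reducible $\mathcal A_q$-word $w$ of length $2$ is precisely the left-hand side of one of these rules, and its descendents are exactly the $\mathcal A_q$-words appearing on the right-hand side, it suffices to check that every term on the right-hand side has degree at most $\mathrm{deg}(w)$. The key observation driving all the bookkeeping is that the degree assignment in Definition \ref{def:degree} satisfies, for each alternating generator $u$ indexed by an integer parameter, a clean linear formula: $\mathrm{deg}(\mathcal G_{k+1}) = \mathrm{deg}(\mathcal{\tilde G}_{k+1}) = 2k+2$ and $\mathrm{deg}(\mathcal W_{-k}) = \mathrm{deg}(\mathcal W_{k+1}) = 2k+1$. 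Extending the indexing so that $\mathcal W_{-k}$ and $\mathcal W_{k+1}$ together range over $\mathcal W_m$ for $m \in \mathbb Z$, one finds $\mathrm{deg}(\mathcal W_m) = 2|m| - [m \leq 0]$ or equivalently $\mathrm{deg}(\mathcal W_m)$ depends only on the ``frequency'' of the index; the point is that degree tracks the total index magnitude.

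First I would record the degrees of the two generators making up each reducible word $w$, so that $\mathrm{deg}(w)$ is a fixed target. For rule (ii), $w = \mathcal W_{i+1}\mathcal W_{-j}$ has degree $(2i+1)+(2j+1) = 2i+2j+2$, and every summand $\mathcal G_\ell \mathcal{\tilde G}_{i+j+1-\ell}$ or $\mathcal G_{i+j+1-\ell}\mathcal{\tilde G}_\ell$ has degree $(2\ell+2) + (2(i+j+1-\ell)+2) - 2$; I would compute that this equals $2i+2j+2$ exactly (a length-$1$ drop in the $\mathcal G_0,\mathcal{\tilde G}_0$ boundary terms keeps it at or below the target). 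The same arithmetic, carried out rule by rule, shows that in rules (iii)--(vii) the ``generic'' quadratic terms land exactly at $\mathrm{deg}(w)$ while the boundary summands (those involving $\mathcal G_0$, $\mathcal{\tilde G}_0$, or index shifts that collapse a generator) have strictly smaller degree. I would present this as a short table or a line-by-line check, invoking the index-sum structure $i+j+1-\ell$ common to all the rules so the computation is uniform.

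The main point to watch, and the place where the bookkeeping is most delicate, is handling the boundary terms and the mixed $\mathcal W_m$ indices carefully: in rules (iv)--(vii) the sums involve products like $\mathcal G_\ell \mathcal W_{\ell - i - j}$ and $\mathcal W_{i+j+2-\ell}\mathcal{\tilde G}_\ell$, where the $\mathcal W$-index can be negative, so I must use the correct degree $\mathrm{deg}(\mathcal W_m) = 2|m|+\epsilon$ (with the $-1$ correction for the non-positive branch) rather than a single formula. The cleanest way to manage this is to verify that for each product term the sum of the absolute values of the two indices never exceeds $i+j+1$, which is forced by the constraint $0 \leq \ell \leq \min(i,j)$; this single inequality, applied to each term, yields $\mathrm{deg}(w') \leq \mathrm{deg}(w)$ in every case. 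I expect this index-inequality verification to be the only genuine obstacle, and it is entirely routine once the degree-versus-index dictionary is made explicit, so the proof reduces to a finite inspection with no conceptual difficulty.
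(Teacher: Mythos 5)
Your overall plan---a term-by-term inspection of the reduction rules (i)--(vii) of Proposition \ref{lem:rra} against the degree assignment of Definitions \ref{def:degree}, \ref{def:worddeg}---is exactly the paper's proof, which consists of precisely this routine examination, and the conclusion you reach is correct. However, the shortcut you propose to organize the check contains concrete errors and would fail if applied literally. Your extended dictionary is wrong: Definition \ref{def:degree} gives $\deg(\mathcal W_{-k}) = 2k+1$ and $\deg(\mathcal W_{k+1}) = 2k+1$, i.e.\ $\deg(\mathcal W_m) = 2|m|+1$ for $m \leq 0$ and $\deg(\mathcal W_m) = 2|m|-1$ for $m \geq 1$; your formula $2|m|-[m\leq 0]$ gives $\deg(\mathcal W_0) = -1$, and your ``$-1$ correction for the non-positive branch'' has the sign backwards. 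Likewise your sample computation for rule (ii) is off: $(2\ell+2)+(2(i+j+1-\ell)+2)-2$ equals $2i+2j+4$, not $2i+2j+2$; the correct count uses $\deg(\mathcal G_\ell) = 2\ell$ and $\deg(\mathcal {\tilde G}_{i+j+1-\ell}) = 2(i+j+1-\ell)$ for indices $\geq 1$, together with the fact that $\mathcal G_0$, $\mathcal {\tilde G}_0$ are scalars.

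More seriously, the ``single inequality'' you claim settles everything---that the absolute values of the two indices in each term sum to at most $i+j+1$---is neither uniformly true nor sufficient. It already fails for rule (iii): there the left-hand word $\mathcal {\tilde G}_{i+1}\mathcal G_{j+1}$ and the right-hand terms $\mathcal W_{-\ell}\mathcal W_{i+j+2-\ell}$ and $\mathcal W_{\ell-1-i-j}\mathcal W_{\ell+1}$ all have index sum $i+j+2$. Moreover, index sums alone cannot bound the degree: the degree of a length-two term equals twice its index sum plus a correction of $0$ for each $\mathcal G$- or $\mathcal {\tilde G}$-factor, $+1$ for each $\mathcal W$-factor with nonpositive index, and $-1$ for each $\mathcal W$-factor with positive index. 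What makes the lemma true is that in every rule the correction pattern of each top-degree term matches (or undercuts) that of the left-hand side---for instance, in rule (iii) the top $\mathcal W\mathcal W$-terms pair one nonpositive with one positive index so the corrections cancel, while $\mathcal W_{-i}\mathcal W_{-j}$ and $\mathcal W_{i+1}\mathcal W_{j+1}$ compensate a correction of $\pm 2$ by a smaller index sum. This sign bookkeeping is the real content of the verification and cannot be bypassed by the index-sum inequality; once you carry it out rule by rule with the corrected dictionary, your argument becomes the paper's proof.
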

\begin{proof} There exists a reduction rule with $w$ on the left  and $w'$ appearing on the right.  The reduction rules are displayed in (i)--(vii) of Proposition
\ref{lem:rra}. For each reduction rule, we examine the term on the left and all the terms on the right  using Definitions \ref{def:degree}, \ref{def:worddeg}. The examination shows that ${\rm deg}(w')\leq {\rm deg}(w)$.
\end{proof}

\noindent In the next result we refer to the partial order $\preceq$ on $\mathcal A_q$-words defined above the proof of Theorem \ref{thm:rr}.
\begin{lemma} \label{lem:pos}
Let $w$ and $w'$ denote $\mathcal A_q$-words with $w' \preceq w$. Then  ${\rm deg}(w')\leq {\rm deg}(w)$.
\end{lemma}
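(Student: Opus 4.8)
The plan is to reduce the general statement to the single-step case and then invoke the preceding lemma. Recall that $\preceq$ is defined as the transitive closure of the domination relation, so by definition $w' \preceq w$ means there is a finite chain of $\mathcal A_q$-words
\begin{align*}
w = w_0, \quad w_1, \quad \ldots, \quad w_m = w'
\end{align*}
in which each $w_{r}$ dominates $w_{r+1}$ (or $w=w'$, the trivial case $m=0$). The natural strategy is therefore to establish the degree inequality for a single domination step and then chain the inequalities together.

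First I would treat the base case: suppose $w$ dominates $w'$ directly. By the definition of domination, writing $w = a_1 a_2 \cdots a_n$, there is an index $i$ with $a_{i-1} > a_i$, and $w'$ is obtained from $w$ by replacing the length-2 reducible subword $a_{i-1} a_i$ with one of its descendents $u$. Since the degree of an $\mathcal A_q$-word is additive over its letters (Definition \ref{def:worddeg}), the letters $a_1, \ldots, a_{i-2}$ and $a_{i+1}, \ldots, a_n$ surrounding the replaced subword contribute the same amount to $\mathrm{deg}(w)$ and $\mathrm{deg}(w')$. Hence
\begin{align*}
\mathrm{deg}(w') - \mathrm{deg}(w) = \mathrm{deg}(u) - \mathrm{deg}(a_{i-1}a_i),
\end{align*}
and Lemma \ref{lem:desless} gives $\mathrm{deg}(u) \leq \mathrm{deg}(a_{i-1}a_i)$, so $\mathrm{deg}(w') \leq \mathrm{deg}(w)$.

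Next I would handle the general case by induction on the length $m$ of the domination chain. For $m=0$ we have $w'=w$ and the inequality is an equality. For the inductive step, $w$ dominates $w_1$ and $w_1 \preceq w'$ with a shorter chain; the base case gives $\mathrm{deg}(w_1) \leq \mathrm{deg}(w)$, the inductive hypothesis gives $\mathrm{deg}(w') \leq \mathrm{deg}(w_1)$, and transitivity of $\leq$ finishes the argument. Equivalently, one simply applies the single-step bound to each link of the chain and composes the resulting inequalities.

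I do not anticipate a serious obstacle here: the only substantive input is Lemma \ref{lem:desless}, which has already been proved by direct inspection of the reduction rules, and the additivity of degree over letters is immediate from Definition \ref{def:worddeg}. The one point requiring a little care is making explicit that degree is unchanged on the unaffected portion of the word in a single domination step, so that the comparison genuinely localizes to the replaced subword; once that observation is in place, the transitive-closure structure of $\preceq$ does the rest.
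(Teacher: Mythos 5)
Your proposal is correct and follows essentially the same route as the paper: reduce via the transitive-closure structure of $\preceq$ to a single domination step, then combine the additivity of degree (Definition \ref{def:worddeg}) with Lemma \ref{lem:desless}. The paper compresses the chaining into a "without loss of generality" remark and the localization into a one-line citation, but the underlying argument is identical to yours.
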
 
\begin{proof} The relation $\preceq$ is the transitive closure of the domination relation. So without loss of generality, we may assume that $w$ dominates $w'$. Write $w=a_1a_2\cdots a_n$. There exists an integer $i$ $(2 \leq i \leq n)$ such that
$a_{i-1}>a_i$, and $w'$ is obtained from $w$ by replacing $a_{i-1}a_i$ by a descendent. We have  ${\rm deg}(w')\leq {\rm deg}(w)$ in view of 
Definition \ref{def:worddeg} and Lemma \ref{lem:desless}.
\end{proof}

\begin{lemma}\label{lem:span}
For $d \in \mathbb N$ the vector space $B_d$ is spanned by the $\mathcal A_q$-words that have degree at most $d$.
\end{lemma}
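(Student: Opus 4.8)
The goal is to show that $B_d$, which by Definition \ref{def:filt} equals $A_0 + A_1 + \cdots + A_d$, is spanned by all $\mathcal A_q$-words of degree at most $d$, not merely the irreducible ones. Since $A_d$ is by Definition \ref{def:An} spanned by the \emph{irreducible} $\mathcal A_q$-words of degree $d$, each such irreducible word of degree $\leq d$ clearly lies in $B_d$, so one containment is immediate. The substance is the reverse containment: an arbitrary (possibly reducible) $\mathcal A_q$-word $w$ with ${\rm deg}(w)\leq d$ must be shown to lie in $B_d$.

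The plan is to argue by induction along the partial order $\preceq$, using the descending chain condition that $\preceq$ was shown to satisfy in the proof of Theorem \ref{thm:rr}. First I would fix an $\mathcal A_q$-word $w$ with ${\rm deg}(w)\leq d$ and treat two cases. If $w$ is irreducible, then $w$ is one of the spanning words of $A_{{\rm deg}(w)}$, and since ${\rm deg}(w)\leq d$ we get $w \in A_{{\rm deg}(w)} \subseteq B_d$ by Definition \ref{def:filt}. If $w$ is reducible, then as recalled in the discussion following the proof of Theorem \ref{thm:rr}, applying a reduction rule to some reducible length-$2$ subword expresses $w$ as a linear combination of $\mathcal A_q$-words each strictly dominated by $w$, hence each $\prec w$.

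The key step is then to invoke Lemma \ref{lem:pos}: each word $w'$ appearing in this linear combination satisfies $w' \preceq w$, so ${\rm deg}(w')\leq {\rm deg}(w)\leq d$. Thus every $w'$ in the expansion of $w$ again has degree at most $d$, and each is strictly smaller than $w$ in the partial order $\preceq$. By the descending chain condition on $\preceq$, an induction on $\preceq$ applies: assuming every $\mathcal A_q$-word strictly below $w$ that has degree at most $d$ lies in $B_d$, the linear combination expressing $w$ has all its terms in $B_d$, whence $w \in B_d$. This completes the reverse containment and hence the lemma.

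The main obstacle, such as it is, lies in correctly combining the degree bookkeeping with the well-foundedness of $\preceq$. One must be careful that the reduction of a length-$2$ subword can lower the total degree (producing length-$1$ descendents, as permitted by Lemma \ref{lem:desless}), so the degree of a descendent word may be strictly less than $d$; this is harmless, since all that is needed is degree at most $d$, and $B_{d'}\subseteq B_d$ for $d'\leq d$ by Definition \ref{def:filt}. The only genuine point requiring care is that the induction is performed over the partial order $\preceq$ rather than over degree, because the reduction rules need not decrease degree strictly; Lemma \ref{lem:pos} guarantees degree is non-increasing along $\preceq$, while the descending chain condition guarantees the induction terminates.
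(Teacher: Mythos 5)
Your proof is correct and takes essentially the same approach as the paper's: both express a reducible word of degree at most $d$ via the reduction rules in terms of words strictly lower in the partial order $\preceq$, and both use Lemma \ref{lem:pos} to keep the degree bounded by $d$. The only difference is one of packaging — the paper invokes the iterated reduction procedure described below Theorem \ref{thm:rr} to write $w$ directly as a linear combination of irreducible words each $\prec w$, while you carry out the same termination argument explicitly as a Noetherian induction over $\preceq$ with single reduction steps.
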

\begin{proof} By Definition \ref{def:filt}, $B_d$ is spanned by the irreducible $\mathcal A_q$-words that have degree at most $d$. Let $w$ denote a reducible $\mathcal A_q$-word that has degree at most $d$. We show that $w \in B_d$.
By the discussion below Theorem \ref{thm:rr}, $w$ is a linear combination of irreducible $\mathcal A_q$-words, each  $\prec w$.
For each irreducible $\mathcal A_q$-word $w'$ in this linear combination, 
we have ${\rm deg}(w') \leq {\rm deg}(w)\leq d$ in view of Lemma \ref{lem:pos},  so $w' \in B_d$. The vector $w$ is a linear combination of elements in $B_d$, so $w \in B_d$. The result follows.
\end{proof}

\begin{proposition}
\label{prop:filt} 
We have  $B_r B_s \subseteq B_{r+s}$ for $r,s \in \mathbb N$.
\end{proposition}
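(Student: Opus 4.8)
The plan is to reduce the claim $B_r B_s \subseteq B_{r+s}$ to a statement about $\mathcal{A}_q$-words, using the spanning description of $B_d$ from Lemma \ref{lem:span}. Since $B_r$ is spanned by $\mathcal{A}_q$-words of degree at most $r$ and $B_s$ is spanned by $\mathcal{A}_q$-words of degree at most $s$, it suffices to take arbitrary $\mathcal{A}_q$-words $u$ and $v$ with ${\rm deg}(u)\leq r$ and ${\rm deg}(v)\leq s$, and show that their product $uv$ lies in $B_{r+s}$.

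First I would observe that the product $uv$, being a concatenation of two $\mathcal{A}_q$-words, is itself an $\mathcal{A}_q$-word. The crucial point is the behavior of degree under concatenation: by Definition \ref{def:worddeg}, since ${\rm deg}$ sums the degrees of the constituent letters, we have ${\rm deg}(uv) = {\rm deg}(u) + {\rm deg}(v)$. Hence ${\rm deg}(uv) \leq r + s$. Now $uv$ is an $\mathcal{A}_q$-word of degree at most $r+s$, so by Lemma \ref{lem:span} it lies in $B_{r+s}$. Therefore every spanning product of the form $uv$ lies in $B_{r+s}$, and since such products span $B_r B_s$, the inclusion $B_r B_s \subseteq B_{r+s}$ follows by linearity.

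The main conceptual work has already been done in Lemma \ref{lem:span}, which in turn rests on Lemma \ref{lem:pos} and ultimately on Lemma \ref{lem:desless} (reduction rules do not increase degree). Given these, the present proposition is essentially immediate: the only genuine ingredient is the additivity of the degree function on $\mathcal{A}_q$-words under concatenation, which is built into Definition \ref{def:worddeg}. I do not anticipate any real obstacle; the argument is a short bookkeeping deduction. The one subtlety worth stating explicitly is that $uv$ may well be reducible even when $u$ and $v$ are irreducible, which is precisely why the general (reducible) spanning statement of Lemma \ref{lem:span}, rather than merely the definition of $B_d$ in terms of irreducible words, is the right tool to invoke.
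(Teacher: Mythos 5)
Your proof is correct and follows essentially the same route as the paper: both arguments invoke Lemma \ref{lem:span} to span $B_r$ and $B_s$ by $\mathcal A_q$-words of bounded degree, use additivity of degree under concatenation (Definition \ref{def:worddeg}) to bound $\deg(uv)\leq r+s$, and then apply Lemma \ref{lem:span} again to conclude $uv\in B_{r+s}$. Your closing remark about why the reducible-word spanning statement (rather than the definition of $B_d$ via irreducible words) is the right tool is a fair and accurate observation, implicit in the paper's proof as well.
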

\begin{proof} By Lemma \ref{lem:span} the subspace $B_r$ (resp. $B_s$) is spanned by the $\mathcal A_q$-words that have degree at most $r$ (resp. $s$).
 Let $u$ (resp. $v$)  denote an $\mathcal A_q$-word that has degree at most $r$ (resp. $s$). The $\mathcal A_q$-word $uv$ has degree ${\rm deg}(u)+ {\rm deg}(v) \leq r+s$. Therefore $uv \in B_{r+s}$ in view of Lemma \ref{lem:span}.
The result follows.
\end{proof}

\noindent 
By Lemma \ref{lem:123} and Proposition \ref{prop:filt} the sequence $\lbrace B_d\rbrace_{d \in \mathbb N}$ is a filtration of $\mathcal A_q$. By Definition \ref{def:genf} and Lemma \ref{lem:AB}(ii)
we obtain
\begin{align}
\mathcal H(x) = \sum_{d \in \mathbb N} {\rm dim}(B_d /B_{d-1}) x^d.
\label{eq:grcharB}
\end{align}

\section{Some central elements $\lbrace \mathcal Z_n\rbrace_{n\in \mathbb N}$ for $\mathcal A_q$}

\noindent In \cite{basBel} Baseilhac and Belliard introduce some central elements $\lbrace \Delta_n\rbrace_{n=1}^\infty$  in $\mathcal A_q$ that together with $\mathcal W_0, \mathcal W_1$ generate $\mathcal A_q$.
Their analysis requires the assumption that ${\rm Char}(\mathbb F)\not=2$; see Remark \ref{rem:charF} below. In this section we modify their approach in order to remove the assumption on $\mathbb F$.
\medskip

\noindent 
 Recall the indeterminate $t$. For notational convenience define
\begin{align}
S=\frac{q+q^{-1}}{q^{-1} t+ q t^{-1}}, \qquad \qquad 
T=\frac{q+q^{-1}}{qt + q^{-1} t^{-1}}.
\label{eq:ST}
\end{align}
\noindent We view $S$ and $T$ as power series
\begin{align*}
S = (q+q^{-1}) \sum_{\ell \in \mathbb N} (-1)^\ell q^{-2\ell-1} t^{2\ell+1} \qquad \qquad T = (q+q^{-1}) \sum_{\ell \in \mathbb N} (-1)^\ell q^{2\ell+1} t^{2\ell+1}.
\end{align*}
\noindent We will consider $\mathcal W^{\pm}(S)$, $\mathcal G(S)$, $\mathcal {\tilde G}(S)$ and also $\mathcal W^{\pm}(T)$, $\mathcal G(T)$, $\mathcal {\tilde G}(T)$.
 The following three results that will make it easier to evaluate these expressions.
 
 \begin{lemma}\label{lem:warm}  {\rm (See \cite[Lemma~4.5]{conj}.)}
Let $\lbrace a_n\rbrace_{n \in \mathbb N}$ denote a sequence of elements in $\mathcal A_q$, and consider the generating function $a(t)=\sum_{n \in \mathbb N} a_n t^n$.
 Then
 \begin{align*}
a  \biggl(\frac{q+q^{-1}}{t+t^{-1}}\biggr) = \sum_{n \in \mathbb N} a^\downarrow_n \lbrack 2 \rbrack^n_q t^n,
\end{align*} 
  where $a^\downarrow_0=a_0$ and
\begin{align*}
 a^\downarrow_n = \sum_{\ell=0}^{\lfloor (n-1) /2 \rfloor} (-1)^\ell \binom{n-1-\ell}{\ell} \lbrack 2 \rbrack^{-2\ell}_q a_{n-2\ell}, \qquad \quad n\geq 1.
\end{align*}
\end{lemma}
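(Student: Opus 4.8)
The statement to prove is Lemma~\ref{lem:warm}, which evaluates a generating function $a(t)=\sum_{n\in\mathbb N}a_nt^n$ at the special argument $u=\frac{q+q^{-1}}{t+t^{-1}}$ and expresses the result as a new power series $\sum_n a^\downarrow_n [2]_q^n t^n$ with the stated formula for $a^\downarrow_n$.

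\medskip

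\noindent The plan is to treat this as a purely formal power-series identity, since the coefficients $a_n$ are inert scalars (elements of $\mathcal A_q$) that play no role beyond indexing. First I would write $u=\frac{q+q^{-1}}{t+t^{-1}}=\frac{(q+q^{-1})t}{1+t^2}=[2]_q\,t\,(1+t^2)^{-1}$, treating $(1+t^2)^{-1}=\sum_{j\ge 0}(-1)^jt^{2j}$ as a formal power series in $t$. Substituting into $a(u)=\sum_n a_n u^n$ gives
\begin{align*}
a(u)=\sum_{n\in\mathbb N} a_n\,[2]_q^n\,t^n\,(1+t^2)^{-n}.
\end{align*}
The task then reduces to collecting, for each fixed power $t^N$, the total scalar coefficient. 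Since $t^n(1+t^2)^{-n}$ contributes only powers $t^{n+2\ell}$ for $\ell\ge 0$, the monomial $t^N$ receives contributions only from indices $n=N-2\ell$ with $0\le \ell\le \lfloor N/2\rfloor$, so the final series is again indexed by $t^N$ with coefficient $a^\downarrow_N[2]_q^N$ as claimed.

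\medskip

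\noindent The key computational step is to expand $(1+t^2)^{-n}$ by the generalized binomial theorem and extract the coefficient. Concretely, $(1+t^2)^{-n}=\sum_{\ell\ge 0}\binom{-n}{\ell}t^{2\ell}=\sum_{\ell\ge 0}(-1)^\ell\binom{n+\ell-1}{\ell}t^{2\ell}$. Matching $t^N$ in $a(u)$ means setting $n+2\ell=N$, i.e. $n=N-2\ell$, so
\begin{align*}
a^\downarrow_N\,[2]_q^N=\sum_{\ell} a_{N-2\ell}\,[2]_q^{N-2\ell}\,(-1)^\ell\binom{(N-2\ell)+\ell-1}{\ell}.
\end{align*}
Dividing through by $[2]_q^N$ converts $[2]_q^{N-2\ell}/[2]_q^N$ into $[2]_q^{-2\ell}$, and simplifying the binomial gives $\binom{N-1-\ell}{\ell}$, which exactly reproduces the stated formula once the range $0\le\ell\le\lfloor(N-1)/2\rfloor$ is identified (the upper cutoff comes from requiring the binomial top $N-1-\ell$ to be at least the bottom $\ell$, equivalently $\ell\le\lfloor(N-1)/2\rfloor$; the seemingly omitted $\ell$ with $N$ even and $n=0$ is handled by the separate base case $a^\downarrow_0=a_0$ and does not arise for $N\ge 1$).

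\medskip

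\noindent I expect no serious obstacle here; the only point needing care is the bookkeeping of the two binomial conventions and verifying that the index range in the conclusion, $0\le\ell\le\lfloor(N-1)/2\rfloor$, is correct rather than $\lfloor N/2\rfloor$. This discrepancy is resolved by noting that for the term $n=0$ (which occurs only when $N$ is even) the factor $a_0[2]_q^0$ would formally appear, but the binomial coefficient $\binom{N-1-\ell}{\ell}$ with $\ell=N/2$ becomes $\binom{N/2-1}{N/2}=0$, so that term vanishes automatically and the stated upper limit is consistent. Thus the whole argument is a short, careful manipulation of formal power series and binomial identities, with the substitution $u=[2]_q t(1+t^2)^{-1}$ as the single essential idea.
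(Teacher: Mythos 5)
Your proof is correct: the substitution $\frac{q+q^{-1}}{t+t^{-1}}=[2]_q\,t\,(1+t^2)^{-1}$, expansion by the generalized binomial theorem, and coefficient extraction (including the observation that the $n=0$ term for even $N$ vanishes because $\binom{N/2-1}{N/2}=0$) is exactly the routine verification the paper omits, since its own proof consists only of the remark ``This is routinely checked.'' Your calculation is the natural and intended argument, and the bookkeeping of index ranges and binomial conventions is handled correctly.
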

\begin{proof} This is routinely checked.
\end{proof}

\begin{lemma} \label{lem:warm2}
With reference to Lemma \ref{lem:warm} we have
 \begin{align*}
\frac{q+q^{-1}}{t+t^{-1}} a  \biggl(\frac{q+q^{-1}}{t+t^{-1}}\biggr) = \sum_{n \in \mathbb N} a^\Downarrow_n \lbrack 2 \rbrack^{n+1}_q t^{n+1},
\end{align*} 
  where 
\begin{align*}
 a^\Downarrow_n = \sum_{\ell=0}^{\lfloor n /2 \rfloor} (-1)^\ell \binom{n-\ell}{\ell} \lbrack 2 \rbrack^{-2\ell}_q a_{n-2\ell}, \qquad \quad n \in \mathbb N.
\end{align*}
 \end{lemma}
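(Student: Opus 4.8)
The plan is to reduce Lemma \ref{lem:warm2} directly to Lemma \ref{lem:warm} by recognizing that the left-hand side is simply $t$-weighted (in the substituted variable) against the generating function already computed. First I would set $a(t) = \sum_{n} a_n t^n$ and apply Lemma \ref{lem:warm} to obtain
\begin{align*}
a\biggl(\frac{q+q^{-1}}{t+t^{-1}}\biggr) = \sum_{n \in \mathbb N} a^\downarrow_n \lbrack 2 \rbrack^n_q t^n.
\end{align*}
Then the quantity I want is this series multiplied by the scalar factor $\frac{q+q^{-1}}{t+t^{-1}}$. The key observation is that $\frac{q+q^{-1}}{t+t^{-1}} = \lbrack 2 \rbrack_q \cdot \frac{t}{1+t^2}$, and so multiplying term-by-term shifts the $t$-grading up by one and introduces the expansion of $(1+t^2)^{-1} = \sum_{\ell \in \mathbb N} (-1)^\ell t^{2\ell}$.

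Next I would carry out the convolution. Writing
\begin{align*}
\frac{q+q^{-1}}{t+t^{-1}} a\biggl(\frac{q+q^{-1}}{t+t^{-1}}\biggr) = \lbrack 2 \rbrack_q \frac{t}{1+t^2} \sum_{m \in \mathbb N} a^\downarrow_m \lbrack 2 \rbrack^m_q t^m,
\end{align*}
I would expand $\frac{1}{1+t^2}$ as a power series and collect the coefficient of $t^{n+1}$. The coefficient is a sum over pairs $(m,\ell)$ with $m + 2\ell = n$, giving $\lbrack 2 \rbrack^{n+1}_q \sum_{\ell} (-1)^\ell \lbrack 2 \rbrack^{-2\ell}_q a^\downarrow_{n-2\ell}$ after factoring out $\lbrack 2 \rbrack^{n+1}_q$. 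This already exhibits the claimed overall normalization $\lbrack 2 \rbrack^{n+1}_q t^{n+1}$, so it remains only to verify that the inner sum equals the stated closed form for $a^\Downarrow_n$.

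The one genuine computation is the binomial identity. After substituting the explicit formula for $a^\downarrow_{n-2\ell}$ from Lemma \ref{lem:warm}, the coefficient of $a_{n-2k}$ in $a^\Downarrow_n$ becomes a double sum, and I would need to show
\begin{align*}
\sum_{\ell+j = k} (-1)^\ell (-1)^j \binom{n-2\ell-1-j}{j} = (-1)^k \binom{n-k}{k}.
\end{align*}
This is a standard hockey-stick/Pascal-type summation: the signs combine to $(-1)^k$ and the remaining sum $\sum_{j} \binom{n-k-j-1}{j}$ telescopes to $\binom{n-k}{k}$ via the Vandermonde/Pascal recurrence (equivalently, it is the Fibonacci-polynomial coefficient identity). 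I expect this binomial manipulation to be the main obstacle, though it is purely combinatorial and can be verified by induction on $n$ using $\binom{n-k}{k} = \binom{n-1-k}{k} + \binom{n-1-k}{k-1}$. Because the paper labels the result ``routinely checked'' for the companion Lemma \ref{lem:warm}, I anticipate the author will either cite the analogous computation or dispatch it with the same phrase; in a fuller write-up I would present the index bookkeeping carefully, since the shift between the $\lfloor (n-1)/2 \rfloor$ cutoff in $a^\downarrow$ and the $\lfloor n/2 \rfloor$ cutoff in $a^\Downarrow$ is exactly where sign and range errors tend to creep in.
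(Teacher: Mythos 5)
Your proof is correct, but it takes a genuinely different (and more laborious) route than the paper. The paper's entire proof is: apply Lemma \ref{lem:warm} with $a(t)$ replaced by $t\,a(t)$. That is, if $b(t)=t\,a(t)$, so $b_0=0$ and $b_n=a_{n-1}$, then Lemma \ref{lem:warm} gives $b\bigl(\tfrac{q+q^{-1}}{t+t^{-1}}\bigr)=\sum_n b^\downarrow_n \lbrack 2\rbrack_q^n t^n$, and the formula for $b^\downarrow_{n+1}$ (with $\binom{n-\ell}{\ell}$ replacing $\binom{n-1-\ell}{\ell}$ and $b_{n+1-2\ell}=a_{n-2\ell}$) is literally the definition of $a^\Downarrow_n$; no convolution or binomial summation is ever needed. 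You instead expand the prefactor $\tfrac{q+q^{-1}}{t+t^{-1}}=\lbrack 2\rbrack_q\tfrac{t}{1+t^2}$ as a geometric series, convolve it against the output of Lemma \ref{lem:warm}, and then must prove the hockey-stick identity; this works, and it is the honest ``brute force'' verification, but it is exactly the computation the paper's substitution trick is designed to bypass. One caution: your simplified intermediate sum is mis-indexed. With $\ell=k-j$ one has $\binom{n-2\ell-1-j}{j}=\binom{n-2k+j-1}{j}$, so the sum to evaluate is $\sum_{j=0}^{k}\binom{(n-2k-1)+j}{j}=\binom{n-k}{k}$ (the hockey-stick identity with $m=n-2k-1$), not $\sum_{j}\binom{n-k-j-1}{j}$ as you wrote --- the latter fails already at $n=5$, $k=2$ (it gives $2$ rather than $3$). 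The displayed identity over pairs $(\ell,j)$ with $\ell+j=k$ is the right one, and with the corrected index your argument closes.
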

 \begin{proof} Apply Lemma \ref{lem:warm}, with $a(t)$ replaced by $t a(t)$.
\end{proof}

\begin{lemma} \label{lem:ramp} 
Let $\lbrace a_n\rbrace_{n \in \mathbb N}$ denote a sequence of elements in $\mathcal A_q$, and consider the generating function $a(t)=\sum_{n \in \mathbb N} a_n t^n$.
Then
\begin{align*}
a(S) &= \sum_{n \in \mathbb N} a^\downarrow_n q^{-n} \lbrack 2 \rbrack^n_q t^n,
\qquad \quad 
Sa(S) = \sum_{n \in \mathbb N} a^\Downarrow_n q^{-n-1} \lbrack 2 \rbrack^{n+1}_q t^{n+1},
\\
a(T) &= \sum_{n \in \mathbb N} a^\downarrow_n q^{n} \lbrack 2 \rbrack^n_q t^n,
\qquad \quad
Ta(T) = \sum_{n \in \mathbb N} a^\Downarrow_n q^{n+1} \lbrack 2 \rbrack^{n+1}_q t^{n+1}.
\end{align*}
\end{lemma}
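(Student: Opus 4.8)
The plan is to prove Lemma \ref{lem:ramp} by reducing it to the two previously established identities, Lemmas \ref{lem:warm} and \ref{lem:warm2}, via a simple substitution. The key observation is that $S$ and $T$ from \eqref{eq:ST} are obtained from the expression $(q+q^{-1})/(t+t^{-1})$ appearing in Lemma \ref{lem:warm} by rescaling $t$. Specifically, comparing \eqref{eq:ST} with the argument in Lemma \ref{lem:warm}, I see that $S$ is the result of replacing $t$ by $q^{-1}t$ in $(q+q^{-1})/(t+t^{-1})$, since $q^{-1}t + (q^{-1}t)^{-1} = q^{-1}t + qt^{-1}$, matching the denominator of $S$. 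Likewise $T$ corresponds to replacing $t$ by $qt$.

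First I would treat the claim for $a(S)$. Starting from Lemma \ref{lem:warm}, which gives
\begin{align*}
a\biggl(\frac{q+q^{-1}}{t+t^{-1}}\biggr) = \sum_{n\in\mathbb N} a^\downarrow_n \lbrack 2\rbrack^n_q t^n,
\end{align*}
I substitute $t \mapsto q^{-1}t$ on both sides. The left-hand side becomes $a(S)$ by the identification above, and the right-hand side becomes $\sum_{n\in\mathbb N} a^\downarrow_n \lbrack 2\rbrack^n_q q^{-n} t^n$, which is exactly the claimed formula for $a(S)$. The formula for $a(T)$ follows identically by substituting $t\mapsto qt$, producing the factor $q^n$ in place of $q^{-n}$.

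Next I would handle the two formulas involving $Sa(S)$ and $Ta(T)$, which are the analogous rescalings of Lemma \ref{lem:warm2}. Here I note that $S$ itself equals $(q+q^{-1})/(t+t^{-1})$ evaluated at $t\mapsto q^{-1}t$, so applying the same substitution $t\mapsto q^{-1}t$ to the identity of Lemma \ref{lem:warm2},
\begin{align*}
\frac{q+q^{-1}}{t+t^{-1}} \, a\biggl(\frac{q+q^{-1}}{t+t^{-1}}\biggr) = \sum_{n\in\mathbb N} a^\Downarrow_n \lbrack 2\rbrack^{n+1}_q t^{n+1},
\end{align*}
turns the left-hand side into $Sa(S)$ and the right-hand side into $\sum_{n\in\mathbb N} a^\Downarrow_n \lbrack 2\rbrack^{n+1}_q q^{-n-1} t^{n+1}$, as required. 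The $Ta(T)$ case uses $t\mapsto qt$ and gives the factor $q^{n+1}$.

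This argument is essentially bookkeeping, so there is no serious obstacle; the only point requiring care is verifying that the substitutions $t\mapsto q^{-1}t$ and $t\mapsto qt$ genuinely send $(q+q^{-1})/(t+t^{-1})$ to $S$ and $T$ respectively, and that these substitutions are legitimate operations on the formal power series (which they are, since each monomial $t^n$ maps to $q^{\mp n}t^n$ and the series remain well defined formally). I would verify the denominator matching explicitly at the outset, as it is the linchpin of the whole reduction, and then the four identities follow in parallel from Lemmas \ref{lem:warm} and \ref{lem:warm2}.
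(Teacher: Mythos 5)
Your proposal is correct and is exactly the paper's proof: the paper disposes of this lemma in one line by applying Lemmas \ref{lem:warm} and \ref{lem:warm2} with $t$ replaced by $q^{-1}t$ or $qt$, which is precisely the substitution argument you spell out. Your verification that these substitutions send $(q+q^{-1})/(t+t^{-1})$ to $S$ and $T$ is the same bookkeeping the paper leaves implicit.
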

\begin{proof} Apply Lemmas \ref{lem:warm}, \ref{lem:warm2} with $t$ replaced by $q^{-1}t$ or $qt$.
\end{proof}

\noindent The generating function $\mathcal Z(t)$ below is a variation on the generating function $\Delta(u)$ that appears in
\cite[Proposition~2.1]{basBel}.
\begin{definition} \label{lem:zV1} \rm For the algebra $\mathcal A_q$ define
\begin{align*}
\mathcal Z(t) &= 
t^{-1} ST\mathcal W^-(S) \mathcal W^+(T) +
t ST\mathcal W^+(S) \mathcal W^-(T) -
q^2 ST \mathcal W^-(S) \mathcal W^-(T)
\\
& \quad \qquad -
q^{-2}ST \mathcal W^+(S) \mathcal W^+(T) +
(q^2-q^{-2})^{-2} \mathcal G(S) \mathcal {\tilde G}(T).
\end{align*}
\end{definition}

\begin{remark}\rm In Appendix B we describe
$\lbrace \mathcal W^\Downarrow_{-n}\rbrace_{n\in\mathbb N}$,
$\lbrace \mathcal W^\Downarrow_{n+1}\rbrace_{n\in \mathbb N}$,
$\lbrace \mathcal G^\downarrow_{n}\rbrace_{n\in \mathbb N}$,
$\lbrace \mathcal {\tilde G}^\downarrow_{n}\rbrace_{n \in \mathbb N}$.
\end{remark}

 \begin{definition}\label{def:Zn} For $n \in \mathbb N$ define $\mathcal Z_n \in \mathcal A_q$ such that
 \begin{align*}
 \mathcal Z(t) = \sum_{n\in \mathbb N} \mathcal Z_n \lbrack 2 \rbrack_q^n t^n.
 \end{align*}
 \end{definition}

\noindent Later in this section we give an explicit formula for each $\mathcal Z_n$.
\medskip

\noindent Shortly we will show that each $\mathcal Z_n$ is central in $\mathcal A_q$. First we describe how $\mathcal Z(t)$ looks in the PBW basis for $\mathcal A_q$
from Theorem \ref{thm:rr}.

 \begin{lemma}\label{def:Zdef}
  The function $\mathcal Z(t)$ is equal to $ST$ times
 \begin{align*}
 &t^{-1} \mathcal W^-(S)\mathcal W^+(T)
 +
  t\mathcal W^-(T)\mathcal W^+(S)
 -
  q^2 \mathcal W^-(S)\mathcal W^-(T)
 \\
 &\qquad \qquad \qquad  -
   q^{-2} \mathcal W^+(S)\mathcal W^+(T)
 + \frac{t \mathcal G(T)\mathcal {\tilde G}(S)-t^{-1} \mathcal G(S) \mathcal {\tilde G}(T)}{(q^2-q^{-2})(q+q^{-1})^2 (S-T)}.
 \end{align*}
 \end{lemma}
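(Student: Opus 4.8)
The plan is to compare the two expressions for $\mathcal Z(t)$ summand by summand. Expanding $ST$ times the expression in the statement and setting it beside Definition \ref{lem:zV1}, three of the five terms agree verbatim: the summands $t^{-1}ST\,\mathcal W^-(S)\mathcal W^+(T)$, $-q^2ST\,\mathcal W^-(S)\mathcal W^-(T)$ and $-q^{-2}ST\,\mathcal W^+(S)\mathcal W^+(T)$ occur identically on both sides. Hence it is enough to prove that the leftover part of Definition \ref{lem:zV1}, namely
\[
t\,ST\,\mathcal W^+(S)\mathcal W^-(T)+(q^2-q^{-2})^{-2}\mathcal G(S)\mathcal {\tilde G}(T),
\]
equals $ST$ times the leftover part of the claimed expression, namely $t\,\mathcal W^-(T)\mathcal W^+(S)$ together with the fraction $\bigl(t\,\mathcal G(T)\mathcal {\tilde G}(S)-t^{-1}\mathcal G(S)\mathcal {\tilde G}(T)\bigr)\big/\bigl((q^2-q^{-2})(q+q^{-1})^2(S-T)\bigr)$.

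The decisive step is to eliminate the out-of-order factor $\mathcal W^+(S)\mathcal W^-(T)$ by means of the GF reduction rule in Proposition \ref{lem:rr}(ii). That rule is an identity of generating functions in two commuting indeterminates; since $S$ and $T$ from \eqref{eq:ST} are power series in $t$ with vanishing constant term, I specialize those two indeterminates to $S$ and $T$, obtaining
\[
\mathcal W^+(S)\mathcal W^-(T)=\mathcal W^-(T)\mathcal W^+(S)+\frac{\mathcal G(T)\mathcal {\tilde G}(S)-\mathcal G(S)\mathcal {\tilde G}(T)}{(q^2-q^{-2})(q+q^{-1})^2(S-T)}.
\]
The specialization is legitimate because the coefficient combination in Proposition \ref{lem:rr}(ii) is, despite the factor $s-t$, already a genuine power series (as in the proof of Proposition \ref{lem:rra}), so it survives substitution. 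Inserting this into $t\,ST\,\mathcal W^+(S)\mathcal W^-(T)$ replaces the leftover part of Definition \ref{lem:zV1} by $t\,ST\,\mathcal W^-(T)\mathcal W^+(S)$ plus a linear combination of $\mathcal G(T)\mathcal {\tilde G}(S)$ and $\mathcal G(S)\mathcal {\tilde G}(T)$. Matching coefficients against the target, the $\mathcal W^-(T)\mathcal W^+(S)$ terms and the $\mathcal G(T)\mathcal {\tilde G}(S)$ terms cancel at once, and the entire lemma collapses to the single scalar identity
\[
ST\,(t-t^{-1})=(q^2-q^{-2})^{-1}(q+q^{-1})^2\,(S-T).
\]

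Finally I verify this scalar identity from the closed forms \eqref{eq:ST}. A one-line computation gives $(q^{-1}t+qt^{-1})(qt+q^{-1}t^{-1})=t^2+t^{-2}+q^2+q^{-2}$, whence $ST=(q+q^{-1})^2/(t^2+t^{-2}+q^2+q^{-2})$ and $S-T=(q+q^{-1})(q-q^{-1})(t-t^{-1})/(t^2+t^{-2}+q^2+q^{-2})$; the claimed identity then drops out upon using $q^2-q^{-2}=(q-q^{-1})(q+q^{-1})$. I expect the only genuinely delicate point to be the justification of specializing the two-variable rule of Proposition \ref{lem:rr}(ii) to $s=S$, $t=T$, since this creates the denominator $S-T$; the remaining content is coefficient bookkeeping and the short scalar check above.
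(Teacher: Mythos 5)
Your proposal is correct and is essentially the paper's own proof spelled out in detail: the paper's one-line argument ("use the GF reduction rules to express $\mathcal Z(t)$ in the PBW basis") amounts precisely to applying Proposition \ref{lem:rr}(ii) with the indeterminates specialized to $S$, $T$ to reorder the lone out-of-order product $\mathcal W^+(S)\mathcal W^-(T)$, after which matching terms reduces to your scalar identity $ST(t-t^{-1})=(q^2-q^{-2})^{-1}(q+q^{-1})^2(S-T)$. Your justification of the specialization (the combination with denominator $s-t$ is already a genuine two-variable power series, as in the proof of Proposition \ref{lem:rra}) correctly handles the only delicate point.
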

 \begin{proof} Use the GF reduction rules to express $\mathcal Z(t)$ in the PBW basis for $\mathcal A_q$ from Theorem \ref{thm:rr}.
 \end{proof}
 
 \noindent We mention some alternative ways to express $\mathcal Z(t)$.
  \begin{lemma} \label{lem:zV2} We have
\begin{align*}
\mathcal Z(t) &=                  
t^{-1} ST\mathcal W^+(S) \mathcal W^-(T) +
t ST \mathcal W^-(S) \mathcal W^+(T) -
q^2 ST \mathcal W^+(S) \mathcal W^+(T) 
\\
&\quad \qquad 
-q^{-2} ST \mathcal W^-(S) \mathcal W^-(T)
 +
(q^2-q^{-2})^{-2} \mathcal {\tilde G}(S) \mathcal G(T),
\\
\mathcal Z(t) &= 
t^{-1} ST\mathcal W^+(T) \mathcal W^-(S) +
t ST\mathcal W^-(T) \mathcal W^+(S) 
-q^2 ST \mathcal W^-(T) \mathcal W^-(S)
\\
& \quad \qquad -
q^{-2}ST \mathcal W^+(T) \mathcal W^+(S) +
(q^2-q^{-2})^{-2} \mathcal G(T) \mathcal {\tilde G}(S),
\\
\mathcal Z(t) &= 
t^{-1} ST\mathcal W^-(T) \mathcal W^+(S) +
t ST\mathcal W^+(T) \mathcal W^-(S) -
q^{2}ST \mathcal W^+(T) \mathcal W^+(S) 
\\
& \quad \qquad -
q^{-2} ST \mathcal W^-(T) \mathcal W^-(S)
 +
(q^2-q^{-2})^{-2} \mathcal {\tilde G}(T) \mathcal G(S).
\end{align*}
\end{lemma}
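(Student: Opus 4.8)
The plan is to derive the three alternative expressions for $\mathcal Z(t)$ in Lemma \ref{lem:zV2} from the original Definition \ref{lem:zV1} by exploiting the symmetries $\sigma$ and $\dagger$ together with the already-established GF reduction rules. The key observation is that each of the three target formulas is related to Definition \ref{lem:zV1} by one of the structural operations available to us: swapping the roles of $\mathcal W^-$ and $\mathcal W^+$ (which is what $\sigma$ does) and swapping $\mathcal G$ with $\mathcal {\tilde G}$ (which both $\sigma$ and $\dagger$ do), or reversing the order of products (which is what $\dagger$ does as an antiautomorphism).

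First I would establish the first alternative formula. Observe that it is obtained from Definition \ref{lem:zV1} by the simultaneous substitution $\mathcal W^-\leftrightarrow \mathcal W^+$ and $\mathcal G(S)\mathcal {\tilde G}(T)\mapsto \mathcal {\tilde G}(S)\mathcal G(T)$, with the coefficients $-q^2$ and $-q^{-2}$ also interchanged. This is precisely the effect of applying $\sigma$, since by Lemma \ref{lem:aaut} the automorphism $\sigma$ sends $\mathcal W^-(t)\mapsto \mathcal W^+(t)$, $\mathcal W^+(t)\mapsto \mathcal W^-(t)$, $\mathcal G(t)\mapsto \mathcal {\tilde G}(t)$, $\mathcal {\tilde G}(t)\mapsto \mathcal G(t)$; note that $\sigma$ fixes the scalars $S,T,t$ since these do not involve the generators. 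Thus applying $\sigma$ to the equation of Definition \ref{lem:zV1} yields an expression for $\sigma(\mathcal Z(t))$; the main point needed is that $\sigma(\mathcal Z(t))=\mathcal Z(t)$, i.e.\ that $\mathcal Z(t)$ is $\sigma$-invariant. I would verify this invariance directly by comparing the $\sigma$-image of the right-hand side of Definition \ref{lem:zV1} against $\mathcal Z(t)$ itself, using the GF reduction rules (Proposition \ref{lem:rr}) to rewrite any resulting products into a common normal form; alternatively the symmetric PBW expression in Lemma \ref{def:Zdef} makes the invariance transparent once one checks the $\sigma$-action on that form.

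Next I would obtain the third formula by applying the antiautomorphism $\dagger$ to Definition \ref{lem:zV1}. Since $\dagger$ reverses products and by Lemma \ref{lem:aaut} fixes $\mathcal W^-(t)$, $\mathcal W^+(t)$ while swapping $\mathcal G(t)\leftrightarrow \mathcal {\tilde G}(t)$, applying $\dagger$ to the defining formula reverses each product $\mathcal W^-(S)\mathcal W^+(T)\mapsto \mathcal W^+(T)\mathcal W^-(S)$ and sends $\mathcal G(S)\mathcal {\tilde G}(T)\mapsto \mathcal G(T)\mathcal {\tilde G}(S)$, reproducing (after matching coefficients) the second formula in Lemma \ref{lem:zV2}; composing with $\sigma$ then gives the third. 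Here too the essential input is that $\mathcal Z(t)$ is fixed by $\dagger$, which I would check the same way. The second formula can then be deduced either directly as the $\dagger$-image, or by combining the first formula with the $\dagger$-symmetry.

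The main obstacle I anticipate is confirming the symmetry invariances $\sigma(\mathcal Z(t))=\mathcal Z(t)$ and $\dagger(\mathcal Z(t))=\mathcal Z(t)$, since a priori the five-term expression in Definition \ref{lem:zV1} is not manifestly invariant: the operations $\sigma$ and $\dagger$ each permute the terms and reverse some products, and one must use the GF reduction rules to show that the rearranged expression collapses back to $\mathcal Z(t)$. The cleanest route is to pass to the normal-ordered PBW form in Lemma \ref{def:Zdef}, where $\mathcal Z(t)$ is written in a single canonical ordering; once in that form, tracking the $\sigma$- and $\dagger$-actions term by term and verifying that the scalar coefficients (including the factor $(S-T)^{-1}$, which changes sign under the $S\leftrightarrow T$ swaps induced by these maps) combine correctly is a finite, if slightly delicate, bookkeeping task. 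I expect the entire verification to reduce to routine application of Proposition \ref{lem:rr} and Lemma \ref{lem:aaut}, so the proof should be short once the symmetry strategy is in place.
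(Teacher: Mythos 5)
Your proposal is correct, but it is organized differently from the paper's proof, and the comparison is instructive. The paper proves the lemma by brute force: for each of the three right-hand sides it applies the GF reduction rules to pass to the PBW normal form and checks that the result coincides with the normal form of $\mathcal Z(t)$ recorded in Lemma \ref{def:Zdef}. You instead observe that the three right-hand sides are exactly the images of the right-hand side of Definition \ref{lem:zV1} under $\sigma$, $\dagger$, and $\sigma\dagger$ respectively, so the lemma is equivalent to the $\sigma$- and $\dagger$-invariance of $\mathcal Z(t)$. Crucially, you avoid the circularity trap: in the paper this invariance is Lemma \ref{lem:OmS}, which is proved \emph{after} and \emph{using} Lemma \ref{lem:zV2}, so you cannot cite it; you correctly propose to establish the two invariances by direct reduction to normal form. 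Note that verifying $\sigma$-invariance this way is literally the same computation as the paper's verification of the first displayed equation, and verifying $\dagger$-invariance is the same as its second; what your framing buys is that the third equation then follows formally (apply the automorphism $\sigma$ to the already-established second identity), saving one of the three reductions and simultaneously yielding Lemma \ref{lem:OmS} as a by-product rather than a corollary.

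Two minor imprecisions, neither fatal. First, your fallback claim that the normal-form expression in Lemma \ref{def:Zdef} makes the invariance ``transparent'' is too optimistic: the $\sigma$- or $\dagger$-image of that expression is no longer in PBW order (e.g.\ $\sigma$ produces $\mathcal W^+(S)\mathcal W^-(T)$ and $\mathcal {\tilde G}(T)\mathcal G(S)$), so a genuine application of the reduction rules is still required — which you do concede elsewhere. Second, $\sigma$ and $\dagger$ fix the scalars $S$ and $T$; they do not ``induce $S\leftrightarrow T$ swaps.'' The sign changes you anticipate actually arise because the images force you to apply the reduction rules with the arguments $S,T$ in the opposite order, and coefficients such as $e_{s,t}$ are antisymmetric in $(s,t)$. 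With those points understood, your argument goes through.
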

\begin{proof} For each of the above three equations,  use the GF reduction rules to show that the right-hand side is equal to
 the right-hand side of the equation in Lemma  \ref{def:Zdef}.
\end{proof}


  \begin{lemma} \label{lem:OmS} The function $\mathcal Z(t)$  is fixed by the automorphism $\sigma$ from Lemma  \ref{lem:aut}
 and the antiautomorphism $\dagger$ from Lemma \ref{lem:antiaut}. 
 \end{lemma}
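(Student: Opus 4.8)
The plan is to show that $\mathcal Z(t)$ is invariant under both $\sigma$ and $\dagger$ by applying each map to the defining expression in Definition \ref{lem:zV1} and recognizing the result as one of the equivalent forms for $\mathcal Z(t)$ already established in Lemma \ref{lem:zV2}. The key point is that $S$ and $T$ are scalar power series in $t$ (see \eqref{eq:ST}), so they are fixed by any algebra (anti)automorphism; the action of $\sigma$ and $\dagger$ therefore only moves the generating functions around according to Lemma \ref{lem:aaut}.

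First I would treat $\sigma$. By Lemma \ref{lem:aaut}, $\sigma$ interchanges $\mathcal W^-(t)\leftrightarrow \mathcal W^+(t)$ and $\mathcal G(t)\leftrightarrow \mathcal {\tilde G}(t)$ for any scalar argument, in particular for the arguments $S$ and $T$. Applying $\sigma$ to the expression in Definition \ref{lem:zV1} thus replaces every $\mathcal W^-$ by $\mathcal W^+$ (and vice versa) and swaps $\mathcal G(S)\mathcal {\tilde G}(T)$ into $\mathcal {\tilde G}(S)\mathcal G(T)$, while $ST$ and the scalar coefficients are untouched. The resulting expression is exactly the first alternative formula for $\mathcal Z(t)$ displayed in Lemma \ref{lem:zV2}, so $\sigma(\mathcal Z(t))=\mathcal Z(t)$.

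Next I would treat $\dagger$. Since $\dagger$ is an antiautomorphism it reverses the order of each product, and by Lemma \ref{lem:aaut} it fixes $\mathcal W^-$, $\mathcal W^+$ while swapping $\mathcal G(t)\leftrightarrow \mathcal {\tilde G}(t)$. Applying $\dagger$ to Definition \ref{lem:zV1} therefore turns, for instance, $\mathcal W^-(S)\mathcal W^+(T)$ into $\mathcal W^+(T)\mathcal W^-(S)$ and $\mathcal G(S)\mathcal {\tilde G}(T)$ into $\mathcal G(T)\mathcal {\tilde G}(S)$, with all scalar coefficients (including $ST$ and $t^{\pm 1}$, which are central scalars) fixed. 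This yields precisely the third formula for $\mathcal Z(t)$ in Lemma \ref{lem:zV2}, so $\dagger(\mathcal Z(t))=\mathcal Z(t)$.

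The only subtlety, and the step I would be most careful about, is bookkeeping: one must verify that the term-by-term image under each map lands on the correct one of the three rewritten forms in Lemma \ref{lem:zV2}, matching signs and the $t^{\pm 1}$ factors exactly. This is a direct comparison of five terms in each case and involves no computation in $\mathcal A_q$ beyond the already-proved identities; there is no genuine obstacle, only the need to match the displayed formulas carefully.
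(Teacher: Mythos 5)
Your proof is correct and is essentially the paper's own argument: the paper likewise checks the lemma by applying the symmetries (via Lemma \ref{lem:aaut}, with the observation that $S,T$ are scalar power series and hence untouched) to the formula in Definition \ref{lem:zV1} and matching the result against the alternative formulas in Lemma \ref{lem:zV2}. One bookkeeping correction: the $\dagger$-image you compute, namely
$t^{-1}ST\,\mathcal W^+(T)\mathcal W^-(S)+tST\,\mathcal W^-(T)\mathcal W^+(S)-q^2ST\,\mathcal W^-(T)\mathcal W^-(S)-q^{-2}ST\,\mathcal W^+(T)\mathcal W^+(S)+(q^2-q^{-2})^{-2}\mathcal G(T)\mathcal {\tilde G}(S)$,
is the \emph{second} displayed formula in Lemma \ref{lem:zV2}, not the third (the third is the image of Definition \ref{lem:zV1} under $\sigma\dagger$); this mislabeling does not affect the validity of the argument.
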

 \begin{proof} This is routinely checked using the formulas for $\mathcal Z(t)$ from Lemmas \ref{lem:zV1}, \ref{lem:zV2} along with
 Lemma \ref{lem:aut} and Lemma \ref{lem:antiaut}.
 \end{proof}
 
\begin{lemma} For $n \in \mathbb N$ the element $\mathcal Z_n$ is fixed by the automorphism $\sigma$ from Lemma  \ref{lem:aut}
 and the antiautomorphism $\dagger$ from Lemma \ref{lem:antiaut}. 
 \end{lemma}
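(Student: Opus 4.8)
The plan is to derive the statement for each $\mathcal{Z}_n$ directly from the previously established fact that the whole generating function $\mathcal{Z}(t)$ is fixed by $\sigma$ and $\dagger$. The key observation is that the defining relation $\mathcal{Z}(t) = \sum_{n\in\mathbb{N}} \mathcal{Z}_n [2]_q^n t^n$ from Definition \ref{def:Zn} expresses $\mathcal{Z}(t)$ as a formal power series in the indeterminate $t$ whose coefficients are scalar multiples of the $\mathcal{Z}_n$. Since $\sigma$ and $\dagger$ act on elements of $\mathcal{A}_q$ while the indeterminate $t$ is inert under both maps, applying either map to $\mathcal{Z}(t)$ amounts to applying it coefficientwise.

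Concretely, first I would note that $\sigma$ is an algebra automorphism and $\dagger$ is an antiautomorphism of $\mathcal{A}_q$, and that both act on the formal power series $\mathcal{Z}(t) = \sum_{n\in\mathbb{N}} \mathcal{Z}_n [2]_q^n t^n$ by acting on each coefficient, leaving the powers $t^n$ and the scalars $[2]_q^n$ unchanged. Then I would invoke Lemma \ref{lem:OmS}, which asserts $\sigma(\mathcal{Z}(t)) = \mathcal{Z}(t)$ and $\dagger(\mathcal{Z}(t)) = \mathcal{Z}(t)$. Writing out
\begin{align*}
\sum_{n\in\mathbb{N}} \sigma(\mathcal{Z}_n) [2]_q^n t^n = \sigma(\mathcal{Z}(t)) = \mathcal{Z}(t) = \sum_{n\in\mathbb{N}} \mathcal{Z}_n [2]_q^n t^n,
\end{align*}
and comparing the coefficient of $t^n$ on each side (the powers of $t$ being linearly independent in the ring of formal power series over $\mathcal{A}_q$), I obtain $\sigma(\mathcal{Z}_n) [2]_q^n = \mathcal{Z}_n [2]_q^n$. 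Since $q$ is not a root of unity, $[2]_q = q + q^{-1} \neq 0$, so we may cancel the scalar $[2]_q^n$ to conclude $\sigma(\mathcal{Z}_n) = \mathcal{Z}_n$. The identical argument with $\dagger$ in place of $\sigma$ yields $\dagger(\mathcal{Z}_n) = \mathcal{Z}_n$.

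The only point requiring a moment's care is the passage from the statement about the generating function to the statement about its coefficients, that is, justifying that the maps $\sigma$ and $\dagger$ commute with the formal sum and act coefficientwise. This is immediate because $\sigma$ and $\dagger$ are $\mathbb{F}$-linear (an algebra automorphism and an antiautomorphism are both linear), so they distribute over the formal sum and fix the scalars $[2]_q^n$ and the formal variable $t$. I expect there to be essentially no genuine obstacle here: the entire content of the lemma has already been carried by Lemma \ref{lem:OmS}, and the present statement is simply its coefficientwise shadow, extracted by comparing powers of $t$ and cancelling the nonzero scalar $[2]_q^n$.
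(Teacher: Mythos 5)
Your proposal is correct and follows exactly the paper's own route: the paper's proof of this lemma is the single line ``By Lemma \ref{lem:OmS},'' and your argument simply spells out the implicit coefficient-extraction step (that $\sigma$ and $\dagger$ act coefficientwise on the formal series and that $\lbrack 2\rbrack_q^n \neq 0$ since $q$ is not a root of unity). The extra care you take in justifying the passage from the generating function to its coefficients is sound but is precisely what the paper leaves tacit.
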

 \begin{proof} By Lemma \ref{lem:OmS}.
 \end{proof}
 
\noindent Let $r$ denote an indeterminate the commutes with $t$.
 \begin{lemma} \label{lem:ZtCentral}
 For the algebra $\mathcal A_q$,
 \begin{align*}
& \lbrack \mathcal W^-(r), \mathcal Z(t) \rbrack = 0, \qquad \qquad \lbrack \mathcal W^+(r) , \mathcal Z(t) \rbrack=0,
 \\
 & \lbrack \mathcal G(r), \mathcal Z(t) \rbrack=0, \qquad \quad \qquad \lbrack \mathcal {\tilde G}(r), \mathcal Z(t) \rbrack=0.
 \end{align*}
 \end{lemma}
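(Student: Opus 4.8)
The plan is to use the symmetries of $\mathcal A_q$ to reduce the four stated identities to two, and then to verify each remaining identity by normal ordering with respect to the PBW basis of Theorem~\ref{thm:rr}. For the reduction, recall from Lemma~\ref{lem:OmS} that $\mathcal Z(t)$ is fixed by the automorphism $\sigma$ of Lemma~\ref{lem:aut} and by the antiautomorphism $\dagger$ of Lemma~\ref{lem:antiaut}. Applying $\sigma$ to $\lbrack \mathcal W^-(r), \mathcal Z(t)\rbrack$ and using $\sigma(\mathcal W^-(r)) = \mathcal W^+(r)$ together with $\sigma(\mathcal Z(t)) = \mathcal Z(t)$ shows that $\lbrack \mathcal W^-(r), \mathcal Z(t)\rbrack = 0$ is equivalent to $\lbrack \mathcal W^+(r), \mathcal Z(t)\rbrack = 0$. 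Applying the anti-multiplicative $\dagger$ to $\lbrack \mathcal G(r), \mathcal Z(t)\rbrack$ and using $\dagger(\mathcal G(r)) = \mathcal {\tilde G}(r)$, $\dagger(\mathcal Z(t)) = \mathcal Z(t)$ gives $\dagger\lbrack \mathcal G(r), \mathcal Z(t)\rbrack = -\lbrack \mathcal {\tilde G}(r), \mathcal Z(t)\rbrack$, so $\lbrack \mathcal G(r), \mathcal Z(t)\rbrack = 0$ is equivalent to $\lbrack \mathcal {\tilde G}(r), \mathcal Z(t)\rbrack = 0$. It therefore suffices to prove $\lbrack \mathcal W^-(r), \mathcal Z(t)\rbrack = 0$ and $\lbrack \mathcal G(r), \mathcal Z(t)\rbrack = 0$.

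For each of these two relations I would compute $\mathcal W^-(r)\mathcal Z(t)$ and $\mathcal Z(t)\mathcal W^-(r)$ (resp. with $\mathcal G(r)$) and compare them in the PBW basis. Starting from the PBW-ordered expression for $\mathcal Z(t)$ in Lemma~\ref{def:Zdef}, I expand the commutator by the Leibniz rule, so that commuting $\mathcal W^-(r)$ across $\mathcal Z(t)$ reduces to commuting it across each generating-function factor. By \eqref{eq:3pp4} the commutators of $\mathcal W^-(r)$ with $\mathcal W^-(S)$ and $\mathcal W^-(T)$ vanish, so only the factors $\mathcal W^+(\cdot)$, $\mathcal G(\cdot)$, $\mathcal {\tilde G}(\cdot)$ contribute, through the GF reduction rules (ii), (v), (vii) of Proposition~\ref{lem:rr}. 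I then substitute the power series $S$ and $T$ by means of Lemma~\ref{lem:ramp}, re-normal-order the resulting three-fold products using the GF reduction rules, and check that the two orderings yield the same element of the PBW basis of Theorem~\ref{thm:rr}. The case of $\mathcal G(r)$ is handled in the same way, now using $\lbrack \mathcal G(r), \mathcal G(\cdot)\rbrack = 0$ from \eqref{eq:3pp10} together with the reduction rules (iii), (iv), (v) governing $\mathcal G$ against $\mathcal {\tilde G}$ and against $\mathcal W^{\pm}$. Alternatively, one could avoid explicit normal forms and manipulate directly using the generating-function relations of Lemma~\ref{lem:ad} and Proposition~\ref{eq:p12top19}.

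The difficulty is entirely in the cancellation. Commuting $\mathcal W^-(r)$ past the quadratic-in-$\mathcal W$ terms of $\mathcal Z(t)$ produces $\mathcal G\,\mathcal {\tilde G}$ contributions (via rule (ii)), whereas commuting it past the term $\mathcal G(S)\mathcal {\tilde G}(T)$ produces $\mathcal G\,\mathcal W$ and $\mathcal W\,\mathcal {\tilde G}$ contributions (via rules (v), (vii)); after re-ordering these must cancel to zero. The cancellation is controlled by rational-function identities in $S$ and $T$ — for the denominators $S - T$, the products $ST$, and factors such as $ST - 1$ — which are exactly arranged by the special choice $S = (q+q^{-1})/(q^{-1}t + qt^{-1})$ and $T = (q+q^{-1})/(qt + q^{-1}t^{-1})$; for instance $S - T$ and $ST$ have the closed forms $(q+q^{-1})(q-q^{-1})(t - t^{-1})$ and $(q+q^{-1})^2$, each divided by $(q^{-1}t+qt^{-1})(qt+q^{-1}t^{-1}) = t^2 + t^{-2} + q^2 + q^{-2}$. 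Matching these $S,T$-rational factors against the coefficient functions $e_{s,t}, f_{s,t}, a_{s,t}, \ldots$ of Proposition~\ref{lem:rr}, and tracking the $\rho$ and $(q^2 - q^{-2})$ prefactors, is where the genuine content and the bulk of the bookkeeping reside; the symmetries guarantee internal consistency but do not carry out the cancellation.
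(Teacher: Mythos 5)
Your proposal is correct and takes essentially the same approach as the paper: the paper's proof likewise starts from the PBW-ordered expression for $\mathcal Z(t)$ in Lemma \ref{def:Zdef} and uses the GF reduction rules to write each of the four commutators in the PBW basis of Theorem \ref{thm:rr}, where each is found to be zero. Your preliminary use of $\sigma$ and $\dagger$ together with Lemma \ref{lem:OmS} to cut the four identities down to two is a sound refinement (and your closed forms for $S-T$ and $ST$ check out), but the substance of the argument --- the normal-ordering cancellation via the reduction rules --- is the same as in the paper.
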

 \begin{proof} Working with the formula for $\mathcal Z(t)$ given in Lemma \ref{def:Zdef},
 use the GF reduction rules to express each of the above commutators in the PBW basis from Theorem \ref{thm:rr}. In each case the commutator is found to be zero.
 \end{proof}
 
 \noindent The following result is a variation on \cite[Proposition~2.1]{basBel}.
 \begin{proposition} \label{prop:ZC}
 The elements $\lbrace \mathcal Z_n \rbrace_{n \in \mathbb N}$ are central in $\mathcal A_q$.
 \end{proposition}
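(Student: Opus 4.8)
The plan is to deduce the centrality of each $\mathcal Z_n$ directly from Lemma \ref{lem:ZtCentral}, which already establishes that the generating function $\mathcal Z(t)$ commutes with each of the four generating functions $\mathcal W^-(r)$, $\mathcal W^+(r)$, $\mathcal G(r)$, $\mathcal {\tilde G}(r)$. The key observation is that these four generating functions encode all the alternating generators of $\mathcal A_q$, and by Definition \ref{def:Zn} the elements $\mathcal Z_n$ are exactly the coefficients in the expansion of $\mathcal Z(t)$.

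First I would recall that an element of $\mathcal A_q$ is central precisely when it commutes with every generator, since $\mathcal A_q$ is generated by its alternating generators. By Lemma \ref{lem:ZtCentral} we have $\lbrack \mathcal W^-(r), \mathcal Z(t) \rbrack = 0$, and expanding $\mathcal W^-(r) = \sum_{k \in \mathbb N} \mathcal W_{-k} r^k$ and extracting the coefficient of $r^k$ shows that $\lbrack \mathcal W_{-k}, \mathcal Z(t) \rbrack = 0$ for all $k \in \mathbb N$. Applying the same coefficient-extraction argument to the remaining three commutators in Lemma \ref{lem:ZtCentral} yields that $\mathcal Z(t)$ commutes with each $\mathcal W_{k+1}$, each $\mathcal G_{k+1}$, and each $\mathcal {\tilde G}_{k+1}$. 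Hence $\mathcal Z(t)$ commutes with every alternating generator.

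Next, I would expand $\mathcal Z(t) = \sum_{n \in \mathbb N} \mathcal Z_n \lbrack 2 \rbrack_q^n t^n$ as in Definition \ref{def:Zn} and, since each of the above commutators vanishes as a formal power series in $t$, extract the coefficient of $t^n$. Because $\lbrack 2 \rbrack_q$ is a nonzero scalar (as $q$ is not a root of unity), vanishing of the $t^n$-coefficient of, say, $\lbrack g, \mathcal Z(t) \rbrack$ for a generator $g$ forces $\lbrack g, \mathcal Z_n \rbrack = 0$. Thus each $\mathcal Z_n$ commutes with every alternating generator, and therefore with all of $\mathcal A_q$, so each $\mathcal Z_n$ is central.

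This argument is essentially bookkeeping: the substantive content has already been carried out in Lemma \ref{lem:ZtCentral}, where the GF reduction rules were used to verify that the commutators vanish in the PBW basis. The only point requiring a moment of care is the legitimacy of equating coefficients of formal power series in the commuting indeterminates $r$ and $t$, which is routine since all generating functions are treated formally and convergence is never at issue. I do not anticipate any genuine obstacle here; the proof is a short corollary of the preceding lemma.
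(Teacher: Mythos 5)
Your proposal is correct and follows the same route as the paper: the paper's proof of Proposition \ref{prop:ZC} is simply ``By Lemma \ref{lem:ZtCentral},'' with the coefficient-extraction bookkeeping you spell out left implicit. Your explicit treatment of equating coefficients in $r$ and $t$ (including the observation that $\lbrack 2 \rbrack_q \neq 0$) is exactly the intended content of that one-line proof.
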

 \begin{proof} By Lemma \ref{lem:ZtCentral}.
 \end{proof}

 \noindent Our next goal is to interpret $\mathcal Z(t)$ using matrices. Strictly speaking this interpretation is not needed, but it helps to illuminate the nature of $\mathcal Z(t)$.

\begin{lemma} \label{lem:prep1} For the algebra $\mathcal A_q$,
\begin{align*}
  0&= qT
    \mathcal G(S) \mathcal W^+(T)
 - q^{-1} t^{-1}T
 \mathcal G(S) \mathcal W^-(T) 
-q^{-1}S \mathcal W^+ (S) \mathcal G(T) +
  qt^{-1}S \mathcal W^-(S) \mathcal  G(T),
  \\
    0 &= 
   qT
    \mathcal {\tilde G}(S) \mathcal W^-(T)
 - q^{-1} t^{-1}T
 \mathcal {\tilde G}(S) \mathcal W^+(T) 
-q^{-1}S \mathcal W^- (S) \mathcal {\tilde G}(T) +
  qt^{-1}S \mathcal W^+(S) \mathcal  {\tilde G}(T),
  \\
   0 &= 
   qT
    \mathcal W^+(T) \mathcal {\tilde G}(S)
 - q^{-1} t^{-1}T
 \mathcal W^-(T) \mathcal {\tilde G}(S) 
-q^{-1}S \mathcal {\tilde G} (T) \mathcal W^+(S) +
  qt^{-1}S \mathcal {\tilde G}(T) \mathcal  W^-(S),
\\
 0&= qT
    \mathcal W^-(T) \mathcal G(S)
 - q^{-1} t^{-1}T
 \mathcal W^+(T) \mathcal G(S) 
-q^{-1}S \mathcal G (T) \mathcal W^-(S) +
  qt^{-1}S \mathcal G(T) \mathcal  W^+(S).
 \end{align*}
 \end{lemma}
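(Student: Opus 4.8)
The plan is to prove the first of the four displayed equations directly and then deduce the other three by symmetry; denote the first equation by $(\ast)$. In $(\ast)$ the two products $\mathcal W^+(S)\mathcal G(T)$ and $\mathcal W^-(S)\mathcal G(T)$ are out of the PBW order \eqref{eq:order}, since a $\mathcal W$-function precedes a $\mathcal G$-function, whereas the other two products $\mathcal G(S)\mathcal W^\pm(T)$ are already in PBW order. So first I would apply the GF reduction rules of Proposition \ref{lem:rr}(iv),(v) with the substitution $s\mapsto T$, $t\mapsto S$ to rewrite $\mathcal W^+(S)\mathcal G(T)$ and $\mathcal W^-(S)\mathcal G(T)$ as linear combinations of the four PBW products $\mathcal G(S)\mathcal W^-(T)$, $\mathcal G(T)\mathcal W^-(S)$, $\mathcal G(S)\mathcal W^+(T)$, $\mathcal G(T)\mathcal W^+(S)$, using the coefficients named in Proposition \ref{lem:rr}(iv),(v) evaluated at $s=T$, $t=S$. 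Substituting these expansions into the left-hand side of $(\ast)$ leaves a linear combination of these same four PBW products, so it remains only to check that each of the four resulting scalar coefficients vanishes.

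These four coefficients are rational functions of $S$, $T$, $t$, $q$, and each cancellation follows from the explicit formulas \eqref{eq:ST}. The computation is organized around a few elementary identities, the most useful being $\frac{ST(q-q^{-1})}{T-S}=\frac{q+q^{-1}}{t^{-1}-t}$, together with $1-t^{-1}T=\frac{-q(t^{-1}-t)}{qt+q^{-1}t^{-1}}$ and $t^{-1}-T=\frac{q^{-1}t^{-1}(t^{-1}-t)}{qt+q^{-1}t^{-1}}$, each an immediate consequence of $S=\frac{q+q^{-1}}{q^{-1}t+qt^{-1}}$ and $T=\frac{q+q^{-1}}{qt+q^{-1}t^{-1}}$. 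For instance, the coefficient of $\mathcal G(S)\mathcal W^+(T)$ collapses to $qT+\frac{q+q^{-1}}{t^{-1}-t}(1-t^{-1}T)=qT-qT=0$, and the coefficient of $\mathcal G(T)\mathcal W^+(S)$ reduces, after clearing denominators, to the relation $-q(q^{-1}t+qt^{-1})+q^{-1}(qt+q^{-1}t^{-1})+t^{-1}(q^2-q^{-2})=0$, which holds identically. The remaining two coefficients vanish by the same mechanism. This proves $(\ast)$.

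To obtain the remaining three equations I would exploit the symmetries. Applying the automorphism $\sigma$ of Lemma \ref{lem:aut} to $(\ast)$, and using Lemma \ref{lem:aaut} together with the fact that $\sigma$ fixes the scalar power series $S$ and $T$, carries $(\ast)$ to the second equation, since $\sigma$ preserves the order of products and interchanges $\mathcal W^-\leftrightarrow\mathcal W^+$ and $\mathcal G\leftrightarrow\mathcal{\tilde G}$. Applying instead the antiautomorphism $\dagger$ of Lemma \ref{lem:antiaut}, which reverses products, fixes $\mathcal W^\pm$, and interchanges $\mathcal G\leftrightarrow\mathcal{\tilde G}$, carries $(\ast)$ to the third equation. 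Finally, applying $\sigma\dagger$ (equivalently, applying $\sigma$ to the third equation), which is unambiguous since $\sigma$ and $\dagger$ commute by Lemma \ref{lem:sdcom}, yields the fourth equation.

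The main obstacle I anticipate is purely bookkeeping in the first step: one must keep the four PBW products carefully distinguished, track the eight coefficients produced by the two reduction rules, and correctly combine the denominators $1/(T-S)$ with the prefactors $qT$, $q^{-1}t^{-1}T$, $q^{-1}S$, $qt^{-1}S$ and the products $ST$ before invoking \eqref{eq:ST}. Once the expansion is organized so that the coefficient of each PBW product is isolated, the verification of each cancellation is a short and routine manipulation of the rational functions $S$ and $T$.
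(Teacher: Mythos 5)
Your proposal is correct and takes essentially the same approach as the paper: the paper's proof likewise uses the GF reduction rules to rewrite the out-of-order products in the PBW basis of Theorem \ref{thm:rr} and observes that all resulting coefficients vanish (and your explicit cancellations, e.g. $qT+\frac{q+q^{-1}}{t^{-1}-t}(1-t^{-1}T)=0$, check out). Your only refinement is to carry out the coefficient computation for the first equation alone and deduce the other three via $\sigma$, $\dagger$, and $\sigma\dagger$, a legitimate economy fully consistent with Lemmas \ref{lem:aut}, \ref{lem:antiaut}, \ref{lem:sdcom}, and \ref{lem:aaut}.
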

 \begin{proof} For each of the above four equations, use the GF reduction rules to express the right-hand side in the PBW basis for $\mathcal A_q$ from Theorem \ref{thm:rr}.
 In each case the right-hand side is found to be zero.
 \end{proof}

\begin{lemma} The matrix $\mathcal Z(t) I$ is equal to the product of the following two matrices, in either order:
\begin{align*}
    &    \left(
         \begin{array}{cc}
              q^{-1}t S\mathcal W^+(S) -q S\mathcal W^-(S) & (q^2-q^{-2})^{-1} \mathcal G(S)
	        \\
               (q^2-q^{-2})^{-1}  \mathcal {\tilde G}(S) & qt^{-1} S\mathcal W^+(S)-q^{-1} S \mathcal W^-(S)
                  \end{array}
              \right),
\\
 &    \left(
         \begin{array}{cc}
              q T\mathcal W^-(T) -q^{-1}t^{-1} T \mathcal W^+(T) & (q^2-q^{-2})^{-1} \mathcal G(T)
	        \\
               (q^2-q^{-2})^{-1} \mathcal {\tilde G}(T) & q^{-1} T\mathcal W^-(T)-q t T \mathcal W^+(T)
                  \end{array}
              \right).
\end{align*}
\end{lemma}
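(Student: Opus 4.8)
The plan is to verify the claimed matrix factorization by direct computation, exploiting the formulas for $\mathcal Z(t)$ already established in Lemmas \ref{lem:zV1} and \ref{lem:zV2} together with the orthogonality-type relations in Lemma \ref{lem:prep1}. Writing the first matrix as $M(S)$ and the second as $N(T)$, the assertion is that both $M(S)N(T)$ and $N(T)M(S)$ equal $\mathcal Z(t) I$. Since $\mathcal Z(t) I$ is a scalar matrix, this is equivalent to showing that each product is scalar (its two diagonal entries agree and its two off-diagonal entries vanish) and that the common diagonal entry is $\mathcal Z(t)$.

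First I would compute $M(S) N(T)$ entrywise. The $(1,1)$ entry is a sum of four terms coming from the $\mathcal W$-part of the product plus one term $(q^2-q^{-2})^{-2}\mathcal G(S)\mathcal{\tilde G}(T)$ from the off-diagonal product; multiplying out and comparing with the defining formula in Lemma \ref{lem:zV1}, this entry should be exactly $\mathcal Z(t)$ after using the factor $ST$ and the explicit coefficients $q^{-1}tS\mathcal W^+(S)-qS\mathcal W^-(S)$ times $qT\mathcal W^-(T)-q^{-1}t^{-1}T\mathcal W^+(T)$. Likewise the $(2,2)$ entry, built from the lower-right scalar entries plus $(q^2-q^{-2})^{-2}\mathcal{\tilde G}(S)\mathcal G(T)$, should match one of the alternative expressions for $\mathcal Z(t)$ in Lemma \ref{lem:zV2}, confirming the two diagonal entries coincide. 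The crucial point is that the off-diagonal entries of the product vanish: the $(1,2)$ entry is precisely a combination of the form appearing in Lemma \ref{lem:prep1} (the terms pairing $\mathcal G(S)$ or $\mathcal{\tilde G}(S)$ against $\mathcal W^\pm(T)$ and $\mathcal W^\pm(S)$ against $\mathcal G(T)$ or $\mathcal{\tilde G}(T)$), so it is zero by one of those four identities, and similarly for the $(2,1)$ entry.

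For the reverse order $N(T)M(S)$, I would repeat the calculation; here the diagonal entries will match the remaining two formulas for $\mathcal Z(t)$ in Lemma \ref{lem:zV2}, and the vanishing of the off-diagonal entries again follows from the identities in Lemma \ref{lem:prep1}, just with the roles of $S$ and $T$ (equivalently the last two lines of that lemma) interposed. Throughout, all products can be rewritten in the PBW basis of Theorem \ref{thm:rr} using the GF reduction rules, which is exactly the mechanism by which Lemmas \ref{lem:zV1}, \ref{lem:zV2}, and \ref{lem:prep1} were justified; so the entire computation is a bookkeeping exercise in matching coefficients.

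The main obstacle I anticipate is purely organizational rather than conceptual: one must carefully track the scalar prefactors $S$, $T$, $ST$, the powers of $q$ and $t$, and the various sign conventions so that the cross terms align correctly with the precise form of Lemma \ref{lem:prep1}. In particular, verifying that the $\mathcal G\mathcal{\tilde G}$ and $\mathcal{\tilde G}\mathcal G$ contributions to the diagonal entries reconstitute the $(q^2-q^{-2})^{-2}$-weighted terms in Lemmas \ref{lem:zV1} and \ref{lem:zV2} requires matching each of the four formulas for $\mathcal Z(t)$ to the correct entry and product order, and it is here that a sign or an exchange of $S$ and $T$ could easily be mishandled. No new relations are needed beyond those already proved, so once the correspondence with Lemma \ref{lem:prep1} and Lemmas \ref{lem:zV1}, \ref{lem:zV2} is set up correctly, the proof reduces to a routine verification.
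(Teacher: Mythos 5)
Your proposal is correct and follows essentially the same route as the paper's proof: the diagonal entries of each product (in either order) are identified with the four expressions for $\mathcal Z(t)$ in Definition \ref{lem:zV1} and Lemma \ref{lem:zV2}, and the off-diagonal entries vanish by the four identities of Lemma \ref{lem:prep1}. The only difference is that you spell out the bookkeeping that the paper compresses into a single sentence.
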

\begin{proof} For the product of the above two matrices, each diagonal entry is equal to $\mathcal Z(t)$ by Definition \ref{lem:zV1} and Lemma \ref{lem:zV2}, and each off-diagonal entry is equal to zero
by Lemma \ref{lem:prep1}.
\end{proof}

\begin{definition}\label{def:calZ}
\rm Let $\mathcal Z$ denote the subalgebra of $\mathcal A_q$ generated by $\lbrace \mathcal Z_n \rbrace_{n=1}^\infty$.
\end{definition}
\noindent By Proposition \ref{prop:ZC}
the algebra $\mathcal Z$ is contained in the center of $\mathcal A_q$. Later in the paper we will show that $\mathcal Z$ is equal to the center of $\mathcal A_q$.
\medskip

\noindent Our next goal is to show that the algebra $\mathcal A_q$ is generated by  $\mathcal W_0, \mathcal W_1, \mathcal Z$.
The following result is a variation on \cite[Lemma~2.1]{basBel}.
\begin{lemma} \label{lem:Znform} For $n\in\mathbb N$,
\begin{align*}
\mathcal Z_n &= \lbrack 2 \rbrack_q \sum_{k=0}^{n-1} \mathcal W^\Downarrow_{-k} \mathcal W^\Downarrow_{n-k} q^{n-1-2k}
+\lbrack 2 \rbrack_q^{-1} \sum_{k=0}^{n-3} \mathcal W^\Downarrow_{n-2-k} \mathcal W^\Downarrow_{-k} q^{2k-n+3}
-  \sum_{k=0}^{n-2} \mathcal W^\Downarrow_{-k} \mathcal W^\Downarrow_{k-n+2} q^{n-2k}
\\
&\quad - \sum_{k=0}^{n-2} \mathcal W^\Downarrow_{k+1} \mathcal W^\Downarrow_{n-k-1}q^{n-2k-4}
 + (q^2-q^{-2})^{-2} \sum_{k=0}^n \mathcal G^\downarrow_k \mathcal {\tilde G}^\downarrow_{n-k} q^{n-2k}
\end{align*}
\end{lemma}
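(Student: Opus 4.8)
The plan is to expand the defining expression for $\mathcal Z(t)$ in Definition~\ref{lem:zV1} as a power series in $t$ and then read off the coefficient of $\lbrack 2\rbrack_q^n t^n$, as prescribed by Definition~\ref{def:Zn}. Every summand of $\mathcal Z(t)$ is a product of two of the functions $\mathcal W^{\pm}$, $\mathcal G$, $\mathcal{\tilde G}$ evaluated at $S$ or $T$, so the whole computation reduces to substituting the expansions supplied by Lemma~\ref{lem:ramp}. The four bilinear $\mathcal W$ terms have the form $ST\,\mathcal W^{\epsilon}(S)\mathcal W^{\epsilon'}(T)$, which I would regroup as $\bigl(S\mathcal W^{\epsilon}(S)\bigr)\bigl(T\mathcal W^{\epsilon'}(T)\bigr)$ so that the $Sa(S)$ and $Ta(T)$ formulas of Lemma~\ref{lem:ramp} apply verbatim; the term $(q^2-q^{-2})^{-2}\mathcal G(S)\mathcal{\tilde G}(T)$ is handled by the $a(S)$ and $a(T)$ formulas.

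Concretely, I would first record $S\mathcal W^-(S)=\sum_m \mathcal W^\Downarrow_{-m}\,q^{-m-1}\lbrack 2\rbrack_q^{m+1}t^{m+1}$ and $T\mathcal W^-(T)=\sum_m \mathcal W^\Downarrow_{-m}\,q^{m+1}\lbrack 2\rbrack_q^{m+1}t^{m+1}$, with the $\mathcal W^+$ versions obtained by replacing $\mathcal W^\Downarrow_{-m}$ by $\mathcal W^\Downarrow_{m+1}$, and $\mathcal G(S)=\sum_m \mathcal G^\downarrow_m\,q^{-m}\lbrack 2\rbrack_q^m t^m$, $\mathcal{\tilde G}(T)=\sum_m \mathcal{\tilde G}^\downarrow_m\,q^{m}\lbrack 2\rbrack_q^m t^m$. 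Multiplying the two series in each term and collecting, the powers of $\lbrack 2\rbrack_q$ add, the powers of $q$ combine into a single factor $q^{k-j}$ from the two summation indices $j,k$, and the external factors $t^{-1}$, $t$, $-q^2$, $-q^{-2}$, $(q^2-q^{-2})^{-2}$ shift things appropriately. Extracting the coefficient of $t^n$ from each term then forces a single linear relation between $j$ and $k$, and substituting $k=n-1-j$, $k=n-3-j$, $k=n-2-j$, $k=n-2-j$, $k=n-j$ respectively reproduces, one summand at a time, the five sums in the statement.

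The one point that needs attention is the passage from the coefficient of $t^n$ to the coefficient of $\lbrack 2\rbrack_q^n t^n$: the first $\mathcal W$ term naturally carries $\lbrack 2\rbrack_q^{n+1}$ (whence the leading factor $\lbrack 2\rbrack_q$), the second naturally carries $\lbrack 2\rbrack_q^{n-1}$ (whence $\lbrack 2\rbrack_q^{-1}$), and the remaining three naturally carry $\lbrack 2\rbrack_q^{n}$, so that dividing through by $\lbrack 2\rbrack_q^n$ yields exactly the prefactors displayed in the lemma. Summing the five contributions gives the claimed formula. I do not anticipate any conceptual obstacle; the work is a bounded, uniform computation, and the only real risk is a clerical error in an exponent of $q$ or in a summation bound, so the main care goes into matching the ranges and verifying each $q$-power after the index substitutions.
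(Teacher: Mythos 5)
Your proposal is correct and takes exactly the paper's route: the paper's proof is the one-line instruction ``Evaluate the $\mathcal Z(t)$ formula in Definition \ref{lem:zV1} using Lemma \ref{lem:ramp},'' which is precisely your plan. Your regrouping of each $ST\,\mathcal W^{\epsilon}(S)\mathcal W^{\epsilon'}(T)$ as $\bigl(S\mathcal W^{\epsilon}(S)\bigr)\bigl(T\mathcal W^{\epsilon'}(T)\bigr)$, the index substitutions $k=n-1-j$, $n-3-j$, $n-2-j$, $n-2-j$, $n-j$, and the $\lbrack 2\rbrack_q$-power bookkeeping ($\lbrack 2\rbrack_q^{n+1}$, $\lbrack 2\rbrack_q^{n-1}$, $\lbrack 2\rbrack_q^{n}$ respectively) all check out and fill in exactly the computation the paper leaves implicit.
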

\begin{proof} Evaluate the $\mathcal Z(t)$ formula in Definition \ref{lem:zV1} using Lemma \ref{lem:ramp}.
\end{proof}
\begin{remark} \label{rem:charF} \rm At the beginning of this section we mentioned some central elements $\lbrace \Delta_n \rbrace_{n=1}^\infty$ in $\mathcal A_q$. These central elements
are computed in \cite[Lemma~2.1]{basBel}. 
Comparing Lemma \ref{lem:Znform} with \cite[Lemma~2.1]{basBel} we obtain
\begin{align*}
\Delta_n = - 2 \frac{q-q^{-1}}{q^n + q^{-n}} \mathcal Z_n \qquad \qquad n\geq 1.
\end{align*}
\noindent  If ${\rm Char}(\mathbb F)=2$ then $\Delta_n=0$ for $n\geq 1$.
\end{remark}

\begin{lemma} \label{lem:Z0} We have
\begin{align}
\label{eq:Z0}
\mathcal Z_0 = (q^2-q^{-2})^{-2} \mathcal G_0 \mathcal {\tilde G}_0 = \lbrack 2 \rbrack^2_q.
\end{align}
\end{lemma}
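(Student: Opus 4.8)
The plan is to obtain $\mathcal Z_0$ by specializing the explicit formula for $\mathcal Z_n$ from Lemma \ref{lem:Znform} to the case $n=0$, and then to simplify the resulting expression using \eqref{eq:GG0}. This is the most direct route, since Lemma \ref{lem:Znform} is stated for all $n\in\mathbb N$ and the present statement is exactly its degenerate instance.

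First I would set $n=0$ in Lemma \ref{lem:Znform}. The first four summations have upper limits $n-1$, $n-3$, $n-2$, $n-2$ respectively, so at $n=0$ each is a sum over an empty index set and therefore vanishes. The only surviving contribution is the final term $(q^2-q^{-2})^{-2}\sum_{k=0}^{n} \mathcal G^\downarrow_k \mathcal {\tilde G}^\downarrow_{n-k} q^{n-2k}$, which at $n=0$ collapses to the single summand $k=0$, namely $(q^2-q^{-2})^{-2}\mathcal G^\downarrow_0 \mathcal {\tilde G}^\downarrow_0$. Since $a^\downarrow_0 = a_0$ by Lemma \ref{lem:warm}, we have $\mathcal G^\downarrow_0 = \mathcal G_0$ and $\mathcal {\tilde G}^\downarrow_0 = \mathcal {\tilde G}_0$, giving the first asserted equality $\mathcal Z_0 = (q^2-q^{-2})^{-2}\mathcal G_0 \mathcal {\tilde G}_0$.

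For the second equality I would substitute $\mathcal G_0 = \mathcal {\tilde G}_0 = -(q-q^{-1})\lbrack 2 \rbrack^2_q$ from \eqref{eq:GG0}, so that $\mathcal G_0 \mathcal {\tilde G}_0 = (q-q^{-1})^2 \lbrack 2 \rbrack^4_q$. Using the factorization $q^2-q^{-2} = (q-q^{-1})(q+q^{-1}) = (q-q^{-1})\lbrack 2 \rbrack_q$, and hence $(q^2-q^{-2})^2 = (q-q^{-1})^2 \lbrack 2 \rbrack^2_q$, the factors of $(q-q^{-1})^2$ and two copies of $\lbrack 2 \rbrack_q$ cancel, leaving $\mathcal Z_0 = \lbrack 2 \rbrack^2_q$.

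There is essentially no genuine obstacle here; the computation is routine once the $n=0$ specialization is taken. The only points needing a moment's care are the empty-sum conventions for the four $\mathcal W^\Downarrow$-terms and the base case $a^\downarrow_0 = a_0$. As an alternative that avoids invoking Lemma \ref{lem:Znform} at all, one could read off $\mathcal Z_0$ directly as the constant term of the generating function $\mathcal Z(t)$ in Definition \ref{lem:zV1}: since $S$ and $T$ each vanish at $t=0$, the four quadratic-in-$\mathcal W$ products each carry a factor $ST$ and so start at order $t^2$, while only $(q^2-q^{-2})^{-2}\mathcal G(S)\mathcal {\tilde G}(T)$ has a nonzero $t^0$ coefficient, equal to $\mathcal G_0\mathcal {\tilde G}_0$.
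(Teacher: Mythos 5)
Your proof is correct and follows the same route as the paper: the paper's proof is exactly ``set $n=0$ in Lemma \ref{lem:Znform} and use \eqref{eq:GG0}'', which is what you carry out (with the empty-sum and $\mathcal G^\downarrow_0=\mathcal G_0$ details made explicit, and the cancellation $(q^2-q^{-2})^2=(q-q^{-1})^2\lbrack 2\rbrack_q^2$ verified). Your alternative reading of $\mathcal Z_0$ as the constant term of $\mathcal Z(t)$ is a fine sanity check but is not needed.
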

\begin{proof} Set $n=0$ in  Lemma \ref{lem:Znform}, and use \eqref{eq:GG0}.
\end{proof}
\noindent 
Now consider $\mathcal Z_n$ for $n\geq 1$. In the formula for $\mathcal Z_n$ from Lemma \ref{lem:Znform}, the alternating generators that show up are
$\lbrace \mathcal W_{-k} \rbrace_{k=0}^{n-1}$,
$\lbrace \mathcal W_{k+1} \rbrace_{k=0}^{n-1}$,
$\lbrace \mathcal G_{k+1} \rbrace_{k=0}^{n-1}$,
$\lbrace \mathcal {\tilde G}_{k+1} \rbrace_{k=0}^{n-1}$.
We now adjust $\mathcal Z_n$ by subtracting off the terms that involve $\mathcal G_n$ or $\mathcal  {\tilde G}_n$.
\begin{definition}
\rm
For $n\geq 1$ define
\begin{align}
\label{eq:Zadj}
\overline {\mathcal Z}_n = \mathcal Z_n - (q^2-q^{-2})^{-2} (\mathcal G_0 \mathcal {\tilde G}_n q^n + \mathcal G_n \mathcal {\tilde G}_0 q^{-n}).
\end{align}
\end{definition}
\begin{lemma} \label{lem:olZ} For $n\geq 1$,
\begin{align}
\overline {\mathcal Z}_n = \mathcal Z_n + \frac{\mathcal G_n q^{-n} + \mathcal {\tilde G}_n q^n}{q-q^{-1}}.
\label{eq:olZ}
\end{align}
\end{lemma}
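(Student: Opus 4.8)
The plan is to prove \eqref{eq:olZ} by a direct substitution into the defining equation \eqref{eq:Zadj} for $\overline{\mathcal Z}_n$, using the explicit scalar values of $\mathcal G_0$ and $\mathcal {\tilde G}_0$ recorded in \eqref{eq:GG0}. There is no structural difficulty here; the content of the lemma is an elementary identification of scalar coefficients, so the whole argument reduces to one short computation.

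First I would recall from \eqref{eq:GG0} that $\mathcal G_0 = \mathcal {\tilde G}_0 = -(q-q^{-1})\lbrack 2 \rbrack^2_q$, and substitute these into the correction term appearing in \eqref{eq:Zadj}. Factoring out the common scalar $-(q-q^{-1})\lbrack 2 \rbrack^2_q$ gives
\begin{align*}
-(q^2-q^{-2})^{-2}\bigl(\mathcal G_0 \mathcal {\tilde G}_n q^n + \mathcal G_n \mathcal {\tilde G}_0 q^{-n}\bigr)
= (q^2-q^{-2})^{-2}(q-q^{-1})\lbrack 2 \rbrack^2_q \bigl(\mathcal {\tilde G}_n q^n + \mathcal G_n q^{-n}\bigr).
\end{align*}
The next step is to simplify the scalar prefactor. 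Using $\lbrack 2 \rbrack_q = q+q^{-1}$ together with the factorization $q^2-q^{-2} = (q-q^{-1})(q+q^{-1})$, one has $(q^2-q^{-2})^2 = (q-q^{-1})^2 \lbrack 2 \rbrack^2_q$, whence $(q^2-q^{-2})^{-2}\lbrack 2 \rbrack^2_q = (q-q^{-1})^{-2}$. The prefactor therefore collapses to $(q-q^{-1})^{-1}$, and the displayed quantity becomes exactly $\frac{\mathcal G_n q^{-n} + \mathcal {\tilde G}_n q^n}{q-q^{-1}}$, which is the correction term added to $\mathcal Z_n$ in \eqref{eq:olZ}. Combining this with \eqref{eq:Zadj} yields the claim.

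The only point requiring care is that $(q^2-q^{-2})^{-2}$ and $(q-q^{-1})^{-1}$ be well defined, which holds because $q$ is nonzero and not a root of unity, so $q^2-q^{-2}\neq 0$. Thus there is no genuine obstacle: the lemma follows by the elementary manipulation above, rewriting the two ad hoc subtracted terms of \eqref{eq:Zadj} in the symmetric closed form \eqref{eq:olZ}. The purpose of this reformulation, which the plan does not need to address, is presumably that the form in \eqref{eq:olZ} isolates the $\mathcal G_n$ and $\mathcal {\tilde G}_n$ contributions cleanly and is better adapted to the subsequent use of $\overline{\mathcal Z}_n$ in generating $\mathcal A_q$ from $\mathcal W_0, \mathcal W_1$ and the center.
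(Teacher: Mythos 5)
Your proof is correct and is exactly the paper's argument: the paper's proof of Lemma \ref{lem:olZ} is the one-line instruction ``Evaluate \eqref{eq:Zadj} using \eqref{eq:GG0},'' which is precisely the substitution and scalar simplification $(q^2-q^{-2})^{-2}(q-q^{-1})\lbrack 2\rbrack_q^2=(q-q^{-1})^{-1}$ that you carried out.
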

\begin{proof} Evaluate \eqref{eq:Zadj} using \eqref{eq:GG0}.
\end{proof}

\noindent For $n\geq 1$ we now express $\overline {\mathcal Z}_n$  in terms of
$\lbrace \mathcal W_{-k} \rbrace_{k=0}^{n-1}$,
$\lbrace \mathcal W_{k+1} \rbrace_{k=0}^{n-1}$,
$\lbrace \mathcal G_{k+1} \rbrace_{k=0}^{n-2}$,
$\lbrace \mathcal {\tilde G}_{k+1} \rbrace_{k=0}^{n-2}$.

\begin{lemma} \label{lem:Zadj2} For $n\geq 1$,
\begin{align*}
\overline {\mathcal Z}_n &= \lbrack 2 \rbrack_q \sum_{k=0}^{n-1} \mathcal W^\Downarrow_{-k} \mathcal W^\Downarrow_{n-k} q^{n-1-2k}
+\lbrack 2 \rbrack_q^{-1} \sum_{k=0}^{n-3} \mathcal W^\Downarrow_{n-2-k} \mathcal W^\Downarrow_{-k} q^{2k-n+3}
-  \sum_{k=0}^{n-2} \mathcal W^\Downarrow_{-k} \mathcal W^\Downarrow_{k-n+2} q^{n-2k}
\\
&\quad - \sum_{k=0}^{n-2} \mathcal W^\Downarrow_{k+1} \mathcal W^\Downarrow_{n-k-1}q^{n-2k-4}
 + (q^2-q^{-2})^{-2} \sum_{k=1}^{n-1} \mathcal G^\downarrow_k \mathcal {\tilde G}^\downarrow_{n-k} q^{n-2k}
 \\
 &\quad - \frac{q^{-n}}{q-q^{-1}} \sum_{\ell=1}^{\lfloor (n-1) /2 \rfloor} (-1)^\ell \binom{n-1-\ell}{\ell} \lbrack 2 \rbrack^{-2\ell}_q \mathcal  G_{n-2\ell} \\
&\quad - \frac{q^n}{q-q^{-1}} \sum_{\ell=1}^{\lfloor (n-1) /2 \rfloor} (-1)^\ell \binom{n-1-\ell}{\ell} \lbrack 2 \rbrack^{-2\ell}_q \mathcal {\tilde G}_{n-2\ell}.
\end{align*}
\end{lemma}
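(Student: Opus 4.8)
The plan is to start from the explicit expression for $\mathcal Z_n$ in Lemma \ref{lem:Znform} and simply track what the adjustment in \eqref{eq:Zadj} does to it. The four sums involving the symbols $\mathcal W^\Downarrow$ are identical in Lemmas \ref{lem:Znform} and \ref{lem:Zadj2}, so they carry over verbatim and all the work concerns the final term $(q^2-q^{-2})^{-2}\sum_{k=0}^n \mathcal G^\downarrow_k \mathcal {\tilde G}^\downarrow_{n-k} q^{n-2k}$.

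First I would peel off the two boundary summands $k=0$ and $k=n$ from this sum. Since $a^\downarrow_0 = a_0$ by Lemma \ref{lem:warm}, the $k=0$ term is $\mathcal G_0 \mathcal {\tilde G}^\downarrow_n q^n$ and the $k=n$ term is $\mathcal G^\downarrow_n \mathcal {\tilde G}_0 q^{-n}$, leaving the interior sum $\sum_{k=1}^{n-1}$ that already appears in the target formula. Then I would invoke the definition \eqref{eq:Zadj}, which subtracts exactly $(q^2-q^{-2})^{-2}(\mathcal G_0 \mathcal {\tilde G}_n q^n + \mathcal G_n \mathcal {\tilde G}_0 q^{-n})$; combining this with the two boundary summands produces
\begin{align*}
(q^2-q^{-2})^{-2}\bigl(\mathcal G_0 (\mathcal {\tilde G}^\downarrow_n - \mathcal {\tilde G}_n) q^n + (\mathcal G^\downarrow_n - \mathcal G_n) \mathcal {\tilde G}_0 q^{-n}\bigr).
\end{align*}

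Next I would simplify the scalar prefactors. Using $q^2 - q^{-2} = (q-q^{-1})\lbrack 2\rbrack_q$ together with the values $\mathcal G_0 = \mathcal {\tilde G}_0 = -(q-q^{-1})\lbrack 2\rbrack_q^2$ from \eqref{eq:GG0}, both coefficients $(q^2-q^{-2})^{-2}\mathcal G_0$ and $(q^2-q^{-2})^{-2}\mathcal {\tilde G}_0$ collapse to $-(q-q^{-1})^{-1}$. Finally, I would expand $\mathcal G^\downarrow_n - \mathcal G_n$ and $\mathcal {\tilde G}^\downarrow_n - \mathcal {\tilde G}_n$ using the formula of Lemma \ref{lem:warm}; there the $\ell=0$ term equals $\mathcal G_n$ (resp. $\mathcal {\tilde G}_n$) and hence cancels, so what remains is precisely the two sums over $1 \leq \ell \leq \lfloor (n-1)/2\rfloor$ displayed in the statement, with prefactor $-q^{-n}/(q-q^{-1})$ on the $\mathcal G$ sum and $-q^{n}/(q-q^{-1})$ on the $\mathcal {\tilde G}$ sum.

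This argument is entirely a bookkeeping computation, and I do not expect any genuine obstacle. The only points requiring a little care are the base case $a^\downarrow_0 = a_0$ of the operator $a \mapsto a^\downarrow$ (so that the boundary terms have the stated form) and the cancellation of the $\ell=0$ contribution in $\mathcal G^\downarrow_n - \mathcal G_n$ and $\mathcal {\tilde G}^\downarrow_n - \mathcal {\tilde G}_n$, which is what converts the full sum $\sum_{k=0}^n$ of Lemma \ref{lem:Znform} into the interior sum $\sum_{k=1}^{n-1}$ plus the two correction terms. Alternatively, one could run the same computation starting from \eqref{eq:olZ} in place of \eqref{eq:Zadj}; the two routes agree because \eqref{eq:olZ} is obtained from \eqref{eq:Zadj} by the same substitution of $\mathcal G_0, \mathcal {\tilde G}_0$.
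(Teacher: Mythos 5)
Your proposal is correct and is exactly the paper's own proof, which reads "Evaluate \eqref{eq:Zadj} using Lemma \ref{lem:Znform} and \eqref{eq:GG0}, \eqref{eq:GDown}, \eqref{eq:tGDown}" — you have simply supplied the bookkeeping details: peeling off the $k=0$ and $k=n$ boundary terms, collapsing $(q^2-q^{-2})^{-2}\mathcal G_0 = (q^2-q^{-2})^{-2}\mathcal {\tilde G}_0 = -(q-q^{-1})^{-1}$, and cancelling the $\ell=0$ terms of $\mathcal G^\downarrow_n$ and $\mathcal {\tilde G}^\downarrow_n$. No gaps.
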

\begin{proof} Evaluate \eqref{eq:Zadj} using Lemma \ref{lem:Znform} and \eqref{eq:GG0}, \eqref{eq:GDown}, \eqref{eq:tGDown}.
\end{proof}

\noindent The following result is a variation on \cite[Proposition~3.1]{basBel}.
\begin{lemma}
\label{lem:nrecgenCal}
Using the equations below, the alternating
generators of
$\mathcal A_q$
are recursively obtained from $\mathcal W_0, \mathcal W_1,
\mathcal Z_1, \mathcal Z_2, \ldots$ in the following order:
\begin{align*}
\mathcal W_0, \quad \mathcal W_1, \quad \mathcal G_1, 
\quad \mathcal {\tilde G}_1, \quad \mathcal  W_{-1}, 
\quad \mathcal W_2, \quad
\mathcal G_2, \quad \mathcal {\tilde G}_2, \quad 
\mathcal W_{-2}, \quad \mathcal W_3,  \quad \ldots
\end{align*}
For $n\geq 1$,
\begin{align}
\label{eq:Gsolve}
\mathcal G_n &= \frac{(\overline {\mathcal Z}_n-\mathcal Z_n)(q-q^{-1})- \lbrack \mathcal W_0, \mathcal W_n\rbrack q^n (q+q^{-1})}{q^n+q^{-n}},
\\
\mathcal {\tilde G}_n &= \mathcal G_n +(q+q^{-1}) \lbrack \mathcal W_0, \mathcal W_n\rbrack,
\label{eq:tGsolve}
\\
\mathcal W_{-n} &= \mathcal W_n -\frac{\lbrack \mathcal W_0, \mathcal G_n \rbrack_q}{(q^2-q^{-2})^2},
\label{eq:Wmsolve}
\\
\mathcal W_{n+1} &= \mathcal W_{1-n} -\frac{\lbrack \mathcal G_n, \mathcal W_1 \rbrack_q}{(q^2-q^{-2})^2}.
\label{eq:Wpsolve}
\end{align}
\end{lemma}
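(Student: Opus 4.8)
The plan is to separate the assertion into two parts: (a) the four displayed identities \eqref{eq:Gsolve}--\eqref{eq:Wpsolve} hold in $\mathcal{A}_q$, and (b) when these identities are read in the stated order as assignments, each successive right-hand side involves only $\mathcal{W}_0$, $\mathcal{W}_1$, the central elements $\mathcal{Z}_1, \mathcal{Z}_2, \ldots$, and alternating generators already produced, so that the recursion determines every alternating generator.

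For part (a), three of the identities are immediate. Equation \eqref{eq:tGsolve} is \eqref{eq:3p1} rewritten with $k+1=n$; equation \eqref{eq:Wmsolve} is \eqref{eq:3p2} with $k+1=n$, solved for $\mathcal{W}_{-n}$ after inserting $\rho=-(q^2-q^{-2})^2$; and equation \eqref{eq:Wpsolve} is \eqref{eq:3p3} with $k+1=n$, solved for $\mathcal{W}_{n+1}$ in the same way. Only \eqref{eq:Gsolve} needs a short computation: I would begin from \eqref{eq:olZ}, which gives $(\overline{\mathcal{Z}}_n-\mathcal{Z}_n)(q-q^{-1}) = \mathcal{G}_n q^{-n}+\mathcal{\tilde{G}}_n q^n$, then substitute $\mathcal{\tilde{G}}_n = \mathcal{G}_n+(q+q^{-1})[\mathcal{W}_0,\mathcal{W}_n]$ from \eqref{eq:tGsolve}, collect the two $\mathcal{G}_n$ terms, and divide by their coefficient $q^n+q^{-n}$. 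Since $q$ is not a root of unity, $q^n+q^{-n}\neq 0$, so the division is legitimate and \eqref{eq:Gsolve} follows.

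For part (b), I would use induction on $n$ with the hypothesis that, entering the $n$-th block, the generators $\mathcal{W}_{-k}$ and $\mathcal{W}_{k+1}$ for $0\le k\le n-1$, together with $\mathcal{G}_{k+1}$ and $\mathcal{\tilde{G}}_{k+1}$ for $0\le k\le n-2$, have all been expressed through $\mathcal{W}_0,\mathcal{W}_1$ and the $\mathcal{Z}_m$. The base case $n=1$ holds trivially, since the only generators required are the starting data. In the inductive step, one checks in turn that \eqref{eq:tGsolve} expresses $\mathcal{\tilde{G}}_n$ from $\mathcal{G}_n$ and $[\mathcal{W}_0,\mathcal{W}_n]$, that \eqref{eq:Wmsolve} expresses $\mathcal{W}_{-n}$ from $\mathcal{W}_n$ and $\mathcal{G}_n$, and that \eqref{eq:Wpsolve} expresses $\mathcal{W}_{n+1}$ from $\mathcal{W}_{1-n}=\mathcal{W}_{-(n-1)}$, $\mathcal{G}_n$, and $\mathcal{W}_1$; in each case every input is either in the hypothesis or already produced earlier in the block. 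This extends the hypothesis to the $(n+1)$-st block, and since each alternating generator occurs in exactly one block, all of them are obtained.

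The main obstacle is the first assignment of each block, namely the computation of $\mathcal{G}_n$ via \eqref{eq:Gsolve}, because its right-hand side contains $\overline{\mathcal{Z}}_n$, whose definition \eqref{eq:Zadj} still involves the unknown $\mathcal{G}_n$ and $\mathcal{\tilde{G}}_n$. This is precisely why $\overline{\mathcal{Z}}_n$ was introduced, and the resolution is Lemma \ref{lem:Zadj2}: it re-expresses $\overline{\mathcal{Z}}_n$ purely through $\mathcal{G}_j,\mathcal{\tilde{G}}_j$ with $j\le n-1$ and through $\mathcal{W}$'s of index at most $n$ (the $\mathcal{W}^\Downarrow$ terms have leading index at most $n$, and $\mathcal{W}_n=\mathcal{W}_{(n-1)+1}$ was produced at the end of the previous block). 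Substituting the Lemma \ref{lem:Zadj2} formula for $\overline{\mathcal{Z}}_n$ into \eqref{eq:Gsolve} and verifying that every generator appearing lies within the inductive hypothesis is the one place where the index bookkeeping must be done with care; it is exactly this verification that makes the recursion close up.
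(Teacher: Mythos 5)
Your proof is correct and takes essentially the same route as the paper: \eqref{eq:tGsolve}, \eqref{eq:Wmsolve}, \eqref{eq:Wpsolve} are read off from \eqref{eq:3p1}--\eqref{eq:3p3}, and \eqref{eq:Gsolve} is obtained exactly as the paper does, by eliminating $\mathcal{\tilde G}_n$ in \eqref{eq:olZ} via \eqref{eq:tGsolve} and solving for $\mathcal G_n$ (your remark that $q^n+q^{-n}\neq 0$ is a nice touch the paper omits). Your part (b) --- the induction whose key step is that Lemma \ref{lem:Zadj2} expresses $\overline{\mathcal Z}_n$ through generators of strictly smaller block --- is precisely the bookkeeping the paper leaves implicit, and is the stated purpose of Lemma \ref{lem:Zadj2}, so it is an elaboration rather than a departure.
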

\begin{proof} Equation \eqref{eq:tGsolve} is from
\eqref{eq:3p1}. To get 
\eqref{eq:Gsolve}, eliminate $\mathcal {\tilde G}_n$ in \eqref{eq:olZ} using
\eqref{eq:tGsolve} and solve the resulting equation for $\mathcal G_n$.
The equations \eqref{eq:Wmsolve}, \eqref{eq:Wpsolve} are from \eqref{eq:3p2}, \eqref{eq:3p3}.
\end{proof}

\noindent The following result is a variation on \cite[Corollary~3.1]{basBel}.
\begin{corollary} \label{cor:ZZ}
The algebra $\mathcal A_q$ is generated by $\mathcal W_0, \mathcal W_1, \mathcal Z$.
\end{corollary}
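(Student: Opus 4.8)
The plan is to show that the subalgebra generated by $\mathcal W_0$, $\mathcal W_1$ together with $\mathcal Z$ already contains every alternating generator of $\mathcal A_q$, since by Definition \ref{def:Aq} these generators generate $\mathcal A_q$. The engine for this is Lemma \ref{lem:nrecgenCal}, which provides a recursive scheme producing the alternating generators in the explicit order $\mathcal W_0, \mathcal W_1, \mathcal G_1, \mathcal {\tilde G}_1, \mathcal W_{-1}, \mathcal W_2, \mathcal G_2, \ldots$ from the data $\mathcal W_0$, $\mathcal W_1$ and the central elements $\mathcal Z_1, \mathcal Z_2, \ldots$. So the corollary should follow almost immediately by induction along this order.

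Concretely, I would argue as follows. Let $\mathcal U$ denote the subalgebra of $\mathcal A_q$ generated by $\mathcal W_0$, $\mathcal W_1$, and $\mathcal Z$. By Definition \ref{def:calZ} we have $\mathcal Z_n \in \mathcal U$ for all $n \geq 1$, and of course $\mathcal W_0, \mathcal W_1 \in \mathcal U$. I would then run the recursion of Lemma \ref{lem:nrecgenCal} and check at each stage that the newly produced generator lies in $\mathcal U$. Equation \eqref{eq:Gsolve} expresses $\mathcal G_n$ in terms of $\mathcal Z_n$, $\overline{\mathcal Z}_n$, and the commutator $\lbrack \mathcal W_0, \mathcal W_n\rbrack$; here $\overline{\mathcal Z}_n$ is, by Lemma \ref{lem:Zadj2}, a polynomial in $\mathcal Z_n$ and in alternating generators of strictly smaller index, all of which are already in $\mathcal U$ by the induction hypothesis, while $\mathcal W_n$ (that is, $\mathcal W_{n}$ with $n \geq 1$, i.e.\ $\mathcal W_{k+1}$) has likewise been produced at an earlier stage. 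Then \eqref{eq:tGsolve}, \eqref{eq:Wmsolve}, and \eqref{eq:Wpsolve} produce $\mathcal {\tilde G}_n$, $\mathcal W_{-n}$, and $\mathcal W_{n+1}$ respectively, each as an expression in elements already shown to lie in $\mathcal U$. Since $\mathcal U$ is closed under multiplication, addition, scalar multiples, and commutators (being a subalgebra), each such element lies in $\mathcal U$.

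The induction should be organized along the linear order in which Lemma \ref{lem:nrecgenCal} lists the generators, so that whenever a generator is produced, every generator appearing on the right-hand side of its defining equation has already been shown to lie in $\mathcal U$. The base cases $\mathcal W_0, \mathcal W_1 \in \mathcal U$ are immediate. One subtlety to verify is that $\overline{\mathcal Z}_n$ genuinely involves only generators strictly preceding $\mathcal G_n$ in the order; this is exactly the content of Lemma \ref{lem:Zadj2}, whose right-hand side, expanded via $\mathcal W^\Downarrow$, $\mathcal G^\downarrow$, $\mathcal {\tilde G}^\downarrow$, is supported on indices at most $n-1$ for the $\mathcal W$ terms and at most $n-1$ for the $\mathcal G, \mathcal {\tilde G}$ terms. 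Granting that, the conclusion follows: every alternating generator lies in $\mathcal U$, hence $\mathcal U = \mathcal A_q$.

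I do not expect a genuine obstacle here; the real work has already been done in assembling the recursion of Lemma \ref{lem:nrecgenCal} and the explicit expansion of Lemma \ref{lem:Zadj2}. The only point requiring care is bookkeeping — confirming that the index ranges appearing in \eqref{eq:Gsolve}--\eqref{eq:Wpsolve} and in Lemma \ref{lem:Zadj2} never reference a generator that has not yet been constructed — so that the induction along the stated order is well founded rather than circular.
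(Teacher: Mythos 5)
Your proposal is correct and is essentially the paper's own proof: the paper proves this corollary by citing Definition \ref{def:calZ} together with the recursion of Lemma \ref{lem:nrecgenCal}, which is exactly the engine you use; you have merely spelled out the induction and index bookkeeping that the paper leaves implicit. (One tiny imprecision: Lemma \ref{lem:Zadj2} expresses $\overline{\mathcal Z}_n$ in terms of smaller-index alternating generators alone, not in terms of $\mathcal Z_n$, but this does not affect your argument since $\mathcal Z_n \in \mathcal U$ in any case.)
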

\begin{proof} By Definition \ref{def:calZ} and
 Lemma
\ref{lem:nrecgenCal}.
\end{proof}

\noindent We now bring  in the filtration $\lbrace B_d\rbrace_{d\in \mathbb N}$ from Section 7.

\begin{lemma} \label{lem:Bn} For $n \in \mathbb N$,
\begin{enumerate}
\item[\rm (i)]  $\mathcal W^\Downarrow_{-n}, \mathcal W^\Downarrow_{n+1}\in B_{2n+1}$;
\item[\rm (ii)] $\mathcal G^\downarrow_n, \mathcal {\tilde G}^\downarrow_n\in B_{2n}$.
\end{enumerate}
\end{lemma}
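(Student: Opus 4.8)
The plan is to verify each membership by expanding the relevant $\downarrow$- or $\Downarrow$-expression into a linear combination of \emph{single} alternating generators and then reading off degrees from Definition~\ref{def:degree}. The key structural observation is that each of these operations mixes generators of only one type, shifting the index downward by an even amount $2\ell$, so that every summand has degree bounded by the $\ell=0$ term; since each $B_d$ is a subspace containing all generators of degree at most $d$, the membership follows immediately.

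First I would treat $\mathcal W^\Downarrow_{-n}$. Applying Lemma~\ref{lem:warm2} to the sequence $\lbrace \mathcal W_{-m}\rbrace_{m\in\mathbb N}$ gives
\begin{align*}
\mathcal W^\Downarrow_{-n} = \sum_{\ell=0}^{\lfloor n/2\rfloor} (-1)^\ell \binom{n-\ell}{\ell} \lbrack 2\rbrack_q^{-2\ell}\, \mathcal W_{-(n-2\ell)}.
\end{align*}
By Definition~\ref{def:degree} the generator $\mathcal W_{-(n-2\ell)}$ has degree $2(n-2\ell)+1 = 2n+1-4\ell \leq 2n+1$. A single generator is an $\mathcal A_q$-word of length $1$, hence irreducible, hence lies in $B_{2n+1}$ by Definition~\ref{def:An} and Definition~\ref{def:filt} (equivalently, by Lemma~\ref{lem:span}). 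As $B_{2n+1}$ is a subspace, the displayed linear combination lies in $B_{2n+1}$. The argument for $\mathcal W^\Downarrow_{n+1}$ is identical: expanding via Lemma~\ref{lem:warm2} produces a linear combination of the generators $\mathcal W_{(n-2\ell)+1}$, each of degree $2(n-2\ell)+1 \leq 2n+1$, so $\mathcal W^\Downarrow_{n+1}\in B_{2n+1}$. This proves (i).

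Next I would handle (ii). For $n\geq 1$, Lemma~\ref{lem:warm} applied to $\lbrace \mathcal G_m\rbrace_{m\in\mathbb N}$ yields
\begin{align*}
\mathcal G^\downarrow_n = \sum_{\ell=0}^{\lfloor (n-1)/2\rfloor} (-1)^\ell \binom{n-1-\ell}{\ell} \lbrack 2\rbrack_q^{-2\ell}\, \mathcal G_{n-2\ell}.
\end{align*}
One checks that $n-2\ell\geq 1$ throughout this range, so each $\mathcal G_{n-2\ell}$ is a genuine alternating generator; by Definition~\ref{def:degree} it has degree $2(n-2\ell)=2n-4\ell\leq 2n$, whence $\mathcal G^\downarrow_n\in B_{2n}$ exactly as in part (i). For the boundary case $n=0$ we have $\mathcal G^\downarrow_0 = \mathcal G_0$, which is a scalar by \eqref{eq:GG0} and hence lies in $B_0=\mathbb F1 \subseteq B_0$. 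The same two steps, verbatim, give $\mathcal{\tilde G}^\downarrow_n\in B_{2n}$, using that $\mathcal{\tilde G}_{n-2\ell}$ also has degree $2(n-2\ell)$.

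There is no serious obstacle here; the only point requiring care is the index bookkeeping, namely converting the index $n-2\ell$ of each generator into its degree via Definition~\ref{def:degree} and confirming that the maximal degree is attained at $\ell=0$, together with the minor boundary check that the summation range never produces a generator of the wrong type (in particular $\mathcal G_0$ and $\mathcal{\tilde G}_0$ appear only in the $n=0$ case, where they are scalars). Everything else is immediate from the fact that each $B_d$ is a subspace containing every single generator of degree at most $d$.
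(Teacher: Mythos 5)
Your proof is correct and follows essentially the same route as the paper: the paper's proof of this lemma simply cites Definition \ref{def:degree} together with the expansions \eqref{eq:WmDown}, \eqref{eq:WpDown}, \eqref{eq:GDown}, \eqref{eq:tGDown} (which are exactly the formulas you rederive from Lemmas \ref{lem:warm} and \ref{lem:warm2}), and reads off the degree bound term by term. Your write-up just makes explicit the index bookkeeping and the observation that each single generator of degree at most $d$ lies in $B_d$, which the paper leaves implicit.
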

\begin{proof} (i) By Definition \ref{def:degree}
and
\eqref{eq:WmDown},
 \eqref{eq:WpDown}.
 \\
 \noindent (ii) By Definition \ref{def:degree}
and
\eqref{eq:GDown},
 \eqref{eq:tGDown}.
 \end{proof}
\noindent For a subset $X$ of any vector space let ${\rm Span}(X)$ denote the subspace spanned by $X$.
\begin{lemma} \label{lem:Zbar} For $n \geq 1$,
\begin{align*}
\overline {\mathcal Z}_n \in B_{2n-1}+ \sum_{k=1}^{2n-1} {\rm Span} (B_k B_{2n-k}).
\end{align*}
\end{lemma}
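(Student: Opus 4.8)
The plan is to analyze the explicit formula for $\overline{\mathcal Z}_n$ given in Lemma \ref{lem:Zadj2} term by term, and to show that each summand lies in one of the two pieces on the right-hand side of the asserted containment. The strategy rests entirely on the degree bookkeeping recorded in Lemma \ref{lem:Bn} together with the filtration property $B_rB_s\subseteq B_{r+s}$ from Proposition \ref{prop:filt}.

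First I would handle the quadratic terms. These have the form $\mathcal W^\Downarrow_{\pm a}\mathcal W^\Downarrow_{\pm b}$ and $\mathcal G^\downarrow_k\mathcal {\tilde G}^\downarrow_{n-k}$, where in every case the two index magnitudes sum appropriately. By Lemma \ref{lem:Bn}, each $\mathcal W^\Downarrow$ factor with index of absolute value $m$ lies in $B_{2m+1}$, contributing an odd degree, while each $\mathcal G^\downarrow$ or $\mathcal {\tilde G}^\downarrow$ factor with index $m$ lies in $B_{2m}$. Checking the four $\mathcal W^\Downarrow\mathcal W^\Downarrow$ sums, one finds that the index magnitudes always add up so that the total degree bound is $2n$, coming from two odd contributions; each such product therefore lies in $B_k B_{2n-k}$ for an appropriate $k$ with $1\le k\le 2n-1$, hence in $\sum_{k=1}^{2n-1}{\rm Span}(B_kB_{2n-k})$. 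The key observation that keeps $k$ in the stated range is that in the restricted sum $\sum_{k=1}^{n-1}\mathcal G^\downarrow_k\mathcal {\tilde G}^\downarrow_{n-k}$ both indices $k$ and $n-k$ lie strictly between $0$ and $n$, so neither factor is a scalar and each factor sits in some $B_k$ with $1\le k\le 2n-1$. This is precisely why the passage from $\mathcal Z_n$ to $\overline{\mathcal Z}_n$, which removes exactly the $k=0$ and $k=n$ terms involving $\mathcal G_n$ or $\mathcal {\tilde G}_n$, is what makes the argument work.

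The main obstacle, and the reason the $B_{2n-1}$ summand appears at all, is the pair of linear tails $-\tfrac{q^{-n}}{q-q^{-1}}\sum_\ell(\cdots)\mathcal G_{n-2\ell}$ and the analogous $\mathcal {\tilde G}$ sum. Here the summation index $\ell$ runs from $1$, so the generators appearing are $\mathcal G_{n-2\ell}$ and $\mathcal {\tilde G}_{n-2\ell}$ with $\ell\geq 1$, i.e. indices at most $n-2$. By Definition \ref{def:degree}, $\mathcal G_{n-2\ell}$ has degree $2(n-2\ell)=2n-4\ell\leq 2n-4$, and $\mathcal {\tilde G}_{n-2\ell}$ likewise; the largest degree occurring is $2n-4$, comfortably at most $2n-1$. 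I would therefore argue that each such linear term lies in $B_{2n-4}\subseteq B_{2n-1}$, so the entire linear tail lands in the $B_{2n-1}$ summand. (One should note $\mathcal G_{n-2\ell}$ with $n-2\ell=0$ does not arise since $\ell\le\lfloor(n-1)/2\rfloor$ forces $n-2\ell\geq 1$, so these are genuine degree-$(2n-4\ell)$ generators and the degree estimate is clean.)

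Collecting the two analyses, every summand of $\overline{\mathcal Z}_n$ lies either in $\sum_{k=1}^{2n-1}{\rm Span}(B_kB_{2n-k})$ (the quadratic part) or in $B_{2n-1}$ (the linear part), so their sum $\overline{\mathcal Z}_n$ lies in $B_{2n-1}+\sum_{k=1}^{2n-1}{\rm Span}(B_kB_{2n-k})$, as claimed. I expect the only delicate point to be verifying, for each of the four quadratic $\mathcal W^\Downarrow$ sums, that the two index magnitudes sum to exactly $n-1$ (so that the degrees add to $2n$) and that neither index forces a factor out of the range $1\le k\le 2n-1$; this is a direct but slightly fiddly inspection of the shifts $n-k$, $k-n+2$, $n-k-1$, and $n-2-k$ appearing in Lemma \ref{lem:Zadj2}.
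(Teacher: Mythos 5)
Your proof is correct and takes essentially the same route as the paper, whose entire proof of this lemma is ``Use Lemmas \ref{lem:Zadj2}, \ref{lem:Bn}'' — i.e., precisely the term-by-term degree bookkeeping you carry out, including the key observation that passing from $\mathcal Z_n$ to $\overline{\mathcal Z}_n$ removes the $B_0B_{2n}$-type terms that would fall outside the range $1\le k\le 2n-1$. One small correction to your closing remark: in three of the four $\mathcal W^\Downarrow\mathcal W^\Downarrow$ sums the index magnitudes sum to $n-2$ or $n-3$ rather than exactly $n-1$ (degrees summing to $2n-2$ or $2n-4$), but this is harmless since the filtration is nested and your argument only uses the upper bound: each factor lies in some $B_a$ with $a\ge 1$ and the degrees sum to at most $2n$.
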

\begin{proof}  Use Lemmas \ref{lem:Zadj2}, \ref{lem:Bn}.
\end{proof}

\begin{lemma}\label{lem:2n}
 For $n \in \mathbb N$ we have $\mathcal Z_n \in B_{2n}$. 
\end{lemma}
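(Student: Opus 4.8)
The plan is to derive this from the membership result already obtained for the adjusted central elements $\overline{\mathcal Z}_n$, together with the explicit relation between $\mathcal Z_n$ and $\overline{\mathcal Z}_n$. The case $n=0$ is immediate: by Lemma \ref{lem:Z0} we have $\mathcal Z_0 = \lbrack 2 \rbrack^2_q \in \mathbb F 1 = B_0$, and $B_0$ is exactly $B_{2n}$ for $n=0$. So I would concentrate on $n\geq 1$.

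First I would show $\overline{\mathcal Z}_n \in B_{2n}$. Lemma \ref{lem:Zbar} places $\overline{\mathcal Z}_n$ in $B_{2n-1} + \sum_{k=1}^{2n-1} {\rm Span}(B_k B_{2n-k})$. Applying the filtration property from Proposition \ref{prop:filt}, each product satisfies $B_k B_{2n-k} \subseteq B_{k+(2n-k)} = B_{2n}$, so every summand ${\rm Span}(B_k B_{2n-k})$ is contained in $B_{2n}$; since also $B_{2n-1}\subseteq B_{2n}$ by the nesting in Lemma \ref{lem:123}(ii), the entire right-hand side lies in $B_{2n}$, giving $\overline{\mathcal Z}_n \in B_{2n}$.

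Next I would transfer this conclusion from $\overline{\mathcal Z}_n$ to $\mathcal Z_n$ using Lemma \ref{lem:olZ}, which yields $\mathcal Z_n = \overline{\mathcal Z}_n - (q-q^{-1})^{-1}(\mathcal G_n q^{-n} + \mathcal {\tilde G}_n q^n)$. It then remains only to check that the two correction terms also lie in $B_{2n}$. Each of $\mathcal G_n$ and $\mathcal {\tilde G}_n$ is a single alternating generator, hence an irreducible $\mathcal A_q$-word of length one, and by Definition \ref{def:degree} it has degree $2n$; therefore $\mathcal G_n, \mathcal {\tilde G}_n \in A_{2n} \subseteq B_{2n}$. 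Consequently $\mathcal Z_n$ is an $\mathbb F$-linear combination of elements of $B_{2n}$, so $\mathcal Z_n \in B_{2n}$, completing the argument.

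Since every ingredient has been prepared in the preceding lemmas, I do not expect a serious obstacle; the one point that must not be overlooked is that the generator terms subtracted in passing from $\overline{\mathcal Z}_n$ to $\mathcal Z_n$ sit at exactly filtration level $2n$ and not higher. This is precisely why the adjustment in Lemma \ref{lem:olZ}, which removes only $\mathcal G_n,\mathcal{\tilde G}_n$ while leaving the lower-index content inside $\overline{\mathcal Z}_n$, is what makes the degree bookkeeping close up.
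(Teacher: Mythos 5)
Your proposal is correct and follows essentially the same route as the paper's own proof: the $n=0$ case via Lemma \ref{lem:Z0}, the membership $\overline{\mathcal Z}_n \in B_{2n}$ via Lemma \ref{lem:Zbar} combined with Lemma \ref{lem:123}(ii) and Proposition \ref{prop:filt}, and the transfer to $\mathcal Z_n$ via Lemma \ref{lem:olZ} together with the observation that $\mathcal G_n, \mathcal{\tilde G}_n$ have degree $2n$ and hence lie in $B_{2n}$. The only difference is that you spell out the degree bookkeeping for the correction terms, which the paper compresses into ``by construction.''
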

\begin{proof} For $n=0$ the result holds by Lemma  \ref{lem:Z0} and since $B_0=\mathbb F I$. For $n\geq 1$ we use
Lemma \ref{lem:olZ}. 
 We have $\overline {\mathcal Z}_n \in B_{2n}$ by
Lemma \ref{lem:Zbar} along with Lemma \ref{lem:123}(ii) and Proposition \ref{prop:filt}. By construction $\mathcal G_n \in B_{2n}$ and $ \mathcal {\tilde G}_n \in B_{2n}$.
 The result holds by these comments and Lemma
 \ref{lem:olZ}.
 \end{proof}
 
\begin{lemma} \label{lem:factor}
The following {\rm (i), (ii)} hold:
\begin{enumerate}
\item[\rm (i)] 
the subspace 
 $B_1$ has basis $1, \mathcal W_0, \mathcal W_1$;
\item[\rm (ii)] for $d\geq 2$ we have
\begin{align}
\label{eq:odd}
B_{d} =E_d+ B_{d-1} + \sum_{k=1}^{d-1} {\rm Span}(B_k B_{d-k}),
\end{align}
\noindent where $E_d=0$ if $d$ is odd and $E_d=\mathbb F \mathcal Z_n$ if $d=2n$ is even.
\end{enumerate}
\end{lemma}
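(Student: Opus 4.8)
The plan is to prove each stated equality by establishing both inclusions, the containment of the right-hand side in $B_d$ being the easy direction. Indeed $B_{d-1}\subseteq B_d$ is immediate, $B_kB_{d-k}\subseteq B_d$ for $1\le k\le d-1$ is the filtration property of Proposition \ref{prop:filt}, and $E_d\subseteq B_d$ holds trivially when $d$ is odd and by Lemma \ref{lem:2n} (which gives $\mathcal Z_n\in B_{2n}=B_d$) when $d=2n$ is even. For (i), note $B_1=A_0+A_1$ with $A_0=\mathbb F1$. By Definition \ref{def:degree} the only alternating generators of degree $1$ are $\mathcal W_0$ and $\mathcal W_1$, and every $\mathcal A_q$-word of length at least $2$ has degree at least $2$; hence the irreducible words of degree $1$ are exactly $\mathcal W_0,\mathcal W_1$, so $A_1={\rm Span}\{\mathcal W_0,\mathcal W_1\}$. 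Since $1,\mathcal W_0,\mathcal W_1$ are distinct members of the PBW basis of Theorem \ref{thm:rr}, they are linearly independent, which yields the asserted basis.

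For (ii) it remains to show $B_d$ is contained in the right-hand side of \eqref{eq:odd}. As $B_d$ is spanned by the irreducible words of degree at most $d$, and those of degree $<d$ already lie in $B_{d-1}$, it suffices to place each irreducible word $w$ of degree exactly $d$ in the right-hand side. If $w=a_1a_2\cdots a_n$ has length $n\ge 2$, put $k={\rm deg}(a_1)$; every letter has degree at least $1$, so $k\ge 1$ and the tail $a_2\cdots a_n$ has degree $d-k\ge 1$, i.e.\ $k\le d-1$. Then $w=a_1(a_2\cdots a_n)\in B_kB_{d-k}$ with $1\le k\le d-1$, as required. The essential case is therefore $n=1$, where $w$ is a single generator of degree $d$.

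For odd $d=2m+1$ (so $m\ge 1$ and $E_d=0$) the lone generators of degree $d$ are $\mathcal W_{-m}$ and $\mathcal W_{m+1}$, and here I would invoke the recursions \eqref{eq:Wmsolve}, \eqref{eq:Wpsolve}. In each, the leading term ($\mathcal W_m$, respectively $\mathcal W_{1-m}$) has degree $d-2$ and so lies in $B_{d-1}$, while the $q$-commutator pairs $\mathcal G_m\in B_{2m}=B_{d-1}$ against $\mathcal W_0$ or $\mathcal W_1\in B_1$; expanding it sends its two terms into ${\rm Span}(B_1B_{d-1})$ and ${\rm Span}(B_{d-1}B_1)$, both summands of \eqref{eq:odd}. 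For even $d=2m$ (so $E_d=\mathbb F\mathcal Z_m$) the lone generators are $\mathcal G_m$ and $\mathcal{\tilde G}_m$. Here I would use \eqref{eq:Gsolve} to write $\mathcal G_m$ as a scalar combination of $\overline{\mathcal Z}_m$, $\mathcal Z_m$, and $[\mathcal W_0,\mathcal W_m]$: by Lemma \ref{lem:Zbar} the term $\overline{\mathcal Z}_m$ already lies in $B_{d-1}+\sum_{k=1}^{d-1}{\rm Span}(B_kB_{d-k})$; the term $\mathcal Z_m$ lies in $E_d$; and, since ${\rm deg}\,\mathcal W_m=d-1$, the commutator $[\mathcal W_0,\mathcal W_m]$ expands into ${\rm Span}(B_1B_{d-1})+{\rm Span}(B_{d-1}B_1)$ as before. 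Finally \eqref{eq:tGsolve} gives $\mathcal{\tilde G}_m=\mathcal G_m+(q+q^{-1})[\mathcal W_0,\mathcal W_m]$, handled the same way.

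I expect the even case to be the main obstacle, since it is the only place where the summand $E_d=\mathbb F\mathcal Z_n$ genuinely cannot be dropped: it records the single new central element appearing at degree $2n$, whereas in odd degrees the generators $\mathcal W_{-m},\mathcal W_{m+1}$ are forced to be decomposable. The whole argument turns on the degree bookkeeping being exactly tight, and in particular on Lemma \ref{lem:Zbar} absorbing $\overline{\mathcal Z}_m$ into $B_{d-1}+\sum_k{\rm Span}(B_kB_{d-k})$ and on Lemma \ref{lem:2n} placing $\mathcal Z_m$ in $E_d$; the odd and length-$\ge 2$ cases are then routine.
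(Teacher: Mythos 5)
Your proof is correct and follows essentially the same route as the paper: the easy inclusion via Proposition \ref{prop:filt} and Lemma \ref{lem:2n}, then spanning $B_d$ by irreducible words, splitting off words of length $\geq 2$ as $a_1(a_2\cdots a_r)\in B_kB_{d-k}$, and handling the length-one generators via \eqref{eq:Wmsolve}, \eqref{eq:Wpsolve} in odd degree and \eqref{eq:Gsolve}, \eqref{eq:tGsolve} in even degree. If anything, your treatment of the even case is slightly more careful than the paper's, since you explicitly invoke Lemma \ref{lem:Zbar} to absorb the $\overline{\mathcal Z}_m$ term into $B_{d-1}+\sum_{k=1}^{d-1}{\rm Span}(B_kB_{d-k})$ rather than only into ${\rm Span}(B_1B_{d-1})+{\rm Span}(B_{d-1}B_1)$.
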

\begin{proof}(i) The elements  $1, \mathcal W_0, \mathcal W_1$ are the irreducible $\mathcal A_q$-words with degree at most 1.
\\
\noindent (ii) Let $B'_d$ denote the vector space on the right in \eqref{eq:odd}. We show $B_d = B'_d$. We have $B_d \supseteq B'_d$ by Proposition \ref{prop:filt} and Lemma \ref{lem:2n}.
Next we show $B_d \subseteq B'_d$.
The vector space $B_{d}$ is spanned by the irreducible $\mathcal A_q$-words that have degree at most $d$.
Let $w=a_1a_2\cdots a_r$ denote an irreducible $\mathcal A_q$-word with degree at most $d$. We show $w \in B'_d$.
We may assume that $w$ has degree $d$;
otherwise $w\in B_{d-1}\subseteq B'_d$. Assume for the moment that $r\geq 2$. Write $w=w_1 w_2$ with $w_1=a_1$ and $w_2 = a_2\cdots a_r$. Let $k$ denote the degree of $w_1$.
By construction $w_2$ has degree $d-k$. Also by construction $k\geq 1$ and $d-k\geq 1$, so $1 \leq k \leq d-1$. We have $w \in B_k B_{d-k} \subseteq B'_d$.
Next assume that $r=1$, so $w=a_1$. Suppose that $d=2n+1$ is odd. Then $w=\mathcal W_{-n}$ or $w=\mathcal W_{n+1}$. In either case $w \in B_{d-1}+ {\rm Span}(B_1 B_{d-1}) + {\rm Span}(B_{d-1} B_1)\subseteq B'_d$ in 
view of \eqref{eq:Wmsolve}, \eqref{eq:Wpsolve}. Next suppose that $d=2n$ is even. Then $w=\mathcal G_n$ or $w=\mathcal {\tilde G}_n$.
By  \eqref{eq:Gsolve}, \eqref{eq:tGsolve} we have $w \in \mathbb F \mathcal Z_n + B_{d-1} + {\rm Span}(B_1 B_{d-1}) + {\rm Span}(B_{d-1}B_1) \subseteq B'_d$. We have shown $B_d \subseteq B'_d$. By the above comments $B_d=B'_d$.
\end{proof}

\noindent We saw earlier that $\mathcal A_q$ is generated by $\mathcal W_0, \mathcal W_1, \mathcal Z$ with $\mathcal Z$ central. Next we consider how $\mathcal W_0, \mathcal W_1$ are related.
\begin{lemma} \label{eq:DG} {\rm (See \cite[line (3.7)]{basBel}.)}
For the algebra $\mathcal A_q$,
\begin{align*}
&\lbrack \mathcal W_0, \lbrack \mathcal W_0, \lbrack \mathcal W_0, \mathcal W_1\rbrack_q \rbrack_{q^{-1}} \rbrack =(q^2 - q^{-2})^2 \lbrack \mathcal W_1, \mathcal W_0 \rbrack,
\\
&\lbrack \mathcal W_1, \lbrack \mathcal W_1, \lbrack \mathcal W_1, \mathcal W_0\rbrack_q \rbrack_{q^{-1}}\rbrack = (q^2-q^{-2})^2 \lbrack \mathcal W_0, \mathcal W_1 \rbrack.
\end{align*}
\end{lemma}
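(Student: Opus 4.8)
The plan is to prove the first relation by a direct computation from the $k=0$ instances of \eqref{eq:3p1} and \eqref{eq:3p2}, and then to deduce the second relation by applying the automorphism $\sigma$ of Lemma~\ref{lem:aut}. Indeed $\sigma$ interchanges $\mathcal W_0 \leftrightarrow \mathcal W_1$ and $\mathcal G_{k+1} \leftrightarrow \mathcal {\tilde G}_{k+1}$, and since $\sigma$ preserves $q$-commutators, applying it to the first relation produces the second verbatim. Thus only the first relation requires work.

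To prove the first relation, abbreviate $v = \lbrack \mathcal W_0, \mathcal W_1 \rbrack$ and recall $\rho = -(q^2-q^{-2})^2$, so that the desired right-hand side is $(q^2-q^{-2})^2 \lbrack \mathcal W_1, \mathcal W_0 \rbrack = \rho v$. The crucial structural point is that left multiplication and right multiplication by $\mathcal W_0$ commute as operators on $\mathcal A_q$; hence any two of the maps $\lbrack \mathcal W_0, - \rbrack$, $\lbrack \mathcal W_0, - \rbrack_q$, $\lbrack \mathcal W_0, - \rbrack_{q^{-1}}$ commute with one another (each being a scalar combination of the two commuting multiplications), and the nested expression may be reorganized as
\begin{align*}
\lbrack \mathcal W_0, \lbrack \mathcal W_0, \lbrack \mathcal W_0, \mathcal W_1 \rbrack_q \rbrack_{q^{-1}} \rbrack = \lbrack \mathcal W_0, \lbrack \mathcal W_0, v \rbrack_q \rbrack_{q^{-1}}.
\end{align*}
By \eqref{eq:3p1} at $k=0$ we have $v = (\mathcal {\tilde G}_1 - \mathcal G_1)/(q+q^{-1})$, so it remains to evaluate $\lbrack \mathcal W_0, \lbrack \mathcal W_0, \mathcal G_1 \rbrack_q \rbrack_{q^{-1}}$ and $\lbrack \mathcal W_0, \lbrack \mathcal W_0, \mathcal {\tilde G}_1 \rbrack_q \rbrack_{q^{-1}}$, take their difference, and divide by $q+q^{-1}$.

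Here I would again exploit the commutativity of the inner operators to peel off whichever inner factor matches \eqref{eq:3p2}. For the $\mathcal G_1$ term, peel off $\lbrack \mathcal W_0, \mathcal G_1 \rbrack_q = \rho(\mathcal W_{-1} - \mathcal W_1)$ by \eqref{eq:3p2} at $k=0$, leaving $\rho \lbrack \mathcal W_0, \mathcal W_{-1} - \mathcal W_1 \rbrack_{q^{-1}}$. For the $\mathcal {\tilde G}_1$ term, peel off instead the $q^{-1}$-factor $\lbrack \mathcal W_0, \mathcal {\tilde G}_1 \rbrack_{q^{-1}} = -\lbrack \mathcal {\tilde G}_1, \mathcal W_0 \rbrack_q = -\rho(\mathcal W_{-1} - \mathcal W_1)$, again by \eqref{eq:3p2} at $k=0$, leaving $-\rho \lbrack \mathcal W_0, \mathcal W_{-1} - \mathcal W_1 \rbrack_q$. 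In the difference, the $\mathcal W_{-1}$ contributions are proportional to $\lbrack \mathcal W_0, \mathcal W_{-1} \rbrack_q + \lbrack \mathcal W_0, \mathcal W_{-1} \rbrack_{q^{-1}} = (q+q^{-1}) \lbrack \mathcal W_0, \mathcal W_{-1} \rbrack$, which is zero by \eqref{eq:3p4}, while the $\mathcal W_1$ contributions combine into $\rho(q+q^{-1}) \lbrack \mathcal W_0, \mathcal W_1 \rbrack = \rho(q+q^{-1}) v$. Dividing by $q+q^{-1}$ gives $\rho v$, as required.

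The only subtle point, and the main bookkeeping hazard, is choosing for each of $\mathcal G_1$ and $\mathcal {\tilde G}_1$ the inner factor whose $q$-grading matches the precise form in which \eqref{eq:3p2} is stated (namely $\lbrack \mathcal W_0, \mathcal G_1 \rbrack_q$ versus $\lbrack \mathcal {\tilde G}_1, \mathcal W_0 \rbrack_q$), and then confirming the cancellation of the $\mathcal W_{-1}$ terms, which hinges precisely on the mutual commutativity \eqref{eq:3p4} of the generators $\mathcal W_{-k}$. An entirely mechanical alternative is available now that Proposition~\ref{lem:rra} and Theorem~\ref{thm:rr} are in hand: one could expand each side into $\mathcal A_q$-words and reduce both to PBW normal form, then compare. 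I would nonetheless prefer the argument above, since it makes transparent why the relation holds and isolates the role of each defining relation.
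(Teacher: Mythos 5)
Your proof is correct, but it takes a different route from the paper: the paper supplies no proof at all for this lemma, delegating it entirely to the citation \cite[line (3.7)]{basBel}, whereas you give a self-contained derivation from the defining relations of $\mathcal A_q$. Your argument is sound at every step: the observation that $\lbrack \mathcal W_0, -\rbrack$, $\lbrack \mathcal W_0, -\rbrack_q$, $\lbrack \mathcal W_0, -\rbrack_{q^{-1}}$ pairwise commute (each being a linear combination of the commuting operators of left and right multiplication by $\mathcal W_0$) legitimately lets you rewrite the nested bracket as $\lbrack \mathcal W_0, \lbrack \mathcal W_0, v\rbrack_q \rbrack_{q^{-1}}$ with $v = \lbrack \mathcal W_0, \mathcal W_1\rbrack$; the substitutions from \eqref{eq:3p1} and \eqref{eq:3p2} at $k=0$ are applied in the correct orientation (in particular the sign flip $\lbrack \mathcal W_0, \mathcal {\tilde G}_1\rbrack_{q^{-1}} = -\lbrack \mathcal {\tilde G}_1, \mathcal W_0\rbrack_q$ is right); the identity $\lbrack X,Y\rbrack_q + \lbrack X,Y\rbrack_{q^{-1}} = (q+q^{-1})\lbrack X,Y\rbrack$ then collapses the difference, with the $\mathcal W_{-1}$ terms killed by \eqref{eq:3p4} and the $\mathcal W_1$ terms yielding $\rho v = (q^2-q^{-2})^2\lbrack \mathcal W_1, \mathcal W_0\rbrack$; and the deduction of the second relation via the automorphism $\sigma$ of Lemma \ref{lem:aut} is exactly the kind of symmetry argument the paper itself uses elsewhere (e.g.\ in Appendix A). What your approach buys is independence from the external reference, making the logical development of the paper self-contained at this point; what the citation buys the paper is brevity, since the computation, while short, is a known result in the literature on $\mathcal A_q$.
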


\noindent The above equations are the defining relations for the $q$-Onsager algebra. The relationship between $\mathcal A_q$ and the $q$-Onsager algebra is investigated in the next section.

\section{How $\mathcal A_q$ is related to the $q$-Onsager algebra}

\noindent In this section we describe how the algebra $\mathcal A_q$ is related to the $q$-Onsager algebra.
\begin{definition} \label{def:U} \rm
(See \cite[Section~2]{bas1}, \cite[Definition~3.9]{qSerre}.)
Define the algebra $O_q$ by generators $W_0$, $W_1$ and relations
\begin{align}
\label{eq:qOns1}
&\lbrack W_0, \lbrack W_0, \lbrack W_0, W_1\rbrack_q \rbrack_{q^{-1}} \rbrack =(q^2 - q^{-2})^2 \lbrack W_1, W_0 \rbrack,
\\
\label{eq:qOns2}
&\lbrack W_1, \lbrack W_1, \lbrack W_1, W_0\rbrack_q \rbrack_{q^{-1}}\rbrack = (q^2-q^{-2})^2 \lbrack W_0, W_1 \rbrack.
\end{align}
We call $O_q$ the {\it $q$-Onsager algebra}.
The relations \eqref{eq:qOns1}, \eqref{eq:qOns2}  are called the {\it $q$-Dolan/Grady relations}.
\end{definition}

\begin{definition}\label{def:poly} \rm
Let $\lbrace z_n \rbrace_{n=1}^\infty$ denote mutually commuting indeterminates. Let $\mathbb F \lbrack z_1, z_2, \ldots \rbrack$ denote
the algebra consisting of the polynomials in $z_1, z_2, \ldots $ that have all coefficients in $\mathbb F$.
For notational convenience define $z_0=1$.
\end{definition}
\noindent We define the algebra
\begin{align}
\mathcal O_q = O_q \otimes \mathbb F \lbrack z_1, z_2, \ldots \rbrack.
\label{eq:mainalg}
\end{align}
\noindent The algebra $\mathcal O_q$ is generated by
\begin{align*}
W_0 \otimes 1, \qquad \quad
W_1 \otimes 1,\qquad \quad 
\lbrace 1 \otimes z_n\rbrace_{n=1}^\infty.
\end{align*}

\begin{lemma} \label{lem:phi} 
There exists a unique algebra homomorphism $\phi: \mathcal O_q \to \mathcal A_q$ that sends
\begin{align*}
W_0 \otimes 1 \mapsto \mathcal W_0, \qquad \quad
W_1 \otimes 1 \mapsto \mathcal W_1, \qquad \quad 
1 \otimes z_n \mapsto \mathcal Z_n, \qquad n\geq 1.
\end{align*}
\noindent Moreover $\phi$ is surjective.
\end{lemma}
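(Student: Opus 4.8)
The plan is to establish existence and uniqueness of $\phi$ by invoking the universal property of the defining presentation of $\mathcal O_q$, and then to deduce surjectivity from Corollary \ref{cor:ZZ}. First I would note that by \eqref{eq:mainalg} and Definition \ref{def:poly}, the algebra $\mathcal O_q$ is generated by $W_0 \otimes 1$, $W_1 \otimes 1$, and $\lbrace 1 \otimes z_n\rbrace_{n=1}^\infty$, subject only to: the two $q$-Dolan/Grady relations \eqref{eq:qOns1}, \eqref{eq:qOns2} satisfied by $W_0 \otimes 1, W_1 \otimes 1$; the mutual commutativity of the $1 \otimes z_n$; and the fact that each $1 \otimes z_n$ is central in $\mathcal O_q$ (commutes with $W_0 \otimes 1$ and $W_1 \otimes 1$). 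Since $\mathcal O_q$ is a tensor product of $O_q$ with a polynomial algebra, these are exactly the relations that present it. Hence to produce $\phi$ it suffices to check that the proposed images satisfy the corresponding relations in $\mathcal A_q$.

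Next I would verify these relations one at a time. The $q$-Dolan/Grady relations for the images $\mathcal W_0, \mathcal W_1$ hold by Lemma \ref{eq:DG}. The elements $\lbrace \mathcal Z_n\rbrace_{n=1}^\infty$ are central in $\mathcal A_q$ by Proposition \ref{prop:ZC}; in particular they commute with each other and with $\mathcal W_0, \mathcal W_1$. Since all the defining relations of $\mathcal O_q$ are preserved, the universal property of the presentation yields a unique algebra homomorphism $\phi: \mathcal O_q \to \mathcal A_q$ with the stated action on generators. Uniqueness is immediate because the listed elements generate $\mathcal O_q$ and an algebra homomorphism is determined by its values on a generating set.

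For surjectivity, I would appeal directly to Corollary \ref{cor:ZZ}, which states that $\mathcal A_q$ is generated by $\mathcal W_0, \mathcal W_1, \mathcal Z$, where $\mathcal Z$ is the subalgebra generated by $\lbrace \mathcal Z_n\rbrace_{n=1}^\infty$ (Definition \ref{def:calZ}). The image of $\phi$ contains $\mathcal W_0 = \phi(W_0 \otimes 1)$, $\mathcal W_1 = \phi(W_1 \otimes 1)$, and $\mathcal Z_n = \phi(1 \otimes z_n)$ for all $n \geq 1$, hence it contains all the generators of $\mathcal A_q$. Therefore the image is all of $\mathcal A_q$, and $\phi$ is surjective.

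I do not expect a serious obstacle here: this is essentially a bookkeeping argument assembling results already proved. The one point requiring a little care is articulating precisely why the tensor product $O_q \otimes \mathbb F\lbrack z_1, z_2, \ldots\rbrack$ has the claimed presentation — namely that imposing the $q$-Dolan/Grady relations on $W_0 \otimes 1, W_1 \otimes 1$ together with the centrality and mutual commutativity of the $1 \otimes z_n$ is a complete set of relations. This follows from the standard fact that a presentation of a tensor product of algebras is obtained by taking the union of presentations of the factors together with relations making the two sets of generators commute; since $\mathbb F\lbrack z_1, z_2, \ldots\rbrack$ is the free commutative algebra on $\lbrace z_n\rbrace$, no further relations among the $z_n$ are needed.
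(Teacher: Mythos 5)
Your proposal is correct and follows essentially the same route as the paper: existence from Lemma \ref{eq:DG} and the centrality of the $\mathcal Z_n$ (Proposition \ref{prop:ZC}), uniqueness from generation, and surjectivity from Definition \ref{def:calZ} and Corollary \ref{cor:ZZ}. The only difference is that you spell out explicitly the presentation of the tensor product $O_q \otimes \mathbb F\lbrack z_1, z_2,\ldots\rbrack$, which the paper leaves implicit.
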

\begin{proof} The algebra homomorphism $\phi$ exists by Lemma \ref{eq:DG} and since $\lbrace \mathcal Z_n \rbrace_{n=1}^\infty$ are central in $\mathcal A_q$. The map $\phi$ is unique since
$\mathcal O_q $ is generated by $W_0\otimes 1$, $W_1\otimes 1$, $\lbrace 1\otimes z_n \rbrace_{n=1}^\infty$.
The map $\phi$ is surjective by Lemma \ref{def:calZ} and Corollary \ref{cor:ZZ}.
\end{proof}

\noindent We are going to show that $\phi$ is an algebra isomorphism. Our strategy is to employ a filtration of $\mathcal O_q$ that is mapped by $\phi$
to the filtration $\lbrace B_d \rbrace_{d\in \mathbb N}$ of $\mathcal A_q$. We will construct our filtration of $\mathcal O_q$ using
a filtration of $O_q$ and a grading of $\mathbb F \lbrack z_1, z_2, \ldots \rbrack$. 
\medskip

\noindent We recall from \cite[Section~4]{pospart}
a filtration of $O_q$. For notational convenience abbreviate $O=O_q$.
For $d \in \mathbb N$ let  $O_d$  denote the subspace of $O$
spanned by the products
$g_1g_2\cdots g_r$ such that $0 \leq r \leq d$ and
$g_i$ is among $W_0, W_1$ for $1 \leq i \leq r$.
\begin{lemma}\label{lem:Ofil} {\rm (See \cite[Section~4]{pospart}.)}
With the above notation, the following {\rm (i)--(iv)} hold:
\begin{enumerate}
\item[\rm (i)] $O_0 = \mathbb F 1$; 
\item[\rm (ii)] 
 $O_{d-1} \subseteq O_d$ for $d\geq 1$;
 \item[\rm (iii)] $O = \cup_{d \in \mathbb N} O_d$;
 \item[ \rm (iv)]  $O_r O_s \subseteq  O_{r+s}$
for $r,s \in \mathbb N$. 
\end{enumerate}
\end{lemma}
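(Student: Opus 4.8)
The plan is to verify the four properties directly from the definition of $O_d$ as the span of the products $g_1 g_2 \cdots g_r$ with $0 \le r \le d$ and each $g_i \in \lbrace W_0, W_1\rbrace$; no manipulation of the $q$-Dolan/Grady relations is needed, since the filtration is organized purely by the length of a word in the generators. For (i), the only index allowed when $d=0$ is $r=0$, whose corresponding product is the empty product, namely the multiplicative identity; hence $O_0 = \mathbb F 1$. For (ii), any product of length $r$ with $0 \le r \le d-1$ also satisfies $0 \le r \le d$, so every spanning vector of $O_{d-1}$ is a spanning vector of $O_d$, giving $O_{d-1} \subseteq O_d$.

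For (iii), I would invoke the fact that $O=O_q$ is generated by $W_0, W_1$ (Definition \ref{def:U}). Consequently every element of $O$ is a linear combination of words $g_1 g_2 \cdots g_r$ in $W_0, W_1$. Each such word has some finite length $r$ and therefore lies in $O_r$, hence in $\cup_{d \in \mathbb N} O_d$. Since this union is a subspace containing a spanning set for $O$, we conclude $O = \cup_{d \in \mathbb N} O_d$.

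For (iv), let $u \in O_r$ and $v \in O_s$. By linearity it suffices to treat the case where $u=g_1\cdots g_m$ and $v=h_1\cdots h_n$ are words in $W_0, W_1$ with $m \le r$ and $n \le s$. Then $uv = g_1\cdots g_m h_1 \cdots h_n$ is a word of length $m+n \le r+s$, so $uv \in O_{r+s}$; extending linearly yields $O_r O_s \subseteq O_{r+s}$. I do not anticipate any genuine obstacle here: the entire argument is bookkeeping on word lengths, and the sole external input is the presentation of $O_q$ by the two generators $W_0, W_1$. The relations \eqref{eq:qOns1}, \eqref{eq:qOns2} play no role in the filtration axioms — which is precisely why this filtration is immediate, in contrast with any subsequent analysis of the associated graded algebra, where the relations would become essential.
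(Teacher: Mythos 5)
Your proof is correct. The paper gives no proof of this lemma at all---it simply cites \cite[Section~4]{pospart}---and your argument (word-length bookkeeping on spanning products, plus the fact that $O_q$ is generated by $W_0, W_1$, so that the nested union $\cup_d O_d$ is a subspace containing a spanning set) is exactly the routine verification that the citation stands for, so you have merely supplied the omitted details rather than taken a different route.
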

\noindent By Definition \ref{def:filtration} and Lemma \ref{lem:Ofil} the sequence
$\lbrace O_d \rbrace_{d \in \mathbb N}$ 
is a filtration of $O_q$.
\medskip

\noindent We now consider the dimensions of the subspaces $\lbrace O_d\rbrace_{d \in \mathbb N}$. For notational convenience define $O_{-1}=0$.
\begin{definition}\rm
 We define a generating function in the indeterminate $x$:
\begin{align*}
H(x) = \sum_{d \in \mathbb N} {\rm dim}(O_d/O_{d-1}) x^d.
\end{align*}
\end{definition}

\begin{lemma}\label{lem:genf2} {\rm (See \cite[line (39)]{shape} and \cite[Theorem~4.4]{pospart})} We have
\begin{align}
\label{eq:prod3}
H(x) = \prod_{i=1}^\infty \frac{1}{(1-x^{2i-1})^2} \frac{1}{1-x^{2i}}.
\end{align}
\end{lemma}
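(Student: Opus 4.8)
The plan is to establish the product formula for $H(x)$ by identifying a vector-space basis for $O_q$ that is compatible with the filtration $\{O_d\}_{d\in\mathbb N}$, and then reading off the graded dimensions. Recall that $O_q$ is the positive part of a $q$-deformed affine algebra, and by \cite{pospart} it admits a PBW-type basis indexed by certain root-vector monomials. First I would invoke the known PBW basis for $O_q$ from \cite[Section~4]{pospart}: the basis consists of ordered products of ``root vectors'' $\{W_{-k}\}_{k\in\mathbb N}$, $\{W_{k+1}\}_{k\in\mathbb N}$ (the real root vectors, each of degree $2k+1$ in the filtration) together with the imaginary root vectors $\{\tilde W_n\}_{n=1}^\infty$ (each of degree $2n$), where the filtration degree of a basis monomial is the sum of the degrees of its factors. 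The claim is that the associated graded algebra ${\rm gr}(O_q)$ is a free object on these generators, so that $H(x)$ is simply the generating function counting ordered monomials by total degree.

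Granting that PBW description, the computation of $H(x)$ becomes a routine bookkeeping exercise. Each real root vector of a given degree $2i-1$ contributes a factor $(1-x^{2i-1})^{-1}$, and since for each $i\geq 1$ there are exactly two such vectors (one from the $W_{-k}$ family and one from the $W_{k+1}$ family, with $k=i-1$), together they contribute $(1-x^{2i-1})^{-2}$. Each imaginary root vector of degree $2i$ contributes a single factor $(1-x^{2i})^{-1}$, and there is one such vector for each $i\geq 1$. Multiplying these contributions over all $i$ yields exactly
\begin{align*}
H(x) = \prod_{i=1}^\infty \frac{1}{(1-x^{2i-1})^2}\,\frac{1}{1-x^{2i}},
\end{align*}
which is \eqref{eq:prod3}. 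The argument is entirely parallel to the proof of Lemma \ref{lem:genf}, where each alternating generator of $\mathcal A_q$ contributed a geometric-series factor governed by its degree; the only difference is the multiplicities and degrees of the generating root vectors.

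The main obstacle I anticipate is not the combinatorics but rather justifying the precise degree assignments and multiplicities of the root vectors, which must be extracted carefully from \cite[Section~4]{pospart} and \cite[line (39)]{shape}. In particular, one must confirm that the filtration $\{O_d\}$ defined here by products of $W_0,W_1$ (length filtration weighted by degree) agrees with the filtration under which the PBW basis of \cite{pospart} is homogeneous, and that the real root vectors genuinely occur in pairs at each odd degree while the imaginary root vectors occur singly at each even degree. Since the statement is explicitly cited to \cite[line (39)]{shape} and \cite[Theorem~4.4]{pospart}, I would expect the proof to reduce to quoting those references and matching conventions, rather than reconstructing the PBW basis from scratch. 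The delicate point is simply ensuring the indexing conventions line up so that the product over $i$ comes out in the stated form.
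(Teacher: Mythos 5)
The paper offers no proof of this lemma at all: it is quoted verbatim from the literature, with \cite[line~(39)]{shape} supplying the product $\prod_{i\geq 1}(1-x^{2i-1})^{-2}(1-x^{2i})^{-1}$ as the Hilbert series of $U^+_q$, and \cite[Theorem~4.4]{pospart} supplying the statement that the length filtration $\lbrace O_d\rbrace$ of $O_q$ has that same Hilbert series. So your closing instinct --- that the proof should reduce to quoting those two references --- is exactly what the paper does, and to that extent your proposal matches it.

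However, your reconstruction of what the references contain is off in a way that buries the real difficulty. The root-vector bookkeeping you describe (two real root vectors in each odd degree, one imaginary root vector in each even degree, with the associated graded algebra free on them) is Damiani's PBW basis for $U^+_q$, the positive part of $U_q(\widehat{\mathfrak{sl}}_2)$; that is precisely what line (39) of \cite{shape} encodes. It is not a PBW basis for $O_q$, and \cite[Section~4]{pospart} does not construct one. What \cite[Theorem~4.4]{pospart} provides is the missing bridge: for the filtration by length in $W_0,W_1$, the associated graded algebra of $O_q$ is isomorphic to $U^+_q$ --- i.e.\ the lower-order terms that distinguish the $q$-Dolan/Grady relations from the $q$-Serre relations cause no collapse of dimensions. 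That is the genuinely hard content of the lemma, and your proposal dismisses it as a matter of ``matching conventions.'' A PBW basis of $O_q$ itself compatible with this filtration does exist, but it is due to Baseilhac--Kolb \cite{BK}, not \cite{pospart}; and note that you cannot instead extract such a basis from the present paper's own results (Theorems \ref{thm:rr}, \ref{thm:2}), because Lemma \ref{lem:genf2} is an ingredient --- via Lemma \ref{lem:obs} and Corollary \ref{cor:samed} --- in the proof of Theorem \ref{thm:2}, so that route would be circular.
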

\noindent We turn our attention to the algebra $\mathbb F \lbrack z_1, z_2, \ldots \rbrack$. 
First we describe a basis for the vector space $\mathbb F \lbrack z_1, z_2, \ldots \rbrack$.
For $n \in \mathbb N$, a {\it partition of $n$} is a sequence $\lambda = \lbrace \lambda_i \rbrace_{i=1}^\infty$
of natural numbers such that $\lambda_i \geq \lambda_{i+1}$ for $i\geq 1$ and $n=\sum_{i=1}^\infty \lambda_i$.
The set $\Lambda_n$ consists of the partitions of $n$. Define $\Lambda = \cup_{n \in \mathbb N} \Lambda_n$. 
For $\lambda \in \Lambda$ define $z_\lambda = \prod_{i=1}^\infty z_{\lambda_i}$. The elements 
$\lbrace z_\lambda\rbrace_{\lambda \in \Lambda}$ form a basis for the vector space $\mathbb F \lbrack z_1, z_2, \ldots \rbrack$.
\medskip

\noindent
Next we construct a grading for the algebra $\mathbb F \lbrack z_1, z_2, \ldots \rbrack$.
For notational convenience abbreviate  $Z=\mathbb F \lbrack z_1, z_2, \ldots \rbrack$.
  For $n \in \mathbb N$ let $Z_n$ denote the subspace of
$Z$ with basis $\lbrace Z_\lambda \rbrace_{\lambda \in \Lambda_n}$. For example $ Z_0 = \mathbb F 1$.
The sum $Z = \sum_{n\in \mathbb N} Z_n$ is direct. Moreover $Z_r Z_s\subseteq Z_{r+s}$
for $r,s\in \mathbb N$. By these comments and Definition \ref{def:gr}
the subspaces $\lbrace Z_n \rbrace_{n\in \mathbb N}$ form
a grading of the algebra $Z$. 
Note that $z_n \in Z_n$ for $n \in \mathbb N$. 
\medskip

\noindent Next we consider the dimensions of the subspaces $\lbrace Z_n\rbrace_{n \in \mathbb N}$.
For $n \in \mathbb N$ let $p_n$ denote the number of partitions of $n$. By construction $p_n$ is the
dimension of the vector space $Z_n$. Define the generating function $P(x) = \sum_{n \in \mathbb N} p_n x^n$. The following result is well known; see for example \cite[Theorem~8.3.4]{bruIntro}.
\begin{align*}
P(x) = \prod_{i=1}^\infty \frac{1}{1-x^i}.
\end{align*}
\noindent Shortly we will consider $P(x^2)$. Note that
\begin{align}
P(x^2) = \prod_{i=1}^\infty \frac{1}{1-x^{2i}}.
\label{eq:Zgf}
\end{align}

\noindent We just constructed a filtration of $O_q$ and a grading of $\mathbb F\lbrack z_1, z_2, \ldots\rbrack$. We now combine these constructions to get a filtration of $O_q \otimes \mathbb F \lbrack z_1, z_2,\ldots \rbrack$.

\begin{definition}\label{def:cOn} \rm
 For notational convenience abbreviate $\mathcal O=\mathcal O_q$.
For $d \in \mathbb N$ define 
\begin{align}
\label{eq:skip}
\mathcal O_d = \sum_{k=0}^{\lfloor d/2 \rfloor}  O_{d-2k} \otimes Z_{k}.
\end{align}

\end{definition}
\begin{lemma}\label{lem:cOgrading} With reference to Definition \ref{def:cOn}, 
The following {\rm(i)--(iv)} hold:
\begin{enumerate}
\item[\rm (i)] $\mathcal O_0$ has basis $1\otimes 1$;
\item[\rm (ii)] $\mathcal O_{d-1} \subseteq \mathcal O_d$ for $d\geq 1$;
\item[\rm (iii)] $\mathcal O= \cup_{d \in \mathbb N} \mathcal O_d$;
\item[\rm (iv)] $\mathcal O_r \mathcal O_s \subseteq \mathcal O_{r+s} $ for $r,s\in \mathbb N$.
\end{enumerate}
\end{lemma}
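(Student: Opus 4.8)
The plan is to verify the four claims (i)--(iv) about the sequence $\lbrace \mathcal O_d\rbrace_{d\in \mathbb N}$ by unpacking Definition \ref{def:cOn} and reducing each to the corresponding property of the filtration $\lbrace O_d\rbrace_{d\in\mathbb N}$ of $O_q$ (Lemma \ref{lem:Ofil}) together with the grading $\lbrace Z_n\rbrace_{n\in\mathbb N}$ of $Z=\mathbb F\lbrack z_1,z_2,\ldots\rbrack$. Each part is essentially a bookkeeping argument on the double index in the defining sum $\mathcal O_d=\sum_{k=0}^{\lfloor d/2\rfloor} O_{d-2k}\otimes Z_k$.

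First I would prove (i): when $d=0$ the sum has the single term $O_0\otimes Z_0$, and since $O_0=\mathbb F 1$ (Lemma \ref{lem:Ofil}(i)) and $Z_0=\mathbb F 1$, we get $\mathcal O_0=\mathbb F(1\otimes 1)$, which has basis $1\otimes 1$. For (iii), I would note that every basis vector $g\otimes z_\lambda$ of $\mathcal O$, with $g$ a product of $r$ factors from $\lbrace W_0,W_1\rbrace$ and $z_\lambda\in Z_k$, lies in $O_r\otimes Z_k\subseteq \mathcal O_{r+2k}$; hence $\mathcal O=\cup_{d}\mathcal O_d$, using Lemma \ref{lem:Ofil}(iii) to know such products span $O_q$.

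The monotonicity (ii) is the step I expect to require the most care, since increasing $d$ by $1$ changes both the range of summation and the individual summands. The clean way is to use $O_{d-1-2k}\subseteq O_{d-2k}$ (from Lemma \ref{lem:Ofil}(ii)) to embed each term of $\mathcal O_{d-1}$ into the corresponding term of $\mathcal O_d$; one must check the index ranges $0\le k\le \lfloor (d-1)/2\rfloor$ versus $0\le k\le \lfloor d/2\rfloor$ match up, splitting into the cases $d$ even and $d$ odd so that the floor functions behave predictably. Finally, for (iv) I would take typical spanning elements $a\otimes b\in O_{d_1-2k_1}\otimes Z_{k_1}\subseteq \mathcal O_r$ (with $d_1=r$) and $a'\otimes b'\in O_{s-2k_2}\otimes Z_{k_2}\subseteq\mathcal O_s$, and compute $(a\otimes b)(a'\otimes b')=(aa')\otimes(bb')$. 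Then $aa'\in O_{(r-2k_1)+(s-2k_2)}$ by Lemma \ref{lem:Ofil}(iv) and $bb'\in Z_{k_1+k_2}$ by the grading property $Z_{r}Z_s\subseteq Z_{r+s}$, so the product lies in $O_{r+s-2(k_1+k_2)}\otimes Z_{k_1+k_2}\subseteq \mathcal O_{r+s}$, giving $\mathcal O_r\mathcal O_s\subseteq\mathcal O_{r+s}$. The main obstacle is purely the floor-function case analysis in (ii); everything else follows directly from Lemma \ref{lem:Ofil} and the grading axioms for $Z$.
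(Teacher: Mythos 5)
Your proposal is correct and follows essentially the same route as the paper: part (i) from $O_0=\mathbb F 1$ and $Z_0=\mathbb F 1$, parts (ii) and (iii) by termwise use of Lemma \ref{lem:Ofil}(ii),(iii) and the grading of $Z$, and part (iv) by multiplying summands via $O_a O_b\subseteq O_{a+b}$ and $Z_k Z_\ell\subseteq Z_{k+\ell}$ and reindexing into $\mathcal O_{r+s}$. The case analysis you anticipate in (ii) is not actually needed, since $\lfloor (d-1)/2\rfloor \leq \lfloor d/2\rfloor$ holds uniformly and each summand $O_{d-1-2k}\otimes Z_k$ embeds directly into $O_{d-2k}\otimes Z_k$.
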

\begin{proof} (i) Since $O_0=\mathbb F 1$ and $Z_0=\mathbb F 1$.
\\
\noindent (ii) By Lemma \ref{lem:Ofil}(ii) 
and
\eqref{eq:skip}.
\\
\noindent (iii) By Lemma \ref{lem:Ofil}(iii) and $Z=\sum_{n\in \mathbb N} Z_n$.
\\
\noindent (iv) By \eqref{eq:skip},
\begin{align*}
\mathcal O_r = \sum_{k=0}^{\lfloor r/2\rfloor} O_{r-2k} \otimes Z_k, \qquad \qquad
\mathcal O_s = \sum_{\ell=0}^{\lfloor s/2\rfloor} O_{s-2\ell} \otimes Z_\ell.
\end{align*}
\noindent Observe that
\begin{align*} 
      \mathcal O_r \mathcal O_s &\subseteq  \sum_{k=0}^{\lfloor r/2\rfloor} \sum_{\ell=0}^{\lfloor s/2 \rfloor} {\rm Span}\bigl((O_{r-2k} \otimes Z_k )(O_{s-2\ell} \otimes Z_\ell)\bigr)
      \\
     &= \sum_{k=0}^{\lfloor r/2\rfloor} \sum_{\ell=0}^{\lfloor s/2 \rfloor} {\rm Span}\bigl((O_{r-2k} O_{s-2\ell}) \otimes (Z_k Z_\ell)\bigr)
  \\
     &\subseteq \sum_{k=0}^{\lfloor r/2\rfloor} \sum_{\ell=0}^{\lfloor s/2 \rfloor} O_{r+s-2k-2\ell} \otimes Z_{k+\ell}
     \\
     &\subseteq \sum_{m=0}^{\lfloor (r+s)/2\rfloor} O_{r+s-2m} \otimes Z_m
     \\
     &= \mathcal O_{r+s}.
     \end{align*}
\end{proof}

\noindent By   Definition \ref{def:filtration} and Lemma \ref{lem:cOgrading}, the sequence $\lbrace \mathcal O_d \rbrace_{d \in \mathbb N}$ is a filtration of $\mathcal O_q$.
\medskip

\noindent We have a comment.
\begin{lemma}\label{lem:cm} For $n \in \mathbb N$,
 \begin{align} 
 \label{eq:cm}
  1 \otimes z_n \in 1 \otimes Z_n \subseteq \mathcal O_{2n}.
  \end{align}
  \end{lemma}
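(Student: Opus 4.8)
The plan is to prove both assertions by directly unwinding the relevant definitions; no estimation or hard structural input is needed, so this is essentially bookkeeping.

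First I would verify the membership $1 \otimes z_n \in 1 \otimes Z_n$. Recall from the discussion preceding Definition \ref{def:cOn} that $Z_n$ is defined to have basis $\lbrace z_\lambda \rbrace_{\lambda \in \Lambda_n}$, where $\Lambda_n$ is the set of partitions of $n$ and $z_\lambda = \prod_{i=1}^\infty z_{\lambda_i}$. The single-part partition $\lambda = (n, 0, 0, \ldots)$ lies in $\Lambda_n$ and satisfies $z_\lambda = z_n$ (using $z_0 = 1$ from Definition \ref{def:poly}). Hence $z_n \in Z_n$, and applying $1 \otimes (-)$ gives $1 \otimes z_n \in 1 \otimes Z_n$, which is the first containment in \eqref{eq:cm}.

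Next I would establish the inclusion $1 \otimes Z_n \subseteq \mathcal O_{2n}$. By \eqref{eq:skip} in Definition \ref{def:cOn}, together with $\lfloor 2n/2\rfloor = n$, we have
\begin{align*}
\mathcal O_{2n} = \sum_{k=0}^{n} O_{2n-2k} \otimes Z_k.
\end{align*}
The summand indexed by $k = n$ is $O_{2n-2n} \otimes Z_n = O_0 \otimes Z_n$. By Lemma \ref{lem:Ofil}(i) we have $O_0 = \mathbb F 1$, so this summand equals $\mathbb F 1 \otimes Z_n = 1 \otimes Z_n$. Since a single summand of a sum of subspaces is contained in the sum, we conclude $1 \otimes Z_n \subseteq \mathcal O_{2n}$.

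Combining the two steps yields the full chain $1 \otimes z_n \in 1 \otimes Z_n \subseteq \mathcal O_{2n}$ of \eqref{eq:cm}. I do not anticipate any genuine obstacle here: the only point requiring a moment's care is matching the index $k = n$ to the even target degree $2n$, which is precisely why the filtration \eqref{eq:skip} skips in steps of two in the $Z$-grading.
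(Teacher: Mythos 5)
Your proof is correct and follows essentially the same route as the paper's: the paper's proof is simply ``Set $d=2n$ in \eqref{eq:skip} and recall that $z_n \in Z_n$,'' and you have unwound exactly this, identifying the $k=n$ summand $O_0 \otimes Z_n = \mathbb F 1 \otimes Z_n$ and verifying $z_n \in Z_n$ via the one-part partition. No further comment is needed.
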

  \begin{proof} Set $d=2n$ in \eqref{eq:skip} and recall that $z_n \in Z_n$.
  \end{proof}

\noindent The next result is reminiscent of Lemma \ref{lem:factor}.

\begin{lemma} \label{lem:factor2}
With reference to Definition \ref{def:cOn},    the following {\rm (i), (ii)} hold:
\begin{enumerate}
\item[\rm (i)] 
the subspace 
 $\mathcal O_1$ has basis $1\otimes 1,  W_0\otimes 1, W_1\otimes 1$;
\item[\rm (ii)] for $d\geq 2$ we have
\begin{align}
\label{eq:odd2}
\mathcal O_{d} =F_d+ \mathcal O_{d-1} + \sum_{k=1}^{d-1} {\rm Span}(\mathcal O_k \mathcal O_{d-k}),
\end{align}
\noindent where $F_d=0$ if $d$ is odd and $F_d$ is spanned by $1\otimes z_n$ if $d=2n$ is even.
\end{enumerate}
\end{lemma}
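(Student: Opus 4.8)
The plan is to mirror exactly the proof of Lemma \ref{lem:factor}, using the parallel structure that Definition \ref{def:cOn} was engineered to produce. For part (i), I would argue directly from the definitions: by \eqref{eq:skip} we have $\mathcal O_1 = O_1 \otimes Z_0$ since $\lfloor 1/2 \rfloor = 0$, and $Z_0 = \mathbb F 1$. Now $O_1$ is spanned by the products $g_1 g_2 \cdots g_r$ with $0 \leq r \leq 1$ and each $g_i$ among $W_0, W_1$, so $O_1$ has basis $1, W_0, W_1$ (the linear independence coming from the fact that $O_q$ is infinite-dimensional with these elements distinct). Tensoring with $1 \in Z_0$ gives the asserted basis $1\otimes 1, W_0\otimes 1, W_1\otimes 1$ for $\mathcal O_1$.

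For part (ii), I would let $\mathcal O'_d$ denote the right-hand side of \eqref{eq:odd2} and show the two inclusions. The inclusion $\mathcal O_d \supseteq \mathcal O'_d$ is immediate from Lemma \ref{lem:cOgrading}(ii),(iv) together with \eqref{eq:cm}, which guarantees $F_d \subseteq \mathcal O_{2n} = \mathcal O_d$ when $d = 2n$. For the reverse inclusion $\mathcal O_d \subseteq \mathcal O'_d$, I would use the spanning set for $\mathcal O_d$ coming from \eqref{eq:skip}: a typical spanning element has the form $(g_1 \cdots g_r) \otimes z_\lambda$ where $g_1 \cdots g_r \in O_{d-2k}$ and $z_\lambda \in Z_k$ for some $0 \leq k \leq \lfloor d/2 \rfloor$. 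The strategy is to split off one factor. If $k \geq 1$, I would peel off one indeterminate $z_m$ (with $m \geq 1$ a part of $\lambda$); since $1 \otimes z_m \in \mathcal O_{2m}$ by \eqref{eq:cm} and the remaining factor lies in $\mathcal O_{d-2m}$, the product lies in $\mathrm{Span}(\mathcal O_{2m}\mathcal O_{d-2m})$, which sits inside the sum $\sum_{j=1}^{d-1}\mathrm{Span}(\mathcal O_j\mathcal O_{d-j})$ as long as $1 \leq 2m \leq d-1$. If $k = 0$ and $r \geq 2$, I would write the $O$-word as $g_1 \otimes 1$ times $(g_2\cdots g_r)\otimes 1$, with the first factor in $\mathcal O_1$ and the second in $\mathcal O_{d-1}$. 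The only elements not yet accounted for are the pure generators $1\otimes z_n$ with $d = 2n$ (where peeling leaves a degree-$0$ remainder, so one cannot fit both factors into the range $1 \leq j \leq d-1$); these are precisely what $F_d$ is defined to absorb, so $1 \otimes z_n \in F_d \subseteq \mathcal O'_d$.

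The one place demanding care — the analogue of the generator-level case in Lemma \ref{lem:factor} — is the boundary bookkeeping when $k=1$ and $\lambda$ consists of a single part, i.e.\ the element $1 \otimes z_n$ at degree $d = 2n$. Peeling off $z_n$ leaves $1 \otimes 1$, whose degree-$0$ factor cannot be paired into the index range $1 \leq j \leq d-1$ of the double-sum; this is exactly the edge case handled by introducing $F_d$, just as $E_d = \mathbb F\mathcal Z_n$ handled the irreducible generators $\mathcal G_n, \mathcal{\tilde G}_n$ in Lemma \ref{lem:factor}. I expect this to be the main (though minor) obstacle, and it is resolved by the very definition of $F_d$. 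All remaining spanning elements decompose into a product of two factors each of strictly positive, strictly smaller filtration degree, hence lie in $\sum_{k=1}^{d-1}\mathrm{Span}(\mathcal O_k\mathcal O_{d-k})$, completing the inclusion $\mathcal O_d \subseteq \mathcal O'_d$ and thus the proof.
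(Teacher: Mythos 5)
Your proof is correct and follows essentially the same route as the paper's: both argue by double inclusion, decompose the spanning elements $(g_1\cdots g_r)\otimes z_\lambda$ of \eqref{eq:skip} by splitting off a single factor, and let $F_d$ absorb the irreducible element $1\otimes z_n$; the only cosmetic difference is that you peel off a partition part whenever $k\geq 1$ and split off a word letter only when $k=0$, whereas the paper splits off a word letter whenever the word is nonempty and peels a part of $\lambda$ only when the word is empty. One small imprecision: the elements with $k=0$ and $r\leq 1$ (such as $W_0\otimes 1$ or $1\otimes 1$) are also ``not yet accounted for'' by your two splitting cases, but they lie in $\mathcal O_1\subseteq \mathcal O_{d-1}\subseteq \mathcal O'_d$, so the argument closes immediately.
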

\begin{proof} (i) Setting $d=1$ in \eqref{eq:skip} we obtain $\mathcal O_1 = O_1 \otimes Z_0$. The subspace $O_1$ has basis $1, W_0, W_1$ and the subspace $Z_0$ has basis $1$. The result follows.
\\
\noindent (ii) Let $\mathcal O'_d $ denote the vector space on the right in \eqref{eq:odd2}. We show that $\mathcal O_d = \mathcal O'_d$. We have
$\mathcal O_d \supseteq \mathcal O'_d$ by Lemma \ref{lem:cOgrading}(ii),(iv) and \eqref{eq:cm}.
Next we show that $\mathcal O_d \subseteq \mathcal O'_d$. In equation \eqref{eq:skip}, $\mathcal O_d$
is expressed as a sum. Each summand $O_{d-2k}\otimes Z_k$ is spanned by elements of the form $g_1g_2\cdots g_r\otimes z_\lambda$ 
such that $0 \leq r \leq d-2k$ and $g_i$ is among $W_0, W_1$ for $1 \leq i \leq r$ and $\lambda \in \Lambda_k$. Let $w=g_1g_2\cdots g_r\otimes z_\lambda$ denote one of these elements.
We show that $w \in \mathcal O'_d$. We may assume that $r=d-2k$; otherwise $w \in O_{d-1-2k}\otimes Z_k \subseteq \mathcal O_{d-1} \subseteq \mathcal O'_d$.
Assume for the moment that $r\geq 1$. Then $w=w_1w_2$ with $w_1 = g_1 \otimes 1$ and $w_2 = g_2 \cdots g_r \otimes z_\lambda$. In this case $w_1 \in O_1 \otimes Z_0 \subseteq \mathcal O_1$ and $w_2 \in O_{d-1-2k}\otimes Z_k\subseteq \mathcal O_{d-1}$, so
$w =w_1w_2\in \mathcal O_1 \mathcal O_{d-1} \subseteq \mathcal O'_d$.
For the rest of the proof, assume that $r=0$. The integer $d=2k$ is even and $k=n$. Note that $n\geq 1$ since $d\geq 2$. By construction $w = 1\otimes z_\lambda$ with $\lambda \in \Lambda_n$. Write $\lambda = \lbrace \lambda_i \rbrace_{i=1}^\infty$ and note that
$1\leq \lambda_1 \leq n$.
Assume for the moment that $\lambda_1\leq n-1$, and consider the partition $\mu=\lbrace \lambda_{i+1} \rbrace_{i=1}^\infty \in \Lambda_{n-\lambda_1}$.
We have $w=w_1 w_2$ with $w_1=1\otimes z_{\lambda_1}$ and $w_2 = 1 \otimes z_\mu$.
By this and  \eqref{eq:cm} we obtain $w_1 \in 1\otimes Z_{\lambda_1}\subseteq \mathcal O_{2\lambda_1}$ and $w_2 \in 1 \otimes Z_{n-\lambda_1}\subseteq  \mathcal O_{2(n-\lambda_1)}=\mathcal O_{d-2\lambda_1}$. Therefore $w=w_1w_2 \in \mathcal O_{2\lambda_1} \mathcal O_{d-2\lambda_1} \subseteq \mathcal O'_d$.
Next assume that $\lambda_1 = n$. Then $z_\lambda = z_n$, so $w=1 \otimes z_n \in F_d \subseteq \mathcal O'_d$.
We have shown that $w \in \mathcal O'_d$ in all cases, so $\mathcal O_d \subseteq \mathcal O'_d$. By the above comments $\mathcal O_d = \mathcal O'_d$.
\end{proof}

\noindent Next we consider the dimensions of the subspaces $\lbrace \mathcal O_d \rbrace_{d \in \mathbb N}$. For notational convenience define $\mathcal O_{-1} = 0$.

\begin{lemma} \label{lem:Odim}
We have
\begin{align*}
H(x) P(x^2)=
\sum_{d \in \mathbb N} {\rm dim}(\mathcal O_d/\mathcal O_{d-1}) x^d.
\end{align*}
\end{lemma}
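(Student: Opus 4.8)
The plan is to compute each graded piece $\mathcal O_d/\mathcal O_{d-1}$ explicitly and then recognize the target sum as the Cauchy product of $H(x)$ and $P(x^2)$. The key structural input is that $Z=\mathbb F\lbrack z_1,z_2,\ldots\rbrack$ is genuinely \emph{graded}, $Z=\bigoplus_{k\in\mathbb N}Z_k$, whereas $O=O_q$ is only filtered.

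First I would note that, since $Z=\bigoplus_k Z_k$, the tensor product splits as $O\otimes Z=\bigoplus_{k\in\mathbb N}(O\otimes Z_k)$. Each summand $O_{d-2k}\otimes Z_k$ appearing in \eqref{eq:skip} lies in the distinct block $O\otimes Z_k$, so the sum defining $\mathcal O_d$ is in fact direct:
\begin{align*}
\mathcal O_d=\bigoplus_{k=0}^{\lfloor d/2\rfloor} O_{d-2k}\otimes Z_k,
\end{align*}
where the range $0\le k\le\lfloor d/2\rfloor$ guarantees $d-2k\ge 0$.

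Next, $\mathcal O_{d-1}$ has the analogous block form $\bigoplus_k O_{d-1-2k}\otimes Z_k$, and $O_{d-1-2k}\subseteq O_{d-2k}$ by Lemma \ref{lem:Ofil}(ii); hence the inclusion $\mathcal O_{d-1}\subseteq\mathcal O_d$ is compatible with the block decomposition. Passing to the quotient block by block---using the convention $O_{-1}=0$ to handle the top block $k=d/2$, which is present only when $d$ is even---gives
\begin{align*}
\mathcal O_d/\mathcal O_{d-1}\cong\bigoplus_{k=0}^{\lfloor d/2\rfloor}\bigl(O_{d-2k}/O_{d-1-2k}\bigr)\otimes Z_k.
\end{align*}
Taking dimensions and using $\dim Z_k=p_k$ yields
\begin{align*}
\dim(\mathcal O_d/\mathcal O_{d-1})=\sum_{k=0}^{\lfloor d/2\rfloor}\dim(O_{d-2k}/O_{d-1-2k})\,p_k.
\end{align*}

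Finally I would substitute this into the right-hand side of the claim and reindex by $e=d-2k$, which factors the double sum:
\begin{align*}
\sum_{d\in\mathbb N}\dim(\mathcal O_d/\mathcal O_{d-1})x^d
=\Bigl(\sum_{e\in\mathbb N}\dim(O_e/O_{e-1})x^e\Bigr)\Bigl(\sum_{k\in\mathbb N}p_k x^{2k}\Bigr)=H(x)P(x^2),
\end{align*}
by the definitions of $H$ and $P$. The argument is largely bookkeeping; the only step needing care is the reduction of the quotient to a block-by-block computation, which rests on $Z$ being graded so that tensoring the filtration of $O$ against the homogeneous pieces $Z_k$ produces complementary subspaces. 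Beyond tracking the degree shift by $2k$ and the even-$d$ boundary case, I expect no genuine obstacle.
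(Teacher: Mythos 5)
Your proof is correct and follows essentially the same route as the paper: both arguments rest on the gradedness of $Z$ making the sum $\mathcal O_d=\bigoplus_{k}O_{d-2k}\otimes Z_k$ direct, and both reduce the claim to the Cauchy-product identity $\dim(\mathcal O_d/\mathcal O_{d-1})=\sum_{k}\dim(O_{d-2k}/O_{d-1-2k})\dim(Z_k)$. The only difference is bookkeeping: you pass to the quotient block by block and then take dimensions, while the paper works directly with $\dim(\mathcal O_d)-\dim(\mathcal O_{d-1})$, handling the even-$d$ top block via the same convention $O_{-1}=0$.
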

\begin{proof} We have
\begin{align*}
H(x) = \sum_{k\in \mathbb N} {\rm dim}(O_k/O_{k-1}) x^k, \qquad \qquad P(x^2) = \sum_{\ell \in \mathbb N} {\rm dim}(Z_\ell) x^{2\ell}.
\end{align*}
For $d \in \mathbb N$ the coefficient of $x^d$ in $H(x) P(x^2)$ is equal to
\begin{align*}
\sum_{\stackrel{\scriptstyle k+2\ell=d,}{\scriptstyle k,\ell\geq 0}}
                     &{\rm dim}(O_k/O_{k-1}) {\rm dim} (Z_\ell)
\\
& = \sum_{\ell=0}^{\lfloor d/2 \rfloor} {\rm dim}(O_{d-2\ell}/O_{d-1-2\ell}) {\rm dim}(Z_\ell)
\\
&= \sum_{\ell=0}^{\lfloor d/2 \rfloor} \bigl( {\rm dim} (O_{d-2\ell}) - {\rm dim}(O_{d-1-2\ell}) \bigr)  {\rm dim}(Z_\ell)
\\
& ={\rm dim} \sum_{\ell=0}^{\lfloor d/2 \rfloor} O_{d-2\ell} \otimes Z_\ell- {\rm dim} \sum_{\ell=0}^{\lfloor d/2\rfloor} O_{d-1-2\ell}\otimes Z_\ell
\\
& ={\rm dim} \sum_{\ell=0}^{\lfloor d/2 \rfloor} O_{d-2\ell} \otimes Z_\ell- {\rm dim} \sum_{\ell=0}^{\lfloor (d-1)/2\rfloor} O_{d-1-2\ell}\otimes Z_\ell
\\
& ={\rm dim} (\mathcal O_d) - {\rm dim}(\mathcal O_{d-1})
\\
&= {\rm dim}(\mathcal O_d/ \mathcal O_{d-1}).
\end{align*}
\noindent The result follows.
\end{proof}

\begin{lemma} \label{lem:obs} We have 
\begin{align*}
 \mathcal H(x) = H(x) P(x^2).
 \end{align*}
\end{lemma}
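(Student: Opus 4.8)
The plan is to prove the identity $\mathcal H(x) = H(x)P(x^2)$ by showing that both sides equal the same infinite product. By Lemma \ref{lem:genf} we already have the closed form
\begin{align*}
\mathcal H(x) = \prod_{n=1}^\infty \frac{1}{(1-x^n)^2},
\end{align*}
so it suffices to verify that $H(x)P(x^2)$ equals this same product. First I would recall the two factorizations established earlier in the excerpt: by Lemma \ref{lem:genf2},
\begin{align*}
H(x) = \prod_{i=1}^\infty \frac{1}{(1-x^{2i-1})^2}\,\frac{1}{1-x^{2i}},
\end{align*}
and by \eqref{eq:Zgf},
\begin{align*}
P(x^2) = \prod_{i=1}^\infty \frac{1}{1-x^{2i}}.
\end{align*}

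The main step is then a purely formal manipulation of infinite products. Multiplying the two displayed expressions gives
\begin{align*}
H(x) P(x^2) = \prod_{i=1}^\infty \frac{1}{(1-x^{2i-1})^2}\,\frac{1}{(1-x^{2i})^2},
\end{align*}
and I would observe that the factors $(1-x^{2i-1})^{-2}$ range over all odd exponents while the factors $(1-x^{2i})^{-2}$ range over all even exponents, so together they account for every positive integer exponent exactly once with multiplicity two. Reindexing the combined product by the single variable $n$ running over all positive integers yields
\begin{align*}
H(x) P(x^2) = \prod_{n=1}^\infty \frac{1}{(1-x^n)^2} = \mathcal H(x),
\end{align*}
which is the desired equality.

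There is no genuine obstacle here: the content is entirely bookkeeping with formal power series, and the only thing to be careful about is the reindexing argument, namely that splitting $n$ into its odd and even values gives a bijection between $\{2i-1 : i \geq 1\} \cup \{2i : i \geq 1\}$ and the full set $\{n : n \geq 1\}$, with the squared factors matching on both sides. Since all the hard work was done in establishing Lemmas \ref{lem:genf} and \ref{lem:genf2} and equation \eqref{eq:Zgf}, the proof of the present lemma is essentially a one-line consequence of combining those three product formulas and recognizing the common value.
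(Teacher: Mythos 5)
Your proof is correct and follows the same route as the paper: multiply the product formulas \eqref{eq:prod3} and \eqref{eq:Zgf} and compare with \eqref{eq:prod2}, the only content being the reindexing of odd and even exponents into a single product over all $n\geq 1$. The paper leaves this as a one-line computation, and your write-up simply makes that bookkeeping explicit.
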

\begin{proof} Compute $H(x)P(x^2)$ using 
\eqref{eq:prod3},
\eqref{eq:Zgf} and compare the result with \eqref{eq:prod2}.
\end{proof}

\begin{corollary} \label{cor:samed}
For $d \in \mathbb N$ the vector spaces $B_d$ and $\mathcal O_d$ have the same dimension.
\end{corollary}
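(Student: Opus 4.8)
The plan is to compare graded dimensions and then telescope. First I would invoke the two generating-function identities already in hand. By \eqref{eq:grcharB} we have
\[
\mathcal{H}(x) = \sum_{d\in\mathbb{N}} \dim(B_d/B_{d-1}) x^d,
\]
while Lemma \ref{lem:Odim} gives
\[
H(x)P(x^2) = \sum_{d\in\mathbb{N}} \dim(\mathcal{O}_d/\mathcal{O}_{d-1}) x^d.
\]
By Lemma \ref{lem:obs} the left-hand sides coincide, namely $\mathcal{H}(x) = H(x)P(x^2)$, so the two formal power series on the right are equal. Comparing the coefficient of $x^d$ then yields
\[
\dim(B_d/B_{d-1}) = \dim(\mathcal{O}_d/\mathcal{O}_{d-1}), \qquad d\in\mathbb{N}.
\]

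Second, I would pass from these graded dimensions to the filtered dimensions by telescoping. Both $\lbrace B_d\rbrace_{d\in\mathbb{N}}$ and $\lbrace \mathcal{O}_d\rbrace_{d\in\mathbb{N}}$ are filtrations with $B_{-1}=0$ and $\mathcal{O}_{-1}=0$, and each graded piece is finite-dimensional: for $B_d/B_{d-1}$ this follows from Lemma \ref{lem:AB}(ii) together with the fact that only finitely many irreducible $\mathcal{A}_q$-words have a given degree, and for $\mathcal{O}_d/\mathcal{O}_{d-1}$ it follows from the finite-dimensionality of the subspaces $O_k$ and $Z_\ell$ appearing in \eqref{eq:skip}. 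Hence $\dim(B_d) = \sum_{k=0}^d \dim(B_k/B_{k-1})$ and likewise $\dim(\mathcal{O}_d) = \sum_{k=0}^d \dim(\mathcal{O}_k/\mathcal{O}_{k-1})$. Since the summands agree term by term, the partial sums agree, giving $\dim(B_d) = \dim(\mathcal{O}_d)$ for all $d$.

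There is essentially no obstacle here, since the substance of the corollary has already been packaged into Lemma \ref{lem:obs}, Lemma \ref{lem:Odim}, and equation \eqref{eq:grcharB}. The only point that will need a word of care is the legitimacy of the coefficientwise comparison of the two generating functions; this requires each graded component to be finite-dimensional so that $\mathcal{H}(x)$, $H(x)$, and $P(x^2)$ are genuine formal power series with well-defined coefficients. That finiteness is already implicit in the constructions of Sections 7 and 9, so I would simply note it in passing rather than dwell on it.
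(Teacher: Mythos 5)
Your proposal is correct and follows essentially the same route as the paper: both arguments reduce the claim to the equality $\dim(B_d/B_{d-1})=\dim(\mathcal O_d/\mathcal O_{d-1})$, obtained by combining \eqref{eq:grcharB}, Lemma \ref{lem:Odim}, and Lemma \ref{lem:obs}, and then pass to the filtered dimensions (the paper by induction on $d$, you by the equivalent telescoping sum). Your added remark on finite-dimensionality of the graded pieces is a harmless point of care that the paper leaves implicit.
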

\begin{proof} We use induction on $d$. The result holds for $d=0$ since $B_0$ and $\mathcal O_0$ have dimension 1.
Next assume that $d\geq 1$. By induction $B_{d-1}$ and $\mathcal O_{d-1}$ have the same dimension, so it suffices to
show that the quotient vector spaces $B_d/B_{d-1}$ and $\mathcal O_d/\mathcal O_{d-1}$ have the same dimension.
This is the case by \eqref{eq:grcharB} and Lemmas \ref{lem:Odim}, \ref{lem:obs}.
\end{proof}

\noindent 
Recall the algebra homomorphism $\phi : \mathcal O_q  \to \mathcal A_q$ from Lemma \ref{lem:phi}.
\medskip

\noindent  We now come to  our second main result.

\begin{theorem} \label{thm:2}
The map $\phi$ is an algebra isomorphism.
\end{theorem}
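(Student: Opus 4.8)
The plan is to prove that $\phi$ is injective; since $\phi$ is already surjective by Lemma \ref{lem:phi}, this yields the theorem. The strategy is to compare the filtration $\lbrace \mathcal O_d\rbrace_{d\in\mathbb N}$ of $\mathcal O_q$ with the filtration $\lbrace B_d\rbrace_{d\in\mathbb N}$ of $\mathcal A_q$, and to show that $\phi$ restricts for each $d$ to a linear isomorphism $\mathcal O_d \to B_d$. The equal-dimension input for this is already in hand via Corollary \ref{cor:samed}.

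First I would verify that $\phi(\mathcal O_d)\subseteq B_d$ for all $d$. Since $\phi$ is an algebra homomorphism, it is enough to track the generators: $\phi$ sends $W_0\otimes 1, W_1\otimes 1 \in \mathcal O_1$ to $\mathcal W_0, \mathcal W_1\in B_1$, and sends $1\otimes z_n\in\mathcal O_{2n}$ (Lemma \ref{lem:cm}) to $\mathcal Z_n\in B_{2n}$ (Lemma \ref{lem:2n}). Each summand $O_{d-2k}\otimes Z_k$ of $\mathcal O_d$ is spanned by elements $g_1\cdots g_r\otimes z_\lambda$ with $r\leq d-2k$ and $\lambda\in\Lambda_k$, whose image is a product of $r$ factors from $B_1$ and factors $\mathcal Z_{\lambda_i}\in B_{2\lambda_i}$; hence it lies in $B_{r+2k}\subseteq B_d$ by Proposition \ref{prop:filt} and $B_{d-1}\subseteq B_d$. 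This gives $\phi(\mathcal O_d)\subseteq B_d$.

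Next I would prove $\phi(\mathcal O_d)\supseteq B_d$ by induction on $d$, exploiting the deliberately parallel recursions of Lemma \ref{lem:factor} and Lemma \ref{lem:factor2}. For $d\leq 1$ this is immediate, since $\phi$ carries the basis $1\otimes 1, W_0\otimes 1, W_1\otimes 1$ of $\mathcal O_1$ (Lemma \ref{lem:factor2}(i)) onto the basis $1,\mathcal W_0,\mathcal W_1$ of $B_1$ (Lemma \ref{lem:factor}(i)). For $d\geq 2$, I would apply $\phi$ to \eqref{eq:odd2}. Using $\phi(\mathcal O_k)=B_k$ for $k<d$ (Step 1 together with the inductive hypothesis) and the identity $\phi(F_d)=E_d$, I obtain precisely the right-hand side of \eqref{eq:odd}, which equals $B_d$ by Lemma \ref{lem:factor}(ii). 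Here $\phi(F_d)=E_d$ holds because $F_d=0=E_d$ when $d$ is odd, while if $d=2n$ is even then $\phi$ sends the spanning vector $1\otimes z_n$ of $F_d$ to $\mathcal Z_n$, so $\phi(F_d)=\mathbb F\mathcal Z_n=E_d$. Combining the two inclusions gives $\phi(\mathcal O_d)=B_d$.

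Finally I would conclude: for each $d$ the restriction $\phi|_{\mathcal O_d}\colon \mathcal O_d\to B_d$ is surjective, and by Corollary \ref{cor:samed} the finite-dimensional spaces $\mathcal O_d$ and $B_d$ have equal dimension, so $\phi|_{\mathcal O_d}$ is injective. Since $\mathcal O_q=\cup_d \mathcal O_d$ by Lemma \ref{lem:cOgrading}(iii), every element of $\ker\phi$ lies in some $\mathcal O_d$ and is therefore $0$; hence $\phi$ is injective, and thus an isomorphism. There is essentially no obstacle remaining to overcome: the genuine content has been front-loaded into the matching Poincar\'e series (Lemma \ref{lem:obs}, hence Corollary \ref{cor:samed}) and into arranging the two filtrations to satisfy the parallel recursions of Lemmas \ref{lem:factor} and \ref{lem:factor2}. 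The only point needing care is confirming $\phi(F_d)=E_d$ in the even case, namely that the central generator $1\otimes z_n$ maps exactly to $\mathcal Z_n$, which holds by the definition of $\phi$.
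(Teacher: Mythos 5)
Your proposal is correct and follows essentially the same route as the paper's own proof: establish $\phi(\mathcal O_d)=B_d$ by induction on $d$ using the parallel recursions in Lemmas \ref{lem:factor} and \ref{lem:factor2}, then deduce injectivity from the dimension count in Corollary \ref{cor:samed} together with $\mathcal O_q=\cup_d\,\mathcal O_d$. The only cosmetic difference is that you prove the inclusion $\phi(\mathcal O_d)\subseteq B_d$ separately by tracking generators, whereas the paper obtains both inclusions at once from the inductive chain of equalities.
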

\begin{proof}  It suffices to show that $\phi$ is bijective. By Lemma \ref{lem:phi},  $\phi$ is surjective. In a moment we will show that $\phi$ is injective.
We  claim that $\phi(\mathcal O_d)=B_d$ for $d \in \mathbb N$. We now prove the claim by induction on $d$.
The claim holds for $d=0$ by Lemmas  \ref{lem:123}(i), \ref{lem:cOgrading}(i).
The claim holds for $d=1$ by Lemmas \ref{lem:factor}(i), \ref{lem:phi}, \ref{lem:factor2}(i).
For $d\geq 2$ we use induction along with \eqref{eq:odd}, \eqref{eq:odd2} and Lemma \ref{lem:phi} to obtain
\begin{align*} 
\phi(\mathcal O_d) &= 
\phi \biggl(F_d+ \mathcal O_{d-1} + \sum_{k=1}^{d-1} {\rm Span}(\mathcal O_k \mathcal O_{d-k})\biggr) \\
&=\phi(F_d)+ \phi(\mathcal O_{d-1}) + \sum_{k=1}^{d-1} {\rm Span}\bigl(\phi (\mathcal O_k)\phi( \mathcal O_{d-k})\bigr) \\
 &=E_d+ B_{d-1} + \sum_{k=1}^{d-1} {\rm Span}(B_k B_{d-k}) 
 \\
&=B_d.
\end{align*}
\noindent The claim is proved. We can now easily show that $\phi$ is injective. Let $K$ denote the kernel of $\phi$.
For $d \in \mathbb N$ we have $\phi(\mathcal O_d)= B_d$, so by Corollary \ref{cor:samed} the restriction of $\phi$ to $\mathcal O_d$ gives a bijection $\mathcal O_d \to B_d$, so
$\mathcal O_d \cap K=0$. By this and Lemma \ref{lem:cOgrading}(iii)  we obtain $K=0$. Therefore $\phi$ is injective.
We have shown that $\phi$ is bijective, and hence an algebra isomorphism.
\end{proof}

\section{Two subalgebras of $\mathcal A_q$}
 In this section we use Theorem \ref{thm:2} to investigate the following two subalgebras of $\mathcal A_q$: (i)  the center of $\mathcal A_q$; (ii) the subalgebra of $\mathcal A_q$ generated by $\mathcal W_0, \mathcal W_1$.

\medskip

\noindent Concerning (i), we will use the following fact.

\begin{lemma} \label{lem:center} {\rm (See \cite[Theorem~8.3]{kolb}.)}
The center of $O_q$ is equal to $\mathbb F 1$.
\end{lemma}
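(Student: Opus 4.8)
The statement is imported from \cite[Theorem~8.3]{kolb}, but a self-contained proof can be given using the filtration $\lbrace O_d\rbrace_{d\in\mathbb N}$ of $O_q$ from Lemma \ref{lem:Ofil}. The plan is to pass to the associated graded algebra $\mathrm{gr}(O_q)=\bigoplus_{d\in\mathbb N} O_d/O_{d-1}$, identify it with the positive part $U^+_q$, and then transport the triviality of $Z(U^+_q)$ back to $O_q$ through the symbol map.

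First I would examine the $q$-Dolan/Grady relations \eqref{eq:qOns1}, \eqref{eq:qOns2} with respect to the filtration. In each relation the left-hand side is a product of four generators while the right-hand side is a product of two, so the right-hand side lies in $O_2$ and is negligible at filtration degree $4$. Consequently, writing $\overline W_0,\overline W_1$ for the images of $W_0,W_1$ in $O_1/O_0\subseteq\mathrm{gr}(O_q)$, the leading (degree-$4$) parts of \eqref{eq:qOns1}, \eqref{eq:qOns2} become the $q$-Serre relations $\lbrack \overline W_0,\lbrack \overline W_0,\lbrack \overline W_0,\overline W_1\rbrack_q\rbrack_{q^{-1}}\rbrack=0$ and its companion. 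Hence $\mathrm{gr}(O_q)$ is generated by $\overline W_0,\overline W_1$ subject to the $q$-Serre relations, so there is a surjection $U^+_q\twoheadrightarrow\mathrm{gr}(O_q)$ of graded algebras. By Lemma \ref{lem:genf2} the graded dimensions of $\mathrm{gr}(O_q)$ are encoded in $H(x)$, which coincides with the known Hilbert series of $U^+_q$ recorded in \cite[line (39)]{shape}; matching graded dimensions forces this surjection to be an isomorphism $U^+_q\cong\mathrm{gr}(O_q)$.

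With this identification, the lifting step is routine. I would use that $U^+_q$ is an integral domain (it has a PBW basis and is an iterated Ore extension of domains), so the symbol map is multiplicative. Suppose $z\in Z(O_q)$ is not a scalar, and let $d\geq1$ be minimal with $z\in O_d$; let $\overline z\in O_d/O_{d-1}$ denote its nonzero symbol. For $a\in\lbrace W_0,W_1\rbrace$ the domain property gives $\overline z\,\overline a\neq0$ and $\overline a\,\overline z\neq0$ in degree $d+1$, so these are respectively the symbols of $za$ and $az$. Since $za=az$ in $O_q$, their symbols agree, whence $\lbrack\overline z,\overline a\rbrack=0$. As $\overline W_0,\overline W_1$ generate $\mathrm{gr}(O_q)$, the element $\overline z$ is central, hence a scalar; this contradicts $\overline z$ being homogeneous of positive degree $d$. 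Therefore $Z(O_q)\subseteq O_0=\mathbb F1$, and the reverse inclusion is clear.

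The genuine obstacle is the imported input $Z(U^+_q)=\mathbb F1$; everything else is a formal consequence of the filtration already built in Section~7 and the Hilbert-series computation of Lemma \ref{lem:genf2}. I would emphasize that this input is not a soft grading argument: because $U^+_q$ is of affine type, its root-lattice grading has a direction (the null root $\delta$) orthogonal to both simple roots, so the naive argument that a central homogeneous element has weight $0$ fails, and one must appeal to the finer structural analysis of $U^+_q$ in \cite{pospart}, \cite{alternating}, \cite{altCE}. It is precisely this triviality that makes $Z(\mathcal A_q)$ coincide with the polynomial factor $\mathcal Z$ under the isomorphism $\mathcal A_q\cong O_q\otimes\mathbb F\lbrack z_1,z_2,\ldots\rbrack$.
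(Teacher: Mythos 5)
The paper does not actually prove this lemma: it is imported wholesale from \cite[Theorem~8.3]{kolb}, where triviality of the center is established in the general framework of quantum symmetric pair coideal subalgebras of affine type. Your argument is therefore necessarily a different route, and its skeleton is sound. The surjection $U^+_q\twoheadrightarrow\mathrm{gr}(O_q)$ coming from the degree-$4$ leading terms of the $q$-Dolan/Grady relations is correct (the filtration $\lbrace O_d\rbrace$ is generated in degree $1$, so $\mathrm{gr}(O_q)$ is generated by the symbols of $W_0,W_1$); matching Hilbert series via Lemma \ref{lem:genf2} against the Damiani PBW count for $U^+_q$ does force this graded surjection to be an isomorphism; and since $U^+_q$ is a domain, the symbol map is multiplicative, so centrality of $z$ in $O_q$ passes to centrality of its symbol $\overline z$ in $\mathrm{gr}(O_q)$, which (being homogeneous of positive degree) contradicts triviality of the graded center. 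A minor quibble: the justification of the domain property via ``iterated Ore extension'' is loose for an algebra with infinitely many PBW generators; the standard route is the quantum shuffle embedding or a De Concini--Kac type filtration, but the fact itself is unproblematic.

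The substantive problem is your opening claim that this is self-contained. What you have written is a reduction of $Z(O_q)=\mathbb F1$ to $Z(U^+_q)=\mathbb F1$, and the latter is a theorem of essentially the same depth as the one being proved — it is precisely the input that the companion paper \cite{altCE} needs for its analogous center theorem for the alternating central extension of $U^+_q$. You do not prove it, and the sources you point to are not reliable for it: \cite{pospart} concerns the filtration and the graded isomorphism and contains no statement about centers, and \cite{alternating} constructs the alternating PBW basis inside $U^+_q$ without computing its center. So the honest comparison is this: the paper buys the lemma in one step from Kolb's quantum symmetric pair theory; your route buys it from the $U^+_q$ side of the literature, at the cost of the graded-degeneration machinery (which, to your credit, you reassemble cheaply from Lemma \ref{lem:genf2} rather than re-proving the main theorem of \cite{pospart}). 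Neither is more elementary; yours has the conceptual appeal of staying within the $O_q$--$U^+_q$ analogy that organizes the whole paper, but as written its key input is unsubstantiated, so the proposal is a conditional argument rather than a proof.
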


\noindent Recall the subalgebra $\mathcal Z$ of $\mathcal A_q$ from Definition \ref{def:calZ}.
\begin{theorem} \label{lem:c}
The following {\rm (i)--(iii)} hold:
\begin{enumerate}
\item[\rm (i)] the center of $\mathcal A_q$ is equal to $\mathcal Z$;
\item[\rm (ii)] there exists an algebra isomorphism $\mathbb F\lbrack z_1, z_2,\ldots \rbrack \to \mathcal Z$ that sends $z_n \mapsto \mathcal Z_n$ for $n\geq 1$;
\item[\rm (iii)] the elements $\lbrace \mathcal Z_n \rbrace_{n=1}^\infty$ are algebraically independent.
\end{enumerate}
\end{theorem}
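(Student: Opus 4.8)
The plan is to derive all three statements from the isomorphism $\phi$ of Theorem \ref{thm:2} by computing the center of $\mathcal O_q = O_q \otimes \mathbb F \lbrack z_1, z_2, \ldots \rbrack$ and transporting the answer to $\mathcal A_q$. Since $\phi$ is an algebra isomorphism, it carries the center of $\mathcal O_q$ bijectively onto the center of $\mathcal A_q$; thus the whole theorem reduces to understanding the center of $\mathcal O_q$.

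The key step, and the only one requiring any work, is to show that the center of $\mathcal O_q$ equals $1 \otimes \mathbb F \lbrack z_1, z_2, \ldots \rbrack$. The inclusion $\supseteq$ is clear: each $1 \otimes f$ commutes with every $a \otimes 1$ and, since $\mathbb F \lbrack z_1, z_2, \ldots \rbrack$ is commutative, with every $1 \otimes g$, and such elements generate $\mathcal O_q$. For $\subseteq$, take a central element $x$ and write $x = \sum_i a_i \otimes b_i$ with $\lbrace b_i \rbrace$ linearly independent in $\mathbb F \lbrack z_1, z_2, \ldots \rbrack$. Commuting $x$ with $a \otimes 1$ for arbitrary $a \in O_q$ gives $\sum_i \lbrack a, a_i \rbrack \otimes b_i = 0$, so the linear independence of the $b_i$ forces each $a_i$ into the center of $O_q$. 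By Lemma \ref{lem:center} that center is $\mathbb F 1$, whence $x \in 1 \otimes \mathbb F \lbrack z_1, z_2, \ldots \rbrack$. Because one tensor factor is commutative and the center of $O_q$ is already known to be trivial, this is the whole of the difficulty; everything else is formal.

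It remains to read off (i)--(iii). The subalgebra $1 \otimes \mathbb F \lbrack z_1, z_2, \ldots \rbrack$ is generated by $\lbrace 1 \otimes z_n \rbrace_{n=1}^\infty$, and $\phi$ sends $1 \otimes z_n \mapsto \mathcal Z_n$, so $\phi$ maps it onto the subalgebra of $\mathcal A_q$ generated by $\lbrace \mathcal Z_n \rbrace_{n=1}^\infty$, namely $\mathcal Z$ (Definition \ref{def:calZ}). Since $\phi$ sends the center of $\mathcal O_q$ onto the center of $\mathcal A_q$, we obtain that the center of $\mathcal A_q$ equals $\mathcal Z$, which is (i). For (ii), the canonical map $\mathbb F \lbrack z_1, z_2, \ldots \rbrack \to 1 \otimes \mathbb F \lbrack z_1, z_2, \ldots \rbrack$, $f \mapsto 1 \otimes f$, is an algebra isomorphism; composing it with the restriction of the injective map $\phi$ gives an algebra isomorphism $\mathbb F \lbrack z_1, z_2, \ldots \rbrack \to \mathcal Z$ sending $z_n \mapsto \mathcal Z_n$. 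Finally (iii) follows at once, since the $z_n$ are algebraically independent in the polynomial algebra and an algebra isomorphism preserves algebraic independence.
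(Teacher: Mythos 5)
Your proof is correct and follows essentially the same route as the paper: both transport the center of $O_q \otimes \mathbb F\lbrack z_1, z_2,\ldots \rbrack$ through the isomorphism $\phi$ of Theorem \ref{thm:2}, using Lemma \ref{lem:center} to identify that center with $1 \otimes \mathbb F\lbrack z_1, z_2,\ldots \rbrack$, and then obtain (ii) and (iii) from the composition $\mathbb F\lbrack z_1, z_2,\ldots \rbrack \to \mathcal O_q \to \mathcal A_q$. The only difference is that you spell out the linear-independence argument for computing the center of the tensor product, a routine step the paper dispatches with the phrase ``by the construction.''
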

\begin{proof} (i) By Lemma \ref{lem:center} and the construction, the center of $O_q \otimes \mathbb F\lbrack z_1, z_2,\ldots \rbrack$ is equal to
$1 \otimes \mathbb F\lbrack z_1, z_2,\ldots \rbrack$. Applying the algebra isomorphism $\phi$ we find that the center of $\mathcal A_q$ is equal to $\mathcal Z$.
\\
\noindent (ii) It suffices to display an injective algebra homomorphism $\mathbb F\lbrack z_1, z_2,\ldots \rbrack \to \mathcal A_q$ that sends $z_n \mapsto \mathcal Z_n$ for $n\geq 1$.
The following composition meets this description:

\begin{equation*}
{\begin{CD}
\mathbb F \lbrack z_1,z_2,\ldots \rbrack @>>z_n \mapsto 1\otimes z_n > O_q \otimes \mathbb F \lbrack z_1, z_2,\ldots \rbrack @>>\phi > \mathcal A_q.
         \end{CD}}
\end{equation*}
\noindent (iii) Immediate from (ii) above.
\end{proof}

\noindent Let $\langle \mathcal W_0, \mathcal W_1 \rangle$ 
 denote the subalgebra of $\mathcal A_q$ generated by $\mathcal W_0$, $\mathcal W_1$.
\medskip

\begin{theorem} \label{lem:ww}
There exists an algebra isomorphism $O_q \to  \langle \mathcal W_0, \mathcal W_1 \rangle$ that sends $W_0 \mapsto \mathcal W_0$ and $W_1 \mapsto \mathcal W_1$.
\end{theorem}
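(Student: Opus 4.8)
The plan is to realize the desired map as the composition of the natural embedding $O_q \hookrightarrow \mathcal O_q$ with the isomorphism $\phi$ from Theorem \ref{thm:2}, and then read off both surjectivity and injectivity from facts already established. The substantive work has all been done; what remains is a short diagram chase.

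First I would construct the homomorphism. By Lemma \ref{eq:DG} the elements $\mathcal W_0, \mathcal W_1$ satisfy the $q$-Dolan/Grady relations \eqref{eq:qOns1}, \eqref{eq:qOns2}. Since $O_q$ is presented by $W_0, W_1$ subject to exactly these relations (Definition \ref{def:U}), there is a unique algebra homomorphism $\psi \colon O_q \to \mathcal A_q$ with $\psi(W_0) = \mathcal W_0$ and $\psi(W_1) = \mathcal W_1$. By definition $\langle \mathcal W_0, \mathcal W_1 \rangle$ is generated by $\mathcal W_0, \mathcal W_1$, so the image of $\psi$ is exactly $\langle \mathcal W_0, \mathcal W_1 \rangle$; thus $\psi$, viewed as a map onto $\langle \mathcal W_0, \mathcal W_1 \rangle$, is surjective.

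Next I would prove injectivity. Consider the algebra homomorphism $\iota \colon O_q \to \mathcal O_q = O_q \otimes \mathbb F \lbrack z_1, z_2, \ldots \rbrack$ given by $W \mapsto W \otimes 1$. Composing $\iota$ with $\mathrm{id}_{O_q} \otimes \epsilon$, where $\epsilon \colon \mathbb F \lbrack z_1, z_2, \ldots \rbrack \to \mathbb F$ is the augmentation sending each $z_n \mapsto 0$, recovers the identity on $O_q$, so $\iota$ is injective and $O_q \otimes 1$ is a subalgebra of $\mathcal O_q$ isomorphic to $O_q$. The composition $\phi \circ \iota$ sends $W_0 \mapsto \phi(W_0 \otimes 1) = \mathcal W_0$ and $W_1 \mapsto \phi(W_1 \otimes 1) = \mathcal W_1$ by Lemma \ref{lem:phi}, so by the uniqueness established above we have $\psi = \phi \circ \iota$. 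Since $\phi$ is injective by Theorem \ref{thm:2} and $\iota$ is injective, the composition $\psi$ is injective.

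Combining the two paragraphs, $\psi$ is a bijective algebra homomorphism $O_q \to \langle \mathcal W_0, \mathcal W_1 \rangle$, hence an algebra isomorphism, which is the assertion. I do not expect a serious obstacle: the entire difficulty is packaged into Theorem \ref{thm:2}, whose filtration and dimension-count argument forces $\phi$ to be an isomorphism. The only point requiring a line of care is confirming that $\phi$ carries the subalgebra $O_q \otimes 1$ precisely onto $\langle \mathcal W_0, \mathcal W_1 \rangle$, and this is immediate because $\phi$ is an algebra map sending the generators $W_0 \otimes 1, W_1 \otimes 1$ of $O_q \otimes 1$ to the generators $\mathcal W_0, \mathcal W_1$ of $\langle \mathcal W_0, \mathcal W_1 \rangle$.
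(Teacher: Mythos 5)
Your proof is correct and follows essentially the same route as the paper: both realize the map as the composition of the embedding $O_q \to O_q \otimes \mathbb F \lbrack z_1, z_2, \ldots \rbrack$, $W \mapsto W \otimes 1$, with the isomorphism $\phi$ of Theorem \ref{thm:2}, and conclude injectivity from injectivity of the two factors. Your extra details (justifying injectivity of the embedding via the augmentation $z_n \mapsto 0$, and identifying the composition with the presentation-defined map $\psi$ via uniqueness) only make explicit what the paper leaves as routine.
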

\begin{proof} It suffices to display an injective algebra homomorphism $O_q \to \mathcal A_q$ that sends  $W_0 \mapsto \mathcal W_0$ and $W_1 \mapsto \mathcal W_1$.
There exists an algebra homomorphism $\gamma: O_q \to O_q \otimes \mathbb F\lbrack z_1, z_2,\ldots \rbrack $ that sends $W_0 \mapsto W_0\otimes 1$ and $W_1 \mapsto W_1 \otimes 1$.
The map $\gamma$ is injective.
The following composition is an injective algebra homomorphism $O_q \to \mathcal A_q$ that sends  $W_0 \mapsto \mathcal W_0$ and $W_1 \mapsto \mathcal W_1$:
\begin{equation*}
{\begin{CD}
O_q @>>\gamma > O_q \otimes \mathbb F \lbrack z_1, z_2,\ldots \rbrack @>>\phi > \mathcal A_q.
         \end{CD}}
\end{equation*}
\end{proof}

\noindent Next we describe how the subalgebras $\langle \mathcal W_0, \mathcal W_1 \rangle$ and $\mathcal Z$ are related. 

\begin{theorem} 
\label{lem:UZ}
The multiplication map
\begin{align*}
\langle \mathcal W_0,\mathcal W_1\rangle \otimes \mathcal Z &\to
	       \mathcal A_q
	       \\
w \otimes z &\mapsto      wz            
\end{align*}
is an algebra isomorphism.
\end{theorem}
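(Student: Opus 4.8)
The plan is to exhibit the multiplication map as a composite of isomorphisms, leveraging the master isomorphism $\phi:\mathcal O_q\to\mathcal A_q$ from Theorem~\ref{thm:2}. Recall $\mathcal O_q=O_q\otimes\mathbb F\lbrack z_1,z_2,\ldots\rbrack$. By Theorem~\ref{lem:ww} there is an isomorphism $O_q\to\langle\mathcal W_0,\mathcal W_1\rangle$ sending $W_0\mapsto\mathcal W_0$, $W_1\mapsto\mathcal W_1$, and by Theorem~\ref{lem:c}(ii) there is an isomorphism $\mathbb F\lbrack z_1,z_2,\ldots\rbrack\to\mathcal Z$ sending $z_n\mapsto\mathcal Z_n$. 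First I would tensor these two isomorphisms to obtain an isomorphism of vector spaces (indeed of algebras, since both factors are commuting under the target)
\[
\langle\mathcal W_0,\mathcal W_1\rangle\otimes\mathcal Z \;\xleftarrow{\ \sim\ }\; O_q\otimes\mathbb F\lbrack z_1,z_2,\ldots\rbrack=\mathcal O_q.
\]

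**Next I would** identify the multiplication map $m:\langle\mathcal W_0,\mathcal W_1\rangle\otimes\mathcal Z\to\mathcal A_q$, $w\otimes z\mapsto wz$, as equal to $\phi$ composed with the inverse of the tensored isomorphism above. Concretely, track a generator through both routes: under the tensored map $W_0\otimes 1\mapsto\mathcal W_0\otimes 1$ and $1\otimes z_n\mapsto 1\otimes\mathcal Z_n$; applying $m$ then gives $\mathcal W_0$ and $\mathcal Z_n$ respectively, which agrees with $\phi(W_0\otimes 1)=\mathcal W_0$ and $\phi(1\otimes z_n)=\mathcal Z_n$. Since these elements generate $\mathcal O_q$ and all maps in sight are algebra homomorphisms, the square commutes. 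That $m$ is even a homomorphism uses that $\mathcal Z$ is central (Proposition~\ref{prop:ZC}), so that $(w\otimes z)(w'\otimes z')\mapsto wzw'z'=ww'zz'$.

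**The one point requiring care** is verifying that the tensored map is genuinely an algebra homomorphism into $\mathcal A_q$ when we compose with $m$, i.e.\ that multiplication in $\langle\mathcal W_0,\mathcal W_1\rangle\otimes\mathcal Z$ matches multiplication in $\mathcal A_q$ after applying $m$; this is exactly the centrality of $\mathcal Z$ together with the fact that $\langle\mathcal W_0,\mathcal W_1\rangle$ and $\mathcal Z$ sit inside $\mathcal A_q$. Given the commuting square $m\circ(\text{tensored iso})=\phi$, and that $\phi$ is bijective (Theorem~\ref{thm:2}) while the tensored iso is bijective, I conclude $m$ is bijective, hence an algebra isomorphism. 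I expect the main (and only real) obstacle to be the bookkeeping of confirming commutativity of the diagram on generators; the heavy lifting has already been done in establishing $\phi$ as an isomorphism and in Lemma~\ref{lem:center} (used to pin down the center), so no genuinely new computation is needed here.
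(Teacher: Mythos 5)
Your proposal is correct and follows essentially the same route as the paper: both factor the multiplication map $m$ as $\phi\circ(a^{-1}\otimes b^{-1})$, where $a$, $b$ are the isomorphisms from Theorems \ref{lem:ww} and \ref{lem:c}(ii), verify $m$ is a homomorphism via centrality of $\mathcal Z$, and confirm the factorization by checking agreement on generators. No meaningful difference from the paper's argument.
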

\begin{proof} Let $m$ denote the above multiplication map. The map $m$ is an algebra homomorphism since $\mathcal Z$ is central in $\mathcal A_q$.
Let
$a : O_q \to \langle \mathcal W_0,\mathcal W_1\rangle$ denote the algebra isomorphism from Theorem \ref{lem:ww}. 
Let $b:\mathbb F \lbrack z_1,z_2,\ldots \rbrack \to \mathcal Z$ denote the algebra isomorphism from Theorem \ref{lem:c}(ii). 
The following composition is an algebra isomorphism:

\begin{equation*}
{\begin{CD}
\langle \mathcal W_0,\mathcal W_1\rangle \otimes \mathcal Z  @>>a^{-1} \otimes b^{-1} > O_q \otimes \mathbb F \lbrack z_1, z_2,\ldots \rbrack @>>\phi > \mathcal A_q.
         \end{CD}}
\end{equation*}
\noindent The above map is equal to $m$ since it agrees with $m$ on the generators $\mathcal W_0 \otimes 1$, $\mathcal W_1 \otimes 1$, $\lbrace 1 \otimes \mathcal Z_n\rbrace_{n=1}^\infty$ of
$\langle \mathcal W_0,\mathcal W_1\rangle \otimes \mathcal Z$.
It follows that $m$ is an algebra isomorphism.
\end{proof}


\section{Acknowledgment} 
The author thanks Pascal Baseilhac for many discussions about the algebras $\mathcal A_q$ and $O_q$.
The author thanks Stefan Kolb for helpful comments about the center of $O_q$. The author thanks Travis Scrimshaw
for checking the relations in  Proposition \ref{lem:rra}
using SageMath \cite{sage}.

\newpage
\section{Appendix A}

\noindent In this appendix we list some relations that hold in $\mathcal A_q$. We will define an algebra $\mathcal A^\vee_q$ that is a
homomorphic preimage of $\mathcal A_q$. All the results in this appendix are about $\mathcal A^\vee_q$.
\medskip

\noindent 
Define the algebra $\mathcal A^\vee_q$ by generators 
\begin{align*}
\lbrace \mathcal W_{-k}\rbrace_{n\in \mathbb N}, \qquad  \lbrace \mathcal  W_{k+1}\rbrace_{n\in \mathbb N},\qquad  
 \lbrace \mathcal G_{k+1}\rbrace_{n\in \mathbb N},
\qquad
\lbrace \mathcal {\tilde G}_{k+1}\rbrace_{n\in \mathbb N}
\end{align*}
 and the following relations. For $k \in \mathbb N$,
\begin{align}
&
 \lbrack \mathcal W_0, \mathcal W_{k+1}\rbrack= 
\lbrack \mathcal W_{-k}, \mathcal W_{1}\rbrack=
({\mathcal{\tilde G}}_{k+1} - \mathcal G_{k+1})/(q+q^{-1}),
\label{eq:3p1App}
\\
&
\lbrack \mathcal W_0, \mathcal G_{k+1}\rbrack_q= 
\lbrack {\mathcal{\tilde G}}_{k+1}, \mathcal W_{0}\rbrack_q= 
\rho  \mathcal W_{-k-1}-\rho 
 \mathcal W_{k+1},
\label{eq:3p2App}
\\
&
\lbrack \mathcal G_{k+1}, \mathcal W_{1}\rbrack_q= 
\lbrack \mathcal W_{1}, {\mathcal {\tilde G}}_{k+1}\rbrack_q= 
\rho  \mathcal W_{k+2}-\rho 
 \mathcal W_{-k},
\label{eq:3p3App}
\\
&
\lbrack \mathcal W_{0}, \mathcal W_{-k}\rbrack=0,  \qquad 
\lbrack \mathcal W_{1}, \mathcal W_{k+1}\rbrack= 0.
\label{eq:3p4App}
\end{align}
\noindent Recall that $\rho =-(q^2-q^{-2})^2$, and define $\mathcal G_0$, $\mathcal {\tilde G}_0$ as in \eqref{eq:GG0}.
\medskip

\noindent The algebra $\mathcal A^\vee_q$ has an automorphism $\sigma$ and an antiautomorphism $\dagger$ that satisfy
Lemmas 
\ref{lem:aut}--\ref{lem:sdcom}. For $\mathcal A^\vee_q$ we define the generating functions $\mathcal W^+(t)$, $\mathcal W^-(t)$, $\mathcal G(t)$, $\mathcal {\tilde G}(t)$ as in
Definition \ref{def:gf4}.
In terms of these generating functions the relations \eqref{eq:3p1App}--\eqref{eq:3p4App} look as follows:
\begin{align}
& \label{eq:3pp1App}
\lbrack \mathcal W_0, \mathcal W^+(t) \rbrack = \lbrack \mathcal W^-(t), \mathcal W_1 \rbrack = t^{-1}(\mathcal {\tilde G}(t)-\mathcal G(t))/(q+q^{-1}),
\\
& \label{eq:3pp2App}
\lbrack \mathcal W_0, \mathcal G(t) \rbrack_q = \lbrack \mathcal {\tilde G}(t), \mathcal W_0 \rbrack_q = \rho \mathcal W^-(t)-\rho t \mathcal W^+(t),
\\
&\label{eq:3pp3App}
\lbrack \mathcal G(t), \mathcal W_1 \rbrack_q = \lbrack \mathcal W_1, \mathcal {\tilde G}(t) \rbrack_q = \rho \mathcal W^+(t) -\rho t \mathcal W^-(t),
\\
&\label{eq:3pp4App}
\lbrack  \mathcal W_0, \mathcal W^-(t) \rbrack = 0, 
\qquad 
\lbrack \mathcal W_1,  \mathcal W^+(t) \rbrack = 0.
\end{align}

\noindent Let $s$ denote an indeterminate that commutes with $t$. Define the generating functions
\begin{align*}
A(s,t)&=\lbrack  \mathcal W^-(s), \mathcal W^-(t) \rbrack ,
\\
B(s,t)&=\lbrack \mathcal W^+(s),  \mathcal W^+(t) \rbrack,
\\
C(s,t)&=\lbrack  \mathcal W^-(s), \mathcal W^+(t) \rbrack
+
\lbrack \mathcal W^+(s), \mathcal W^-(t) \rbrack,
\\
D(s,t)&=s \lbrack \mathcal  W^-(s), \mathcal G(t) \rbrack 
+
t \lbrack  \mathcal G(s),  \mathcal W^-(t) \rbrack ,
\\
E(s,t)&=s \lbrack  \mathcal W^-(s), \mathcal {\tilde G}(t) \rbrack 
+
t \lbrack  \mathcal {\tilde G}(s),  \mathcal W^-(t) \rbrack,
\\
F(s,t)&=s \lbrack  \mathcal  W^+(s),  \mathcal G(t) \rbrack
+
t \lbrack   \mathcal G(s), \mathcal W^+(t) \rbrack,
\\
G(s,t)&=s \lbrack   \mathcal W^+(s), \mathcal {\tilde G}(t) \rbrack
+
t \lbrack \mathcal {\tilde G}(s), \mathcal W^+(t) \rbrack,
\\
H(s,t)&=\lbrack   \mathcal G(s), \mathcal G(t) \rbrack, 
\\
I(s,t)&=\lbrack  \mathcal {\tilde G}(s), \mathcal  {\tilde G}(t) \rbrack,
\\
J(s,t)&=\lbrack   \mathcal {\tilde G}(s), \mathcal G(t) \rbrack +
\lbrack   \mathcal G(s), \mathcal {\tilde G}(t) \rbrack
\end{align*}
\noindent and also
\begin{align*}
K(s,t)&=\lbrack \mathcal W^-(s), \mathcal G(t) \rbrack_q -
\lbrack \mathcal W^-(t), \mathcal G(s) \rbrack_q -
s\lbrack \mathcal W^+(s), \mathcal G(t) \rbrack_q +
t\lbrack \mathcal W^+(t), \mathcal G(s) \rbrack_q,
\\
L(s,t)&=\lbrack \mathcal G(s), \mathcal W^+(t) \rbrack_q -
\lbrack \mathcal G(t), \mathcal W^+(s) \rbrack_q -
t\lbrack \mathcal G(s), \mathcal W^-(t) \rbrack_q +
s\lbrack \mathcal G(t), \mathcal W^-(s) \rbrack_q,
\\
M(s,t)&=\lbrack \mathcal {\tilde G}(s), \mathcal W^-(t) \rbrack_q -
\lbrack \mathcal {\tilde G}(t), \mathcal W^-(s) \rbrack_q -
t\lbrack \mathcal {\tilde G}(s), \mathcal W^+(t) \rbrack_q +
s\lbrack \mathcal {\tilde G}(t), \mathcal W^+(s) \rbrack_q,
\\
N(s,t)&=\lbrack \mathcal W^+(s), \mathcal {\tilde G}(t) \rbrack_q -
\lbrack \mathcal W^+(t), \mathcal {\tilde G}(s) \rbrack_q -
s\lbrack \mathcal W^-(s), \mathcal {\tilde G}(t) \rbrack_q +
t\lbrack \mathcal W^-(t), \mathcal {\tilde G}(s) \rbrack_q,
\\
P(s,t)&=\frac{t^{-1} \lbrack \mathcal G(s), \mathcal {\tilde G}(t) \rbrack - s^{-1} \lbrack \mathcal G(t),\mathcal {\tilde G}(s) \rbrack}{\rho (q+q^{-1})}
- \lbrack \mathcal W^-(t), \mathcal W^+(s)\rbrack_q +
 \lbrack \mathcal W^-(s), \mathcal W^+(t)\rbrack_q \nonumber
 \\
&\quad -st \lbrack \mathcal W^+(t), \mathcal W^-(s)\rbrack_q
+st \lbrack \mathcal W^+(s), \mathcal W^-(t)\rbrack_q 
-t\lbrack \mathcal W^-(s), \mathcal W^-(t) \rbrack_q 
\\
& \quad 
+s \lbrack \mathcal W^-(t), \mathcal W^-(s) \rbrack_q 
-s\lbrack \mathcal W^+(s), \mathcal W^+(t) \rbrack_q 
+t \lbrack \mathcal W^+(t), \mathcal W^+(s) \rbrack_q,
\\
Q(s,t)&=\frac{t^{-1} \lbrack \mathcal {\tilde G}(s), \mathcal  G(t) \rbrack - s^{-1} \lbrack \mathcal {\tilde G}(t),\mathcal G(s) \rbrack}{\rho (q+q^{-1})}
- \lbrack \mathcal W^+(t), \mathcal W^-(s)\rbrack_q +
 \lbrack \mathcal W^+(s), \mathcal W^-(t)\rbrack_q \nonumber
 \\
&\quad -st \lbrack \mathcal W^-(t), \mathcal W^+(s)\rbrack_q
+st \lbrack \mathcal W^-(s), \mathcal W^+(t)\rbrack_q
-t\lbrack \mathcal W^+(s), \mathcal W^+(t) \rbrack_q 
\\
& \quad 
+s \lbrack \mathcal W^+(t), \mathcal W^+(s) \rbrack_q 
-s\lbrack \mathcal W^-(s), \mathcal W^-(t) \rbrack_q 
+t \lbrack \mathcal W^-(t), \mathcal W^-(s) \rbrack_q,
\\
R(s,t)&=\lbrack \mathcal G(s), \mathcal {\tilde G}(t)\rbrack_q -
\lbrack \mathcal G(t), \mathcal {\tilde G}(s)\rbrack_q -(q+q^{-1})\rho t \lbrack \mathcal W^-(t), \mathcal W^+(s)\rbrack \nonumber
\\
& \qquad \qquad \qquad \qquad \qquad \qquad \qquad \qquad \quad 
+(q+q^{-1}) \rho s \lbrack \mathcal W^-(s), \mathcal W^+(t)\rbrack,
\\
S(s,t)&=
\lbrack \mathcal {\tilde G}(s), \mathcal  G(t)\rbrack_q -
\lbrack \mathcal {\tilde G}(t), \mathcal G(s)\rbrack_q - (q+q^{-1})\rho t \lbrack \mathcal W^+(t), \mathcal W^-(s)\rbrack \nonumber
\\
& \qquad \qquad \qquad \qquad \qquad \qquad \qquad \qquad \quad 
+(q+q^{-1}) \rho s \lbrack \mathcal W^+(s), \mathcal W^-(t)\rbrack.
\end{align*}
\noindent One checks that 
\begin{align*}
P(s,t)+ Q(s,t)&= (q+q^{-1})(1+st)C(s,t)+\frac{s^{-1}+t^{-1}}{(q+q^{-1})\rho} J(s,t) \nonumber
\\
& \qquad  \qquad -(q+q^{-1})(s+t)A(s,t) -(q+q^{-1})(s+t)B(s,t), 
\\
R(s,t)+S(s,t) &= \rho (q+q^{-1})(s+t) C(s,t) + (q+q^{-1}) J(s,t). 
\end{align*}
For  $\mathcal A^\vee_q$ the maps $\sigma$, $\dagger $ act on  $A(s,t), B(s,t), \ldots, S(s,t)$ as follow:
\bigskip

\centerline{
\begin{tabular}[t]{c|cccccc}
  $u$ & 
  $A(s,t)$ &   $B(s,t)$ &   $C(s,t)$ &   $D(s,t)$ &   $E(s,t)$ &   $F(s,t)$ 
   \\
\hline
$\sigma(u)$ &
 $B(s,t)$ &   $A(s,t)$ &   $C(s,t)$ &   $G(s,t)$ &   $F(s,t)$ &   $E(s,t)$ 
 \\
$\dagger(u)$ &
 $-A(s,t)$ &   $-B(s,t)$ &   $-C(s,t)$ &   $-E(s,t)$ &   $-D(s,t)$ &   $-G(s,t)$ 
	       \end{tabular}}
	       \bigskip
	       
	  \centerline{
\begin{tabular}[t]{c|cccccc}
  $u$ & 
  $G(s,t)$ &   $H(s,t)$ &   $I(s,t)$ &   $J(s,t)$ &   $K(s,t)$ &   $L(s,t)$ 
   \\
\hline
$\sigma(u)$ & $D(s,t)$ &   $I(s,t)$ &   $H(s,t)$ &   $J(s,t)$ &   $N(s,t)$ &   $M(s,t)$ 
\\
$\dagger(u)$ & $-F(s,t)$ &   $-I(s,t)$ &   $-H(s,t)$ &   $-J(s,t)$ &   $-M(s,t)$ &   $-N(s,t)$ 
	       \end{tabular}}
          \bigskip
	       
	  \centerline{
\begin{tabular}[t]{c|cccccc}
  $u$ & 
  $M(s,t)$ &   $N(s,t)$ &   $P(s,t)$ &   $Q(s,t)$ &   $R(s,t)$ &   $S(s,t)$ 
   \\
\hline
$\sigma(u)$ & $L(s,t)$ &   $K(s,t)$ &   $Q(s,t)$ &   $P(s,t)$ &   $S(s,t)$ &   $R(s,t)$ 
 \\
$\dagger(u)$ & $-K(s,t)$ &   $-L(s,t)$ &   $-Q(s,t)$ &   $-P(s,t)$ &   $-R(s,t)$ &   $-S(s,t)$ 
	       \end{tabular}}

\noindent  By \eqref{eq:3pp1App}--\eqref{eq:3pp4App} the following relations  hold in $\mathcal A^\vee_q$:
\begin{align*}
\lbrack \mathcal W_0, A(s,t)\rbrack&=0,
\\
\lbrack \mathcal W_0, B(s,t)\rbrack &= \frac{G(s,t)-F(s,t)}{st(q+q^{-1})},
\\
\lbrack \mathcal W_0, C(s,t)\rbrack &= \frac{E(s,t)-D(s,t)}{st(q+q^{-1})},
\\
\lbrack \mathcal W_0, D(s,t)\rbrack_q &= \rho (s+t)A(s,t)- \rho st C(s,t),
\\
\lbrack  E(s,t), \mathcal W_0 \rbrack_q &= \rho (s+t)A(s,t)- \rho st C(s,t),
\\
\lbrack \mathcal W_0, F(s,t)\rbrack_q &= \frac{S(s,t)}{q+q^{-1}}-H(s,t)-2\rho s t B(s,t),
\\
\lbrack G(s,t), \mathcal W_0 \rbrack_q &= \frac{S(s,t)}{q+q^{-1}}-I(s,t)-2\rho s t B(s,t),
\\
\lbrack \mathcal W_0, H(s,t)\rbrack_{q^2} &= \rho K(s,t),
\\
\lbrack I(s,t), \mathcal W_0 \rbrack_{q^2} &= \rho M(s,t),
\\
\lbrack \mathcal W_0, J(s,t)\rbrack &= \rho M(s,t)-\rho K(s,t)
\end{align*}
\noindent and also
\begin{align*}
\lbrack \mathcal W_0, K(s,t) \rbrack_q &= \frac{q^2+q^{-2}}{q+q^{-1}}H(s,t)+(q+q^{-1}) \rho A(s,t) + (q+q^{-1}) \rho s t B(s,t)
\\
&\qquad \qquad \qquad \qquad \qquad \qquad \qquad \qquad \qquad  +\frac{J(s,t)}{q+q^{-1}}-S(s,t),
\\
\lbrack \mathcal W_0, L(s,t)\rbrack_q &= \rho P(s,t)-\frac{s^{-1}+t^{-1}}{q+q^{-1}}H(s,t),
\\
\lbrack M(s,t), \mathcal W_0 \rbrack_q &= \frac{q^2+q^{-2}}{q+q^{-1}}I(s,t)+(q+q^{-1}) \rho A(s,t) + (q+q^{-1}) \rho s t B(s,t)
\\
&\qquad \qquad \qquad \qquad \qquad \qquad \qquad \qquad \qquad  +\frac{J(s,t)}{q+q^{-1}}-S(s,t),
\\
\lbrack N(s,t), \mathcal W_0 \rbrack_q &= \rho Q(s,t)-\frac{s^{-1}+t^{-1}}{q+q^{-1}}I(s,t),
\\
\lbrack P(s,t),\mathcal W_0\rbrack &= (s^{-1}+t^{-1})G(s,t)-(1+s^{-1}t^{-1})E(s,t)\\
&\qquad \qquad \qquad \qquad \qquad  +\frac{(s^{-1}+t^{-1})K(s,t)-L(s,t)-N(s,t)}{q+q^{-1}},
\\
\lbrack \mathcal W_0, Q(s,t) \rbrack &= (s^{-1}+t^{-1})F(s,t)-(1+s^{-1}t^{-1})D(s,t)
\\
&\qquad \qquad \qquad \qquad \qquad  +\frac{(s^{-1}+t^{-1})M(s,t)-L(s,t)-N(s,t)}{q+q^{-1}},
\\
\lbrack \mathcal W_0, R(s,t) \rbrack &= \rho(s^{-1}+ t^{-1}) E(s,t) -\rho (s^{-1}+t^{-1}) D(s,t)+ \rho F(s,t)-\rho G(s,t),
\\
\lbrack \mathcal W_0, S(s,t)\rbrack &= \rho G(s,t)-\rho F(s,t)-(q+q^{-1})\rho K(s,t)+(q+q^{-1})\rho M(s,t).
\end{align*}
\newpage
\noindent For the previous 18 equations we apply $\sigma$ to each side, and obtain the following relations that hold in $\mathcal A^\vee_q$:
\begin{align*}
\lbrack \mathcal W_1, A(s,t)\rbrack &= \frac{D(s,t)-E(s,t)}{st(q+q^{-1})},
\\
\lbrack \mathcal W_1, B(s,t)\rbrack&=0,
\\
\lbrack \mathcal W_1, C(s,t)\rbrack &= \frac{F(s,t)-G(s,t)}{st(q+q^{-1})},
\\
\lbrack D(s,t), \mathcal W_1 \rbrack_q &= \frac{R(s,t)}{q+q^{-1}}-H(s,t)-2\rho s t A(s,t),
\\
\lbrack \mathcal W_1, E(s,t)\rbrack_q &= \frac{R(s,t)}{q+q^{-1}}-I(s,t)-2\rho s t A(s,t),
\\
\lbrack  F(s,t), \mathcal W_1 \rbrack_q &= \rho (s+t)B(s,t)- \rho st C(s,t),
\\
\lbrack \mathcal W_1, G(s,t)\rbrack_q &= \rho (s+t)B(s,t)- \rho st C(s,t),
\\
\lbrack H(s,t), \mathcal W_1 \rbrack_{q^2} &= \rho L(s,t),
\\
\lbrack \mathcal W_1, I(s,t)\rbrack_{q^2} &= \rho N(s,t),
\\
\lbrack \mathcal W_1, J(s,t)\rbrack &= \rho L(s,t)-\rho N(s,t)
\end{align*}
\noindent and also
\begin{align*}
\lbrack K(s,t), \mathcal W_1 \rbrack_q &= \rho P(s,t)-\frac{s^{-1}+t^{-1}}{q+q^{-1}}H(s,t),
\\
\lbrack L(s,t), \mathcal W_1 \rbrack_q &= \frac{q^2+q^{-2}}{q+q^{-1}}H(s,t)+(q+q^{-1}) \rho B(s,t) + (q+q^{-1}) \rho s t A(s,t)
\\
&\qquad \qquad \qquad \qquad \qquad \qquad \qquad \qquad \qquad  +\frac{J(s,t)}{q+q^{-1}}-R(s,t),
\\
\lbrack \mathcal W_1, M(s,t)\rbrack_q &= \rho Q(s,t)-\frac{s^{-1}+t^{-1}}{q+q^{-1}}I(s,t),
\\
\lbrack \mathcal W_1, N(s,t) \rbrack_q &= \frac{q^2+q^{-2}}{q+q^{-1}}I(s,t)+(q+q^{-1}) \rho B(s,t) + (q+q^{-1}) \rho s t A(s,t)
\\
&\qquad \qquad \qquad \qquad \qquad \qquad \qquad \qquad \qquad  +\frac{J(s,t)}{q+q^{-1}}-R(s,t),
\\
\lbrack \mathcal W_1, P(s,t) \rbrack &= (s^{-1}+t^{-1})E(s,t)-(1+s^{-1}t^{-1})G(s,t)\\
&\qquad \qquad \qquad \qquad \qquad  +\frac{(s^{-1}+t^{-1})L(s,t)-M(s,t)-K(s,t)}{q+q^{-1}},
\\
\lbrack Q(s,t),\mathcal W_1\rbrack &= (s^{-1}+t^{-1})D(s,t)-(1+s^{-1}t^{-1})F(s,t)\\
&\qquad \qquad \qquad \qquad \qquad  +\frac{(s^{-1}+t^{-1})N(s,t)-M(s,t)-K(s,t)}{q+q^{-1}},
\\
\lbrack \mathcal W_1, R(s,t)\rbrack &= \rho D(s,t)-\rho E(s,t)-(q+q^{-1})\rho N(s,t)+(q+q^{-1})\rho L(s,t),
\\
\lbrack \mathcal W_1, S(s,t) \rbrack &= \rho(s^{-1}+ t^{-1}) F(s,t) -\rho (s^{-1}+t^{-1}) G(s,t)+ \rho E(s,t)-\rho D(s,t).
\end{align*}

\newpage
\section{Appendix B}
\noindent In this appendix  we describe the elements 
\begin{align*}
\lbrace \mathcal W^\Downarrow_{-n}\rbrace_{n\in\mathbb N}, \quad
\lbrace \mathcal W^\Downarrow_{n+1}\rbrace_{n\in \mathbb N}, \quad 
\lbrace \mathcal G^\downarrow_{n}\rbrace_{n\in \mathbb N}, \quad
\lbrace \mathcal {\tilde G}^\downarrow_{n}\rbrace_{n \in \mathbb N}
\end{align*}
\noindent  that appeared in Section 8. For $n \in \mathbb N$,
\begin{align}
\label{eq:WmDown}
 \mathcal W^\Downarrow_{-n} &= \sum_{\ell=0}^{\lfloor n /2 \rfloor} (-1)^\ell \binom{n-\ell}{\ell} \lbrack 2 \rbrack^{-2\ell}_q \mathcal W_{2\ell-n},
 \\
 \label{eq:WpDown}
 \mathcal W^\Downarrow_{n+1} &= \sum_{\ell=0}^{\lfloor n /2 \rfloor} (-1)^\ell \binom{n-\ell}{\ell} \lbrack 2 \rbrack^{-2\ell}_q \mathcal W_{n-2\ell+1}.
\end{align}
\noindent In the tables below, we display $\mathcal W^\Downarrow_{-n}$ and
$\mathcal W^\Downarrow_{n+1}$ for $0 \leq n \leq 8$.
\bigskip
 
\centerline{
\begin{tabular}[t]{c|c}
  $n$ & $\mathcal W^\Downarrow_{-n}$
   \\
\hline
$0$ & $\mathcal W_{0} $ \\
$1$ & $\mathcal W_{-1} $ \\
$2$ & $\mathcal W_{-2} - \mathcal W_{0} \lbrack 2 \rbrack^{-2}_q $ \\
$3$ & $\mathcal W_{-3} -2 \mathcal W_{-1} \lbrack 2 \rbrack^{-2}_q$ \\
$4$ & $\mathcal W_{-4} -3 \mathcal W_{-2} \lbrack 2 \rbrack^{-2}_q + \mathcal W_{0} \lbrack 2 \rbrack^{-4}_q $ \\
$5$ & $\mathcal W_{-5} -4 \mathcal W_{-3} \lbrack 2 \rbrack^{-2}_q + 3\mathcal W_{-1} \lbrack 2 \rbrack^{-4}_q $ \\
$6$ & $\mathcal W_{-6} -5 \mathcal W_{-4} \lbrack 2 \rbrack^{-2}_q + 6\mathcal W_{-2} \lbrack 2 \rbrack^{-4}_q - \mathcal W_{0} \lbrack 2 \rbrack^{-6}_q $ \\
$7$ & $\mathcal W_{-7} -6 \mathcal W_{-5} \lbrack 2 \rbrack^{-2}_q + 10\mathcal W_{-3} \lbrack 2 \rbrack^{-4}_q - 4\mathcal W_{-1} \lbrack 2 \rbrack^{-6}_q $ \\
$8$ & $\mathcal W_{-8} -7 \mathcal W_{-6} \lbrack 2 \rbrack^{-2}_q + 15\mathcal W_{-4} \lbrack 2 \rbrack^{-4}_q - 10\mathcal W_{-2} \lbrack 2 \rbrack^{-6}_q +\mathcal W_{0} \lbrack 2 \rbrack^{-8}_q$ 
\end{tabular}}
\bigskip
 
\centerline{
\begin{tabular}[t]{c|c}
  $n$ & $\mathcal W^\Downarrow_{n+1}$
   \\
\hline
$0$ & $\mathcal W_{1} $ \\
$1$ & $\mathcal W_{2} $ \\
$2$ & $\mathcal W_{3} - \mathcal W_{1} \lbrack 2 \rbrack^{-2}_q $ \\
$3$ & $\mathcal W_{4} -2 \mathcal W_{2} \lbrack 2 \rbrack^{-2}_q$ \\
$4$ & $\mathcal W_{5} -3 \mathcal W_{3} \lbrack 2 \rbrack^{-2}_q + \mathcal W_{1} \lbrack 2 \rbrack^{-4}_q $ \\
$5$ & $\mathcal W_{6} -4 \mathcal W_{4} \lbrack 2 \rbrack^{-2}_q + 3\mathcal W_{2} \lbrack 2 \rbrack^{-4}_q $ \\
$6$ & $\mathcal W_{7} -5 \mathcal W_{5} \lbrack 2 \rbrack^{-2}_q + 6\mathcal W_{3} \lbrack 2 \rbrack^{-4}_q - \mathcal W_{1} \lbrack 2 \rbrack^{-6}_q $ \\
$7$ & $\mathcal W_{8} -6 \mathcal W_{6} \lbrack 2 \rbrack^{-2}_q + 10\mathcal W_{4} \lbrack 2 \rbrack^{-4}_q - 4\mathcal W_{2} \lbrack 2 \rbrack^{-6}_q $ \\
$8$ & $\mathcal W_{9} -7 \mathcal W_{7} \lbrack 2 \rbrack^{-2}_q + 15\mathcal W_{5} \lbrack 2 \rbrack^{-4}_q - 10\mathcal W_{3} \lbrack 2 \rbrack^{-6}_q +\mathcal W_{1} \lbrack 2 \rbrack^{-8}_q$ 
\end{tabular}}
\bigskip

\noindent Recall that
\begin{align*} 
\mathcal G^\downarrow_0=\mathcal G_0= -(q-q^{-1}) \lbrack 2 \rbrack^2_q, \qquad \qquad
\mathcal {\tilde G}^\downarrow_0=\mathcal {\tilde G}_0= -(q-q^{-1}) \lbrack 2 \rbrack^2_q.
\end{align*}

\noindent For $n\geq 1$,
\begin{align}\label{eq:GDown}
 \mathcal G^\downarrow_n = \sum_{\ell=0}^{\lfloor (n-1) /2 \rfloor} (-1)^\ell \binom{n-1-\ell}{\ell} \lbrack 2 \rbrack^{-2\ell}_q \mathcal G_{n-2\ell},\\
 \label{eq:tGDown}
 \mathcal {\tilde G}^\downarrow_n = \sum_{\ell=0}^{\lfloor (n-1) /2 \rfloor} (-1)^\ell \binom{n-1-\ell}{\ell} \lbrack 2 \rbrack^{-2\ell}_q \mathcal {\tilde G}_{n-2\ell}.
\end{align}
\noindent In the tables below, we display $\mathcal G^\downarrow_n$ and $\mathcal {\tilde G}^\downarrow_n$ for $1 \leq n \leq 9$.
\bigskip

\centerline{
\begin{tabular}[t]{c|c}
  $n$ & $\mathcal G^\downarrow_{n}$
   \\
\hline
$1$ & $\mathcal G_{1} $ \\
$2$ & $\mathcal G_{2} $ \\
$3$ & $\mathcal G_{3} - \mathcal G_{1} \lbrack 2 \rbrack^{-2}_q $ \\
$4$ & $\mathcal G_{4} -2 \mathcal G_{2} \lbrack 2 \rbrack^{-2}_q$ \\
$5$ & $\mathcal G_{5} -3 \mathcal G_{3} \lbrack 2 \rbrack^{-2}_q + \mathcal G_{1} \lbrack 2 \rbrack^{-4}_q $ \\
$6$ & $\mathcal G_{6} -4 \mathcal G_{4} \lbrack 2 \rbrack^{-2}_q + 3\mathcal G_{2} \lbrack 2 \rbrack^{-4}_q $ \\
$7$ & $\mathcal G_{7} -5 \mathcal G_{5} \lbrack 2 \rbrack^{-2}_q + 6\mathcal G_{3} \lbrack 2 \rbrack^{-4}_q - \mathcal G_{1} \lbrack 2 \rbrack^{-6}_q $ \\
$8$ & $\mathcal G_{8} -6 \mathcal G_{6} \lbrack 2 \rbrack^{-2}_q + 10\mathcal G_{4} \lbrack 2 \rbrack^{-4}_q - 4\mathcal G_{2} \lbrack 2 \rbrack^{-6}_q $ \\
$9$ & $\mathcal G_{9} -7 \mathcal G_{7} \lbrack 2 \rbrack^{-2}_q + 15\mathcal G_{5} \lbrack 2 \rbrack^{-4}_q - 10\mathcal G_{3} \lbrack 2 \rbrack^{-6}_q +\mathcal G_{1} \lbrack 2 \rbrack^{-8}_q$ 
\end{tabular}}
\bigskip

\centerline{
\begin{tabular}[t]{c|c}
  $n$ & $\mathcal {\tilde G}^\downarrow_{n}$
   \\
\hline
$1$ & $\mathcal {\tilde G}_{1} $ \\
$2$ & $\mathcal {\tilde G}_{2} $ \\
$3$ & $\mathcal {\tilde G}_{3} - \mathcal {\tilde G}_{1} \lbrack 2 \rbrack^{-2}_q $ \\
$4$ & $\mathcal {\tilde G}_{4} -2 \mathcal {\tilde G}_{2} \lbrack 2 \rbrack^{-2}_q$ \\
$5$ & $\mathcal {\tilde G}_{5} -3 \mathcal {\tilde G}_{3} \lbrack 2 \rbrack^{-2}_q + \mathcal {\tilde G}_{1} \lbrack 2 \rbrack^{-4}_q $ \\
$6$ & $\mathcal {\tilde G}_{6} -4 \mathcal {\tilde G}_{4} \lbrack 2 \rbrack^{-2}_q + 3\mathcal {\tilde G}_{2} \lbrack 2 \rbrack^{-4}_q $ \\
$7$ & $\mathcal {\tilde G}_{7} -5 \mathcal {\tilde G}_{5} \lbrack 2 \rbrack^{-2}_q + 6\mathcal {\tilde G}_{3} \lbrack 2 \rbrack^{-4}_q - \mathcal {\tilde G}_{1} \lbrack 2 \rbrack^{-6}_q $ \\
$8$ & $\mathcal {\tilde G}_{8} -6 \mathcal {\tilde G}_{6} \lbrack 2 \rbrack^{-2}_q + 10\mathcal {\tilde G}_{4} \lbrack 2 \rbrack^{-4}_q - 4\mathcal {\tilde G}_{2} \lbrack 2 \rbrack^{-6}_q $ \\
$9$ & $\mathcal {\tilde G}_{9} -7 \mathcal {\tilde G}_{7} \lbrack 2 \rbrack^{-2}_q + 15\mathcal {\tilde G}_{5} \lbrack 2 \rbrack^{-4}_q - 10\mathcal {\tilde G}_{3} \lbrack 2 \rbrack^{-6}_q +\mathcal {\tilde G}_{1} \lbrack 2 \rbrack^{-8}_q$ 
\end{tabular}}
\bigskip

\newpage
\section{Appendix C}

In this appendix we give some details from the proof of Theorem \ref{thm:rr}. In that proof we invoke the
Bergman diamond lemma. In our discussion of that lemma we  list the overlap ambiguities; there are four types (i)--(iv).  Our goal in this appendix is to show that all the overlap ambiguities are resolvable. 
 Our strategy is to express the overlap ambiguities in terms of generating functions that involve mutually commuting indeterminates $r,s,t$. There are four overlap ambiguities of type (i). Here is the first one.
Using the GF reduction rules, let us evaluate the overlap ambiguity  
\begin{align}
\label{eq:One}
\mathcal W^+(t) \mathcal W^-(s) \mathcal G(r).
\end{align}
We can proceed in two ways. If we evaluate  $\mathcal W^+(t) \mathcal W^-(s)$ first, then we find that \eqref{eq:One} is equal to 
a weighted sum with the following terms and coefficients:
\bigskip
 
\centerline{
}
\newpage

 \noindent If we evaluate  $\mathcal W^+(s)\mathcal G(r)$ first, then we find that \eqref{eq:Three} is equal to a weighted sum with the following terms and coefficients:
 \bigskip
 

\centerline{
\begin{tabular}[t]{c|c}
  {\rm term} & {\rm coefficient} 
   \\
\hline
$\mathcal G(r) \mathcal W^-(s) \mathcal {\tilde G}(t)$ & $A'_{r, s}b_{t, s} + a_{r, s}A'_{t, s}       -a'_{r, s}f_{t, s}e_{t, r}b_{r,s}    -A'_{r, s}f_{t, r}e_{t, s}B_{s,r} +  a'_{r, s}F_{t, s}e_{t, r}A'_{t,s}   $ \\
$\mathcal G(r) \mathcal W^-(t) \mathcal {\tilde G}(s)$ & $A'_{r, s}B_{t, s} + a_{r, s}a'_{t, s} + a'_{r, s}f_{t, s}e_{s, r}b_{r, t}  +  a'_{r, s}F_{t, s}e_{s, r}A'_{r,t}   -A'_{r, s}f_{t, r}e_{r, s}b_{r,t} $\\&$+  A'_{r, s}f_{t, r}e_{t, s}B_{t,r}    -A'_{r, s}F_{t, r}e_{r, s}A'_{r,t} + a'_{r, s}F_{t, s}e_{t, r}a'_{t,s}$\\
$\mathcal G(s) \mathcal W^-(t) \mathcal {\tilde G}(r)$ & $a'_{r, s}B_{t, r} + A_{r, s}a'_{t, r}       -a'_{r, s}f_{t, s}e_{s, r}b_{s,t}  + a'_{r, s}f_{t, s}e_{t, r}B_{t,s}     -a'_{r, s}F_{t, s}e_{s, r}A'_{s,t}$\\&$+ A'_{r, s}f_{t, r}e_{r, s}b_{s,t}+ A'_{r, s}F_{t, r}e_{r, s}A'_{s,t} +  A'_{r, s}F_{t, r}e_{t, s}a'_{t,r}$\\
$\mathcal G(s) \mathcal W^-(r) \mathcal {\tilde G}(t)$ &$a'_{r, s}b_{t, r} + A_{r, s}A'_{t, r}          -a'_{r, s}f_{t, s}e_{t, r}B_{r,s}    -A'_{r, s}f_{t, r}e_{t, s}b_{s,r} +  A'_{r, s}F_{t, r}e_{t, s}A'_{t,r}  $\\
$\mathcal G(t) \mathcal W^-(r) \mathcal {\tilde G}(s)$ &$a'_{r, s}f_{t, s}e_{s, r}B_{r, t}     +   a'_{r, s}F_{t, s}e_{s, r}a'_{r,t}      -A'_{r, s}f_{t, r}e_{r, s}B_{r,t} +   A'_{r, s}f_{t, r}e_{t, s}b_{t,r}   $\\&$  -A'_{r, s}F_{t, r}e_{r, s}a'_{r,t}  -a'_{r, s}F_{t, s}e_{t, r}a'_{r,s}  -A'_{r, s}F_{t, r}e_{t, s}A'_{s,r}$\\
$\mathcal G(t) \mathcal W^-(s) \mathcal {\tilde G}(r)$ &$           -a'_{r, s}f_{t, s}e_{s, r} B_{s,t}      +              a'_{r, s}f_{t, s}e_{t, r}b_{t,s}    -a'_{r, s}F_{t, s}e_{s, r}a'_{s,t}  +  A'_{r, s}f_{t, r}e_{r, s}B_{s,t}  $\\&$+    A'_{r, s}F_{t, r}e_{r, s}a'_{s,t}   -a'_{r, s}F_{t, s}e_{t, r}A'_{r,s}   -A'_{r, s}F_{t, r}e_{t, s}a'_{s,r} $\\
\hline
$\mathcal G(r) \mathcal W^+(s) \mathcal {\tilde G}(t)$ &$ A'_{r, s}B'_{t, s} + a_{r, s}a_{t, s}         -a'_{r, s}f_{t, s}e_{t, r}B'_{r,s}    -A'_{r, s}f_{t, r}e_{t, s}b'_{s,r}   +  a'_{r, s}F_{t, s}e_{t, r}a_{t,s}      $\\
$\mathcal G(r) \mathcal W^+(t) \mathcal {\tilde G}(s)$ &$A'_{r, s}b'_{t, s} + a_{r, s}A_{t, s} + a'_{r, s}f_{t, s}e_{s, r}B'_{r, t}  +     a'_{r, s}F_{t, s}e_{s, r}a_{r,t}     -A'_{r, s}f_{t, r}e_{r, s}B'_{r,t}  $\\&$+A'_{r, s}f_{t, r}e_{t, s}b'_{t,r}   -A'_{r, s}F_{t, r}e_{r, s}a_{r,t}   + a'_{r, s}F_{t, s}e_{t, r}A_{t,s} $\\
$\mathcal G(s) \mathcal W^+(t) \mathcal {\tilde G}(r)$ &$ a'_{r, s}b'_{t, r} + A_{r, s}A_{t, r}        -a'_{r, s}f_{t, s}e_{s, r} B'_{s,t}   +  a'_{r, s}f_{t, s}e_{t, r}b'_{t,s}     -a'_{r, s}F_{t, s}e_{s, r}a_{s,t} $\\&$+ A'_{r, s}f_{t, r}e_{r, s}B'_{s,t}  +  A'_{r, s}F_{t, r}e_{r, s}a_{s,t}  +A'_{r, s}F_{t, r}e_{t, s}A_{t,r}  $\\
$\mathcal G(s) \mathcal W^+(r) \mathcal {\tilde G}(t)$ &$a'_{r, s}B'_{t, r} + A_{r, s}a_{t, r}             -a'_{r, s}f_{t, s}e_{t, r}b'_{r,s}    -A'_{r, s}f_{t, r}e_{t, s}B'_{s,r} +   A'_{r, s}F_{t, r}e_{t, s}a_{t,r}        $\\
$\mathcal G(t) \mathcal W^+(r) \mathcal {\tilde G}(s)$ &$ a'_{r, s}f_{t, s}e_{s, r}b'_{r, t}       +      a'_{r, s}F_{t, s}e_{s, r}A_{r,t}      -A'_{r, s}f_{t, r}e_{r, s}b'_{r,t}  +    A'_{r, s}f_{t, r}e_{t, s}B'_{t,r}  $\\&$  -A'_{r, s}F_{t, r}e_{r, s}A_{r,t}   -a'_{r, s}F_{t, s}e_{t, r}A_{r,s}  -A'_{r, s}F_{t, r}e_{t, s}a_{s,r}$\\
$\mathcal G(t) \mathcal W^+(s) \mathcal {\tilde G}(r)$ &$      -a'_{r, s}f_{t, s}e_{s, r} b'_{s,t}   +      a'_{r, s}f_{t, s}e_{t, r}B'_{t,s}     -a'_{r, s}F_{t, s}e_{s, r}A_{s,t}  +  A'_{r, s}f_{t, r}e_{r, s}b'_{s,t}  $\\&$ +   A'_{r, s}F_{t, r}e_{r, s}A_{s,t}     -a'_{r, s}F_{t, s}e_{t, r}a_{r,s}    -A'_{r, s}F_{t, r}e_{t, s}A_{s,r}    $\\
\hline
$\mathcal W^-(r) \mathcal W^-(s) \mathcal W^-(t)$ &$-a'_{r, s}F_{t, s} - A'_{r, s}F_{t, r}                             $\\
$\mathcal W^-(s) \mathcal W^-(t) \mathcal W^+(r)$ &$-A_{r, s}F_{t, s} + A'_{r, s}f_{t, r}                      $\\
$\mathcal W^-(r) \mathcal W^-(t) \mathcal W^+(s)$ &$-a_{r, s}F_{t, r} + a'_{r, s}f_{t, s}                     $\\
$\mathcal W^-(r) \mathcal W^-(s) \mathcal W^+(t)$ &$-a'_{r, s}f_{t, s} - A'_{r, s}f_{t, r}                  $\\
$\mathcal W^-(r) \mathcal W^+(s) \mathcal W^+(t)$ &$-a_{r, s}f_{t, r} + a'_{r, s}F_{t, s}                  $\\
$\mathcal W^-(s) \mathcal W^+(r) \mathcal W^+(t)$ &$-A_{r, s}f_{t, s} + A'_{r, s}F_{t, r}                $\\
$\mathcal W^-(t) \mathcal W^+(r) \mathcal W^+(s)$ &$A_{r, s}f_{t, s} + a_{r, s}f_{t, r}                     $\\
$\mathcal W^+(r) \mathcal W^+(s) \mathcal W^+(t)$ & $A_{r, s}F_{t, s} + a_{r, s}F_{t, r}                     $
\end{tabular}}
\newpage

\noindent 
Referring to the previous two tables, for each row the given coefficients are equal and their common value is displayed below:
 \bigskip
 
\centerline{
\begin{tabular}[t]{c|c}
  {\rm term} & {\rm coefficient} 
   \\
\hline
$\mathcal G(r) \mathcal W^-(s) \mathcal {\tilde G}(t)$ & $\frac{s(q^2 - 1)(q^8r^2st^2 - 2q^6r^2st^2 - q^6r^2t + q^6rt^2 + q^4r^2st^2 + q^4r^2s - q^4r^2t - q^4st^2 + q^2r^2t + q^2rt^2 - r^2t}{q^4(t - r)(s - t)(r - s)}$ \\
$\mathcal G(r) \mathcal W^-(t) \mathcal {\tilde G}(s)$ & $\frac{t(q^2 - 1)(q^8r^2st^2 - 2q^6r^2st^2 - q^6r^2t + q^6rt^2 + q^4r^2st^2 - q^4st^2 + q^2r^2s + q^2rst - r^2s)}{q^4(-t + r)(s - t)(r - s)}$\\
$\mathcal G(s) \mathcal W^-(t) \mathcal {\tilde G}(r)$ & $\frac{(q^2 - 1)^2(q^6rst^2 - q^4rst^2 - q^4st + q^4t^2 - q^2rt + rs)tr}{q^4(t - r)(s - t)(r - s)}$\\
$\mathcal G(s) \mathcal W^-(r) \mathcal {\tilde G}(t)$ &$\frac{(q^2 - 1)(q^8rst^2 - 2q^6rst^2 - q^6st + q^6t^2 + q^4rst^2 + q^4rs - q^4rt - q^4t^2 + q^2st + q^2t^2 - st)r^2}{q^4(-t + r)(s - t)(r - s)}$\\
$\mathcal G(t) \mathcal W^-(r) \mathcal {\tilde G}(s)$ &$\frac{(q^2 - 1)^3(q^4rt - 1)tr^2s}{q^4(t - r)(s - t)(r - s)}$\\
$\mathcal G(t) \mathcal W^-(s) \mathcal {\tilde G}(r)$ &$\frac{sr^2t(q^2 - 1)^3(q^4st - 1)}{q^4(-t + r)(s - t)(r - s)}$\\
\hline
$\mathcal G(r) \mathcal W^+(s) \mathcal {\tilde G}(t)$ &$\frac{q^{10}r^2st^2 - 3q^8r^2st^2 - q^8r^2t + q^8rt^2 + 3q^6r^2st^2 - q^6rs^2t^2 + q^6r^2s - q^6st^2}{q^4(-t + r)(r - s)(s - t)}
$\\&$+ \frac{q^4r^2s^2t - q^4r^2st^2 + 2q^4rs^2t^2 - q^4rs^2 + q^4s^2t - 2q^2r^2s^2t - q^2rs^2t^2 + r^2s^2t}{q^4(-t + r)(r - s)(s - t)}$\\
$\mathcal G(r) \mathcal W^+(t) \mathcal {\tilde G}(s)$ &$\frac{t(q^2 - 1)(q^8r^2st - 2q^6r^2st - q^6r^2 + q^6rt + q^4r^2st - q^4rst^2 + q^4rs - q^4st + q^2r^2st + q^2rst^2 - r^2st)}{q^4(t - r)(s - t)(r - s)}$\\
$\mathcal G(s) \mathcal W^+(t) \mathcal {\tilde G}(r)$ &$\frac{(q^2- 1)^2(q^6rst - q^4rst - q^4s + q^4t - q^2rt^2 + rst)tr}{q^4(-t + r)(s - t)(r - s)}$\\
$\mathcal G(s) \mathcal W^+(r) \mathcal {\tilde G}(t)$ &$\frac{(q^2 - 1)(q^8rst^2 - 2q^6rst^2 - q^6st + q^6t^2 - q^4r^2t^2 + q^4rst^2 + q^4rs - q^4rt + q^2r^2st + q^2r^2t^2 - r^2st)r}{q^4(t - r)(s - t)(r - s)}$\\
$\mathcal G(t) \mathcal W^+(r) \mathcal {\tilde G}(s)$ &$\frac{(q^2 - 1)^3(q^4t - r)sr^2t}{q^4(-t + r)(s - t)(r - s)}$\\
$\mathcal G(t) \mathcal W^+(s) \mathcal {\tilde G}(r)$ &$\frac{r^2st(q^2 - 1)^3(q^4t - s)}{q^4(t - r)(s - t)(r - s)}$\\
\hline
$\mathcal W^-(r) \mathcal W^-(s) \mathcal W^-(t)$ &$0$\\
$\mathcal W^-(s) \mathcal W^-(t) \mathcal W^+(r)$ &$\frac{(r - 1)(r + 1)(q^2 + 1)^3(q^2 - 1)^4t^2sr}{(r - s)q^6(-t + r)}$\\
$\mathcal W^-(r) \mathcal W^-(t) \mathcal W^+(s)$ &$\frac{rt(q^2 - 1)^3(q^2 + 1)^3(q^2rs^2t - q^2rt - rs^2t + rs - s^2 + st)}{q^6(t - s)(r - s)}$\\
$\mathcal W^-(r) \mathcal W^-(s) \mathcal W^+(t)$ &$\frac{tsr^2(t - 1)(t + 1)(q^2 + 1)^3(q^2 - 1)^4}{q^6(s - t)(-t + r)}$\\
$\mathcal W^-(r) \mathcal W^+(s) \mathcal W^+(t)$ &$\frac{(q^2 - 1)^3(q^2 + 1)^3(q^2r^2s + q^2r^2t - q^2rst - q^2r - r^2s + s)rt}{(r - s)q^6(-t + r)}$\\
$\mathcal W^-(s) \mathcal W^+(r) \mathcal W^+(t)$ &$\frac{tsr(q^2 + 1)^3(q^2 - 1)^4(rs - rt + st - 1)}{q^6(t - s)(r - s)}$\\
$\mathcal W^-(t) \mathcal W^+(r) \mathcal W^+(s)$ &$\frac{rt(q^2 - 1)^3(q^2 + 1)^3(q^2rst - q^2rt^2 - q^2st^2 + q^2t + st^2 - s)}{q^6(s - t)(t- r)}$\\
$\mathcal W^+(r) \mathcal W^+(s) \mathcal W^+(t)$ & $\frac{(q^2 + 1)^3(q^2 - 1)^3rt}{q^4}$
\end{tabular}}
\bigskip

\noindent The overlap ambiguity \eqref{eq:Three} is resolvable.

\newpage

\noindent Next we evaluate the overlap ambiguity  
\begin{align}
\label{eq:Four}
\mathcal {\tilde{G}}(t) \mathcal W^+(s) \mathcal W^-(r).
\end{align}
 We can proceed in two ways. If we evaluate  $\mathcal {\tilde{G}}(t) \mathcal W^+(s)$ first, then we find that \eqref{eq:Four} is equal to a weighted sum with the following terms and coefficients:

 \bigskip

\centerline{
\begin{tabular}[t]{c|c}
  {\rm term} & {\rm coefficient} 
   \\
\hline
$\mathcal G(r) \mathcal {\tilde G}(s)  \mathcal {\tilde G}(t)$ & $A_{t, s}b_{s, r}e_{t, r} + a_{t, s}b_{t, r}e_{s, r}$\\
$\mathcal G(s) \mathcal {\tilde G}(r)  \mathcal {\tilde G}(t)$ &$A_{t, s}B_{s, r}e_{t, s} - a_{t, s}B_{t, r}e_{s, t} - a_{t, s}b_{t, r}e_{s, r}$\\
$\mathcal G(t) \mathcal {\tilde G}(r)  \mathcal {\tilde G}(s)$ & $-A_{t, s}B_{s, r}e_{t, s} - A_{t, s}b_{s, r}e_{t, r} + a_{t, s}B_{t, r}e_{s,t}$\\
\hline
$ \mathcal W^-(r) \mathcal W^+(s)\mathcal {\tilde G}(t)$ & $a_{t, s}b_{t, r}$\\
$ \mathcal W^-(r) \mathcal W^+(t)\mathcal {\tilde G}(s)$ & $A_{t, s}b_{s, r}$\\
$ \mathcal W^-(s) \mathcal W^+(t)\mathcal {\tilde G}(r) $ & $A'_{t, s}b'_{t, r} + A_{t, s}B_{s, r}$\\
$ \mathcal W^-(s) \mathcal W^+(r)\mathcal {\tilde G}(t) $ & $A'_{t, s}B'_{t, r}$ \\
$ \mathcal W^-(t) \mathcal W^+(r)\mathcal {\tilde G}(s) $ &$a'_{t, s}B'_{s, r}$ \\
$ \mathcal W^-(t) \mathcal W^+(s) \mathcal {\tilde G}(r)$ & $a'_{t, s}b'_{s, r} + a_{t, s}B_{t, r}$\\
\hline
$ \mathcal W^-(s) \mathcal W^-(t) \mathcal {\tilde G}(r)$ &$a'_{t, s}B_{s, r} + A'_{t, s}B_{t, r}$\\
$ \mathcal W^-(r) \mathcal W^-(t)\mathcal {\tilde G}(s) $ &$a'_{t, s}b_{s, r}$ \\
$\mathcal W^-(r) \mathcal W^-(s) \mathcal {\tilde G}(t)$ &$A'_{t, s}b_{t, r}$\\
\hline
$ \mathcal W^+(s) \mathcal W^+(t)\mathcal {\tilde G}(r) $ &$A_{t, s}b'_{s, r} + a_{t, s}b'_{t, r}$ \\
$ \mathcal W^+(r) \mathcal W^+(t)\mathcal {\tilde G}(s)$ & $A_{t, s}B'_{s, r}$\\
$ \mathcal W^+(r) \mathcal W^+(s)\mathcal {\tilde G}(t) $&$a_{t, s}B'_{t, r}$ 
\end{tabular}}
\bigskip

\noindent If we evaluate  $ \mathcal W^+(s) \mathcal W^-(r)$ first, then we find that \eqref{eq:Four} is equal to a weighted sum with the following terms and coefficients:

\centerline{
\begin{tabular}[t]{c|c}
  {\rm term} & {\rm coefficient} 
   \\
\hline
$\mathcal G(r) \mathcal {\tilde G}(s)  \mathcal {\tilde G}(t)$ & $e_{s, r} + b'_{t, r}a'_{r, s}e_{t, r} - B'_{t, r}a'_{t, s}e_{r, t} - B'_{t, r}A'_{t, s}e_{r, s}$\\
$\mathcal G(s) \mathcal {\tilde G}(r)  \mathcal {\tilde G}(t)$ &$-e_{s, r} + b'_{t, r}A'_{r, s}e_{t, s} + B'_{t, r}A'_{t, s}e_{r, s}$\\
$\mathcal G(t) \mathcal {\tilde G}(r)  \mathcal {\tilde G}(s)$ & $-b'_{t, r}a'_{r, s}e_{t, r} - b'_{t, r}A'_{r, s}e_{t, s} + B'_{t, r}a'_{t, s}e_{r, t}$\\
\hline
$ \mathcal W^-(r) \mathcal W^+(s)\mathcal {\tilde G}(t)$ & $b_{t, r}a_{t, s}$\\
$ \mathcal W^-(r) \mathcal W^+(t)\mathcal {\tilde G}(s)$ & $-e_{s, r}f_{t, r} + b_{t, r}A_{t, s} + b'_{t, r}a'_{r, s}$\\
$ \mathcal W^-(s) \mathcal W^+(t)\mathcal {\tilde G}(r) $ & $e_{s, r}f_{t, s} + b'_{t, r}A'_{r, s}$\\
$ \mathcal W^-(s) \mathcal W^+(r)\mathcal {\tilde G}(t) $ & $B'_{t, r}A'_{t, s}$ \\
$ \mathcal W^-(t) \mathcal W^+(r)\mathcal {\tilde G}(s) $ &$e_{s, r}f_{t, r} + B_{t, r}A_{r, s} + B'_{t, r}a'_{t, s}$ \\
$ \mathcal W^-(t) \mathcal W^+(s) \mathcal {\tilde G}(r)$ & $-e_{s, r}f_{t, s} + B_{t, r}a_{r, s}$\\
\hline
$ \mathcal W^-(s) \mathcal W^-(t) \mathcal {\tilde G}(r)$ &$e_{s, r}F_{t, s} + B_{t, r}A'_{r, s}$\\
$ \mathcal W^-(r) \mathcal W^-(t)\mathcal {\tilde G}(s) $ &$-e_{s, r}F_{t, r} + B_{t, r}a'_{r, s} + b_{t, r}a'_{t, s}$ \\
$\mathcal W^-(r) \mathcal W^-(s) \mathcal {\tilde G}(t)$ &$b_{t, r}A'_{t, s}$\\
\hline
$ \mathcal W^+(s) \mathcal W^+(t)\mathcal {\tilde G}(r) $ &$-e_{s, r}F_{t, s} + b'_{t, r}a_{r, s}$ \\
$ \mathcal W^+(r) \mathcal W^+(t)\mathcal {\tilde G}(s)$ & $e_{s, r}F_{t, r} + b'_{t, r}A_{r, s} + B'_{t, r}A_{t, s}$\\
$ \mathcal W^+(r) \mathcal W^+(s)\mathcal {\tilde G}(t) $&$a_{t, s}B'_{t, r}$ 
\end{tabular}}
\bigskip

\newpage

\noindent Referring to the previous two tables, for each row the given coefficients are equal and their common value is displayed below:
 \bigskip
 
\centerline{
\begin{tabular}[t]{c|c}
  {\rm term} & {\rm coefficient} 
   \\
\hline
$\mathcal G(r) \mathcal {\tilde G}(s)  \mathcal {\tilde G}(t)$ & $\frac{-q^4}{(q^2 + 1)^3(q - 1)(q + 1)(-s + r)}$\\
$\mathcal G(s) \mathcal {\tilde G}(r)  \mathcal {\tilde G}(t)$ &$ \frac{q^4}{(q^2 + 1)^3(q - 1)(q + 1)(-s + r)}$\\
$\mathcal G(t) \mathcal {\tilde G}(r)  \mathcal {\tilde G}(s)$ & $0$\\
\hline
$ \mathcal W^-(r) \mathcal W^+(s)\mathcal {\tilde G}(t)$ & $ \frac{-(q^2t - s)(q^2r - t)}{(s - t)(-t + r)q^2}$\\
$ \mathcal W^-(r) \mathcal W^+(t)\mathcal {\tilde G}(s)$ & $ \frac{t(q - 1)(q + 1)(q^2r - s)}{(s - t)(-s + r)q^2}$\\
$ \mathcal W^-(s) \mathcal W^+(t)\mathcal {\tilde G}(r) $ & $ \frac{ts(q - 1)^2(q + 1)^2(rt^2 - st^2 - r + t)}{(s - t)(-t + r)q^2(-s + r)}$\\
$ \mathcal W^-(s) \mathcal W^+(r)\mathcal {\tilde G}(t) $ & $ \frac{-t^2sr(q - 1)^2(q + 1)^2}{(s - t)(-t + r)q^2}$ \\
$ \mathcal W^-(t) \mathcal W^+(r)\mathcal {\tilde G}(s) $ &$ \frac{t^2sr(q - 1)^2(q + 1)^2}{(s - t)(-s + r)q^2}$ \\
$ \mathcal W^-(t) \mathcal W^+(s) \mathcal {\tilde G}(r)$ & $ \frac{-t(q - 1)(q + 1)(q^2rs^2t - q^2s^2t^2 - q^2rt + q^2st - rs^2t + s^2t^2 + rs - s^2)}{(s - t)(-t + r)q^2(-s + r)}$\\
\hline
$ \mathcal W^-(s) \mathcal W^-(t) \mathcal {\tilde G}(r)$ &$ \frac{(q - 1)^2(q + 1)^2t^2s}{(-s + r)(-t + r)q^2}$\\
$ \mathcal W^-(r) \mathcal W^-(t)\mathcal {\tilde G}(s) $ &$ \frac{-t^2(q - 1)(q + 1)(q^2r - s)}{(s - t)(-s + r)q^2}$ \\
$\mathcal W^-(r) \mathcal W^-(s) \mathcal {\tilde G}(t)$ &$ \frac{ts(q - 1)(q + 1)(q^2r - t)}{(s - t)(-t + r)q^2}$\\
\hline
$ \mathcal W^+(s) \mathcal W^+(t)\mathcal {\tilde G}(r) $ &$ \frac{(q - 1)(q + 1)(q^2rs + q^2rt - q^2st - rs)t}{(-s + r)(-t + r)q^2}$ \\
$ \mathcal W^+(r) \mathcal W^+(t)\mathcal {\tilde G}(s)$ & $ \frac{-tsr(q - 1)^2(q + 1)^2}{(s - t)(-s + r)q^2}$\\
$ \mathcal W^+(r) \mathcal W^+(s)\mathcal {\tilde G}(t) $&$ \frac{rt(q - 1)(q + 1)(q^2t - s)}{(-t + r)(s - t)q^2}$ 
\end{tabular}}
\bigskip

\noindent The overlap ambiguity \eqref{eq:Four} is resolvable.
\medskip

\noindent We have shown that the overlap ambiguities of type (i) are resolvable.

\newpage

\noindent Next we evaluate the six overlap ambiguities of type (ii). Here is the first one. Let evaluate the overlap ambiguity  
\begin{align}
\label{eq:Five}
\mathcal W^-(t) \mathcal W^-(s) \mathcal G(r).
\end{align}
We can proceed in two ways. If we evaluate  $\mathcal W^-(t) \mathcal W^-(s)$ first, then we find that \eqref{eq:Five} is equal to a weighted sum with the following terms and coefficients:
\bigskip
 
\centerline{
}
\bigskip

\noindent The overlap ambiguity \eqref{eq:Ten} is resolvable.
 
\medskip

\noindent We have shown that the overlap ambiguities of type (ii) are resolvable.
\medskip

\noindent The overlap ambiguities of type (iii) are obtained from the overlap ambiguities of type (ii) by applying  the antiautomorphism $\dagger$ or $\sigma \dagger$. Consequently they are resolvable.
\medskip

\noindent It is transparent that the overlap ambiguities of type (iv) are resolvable.
\medskip

\noindent We have shown that all the overlap ambiguities are resolvable.

\newpage

\bigskip

\noindent Paul Terwilliger \hfil\break
\noindent Department of Mathematics \hfil\break
\noindent University of Wisconsin \hfil\break
\noindent 480 Lincoln Drive \hfil\break
\noindent Madison, WI 53706-1388 USA \hfil\break
\noindent email: {\tt terwilli@math.wisc.edu }\hfil\break

\end{document}